\numberwithin{equation}{section}
\newtheorem{thm}[equation]{Theorem}
\newtheorem*{thm*}{Theorem}
\newtheorem{prop}[equation]{Proposition}
\newtheorem{cor}[equation]{Corollary}
\newtheorem{lemma}[equation]{Lemma}
\theoremstyle{definition}
\newtheorem{ex}[equation]{Example}
\newtheorem{df}[equation]{Definition}
\newtheorem{con}[equation]{Construction}
\newtheorem{notn}[equation]{Notation}
\newtheorem{rmk}[equation]{Remark}
\theoremstyle{definition}
\DeclareMathOperator{\set}{\mathbf{Set}}
\DeclareMathOperator{\cat}{\mathbf{Cat}}
\DeclareMathOperator{\ncat}{\mathbf{n-Cat}}
\DeclareMathOperator{\sset}{\mathbf{sSet}}
\DeclareMathOperator{\Hom}{Hom}
\DeclareMathOperator{\ehom}{\underline{Hom}}
\DeclareMathOperator{\fun}{Fun}
\DeclareMathOperator{\sps}{\mathbf{sPSh}}
\DeclareMathOperator{\ps}{\mathbf{PSh}}
\DeclareMathOperator{\map}{map}
\DeclareMathOperator{\hmap}{hmap}
\DeclareMathOperator{\heq}{heq}
\DeclareMathOperator{\Map}{Map}
\DeclareMathOperator{\id}{id}
\DeclareMathOperator{\pr}{pr}
\DeclareMathOperator{\ho}{Ho}
\DeclareMathOperator{\hofiber}{hofiber}
\DeclareMathOperator{\se}{Se}
\DeclareMathOperator{\cpt}{Cplt}
\DeclareMathOperator{\ob}{ob}
\DeclareMathOperator{\diag}{diag}
\DeclareMathOperator*{\hocolim}{hocolim}
\DeclareMathOperator*{\amg}{\amalg}
\DeclareMathOperator*{\tim}{\times}
\newcommand{\op}{^{\text{op}}}
\newcommand{\si}[1]{^{\text{#1}}}
\pretocmd{\subsection}{\stepcounter{equation}}{}{}
\title{Completions and DK-equivalences of $\Theta_n$-spaces}
\author{Miika Tuominen}
\date{}
\begin{document}

\begin{abstract}
We establish Rezk completion functors for $\Theta_n$-spaces with respect to each and all of the completeness conditions. As a consequence, we obtain a characterization completeness of Segal $\Theta_n$-spaces as locality with respect to higher-dimensional Dwyer-Kan equivalences.
\end{abstract}

\maketitle

\tableofcontents

\section{Introduction}

Categories have become ubiquitous throughout mathematics, and in increasing number of contexts, they also come equipped with the additional structure of higher-dimensional morphisms. Especially in higher-dimensional contexts, the composition operations of morphisms are often associative and unital only in a weak sense, which is most conveniently described in the language of homotopy theory.
The $n$-dimensional $(\infty,n)$-categories are prototypically defined as categories enriched in a suitable notion of $(\infty,n-1)$-categories, where $(\infty,0)$-categories should be understood as spaces. However, the strictness at the object level of enriched categories is not well-behaved with respect to homotopy theory, so different models based on presheaves are often used instead.

In the one-dimensional case of $(\infty,1)$-categories, one of more studied models is given by Rezk's complete Segal spaces \cite{rezksp}, which are simplicial presheaves on the simplex category $\Delta$ subject to \emph{Segal} and \emph{completeness} conditions. The Segal condition alone is sufficient to define the structure of a weak composition, but the resulting theory has many different notions of invertibility for morphisms. In order to obtain a homotopy theory for $(\infty,1)$-categories, an additional condition of \emph{completeness} has to be imposed that forces invertibility in terms of homotopical and categorical structures to coincide.

In the category of simplicial spaces, it is common for categorical constructions, such as nerves, in their naïve form to satisfy the Segal condition but fail to be complete; for example, the nerve of a category, although an $(\infty,1)$-category in the quasi-category model on simplicial sets, is just a Segal space that is usually not complete as a discrete simplicial space.
However, the issue of incompleteness for Segal spaces may often be solved via the use of Rezk's \emph{completion} functor \cite[\S 14]{rezksp}, which is a localization construction that is in many cases explicit in form. As an example, the completion applied to the aforementioned discrete nerve gives the homotopy coherent nerve known as the \emph{Rezk nerve} or the \emph{classifying diagram} \cite[3.5]{rezksp}. 

The completion also has the additional property that the natural weak equivalence from a Segal space to its completion is a \emph{DK-equivalence}, which means that it is essentially surjective and fully faithful in a suitably weak sense, preserving objects up to equivalence and the enriched structure in spaces. 
This property creates a link between weak equivalences in the complete Segal space model and DK-equivalences, allowing us to better understand the completeness condition and DK-equivalences between general Segal spaces. DK-equivalences mimic the behaviour of equivalences of categories enriched in spaces and serve as weak equivalences also in other models of $(\infty,1)$-categories based on Segal spaces \cite{bthree,mnsegal}.
In some context, especially ones adjacent to computer science and logic, completeness is also known by the term \emph{univalence}. In this direction Rezk's completion and its generalizations have been extensively studied for their relevance to the program on univalent foundations, building on work in \cite{abmuni}.

In order to model $(\infty,n)$-categories for a general $n$, one may use the category of simplicial presheaves on Joyal's cell category $\Theta_n$ \cite{joyaltheta}, which is an $n$-categorical analogue of the simplex category $\Delta$. Imposing Segal and completeness conditions for each of the $n$ dimensions of morphisms gives the $\Theta_n$-space model for $(\infty,n)$-categories developed by Rezk in \cite{rezkth}. Like in the 1-dimensional case, many constructions in the category of $\Theta_n$-spaces satisfy the Segal conditions but fail some or all of the completeness conditions; for example, framed tangles studied in the context of topological quantum field theories form a Segal $\Theta_n$-spaces that is generally not complete \cite[3.12]{afcob}, \cite[0.26]{afflag}.
In the present paper, we generalize Rezk's completion functor to each of the $n$ completeness conditions of Segal $\Theta_n$-spaces, expressed in terms of a higher-categorical extension of the notion of DK-equivalences.

\begin{thm}\label{t1}
For each $1\leq k\leq n$ there is a functor localizing Segal $\Theta_n$-spaces with respect to the completeness condition in dimension $k$ via a DK-$n$-equivalence.
\end{thm}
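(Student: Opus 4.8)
The plan is to realize the desired completion as a fibrant replacement in a left Bousfield localization and then to exhibit an explicit model for it in order to verify the Dwyer--Kan equivalence. Throughout I work in $\sps(\Theta_n)$ equipped with the model structure whose fibrant objects are the Segal $\Theta_n$-spaces, i.e.\ the localization of the levelwise structure at the Segal maps. The first task is to pin down the completeness condition in dimension $k$ as locality with respect to a single map (or small set of maps) $j_k\colon E_k\to *$. In dimension $1$ the object $E_1$ is the nerve of the free-standing equivalence $E$, and $j_1$ is precisely Rezk's completeness. For $k\geq 2$ I build $E_k$ by applying the suspension functor $\Sigma$ (raising dimension by one) a total of $k-1$ times to $E$; this reflects the inductive principle that completeness in dimension $k$ is completeness in dimension $k-1$ of every mapping $(\infty,n-1)$-category, and it singles out the simplicial direction controlling invertible $k$-cells.

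Granting this identification, I form the left Bousfield localization of the Segal model structure at $j_k$. Formal properties of Bousfield localization immediately yield a functorial fibrant replacement $X\mapsto \mathrm{L}_k X$ together with a natural $j_k$-local equivalence $X\to\mathrm{L}_k X$, and the local objects are exactly the Segal $\Theta_n$-spaces that are complete in dimension $k$. This produces the localization functor asserted in the theorem; what carries the real content is showing that the unit map is a DK-equivalence rather than merely a local equivalence in the localized structure.

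For the DK-equivalence I give an explicit model for $\mathrm{L}_k$ generalizing Rezk's construction. Writing $E_k[m]$ for the ``contractible $k$-groupoid on $m{+}1$ objects'', namely the $(k-1)$-fold suspension of the nerve $E[m]$ of the contractible groupoid on $\{0,\dots,m\}$, I define the completion as the diagonal of the bisimplicial object $[m]\mapsto \Map(E_k[m],X)$. A contractibility comparison identifies this explicit construction with the abstract fibrant replacement $\mathrm{L}_kX$ up to weak equivalence, so it suffices to verify the DK-equivalence for the explicit model. Essential surjectivity is then transparent: since each $E_k[m]$ is weakly contractible, the construction adjoins formal inverses to dimension-$k$ equivalences without creating new cells, and so induces a bijection on equivalence classes of objects. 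Full faithfulness reduces, via the Segal decomposition and the suspension description of $E_k[m]$, to the statement that the mapping objects are left unchanged up to equivalence; the only coordinate affected is the single simplicial direction governing dimension-$k$ invertibility, and on that coordinate the claim is exactly Rezk's one-dimensional completion theorem applied fibrewise over the lower-dimensional data.

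The main obstacle is precisely this last verification: controlling the effect of the dimension-$k$ completion on the mapping objects and ensuring it is invisible to the DK-structure. The delicate points are the compatibility of the functor $\Map(E_k[m],-)$ with the Segal maps, so that $\mathrm{L}_kX$ remains Segal, and the need to keep the completion confined to dimension $k$ so that it disturbs neither the other completeness conditions nor the mapping spaces that witness full faithfulness. The inductive reduction to Rezk's original completion, carried out through the suspension functor $\Sigma$ and the wreath-product description $\Theta_n\cong\Delta\wr\Theta_{n-1}$, is what ultimately tames these interactions.
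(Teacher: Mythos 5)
Your overall strategy---realize the dimension-$k$ completion as a fibrant replacement, model it explicitly by a diagonal of a simplicial resolution built from suspended walking equivalences, and reduce fully faithfulness to Rezk's one-dimensional completion on mapping objects---is the same shape as the paper's argument (Construction \ref{ncompletion} and Theorem \ref{suspended}). However, the explicit model you propose is not correct as stated, and this is not a cosmetic issue. The diagonal of $[m]\mapsto \Map(E_k[m],X)$ with $E_k[m]=\Sigma^{k-1}E(m)$ does not define a presheaf on $\Theta_n$ (there is no $\theta$-coordinate), and the natural repair---cotensoring $X$ by the suspended interval, i.e.\ taking $X^{V[1]^{k-1}\pi^*E(m)}$---fails: its value at level $[0]$ is $\Map(V[1]^{k-1}\pi^*E(m),X)$, which enlarges the space of objects and the lower-dimensional cells, destroying both essential surjectivity and the requirement that the completion be confined to dimension $k$. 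The paper avoids this by defining the cosimplicial-type diagram $Q^n_{\theta,p}$ recursively through the intertwining functor $V[m]$ rather than through the suspension alone; this is precisely what makes the construction act as the identity on the underlying $\Theta_{k-2}$-space (Lemma \ref{uth}, Corollary \ref{underlying}) and as the dimension-$(k-1)$ completion on mapping objects (Proposition \ref{induction}). You flag ``keeping the completion confined to dimension $k$'' as the main obstacle, but the construction you give does not overcome it.

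A second gap is the comparison between your explicit model and the abstract Bousfield-localization fibrant replacement. To identify the two up to weak equivalence you must first prove that the explicit model is local and that the map into it is a local equivalence---which is exactly the content of parts (1) and (2) of the theorem, not something that follows from ``a contractibility comparison.'' For $k=1$ the paper proves the local-equivalence statement using categorical equivalences and the Cartesian structure (Proposition \ref{cateqcpt}), but for $k\geq 2$ that machinery is unavailable because the completion is no longer a cotensor; the paper instead runs a two-out-of-six argument on the double completion $T^nT^nW$ (Proposition \ref{compwe}), using that the first completion is already fibrant. Nothing in your proposal supplies a substitute for this step, and essential surjectivity (which for $k\geq 2$ must be checked on all iterated mapping objects, not just on objects) is asserted from weak contractibility of $E_k[m]$ rather than derived from the preservation of the underlying lower-dimensional structure.
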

In somewhat more precise terms, here by localization we mean a fibrant replacement in a certain model structure on the category of $\Theta_n$-spaces. The notion of DK-$n$-equivalence is an adaptation equivalences of $(\infty,n)$-categories to the incomplete setting and consists of weakened essential surjectivity condition for cells below dimension $n$ together with fully faithfulness on $n$-cells, discussed more extensively in Section \ref{hdk}. We prove Theorem \ref{t1} in its more precise formulation as Theorem \ref{suspended}.

Rezk's completion functor takes a Segal spaces to and equivalent complete one by thickening the space of objects with higher dimensional simplices that encode information about the invertible morphisms while keeping the space of morphisms between any fixed pair of objects unchanged. When $n,k\geq 2$, our completion functor acts similarly $k$ dimensions higher but also leaves morphisms of dimension $k-2$ and lower essentially unchanged. Thus the completion in dimension $k$ essentially only affects morphisms in the single dimension of $k-1$, which allows us to combine the completions for multiple values of $k$ into a single completion functor.

\begin{thm}\label{t2}
There is a fibrant replacement functor localizing Segal $\Theta_n$-spaces with respect to all of the completeness conditions via a DK-$n$-equivalence.
\end{thm}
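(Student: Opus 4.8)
The plan is to build the simultaneous completion as a finite composite of the single-dimension completions $C_1,\dots,C_n$ supplied by Theorem \ref{suspended}, applied in decreasing order of $k$. Write $\eta_k\colon X\to C_k X$ for the natural DK-equivalence of Theorem \ref{suspended}; each $C_k$ carries a Segal $\Theta_n$-space to a Segal $\Theta_n$-space that is complete in dimension $k$. I would first record the Bousfield localization of the Segal model structure at the union of the completeness maps over all $1\le k\le n$: its fibrant objects are precisely the Segal $\Theta_n$-spaces that are complete in every dimension, so producing a functorial fibrant replacement in the sense of Theorem \ref{t2} amounts to producing a functorial, all-dimensions-complete, DK-equivalent replacement.

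The technical heart is a preservation lemma: if $X$ is complete in dimension $j$, then so is $C_k X$ whenever $j>k$. This is exactly where the structural description emphasized before the statement enters, namely that $C_k$ leaves the cells of dimension $\le k-2$ unchanged, essentially only alters the cells of dimension $k-1$, and keeps the mapping structure in dimensions $\ge k$ unchanged. Since completeness in dimension $j$ is governed by the cells of dimensions $j-1$ and $j$, and for $j>k$ both indices exceed $k-1$, the operation $C_k$ cannot disturb it. I would prove this by unwinding the explicit form of $C_k$ from Theorem \ref{suspended} and verifying that the map witnessing completeness in dimension $j$ is carried to an equivalence.

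Granting the lemma, I would run the completions as $L:=C_1\circ C_2\circ\cdots\circ C_n$, so that $C_n$ is applied first. At the stage where $C_k$ acts, the completeness conditions already secured are those in dimensions $j>k$, each governed by dimensions $\ge k>k-1$; by the preservation lemma these survive, while $C_k$ newly installs completeness in dimension $k$. Hence $LX$ is Segal and complete in all dimensions, i.e. fibrant in the localized model structure. The decreasing order is what makes this work: applied in increasing order, $C_{j+1}$ would be free to enlarge the dimension-$j$ cells and thereby break the completeness in dimension $j$ that a prior $C_j$ had established, so I would flag that the adjacent pair $(C_j,C_{j+1})$ is precisely the interaction the ordering is designed to avoid.

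Finally I would assemble naturality and the DK-equivalence property. The unit $X\to LX$ is the composite $X\xrightarrow{\eta_n}C_nX\xrightarrow{\eta_{n-1}}C_{n-1}C_nX\to\cdots\to LX$ of the maps $\eta_k$ evaluated at the successive intermediate objects; naturality of each $\eta_k$ makes the composite natural in $X$, and since DK-equivalences are closed under composition (Definition \ref{iterateddk}), the composite unit is itself a DK-equivalence. I expect the main obstacle to be the preservation lemma, and within it the borderline bookkeeping that thickening the dimension-$(k-1)$ cells genuinely leaves the homotopy type of the invertible cells relevant to the higher completeness conditions unchanged; the remaining steps are formal two-out-of-three and functoriality.
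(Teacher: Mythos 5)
Your proposal follows essentially the same route as the paper: the total completion is defined there as the composite $T=T^{1}\cdots T^{n-1}T^{n}$ (Construction \ref{tcompletion}), applied in decreasing order of dimension exactly as you describe, and the proof of Theorem \ref{total} is the iteration-plus-naturality argument you sketch, with DK-equivalences closed under composition. Two remarks on where your write-up diverges from what is actually needed. First, the ``preservation lemma'' you isolate as the technical heart does not need a separate unwinding argument: Theorem \ref{suspended} is stated for an arbitrary Cartesian localization parameter $S$, so applying it at the $k$-th stage with $C$ replaced by $\Theta^{n-k}C$ and $S$ by $\cpt^{n-k}(S)$ makes the previously installed completeness conditions part of the hypothesis ``$\se^{k}(S')$-fibrant'' and part of the conclusion ``$\se^{k-1}\cpt(S')$-fibrant''; your dimension-counting intuition is correct (and is what Proposition \ref{induction} and Corollary \ref{compfibrant} encode), but the bookkeeping is already done. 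Second, and this is a genuine gap in your reduction: a fibrant replacement in the localized model structure requires the unit $X\to LX$ to be a $\cpt^{n}(S)$-local equivalence, not merely a DK-equivalence with fibrant target. You cannot appeal to the equivalence of these two notions for Segal objects, since that is Theorem \ref{dktotal}, which is proved \emph{using} the total completion. The fix is immediate --- part (2) of Theorem \ref{suspended} already says each $\eta^{k}$ is a $\se^{k-1}\cpt^{n-k+1}(S)$-local acyclic cofibration, hence a $\cpt^{n}(S)$-local one --- but it must be invoked explicitly rather than folded into the DK-equivalence claim.
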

We define this combined completion functor more precisely in Construction \ref{tcompletion} and discuss its properties in Theorem \ref{total}. 

As in the 1-dimensional setting, DK-$n$-equivalences here encode the prototypical behaviour of equivalences of $(\infty,n)$-categories, and for Segal $\Theta_n$-spaces with completeness above dimension 1, they coincide with prior notions of DK-equivalence, studied for simplicial objects by Berger-Rezk \cite{brcomp2} and Gepner-Haugseng \cite[\S 5]{gh}. For $\Theta_n$-spaces, DK-equivalences have been shown to coincide with equivalences of $(\infty,n)$-categories by Bergner in \cite[6.4]{bdisc} via comparison to the simplicial setting. Our completion functors allow us to provide a direct proof of this correspondence while also relaxing the remaining assumptions on completeness.

\begin{thm}\label{t3}
A map between Segal $\Theta_n$-spaces is a DK-$n$-equivalence if and only if it is becomes a weak equivalence after localizing with respect to all of the completeness conditions; in particular, a Segal $\Theta_n$-space is complete if and only if it is local with respect to the class of DK-equivalences.
\end{thm}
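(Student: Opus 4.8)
The plan is to deduce the theorem from a single comparison: \emph{between complete Segal $\Theta_n$-spaces the DK-equivalences are exactly the weak equivalences of the model structure}. I will use freely that a levelwise weak equivalence of Segal $\Theta_n$-spaces is a DK-equivalence, that DK-equivalences satisfy two-out-of-three (immediate from Definition \ref{iterateddk}, since essential surjectivity and fully faithfulness each do, by induction on $n$), and the output of Theorem \ref{total}: the combined completion $L$ of Construction \ref{tcompletion} is a fibrant replacement whose unit $\eta_X\colon X\to LX$ is a DK-equivalence with $LX$ complete. Observe that a map $f\colon X\to Y$ becomes a weak equivalence after localizing if and only if $Lf$ is a levelwise weak equivalence of complete objects, since the units are weak equivalences and $LX,LY$ are fibrant.

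Granting the comparison, the main equivalence is formal. For $f\colon X\to Y$ consider the naturality square $Lf\circ\eta_X=\eta_Y\circ f$ with $\eta_X,\eta_Y$ DK-equivalences. If $f$ is a DK-equivalence, then two-out-of-three makes $Lf$ a DK-equivalence between the complete objects $LX,LY$, hence a weak equivalence by the comparison. Conversely, if $Lf$ is a levelwise weak equivalence then it is a DK-equivalence, and two-out-of-three returns that $f$ is a DK-equivalence. This is exactly the asserted biconditional.

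It remains to prove the comparison, which I would carry out by induction on $n$; the base case $n=0$ is the identity DK-equivalences $=$ weak equivalences of spaces. The direction from weak to DK needs no completeness. For the converse, let $f$ be a DK-equivalence of complete Segal $\Theta_n$-spaces. Fully faithfulness provides, for each pair of objects, a DK-equivalence between the mapping objects, which are complete Segal $\Theta_{n-1}$-spaces; by induction these are weak equivalences. The remaining task is to show that the object-space map $X_{[0]}\to Y_{[0]}$ is a weak equivalence, after which the Segal condition — writing every value $X_\theta$ as an iterated homotopy pullback of mapping objects over object spaces — propagates the conclusion to a levelwise, hence model-categorical, weak equivalence.

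I expect the object-space comparison to be the main obstacle, and it is precisely where completeness in dimension $1$ is used. On path components, essential surjectivity gives surjectivity and fully faithfulness, reflecting equivalences, gives injectivity, once completeness is invoked to identify the components of $X_{[0]}$ with the equivalence classes of objects. For the higher homotopy groups I would use the first completeness condition to identify the loop space $\Omega_a X_{[0]}$ at an object $a$ with its space of self-equivalences, a union of components of the underlying space of the endomorphism mapping object; since fully faithfulness already gives an equivalence of mapping objects carrying invertible components to invertible components, all induced maps on homotopy groups of $X_{[0]}$ are isomorphisms, and with the bijection on components this yields $X_{[0]}\xrightarrow{\sim}Y_{[0]}$. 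Transcribing the classical identification $\Omega_a X_{[0]}\simeq\mathrm{Aut}_X(a)$ into the $\Theta_n$-setting and matching it against the completeness condition of Theorem \ref{suspended} is the technical heart of the proof. Finally, the concluding clause is immediate: by Theorem \ref{total}, $L$ is fibrant replacement in a localization whose fibrant objects are the complete Segal $\Theta_n$-spaces and whose weak equivalences between Segal spaces are, by the equivalence just proved, the DK-equivalences; since the fibrant objects of such a localization are exactly those local with respect to its weak equivalences, a Segal $\Theta_n$-space is complete if and only if it is local with respect to the class of DK-equivalences.
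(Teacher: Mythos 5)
Your proposal is correct and follows essentially the same route as the paper: reduce via the total completion $T$, its naturality square, and two-out-of-three for DK-equivalences to the key comparison that DK-equivalences between complete objects are levelwise weak equivalences (the paper's Proposition \ref{idklw}), which is then proved by induction on $n$ using completeness to control the object space. Your treatment of the object-space step via the identification $\Omega_a X_{[0]}\simeq \mathrm{Aut}_X(a)$ is just the fiberwise unpacking of the paper's homotopy-monomorphism argument (Lemma \ref{htpymono} and Proposition \ref{dklw}), so the two proofs coincide in substance.
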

A particularly important implication of this theorem is that the $(\infty,1)$-category of $(\infty,n)$-categories may be obtained from the $(\infty,1)$-category of Segal $\Theta_n$-spaces by localizing with respect to DK-$n$-equivalences. We prove this result in a model-categorical framing as Theorem \ref{dktotal}.


\subsection{Overview}
We use an inductive approach to prove Theorem \ref{t1}, and for the base case $k=1$, our proof strategy follows the same structure as Rezk's for $n=1$. Rezk's completion construction makes use of the Cartesian structure on the category of simplicial spaces; however, DK-equivalences and the weak equivalences of the model structure for complete Segal spaces are not generally compatible with the Cartesian structure when considered between objects that are not fibrant, that is, complete Segal spaces. In order to show that the inclusion of a Segal space into its thickening is both a DK-equivalence and a weak equivalence, Rezk develops a stronger notion of equivalence called \emph{categorical equivalence} that implies the other two and is compatible with the Cartesian structure. Our completion functors for higher values of $k$ is obtained from the base case via a suspension construction that increases the dimension of all morphisms by 1.

In Section \ref{sbg} we discuss the background on the relevant categories and model structures as well as some of the relationships to lower-dimensional cases. In Section \ref{smap} we recount some key properties of the enriched structure  of Segal objects that are essential for studying fully faithfulness part of DK-equivalences, and that allow us to set up the inductive arguments for the higher-dimensional completions. In Section \ref{sho} we discuss notions of homotopy determined by interval objects, which we use to generalize categorical equivalences to a more general setting. This framework also allows us to compare categorical equivalences to weak equivalences and equivalences of categories. In Section \ref{sdk} we study DK-equivalences and their relationship to weak and categorical equivalences. In Section \ref{shcomp} we define the inductive base case of our completion functors and prove Theorem \ref{t1} for $k=1$. We also show a more restricted version of Theorem \ref{t3} which too serves as a base case for induction. The following diagram illustrates the various notions of equivalence that we consider for the inductive base cases of our results with the base case of Theorem \ref{t3} in the centre:
\[
\begin{tikzcd}
& & \begin{tabular}{c}
$\cpt(S)$-local\\
equivalence \ref{cptnotn}
\end{tabular}
\arrow[dd, Leftrightarrow, "\ref{dkmain}", "\se" swap]
\arrow[ld, Rightarrow, bend right=45, "\cpt" swap] 
\arrow[rd, Rightarrow, bend left=45, "\cpt", "\ref{htpywe}" swap] 
 &\\
\begin{tabular}{c}
Simplicial\\
homotopy\\
equivalence
\end{tabular}
\arrow[r, Rightarrow, bend left] & 
\begin{tabular}{c}
Levelwise\\
weak\\
equivalence
\end{tabular}
 \arrow[ru, Rightarrow, end anchor=south west] 
 \arrow[rd, Rightarrow, end anchor=north west, "\se" swap, "\ref{lwdk}"] 
 \arrow[l, Rightarrow, bend left, "\text{inj}"] &  & \begin{tabular}{c}
Categorical \\
equivalence \\ \ref{cateqdef},
\end{tabular} 
\arrow[ld, Rightarrow, end anchor=north east, "\se" , "\ref{cateqdk}" swap]  
\arrow[lu, Rightarrow, end anchor=south east, "\se" swap, "\ref{cateqcpt}" ]\\
& & 
\begin{tabular}{c}
DK-equivalence 
\arrow[lu, Rightarrow, bend left=45, "\cpt", "\ref{dklw}" swap] 
\arrow[ru, Rightarrow, bend right=45, "\cpt" swap] 
\\
\ref{dkdef} 
\end{tabular}
&
\end{tikzcd}
\]
where the implications with labels "inj", "$\se$", and "$\cpt$" hold for injective fibrant, Segal, and Complete Segal objects, respectively. The numbers in the diagram indicate the relevant definition or result showing the implication, whereas the implications without numbers are either well-known or follow from the others.

In Section \ref{hdk} we discuss higher-dimensional extensions for the notions of categorical and DK-equivalence, which we require for characterizing our higher-dimensional completions. Finally, in Section \ref{sscomp} we define our completion functors in the general case and prove our main theorems.

\subsection*{Acknowledgements}
Firstly I want to thank Julie Bergner for introducing me to the problems studied in this paper and guiding me during my doctoral studies. I also want to thank Tim Campion for many insightful discussions related to the topics of this paper and Lyne Moser for very helpful comments on several versions of this text.

\section{Preliminaries}\label{sbg}

In \cite{rezkth}, where $\Theta_n$-spaces are first studied as a model for $(\infty,n)$-categories, the localizations of the injective model structure encoding Segal conditions and completeness are considered inductively, one dimension at a time. In Rezk's approach these localizations are also independent of the previous ones on the higher dimensional structure and thus conveniently described in terms of a more general \emph{$\Theta$-construction} rather than $\Theta_n$ directly. The same level of generality is convenient for our inductive arguments as well, so we adopt a similar approach. The $\Theta$-construction was first introduced by Berger in \cite[3.1.]{bergerth} as a categorical wreath product, but the following description is due to Rezk \cite[3.2]{rezkth}. 

\begin{df}
Let $C$ be a small category. Define a category $\Theta C$ with objects $[m](c_1,\ldots,c_m)$ where $m\in \ob(\Delta)$ and $c_1,\ldots,c_m\in \ob(C)$. A morphism $[m](c_1,\ldots,c_m)\to [l](d_1,\ldots,d_l)$ in $\Theta C$ consists of 
\begin{enumerate}
\item a morphism $\varphi\colon [m]\to[l]$ in $\Delta$, and 
\item morphisms $f_{ij}\colon c_i\to d_j$ for $i=1,\ldots, m$ and $\varphi(i-1)<j \leq \varphi(i)$.
\end{enumerate}
\end{df}

In particular, iterating the $\Theta$-construction on the terminal category, which we denote by $*$, produces Joyal's cell categories $\Theta_n\cong \Theta^n(*)$. We now describe this special case to build intuition for the $\Theta$-construction.

Consider $\Theta_1\cong \Delta$ as a full subcategory of the category of small categories $\cat$, and let $\Sigma$ be the \emph{suspension} functor that takes a strict $(n-1)$-category $A$ to the strict $n$-category with two objects 0 and 1, and 
$\Hom$-$(n-1)$-categories
\[
\begin{split}
\ehom_{\Sigma A}(0,1)&= A,\\
\ehom_{\Sigma A}(0,0)&=\{\id_0\},\\
\ehom_{\Sigma A}(1,1)&=\{\id_1\},\quad \text{and}\\
\ehom_{\Sigma A}(1,0)&=\emptyset.
\end{split}
\]
In essence, $\Sigma$ introduces two new 0-cells and raises the dimension of all cells of $A$ by 1.
We may then consider $\Theta_n$ as the full subcategory of the category strict $n$-categories $\ncat$ on objects
\[
[m](\theta_1,\ldots,\theta_m)\cong\Sigma \theta_1 \amg_{[0]} \Sigma \theta_2 \amg_{[0]} \cdots \amg_{[0]} \Sigma \theta_m,
\]
where each of the pushouts is with respect to the inclusion at 1 in the preceding term and the inclusion at 0 in the following term, and $m\in \ob (\Delta)$ and $\theta_i\in \ob (\Theta_{n-1})$.
As an example the object $[3]([2],[0],[1])\in \Theta_2$ is the 2-category generated by the data
\[
  \begin{tikzcd}
    0 \arrow[bend left=60]{r}[name=LUU, below]{}
    \arrow{r}[name=LUD]{}
    \arrow[swap]{r}[name=LDU]{}
    \arrow[bend right=60]{r}[name=LDD]{}
    \arrow[Rightarrow,to path=(LUU) -- (LUD)\tikztonodes, shorten=-2pt]{r}
    \arrow[Rightarrow,to path=(LDU) -- (LDD)\tikztonodes, shorten=-2pt]{r}
    & 
    1
    \arrow{r}[name=RUD]{}
    & 
    2
    \arrow[bend left=40]{r}[name=RrUU, below]{}
     \arrow[bend right=40]{r}[name=RrUD]{}
    \arrow[Rightarrow,to path=(RrUU) -- (RrUD)\tikztonodes, shorten=1pt]{r}
    & 
    3.
  \end{tikzcd}
\]
It may be useful to think of the objects of the more general $\Theta C$ as chains of suspended objects from $C$.
We refer the reader to \cite[\S 3]{rezkth} for further discussion on the $\Theta$-construction and to \cite[\S 4]{orqa} for discussion on its connection to the suspension.

\begin{notn}
Throughout this paper we let $C$ be a small category with a fixed terminal object $t$.

Let $\ps(C)=\fun(C\op,\set)$ denote the set valued presheaves and $\sps(C)=\fun(C\op,\sset)$ the simplicial presheaves on $C$. 
We write $F\colon C\to \sps(C)$ for the \emph{discrete Yoneda embedding}, that is, the usual Yoneda embedding $C\to \ps(C)$ followed by postcomposition by the discrete inclusion $\set\to \sset$. We may also view elements of $\sps(C)$ interchangeably as elements of $\ps(C\times \Delta)$.

In the case $C=\Theta_n$, the inclusion $\Theta_n\to \ncat$ has an extension along the discrete Yoneda embedding which we call the \emph{discrete nerve} $N^n\colon \ncat\to \sps(\Theta_n)$, given by the levelwise formula 
\[
N^n(A)_{\theta,p}=\Hom_{\ncat}(\theta, A),
\]
for $\theta\in \Theta_n\subset \ncat$, $[p]\in \Delta$ and $A\in \ncat$.
When $n=1$ we omit the superindex and write $N=N^1$.

We use arrows like $\hookrightarrow$ for cofibrations, $\twoheadrightarrow$ for fibrations, and decorate morphisms that are weak equivalences with $\sim$ when relevant. Which model structure these types of morphisms are considered with respect to varies by context.
\end{notn}

Recall that the category $\sps(C)$ has the \emph{injective} model structure, where weak equivalences are levelwise weak equivalences of simplicial sets in the model structure for Kan complexes and cofibrations are monomorphisms. 
We also note that by a result of Bergner and Rezk \cite{brreedy} the injective model structure on $\sps(\Theta_n)$ coincides with the \emph{Reedy} model structure, where fibration may be described more explicitly, although we do not need that description for our results.

The $\Theta$-construction may be extended from the strict setting to the weak one via the discrete Yoneda embedding as follows.

\begin{df}\cite[4.4]{rezkth}
The \emph{intertwining functor} $V\colon \Theta (\sps(C))\to \sps(\Theta C)$ is the left Kan extension of the discrete Yoneda embedding of $\Theta C$ along the $\Theta$-construction of the discrete Yoneda embedding of $C$ as illustrated in the diagram
\[
\begin{tikzcd}
\Theta C \arrow[r,"F_{\Theta C}"] \arrow[d,"\Theta(F_{C})" left] & \sps(\Theta C).\\
\Theta(\sps(C)) \arrow[ru,"V" below right]& \\
\end{tikzcd}
\]
\end{df}
The intertwining functor has the levelwise formula
\begin{equation}\label{veq}
V[n](X_1,\ldots,X_n)_{[m](c_1,\ldots,c_m)}=\coprod_{\delta\in \Hom_{\Delta}([m],[n])}\prod_{i=1}^m \prod_{j=\delta(i-1)+1}^{\delta(i)} (X_j)_{c_i},
\end{equation}
from which we can observe that the intertwining functor preserves monomorphisms in each variable, since the finite products and coproduct in the levelwise formula preserve injectivity.

Of particular interest is the functor $V[1]\colon \sps(C)\to (F[0]\coprod F[0])/\sps(\Theta C)$, which extends the suspension $\Sigma$ when $C=\Theta_n$, but is well-defined for any $C$.

\begin{prop}\cite[4.6]{rezkth}\label{vleftq}
The functor $V[1]$ is left Quillen with respect to the injective model structures.
\end{prop}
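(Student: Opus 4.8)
The plan is to check the two defining properties of a left Quillen functor: that $V[1]$ is a left adjoint, and that it carries cofibrations to cofibrations and acyclic cofibrations to acyclic cofibrations. In the injective model structure the cofibrations are precisely the monomorphisms and the acyclic cofibrations are the monomorphisms that are levelwise weak equivalences; moreover, in the under-category $(F[0]\amg F[0])/\sps(\Theta C)$ both cofibrations and weak equivalences are created by the forgetful functor to $\sps(\Theta C)$. It therefore suffices to verify that $V[1]$ sends monomorphisms of $\sps(C)$ to monomorphisms of $\sps(\Theta C)$ and levelwise weak equivalences to levelwise weak equivalences.

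The computation at the heart of the argument is to specialize the levelwise formula (\ref{veq}) to $n=1$. For a map $\delta\colon[m]\to[1]$ the iterated product has $\delta(m)-\delta(0)$ factors, so it is the terminal object when $\delta$ is one of the two constant maps and it is the single evaluation $X_{c_k}$ when $\delta$ jumps from $0$ to $1$ between the indices $k-1$ and $k$. Hence
\[
V[1](X)_{[m](c_1,\ldots,c_m)}\cong *\amg\Big(\coprod_{k=1}^{m}X_{c_k}\Big)\amg *,
\]
where the two copies of the point are the images of the two basepoints $F[0]\amg F[0]$ and are independent of $X$. Thus every level of $V[1](X)$ is a disjoint union of two fixed points together with one copy of each evaluation $X_{c_k}$.

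All three required properties now follow directly from this description. Each level is built from $X$ using only evaluation at objects of $C$ and coproducts, both of which preserve colimits, with the two constant points recording exactly the basepoint gluing of the under-category; hence $V[1]$ preserves colimits, and since the categories involved are presheaf categories, and thus locally presentable, the adjoint functor theorem supplies a right adjoint. Monomorphisms are preserved because the finite products and coproducts of the formula preserve injectivity, as already noted after (\ref{veq}). Finally, if $X\hookrightarrow Y$ is a levelwise weak equivalence then each $X_{c_k}\to Y_{c_k}$ is a weak equivalence of simplicial sets, and a disjoint union of weak equivalences of simplicial sets is again a weak equivalence, while the two basepoints are sent to themselves by the identity; therefore $V[1](X)\to V[1](Y)$ is a levelwise weak equivalence.

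I expect the only genuinely delicate point to be the bookkeeping in the second paragraph—matching the two constant maps $\delta$ with the pair of basepoints and each jumping map with the summand $X_{c_k}$—together with the remark that colimits, cofibrations, and weak equivalences in the under-category are all detected on the underlying objects of $\sps(\Theta C)$. Granting these, $V[1]$ is a left adjoint that preserves cofibrations and acyclic cofibrations, hence is left Quillen.
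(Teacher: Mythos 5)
Your proof is correct, and it proceeds exactly along the lines the paper (which cites this result to Rezk rather than proving it) sets up: the specialization of the levelwise formula (\ref{veq}) to $V[1](X)_{[m](c_1,\ldots,c_m)}\cong \ast\amalg(\coprod_k X_{c_k})\amalg\ast$, the resulting preservation of monomorphisms and levelwise weak equivalences, and the existence of the right adjoint (which the paper later exhibits explicitly as $\map$ via Lemma \ref{mapgen}). The only point worth stating a touch more carefully is that the two constant summands $\ast$ are what make $V[1]$ colimit-preserving \emph{into the under-category} $(F[0]\amalg F[0])/\sps(\Theta C)$ (where coproducts are amalgamated over the basepoints) rather than into $\sps(\Theta C)$ itself, but you flag this and it is handled correctly.
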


In order to describe the Segal and completeness conditions for presheaves on $\Theta C$, we need functors relating them to simplicial spaces where the conditions are well understood. However, higher-dimensional comparisons are also of use to us for describing completeness in higher dimensions.

Fix the category $C$, let $n\geq k\geq 0$, and consider the unique functor $\pi_0^{n-k}\colon \Theta^{n-k} C\to *$ to the terminal category; since $\Theta^{n} C$ has a terminal object, the functor $\pi_0^{n}$ has a right adjoint $\tau_0^{n-k}\colon * \to \Theta^{n-k} C$ given by the constant functor at the terminal object.
Applying the $\Theta$-construction $k$ times to these functors then gives an adjunction
\[
  \begin{tikzcd}
  \Theta^{n} C
    \arrow[r, bend left=40, "\pi_k^{n}"{name=mu, above}] 
    & 
    \Theta_k,
    \arrow[l, bend left=40, "\tau_k^{n}"{name=md, below}]  
    \arrow[l, phantom,to path=(mu) -- (md)\tikztonodes, "\perp" ]
    \end{tikzcd}
\]
where $\pi_k^{n}$ forgets cells above dimension $k$ and $\tau_k^{n}$ adds only identities above dimension $k$.

Then recall that given a functor $f\colon C \to D$ between small categories, the precomposition functor $f^*\colon \sps(D)\to \sps(C)$ has a left adjoint $f_!$ and a right adjoint $f_*$ given by left and right Kan extension, respectively. Additionally, given an adjoint pair of functors, the associated precomposition functors also form an adjoint pair. In particular, we have a chain of adjoint functors

\[
  \begin{tikzcd}
  \sps(\Theta_k)
    \arrow[r, bend left=20, "(\pi_k^n)^*\cong (\tau_k^n)_!"{name=mu, above}] 
    \arrow[r, bend right=70, looseness=1.5, "(\tau_k^n)_*" below, ""{name=d, above}]   
    & 
    \sps(\Theta^n C),
    \arrow[l, bend left=20, "(\pi_k^n)_*\cong (\tau_k^n)^*"{name=md, below}]
    \arrow[l, bend right=70, looseness=1.5, "(\pi_k^n)_!" above, ""{name=u, below}]
    \arrow[phantom,to path=(u) -- (mu)\tikztonodes, "\perp" near start]
    \arrow[phantom,to path=(md) -- (d)\tikztonodes, "\perp" near end]
    \arrow[phantom,to path=(mu) -- (md)\tikztonodes, "\perp" ]
    \end{tikzcd}
\]
where the natural isomorphisms follow by uniqueness of adjoints.
The functor $(\tau_k^n)^*$ may be viewed as forgetting cells above dimension $k$ and $(\pi_k^n)^*$ as the inclusion giving only trivial structure above dimension $k$. When $n=k=1$ we omit the indices and write $\tau =\tau_1^1$ and $\pi = \pi_1^1$.

\begin{df}
Given a presheaf $X \in \sps(\Theta^n C)$, we call $(\tau_k^{n})^* X\in \sps(\Theta_k)$ the \emph{underlying $\Theta_k$-space} of $X$, or the \emph{underlying simplicial space} of $X$ in the case $k=1$.
\end{df}

Note that the functors $(\pi_k^{n})^*$ may be used to extend the discrete nerves; in particular, we obtain a nerve functor of 1-categories
\[
\pi^*N\colon \cat \to\sps(\Delta)\to \sps(\Theta C).
\]

Next we describe the Segal and completeness condition and the associated model structures in the language of Bousfield localizations. 

Recall that the category of simplicial presheaves $\sps(C)$ has a simplicial structure with mapping spaces given by
\[
\Map(X,Y)_p= \Hom(X \times \Delta[p], Y),
\]
where $\Delta[p]$ is the representable simplicial set on $[p]\in \Delta$ viewed as a constant presheaf.

\begin{df}
Let $S$ be a set of morphisms in $\sps(C)$. We say that an object $Z$ in $\sps(C)$ is \emph{$S$-local} if
\[
f^*\colon \Map(Y,Z)\to \Map(X,Z)
\]
is a weak equivalence for all $f\colon X\to Y$ in $S$, and we say that $Z$ is \emph{$S$-fibrant} if it is $S$-local and injective fibrant.

Furthermore, we call a morphism $g\colon X\to Y$ an \emph{$S$-local equivalence} if the morphism
\[
g^*\colon \Map(Y,Z)\to \Map(X,Z)
\]
is a weak equivalence for every $S$-fibrant $Z$.
\end{df}

The following theorem states the existence of a model structure, which may be viewed as being obtained from the injective one by forcing elements of $S$ to be weak equivalences in addition to levelwise weak equivalences.

\begin{thm}\cite[4.1.1.]{hirsch}
Let $S$ be a set of morphisms in $\sps(C)$. Then there is a model structure on $\sps(C)$ where the cofibrations are monomorphisms, the weak equivalences are $S$-local equivalences and the fibrant objects are $S$-fibrant objects.
\end{thm}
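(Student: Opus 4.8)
The plan is to exhibit this model structure as the left Bousfield localization of the injective model structure on $\sps(C)$ at the set $S$, and to invoke the general existence theorem for such localizations. The only inputs that theorem requires are that the ambient model category be left proper and either cellular or combinatorial, so the first task is to verify these two hypotheses for the injective model structure on $\sps(C)$.

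Left properness is immediate: since the cofibrations are the monomorphisms, the inclusion of the initial (empty) presheaf into any object is a cofibration, so every object of $\sps(C)$ is cofibrant, and a model category in which all objects are cofibrant is automatically left proper. For the second hypothesis I would use that $\sps(C)=\fun(C\op,\sset)$ is a category of diagrams in a locally presentable category, hence itself locally presentable; the injective model structure is cofibrantly generated by a standard cardinality argument producing a set of generating cofibrations and generating trivial cofibrations that detect the levelwise Kan fibrations by the right lifting property, the size of the test objects being controlled by an accessibility bound. This makes it combinatorial; alternatively one verifies the axioms of a cellular model category directly, using that monomorphisms of presheaves are effective and that the domains of the generators are compact, which is the form matching the cited reference.

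With these hypotheses in hand the construction proceeds as follows. The cofibrations of the localized structure are declared unchanged, namely the monomorphisms, and the weak equivalences are declared to be the $S$-local equivalences. First I would check that the class of $S$-local equivalences contains the levelwise weak equivalences, satisfies the two-out-of-three property, and is closed under retracts, all of which follow formally from the definition via the mapping-space condition against $S$-fibrant objects. The substantive step is to produce a \emph{set} $J_S$ of generating trivial cofibrations for the localized structure: one takes the generating trivial cofibrations of the injective structure together with a set of cofibrations built from the maps of $S$, replacing each map of $S$ by a cofibration via factorization and then forming its pushout-product maps with the boundary inclusions $\partial\Delta[p]\hookrightarrow\Delta[p]$, and then shows that an injective-fibrant object has the right lifting property against $J_S$ precisely when it is $S$-local. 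The recognition theorem for combinatorial model categories then assembles these data into a model structure whose fibrant objects are, by construction, exactly the injective-fibrant $S$-local objects, that is, the $S$-fibrant objects.

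The main obstacle is the control of cardinality needed to guarantee that $J_S$ may be chosen as a set rather than a proper class: one must bound the size of the maps required to detect $S$-locality, which is exactly where local presentability enters, through the solution-set condition for the accessibly embedded class of $S$-local equivalences. Once this accessibility is established, the remaining verifications—the lifting and factorization axioms together with the identification of the fibrant objects—are formal consequences of the general localization machinery, so I would cite the existence theorem rather than reprove them.
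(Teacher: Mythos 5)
This statement is quoted from Hirschhorn \cite[4.1.1]{hirsch} and the paper gives no proof of its own, so your sketch is being compared against the cited argument rather than anything in the text; your outline — verify left properness (all objects cofibrant) and cellularity/combinatoriality of the injective structure, then run the Bousfield localization machinery with the accessibility/solution-set argument controlling the size of the generating trivial cofibrations — is exactly the standard proof that the citation points to. The one imprecision is that the augmented set of $S$-horns characterizes the $S$-fibrant objects among injective-fibrant ones but is not itself the set of generating trivial cofibrations of the localized structure (those come from the cardinality argument you correctly flag as the main obstacle), which is a detail, not a gap.
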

We refer to this model structure as the \emph{$S$-local model structure}.

The first collection of maps in $\sps(\Theta C)$ that we want to localize with respect to are the \emph{Segal maps}
\[\begin{split}
\se_C=\{ \operatorname{se}^{[m](c_1,\ldots , c_m)} \colon F([1](c_1))\amg_{F[0]}\cdots \amg_{F[0]} F([1](c_m))\hookrightarrow F([m](c_1,\ldots , c_m))  | \\ [m]\in \Delta, c_1,\ldots , c_m \in C \},
\end{split}\]
which are induced by the \emph{spine inclusions}
\[
[1]\amg_{[0]}\cdots \amg_{[0]} [1] \hookrightarrow [m],
\]
where on the left-hand-side the endpoint of each arrow $[1]$ is glued to the start of the next, forming a chain of $m$ composable morphisms. For illustration, consider the case $C=*$, where each Segal map is directly obtained by applying the discrete nerve $N\colon \cat \to \sps(\Delta)$ to the colimit diagram defining the corresponding spine. Note that the spine inclusions are isomorphisms in $\cat$, since there is a unique composite for any chain of $m$ morphisms. However, the discrete nerve does not preserve colimits, so the Segal maps are not isomorphisms or even levelwise weak equivalences, but by localizing with respect to them, we recover the structure of a composition operation in a weaker, up to homotopy form. Allowing for general $C$ adds enrichment over $\sps(C)$. 

Although $\se_C$-fibrant objects have a weak category-like structure, their homotopy theory fails to capture equivalences within and thus also between such categorical structures. However, by forcing an appropriate model for a walking equivalence to be contractible, we may impose a relationship between objects and equivalences within the categorical structure of $\se_C$-fibrant objects that resolves the issue. In the strict setting of $\cat$ equivalences may be encoded using the walking isomorphism $I$, and the precomposition by the functor $I\to *$ induces the inclusion of identities to isomorphisms. Using the discrete nerve $\pi^*N$, we may pass to $\sps(\Theta C)$ where the composition operation is weaker, making $\pi^*E := \pi^*N I$ the walking 1-equivalence. 

The \emph{completeness} map is defined as the discrete nerve applied to the functor $I\to *$:
\[
\operatorname{cplt}_C= \{\pi^*E \to F[0]\}.
\]
For the Segal maps together with the completeness map, we write
\[
\cpt_C=\operatorname{cplt}_C\cup \se_C.
\]
The maps in $\cpt_C$ only encode information regarding 1-dimensional structure in $\sps(\Theta C)$ introduced by the $\Theta$-construction, and in order to additionally impose structure encoded in $C$, we may consider a set of morphisms $S$ in $\sps(C)$ and suspend it to raise it in dimension.
\begin{notn}\label{cptnotn}
We denote $\se(S)=\se_C \cup V[1](S) $ and $\cpt(S)=\cpt_C \cup V[1](S)$.

In particular in the presence of the iterated $\Theta$-construction, we may suspend the Segal and completeness maps to encode higher dimensional weak categorical structure. For Segal and combined Segal and completeness maps in multiple consecutive dimensions we denote $\se^n(S)= \se(\se^{n-1}(S))$ and $\cpt^n(S)= \cpt(\cpt^{n-1}(S))$ for brevity.
More explicitly, we have 
\[
\se^n(S)=\left(\bigcup_{k=1}^n \se_{\Theta^{n-k} C}\right) \cup V[1]^n(S)
\]
and 
\[
\cpt^n(S)=\left(\bigcup_{k=1}^n V[1]^{k-1} (\operatorname{cplt}_{\Theta^{n-k} C})\right) \cup \left(\bigcup_{k=1}^n V[1]^{k-1} (\se_{\Theta^{n-k} C})\right)   \cup V[1]^n(S).
\]
We refer to the locality with respect to $V[1]^{k-1}(\operatorname{cplt}_{\Theta^{n-k} C})$ as \emph{completeness in dimension $k$}.
\end{notn}

\begin{df}
In the category $\sps(\Theta^n C)$, we call $\se^n(S)$-fibrant objects \emph{Segal} objects and $\cpt^n(S)$-fibrant objects \emph{complete Segal} objects
\end{df}

We recall a key property of the model structures that we consider in this paper.
\begin{df}
A category $\mathcal{C}$ is \emph{Cartesian closed} if it has finite products and for each object $X$ of $\mathcal{C}$ the functor $- \times X$ has a right adjoint $-^X$ contravariant in $X$.

A model category $\mathcal{M}$ is \emph{Cartesian} if it is Cartesian closed as a category and the two equivalent conditions hold.
\begin{enumerate}
\item If $f\colon X\hookrightarrow Y$ and $g\colon Z\hookrightarrow W$ are cofibrations in $\mathcal{M}$, then the map induced by $f$ and $g$
\[
X\times W \amg_{X\times Z} Y\times Z \to Y\times W
\]
is a cofibration.
\item If $f\colon X\twoheadrightarrow Y$ is a fibration and
 $g\colon Z\hookrightarrow W$ is a cofibration in $\mathcal{M}$, then the map induced by $f$ and $g$
\[
X^W\to 
X^Z \tim_{Y^Z} Y^W 
\]
is a fibration.
\end{enumerate}
\end{df}

\begin{prop}\cite[8.1, 8.5]{rezkth}
Let $S$ be a set of morphisms in $\sps(C)$. If the $S$-local model structure on $\sps(C)$ is Cartesian, then so are the $\se(S)$-local and $\cpt(S)$-local model structures on $\sps(\Theta C)$.
\end{prop}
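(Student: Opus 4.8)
The plan is to reduce the statement to a product-stability criterion for the localizing maps and then to verify that criterion on generators. Since the cofibrations in all three model structures are exactly the monomorphisms, condition (1) is automatic: the pushout-product of two monomorphisms is again a monomorphism in the presheaf topos $\sps(\Theta C)$, and the localization leaves the cofibrations unchanged. The real content therefore lies in the homotopical half of the axiom, namely that the pushout-product of a trivial cofibration with a cofibration is again trivial. Because the localized model structures are left proper with every object cofibrant (the map from the initial presheaf is always a monomorphism), pushing a trivial cofibration out along a cofibration and applying two-out-of-three shows that this homotopical half is equivalent to the assertion that $-\tim X$ preserves $\se(S)$-local (resp.\ $\cpt(S)$-local) equivalences for every object $X$.

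I would establish this preservation by reducing to generators. By the hom--tensor adjunction, $-\tim X$ is a left Quillen endofunctor of the localized structure if and only if its right adjoint $(-)^X$ preserves fibrant objects, and since $\Map(A,W^X)\cong \Map(A\tim X, W)$, the latter holds precisely when $f\tim \id_X$ is a local equivalence for every localizing map $f$; as $X$ ranges over all objects this is exactly the preservation statement, since a left Quillen functor preserves weak equivalences between cofibrant objects. Because products preserve colimits and local equivalences are closed under filtered colimits and pushouts along cofibrations, it suffices to verify this for $X=A$ a representable $F([m](c_1,\ldots,c_m))$. Thus the proposition comes down to showing, for each generator $f$ of $\se(S)$ (resp.\ $\cpt(S)$) and each such representable $A$, that $f\tim \id_A$ is a local equivalence.

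For the Segal maps $\se_C$ this becomes a combinatorial statement about products of cells: the product of two representables decomposes as a colimit of representables glued along spine inclusions, reflecting the way a product of categorical pasting diagrams is covered by its cells, so that the product of a spine inclusion with $A$ is a transfinite composite of pushouts of Segal maps and hence an $\se_C$-local equivalence. For the suspended generators $V[1](S)$ I would finally invoke the Cartesian hypothesis on $\sps(C)$: after replacing $A$ by its spine using Segal-locality, a product formula for the intertwining functor expresses $V[1](s)\tim \id_A$ in terms of suspensions $V[1]$ of products $s\tim(-)$ taken in $\sps(C)$. These products are $S$-local equivalences because $\sps(C)$ is Cartesian by assumption, and $V[1]$ carries them to $\se(S)$-local equivalences since it is left Quillen by Proposition \ref{vleftq}. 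The completeness map $\operatorname{cplt}_C\colon \pi^*E\to F[0]$ is handled by the same reduction: $F[0]$ is the representable on the terminal object of $\Theta C$, hence terminal in $\sps(\Theta C)$, so one must show $\pi^*E\tim A\to A$ is a $\cpt(S)$-local equivalence, which follows from $\pi^*E$ being pulled back from the one-dimensional structure together with the categorical contractibility of the walking isomorphism $I$.

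The main obstacle is the combinatorial core of the Segal case, namely identifying the decomposition of a product of cells into a Segal-cellular object and checking that the product of a spine inclusion with a representable is generated under pushout and transfinite composition by the Segal maps alone. The companion difficulty is establishing the product formula for the intertwining functor that reduces the suspended generators $V[1](S)$ to products in $\sps(C)$, where the Cartesian hypothesis can at last be applied; this is where the hypothesis on $\sps(C)$ is genuinely used, whereas the Segal and completeness maps are handled internally to $\sps(\Theta C)$. Once these two structural facts are in place, the closure properties of local equivalences assemble them into the required preservation statement.
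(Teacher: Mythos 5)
The paper offers no proof of this proposition; it is quoted from Rezk's \cite{rezkth} (Propositions 8.1 and 8.5), and your plan is essentially a reconstruction of Rezk's argument there. The formal reductions are correct: the cofibration half is automatic for monomorphisms in a presheaf topos, and left properness, cofibrancy of all objects, and Proposition \ref{qad} reduce the pushout-product axiom to showing that $f\times F(\theta)$ is a local equivalence for each generating map $f$ and each representable $F(\theta)$. Your treatment of the Segal generators (the shuffle-type decomposition of $F(\theta)\times F(\theta')$) and of the suspended generators $V[1](S)$ (the pushout formula for $V[1](X)\times V[1](Y)$, which is exactly Lemma \ref{itprod}, combined with Proposition \ref{moresegal} and the left Quillen property of $V[1]$ from Proposition \ref{vleftq}) correctly identifies the two structural inputs of Rezk's proof of his 8.1, and this is indeed the only place where the Cartesian hypothesis on $\sps(C)$ enters.

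The genuine gap is the completeness generator. You assert that $\pi^*E\times F(\theta)\to F(\theta)$ being a $\cpt(S)$-local equivalence ``follows from \ldots the categorical contractibility of the walking isomorphism $I$,'' but the natural way to make this precise --- $\pi^*E\to F[0]$ is a categorical equivalence, categorical equivalences are product-stable (Corollary \ref{cateqcart}), and categorical equivalences between Segal objects are $\cpt(S)$-local equivalences --- is circular at this stage: the last implication (Proposition \ref{cateqcpt}) is proved by using that $W^U$ is $\cpt(S)$-fibrant whenever $W$ is, which is precisely the Cartesianness being established. What is actually needed, and what occupies a substantial part of Rezk's paper (his Proposition 8.6 and the section devoted to its proof), is the statement that for a $\cpt(S)$-fibrant $W$ the cotensor $W^{F(\theta)}$ --- already known to be $\se(S)$-fibrant by the Segal case --- again satisfies the completeness condition, i.e.\ that $(W^{F(\theta)})_{eq}\simeq (W^{F(\theta)})_{[0]}$ in the sense of Proposition \ref{eeq}. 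This rests on a pointwise criterion for equivalences in $W^{F(\theta)}$ (a ``natural transformation is an equivalence iff it is so objectwise'' statement) and is not a formal consequence of $I\simeq *$. Your plan flags the Segal combinatorics and the intertwining product formula as the main obstacles, but in Rezk's development this completeness step is the hardest part of the proposition and should not be dismissed as routine.
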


\begin{notn}
In the remainder of this paper we let $S$ be a set of morphisms in $\sps(C)$ such that $S$-local model structure is Cartesian.
\end{notn}

The following proposition is a key tool for studying Quillen equivalences between localized model structures.

\begin{prop}\cite[2.16]{rezkth}\label{qad}
Let
\[
  \begin{tikzcd}
  \sps(\Theta C)
    \arrow[r, bend left=20, "L"{name=mu, above}] 
    & 
    \sps(\Theta C')
    \arrow[l, bend left=20, "R"{name=md, below}]  
    \arrow[l, phantom,to path=(mu) -- (md)\tikztonodes, "\perp" ]
    \end{tikzcd}
\]
be a Quillen pair with respect to the injective model structures on $\sps(\Theta C)$ and $\sps(\Theta C')$, and let $S$ and $S'$ be sets of morphisms in $\sps(\Theta C)$ and $\sps(\Theta C')$, respectively. Then $L \dashv R$ is a Quillen pair with respect to the $S$-local and $S'$-local model structures if and only if the two equivalent conditions are satisfied.
\begin{enumerate}
\item The map $L(s)$ is a $S'$-local equivalence for each $s\in S$.
\item The object $R(X)$ is $S$-fibrant for all $S'$-fibrant $X$ in $\sps(\Theta C')$.
\end{enumerate}
\end{prop}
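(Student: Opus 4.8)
The plan is to prove the equivalence of conditions (1) and (2) directly from the definitions, and then show that either is equivalent to the stated Quillen condition. First I would record the standing facts that will do all the work: in every one of the four model structures the cofibrations are the monomorphisms, so $L$ preserves cofibrations in the localized structures simply because it does so for the injective ones; every object is injective cofibrant; and $R$, being right Quillen for the injective structures, preserves injective fibrant objects. The one nonformal input is the derived adjunction attached to a Quillen pair: for cofibrant $A$ and injective fibrant $X$ there is a natural weak equivalence
\[
\Map(L A, X)\;\simeq\;\Map(A, R X),
\]
and since all objects are cofibrant while $S$- and $S'$-fibrant objects are in particular injective fibrant, these mapping spaces compute homotopy function complexes and may be used to detect both $S$-locality and $S$-local equivalences.

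To establish (1) $\Leftrightarrow$ (2), I would fix $s\colon A\to B$ in $S$ together with an $S'$-fibrant object $X$ and compare the two maps
\[
\Map(L B, X)\to \Map(L A, X)\qquad\text{and}\qquad \Map(B, R X)\to \Map(A, R X),
\]
which agree up to the natural weak equivalence above. Quantifying over all $S'$-fibrant $X$, the left-hand map is a weak equivalence for every such $X$ exactly when $L(s)$ is an $S'$-local equivalence, whereas the right-hand map is a weak equivalence for every $s\in S$ exactly when $R X$ is $S$-local; as $R X$ is automatically injective fibrant, the latter is precisely the statement that $R X$ is $S$-fibrant. Reading the coincidence of these two maps in both directions then yields (1) $\Leftrightarrow$ (2).

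For (2) $\Rightarrow$ Quillen, since $L$ already preserves cofibrations it suffices to show that $L$ sends $S$-local trivial cofibrations to $S'$-local trivial cofibrations, and because $L$ preserves monomorphisms this reduces to checking that $L$ carries $S$-local equivalences to $S'$-local equivalences. Given an $S$-local equivalence $f\colon A\to B$ and an $S'$-fibrant $X$, the map $\Map(L B,X)\to\Map(L A,X)$ identifies up to weak equivalence with $\Map(B,R X)\to \Map(A,R X)$; by (2) the object $R X$ is $S$-fibrant, so this is a weak equivalence by the very definition of an $S$-local equivalence, whence $L(f)$ is an $S'$-local equivalence. The converse Quillen $\Rightarrow$ (2) is then immediate, since a right Quillen functor preserves fibrant objects and the fibrant objects of the $S$-local and $S'$-local structures are exactly the $S$-fibrant and $S'$-fibrant ones.

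The only delicate point, and the step I would be most careful about, is the derived adjunction identity $\Map(L A,X)\simeq\Map(A,R X)$. If $L\dashv R$ is a simplicial adjunction this holds on the nose; in general one must replace the simplicial mapping spaces by homotopy function complexes and invoke the standard fact that a Quillen pair induces an adjunction on homotopy function complexes, together with the fact that $S$-locality and $S$-local equivalences may be tested on them. Once this identification is in place, everything else is a formal manipulation of the adjunction against the definitions of $S$-locality and $S$-fibrancy.
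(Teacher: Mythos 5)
Your proof is correct and is essentially the standard argument: the paper itself cites this result from Rezk without reproducing a proof, and your reduction of everything to the derived adjunction identity $\Map(LA,X)\simeq\Map(A,RX)$ for cofibrant $A$ and fibrant $X$, together with the observation that all four model structures share their cofibrations, is exactly how the proof in the cited reference goes. The caveat you flag about needing either a simplicial adjunction or homotopy function complexes is the right point to be careful about and is handled the same way there.
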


\begin{cor}
The functor $V[1]$ is left Quillen from the $S$-local model structure to the $\se(S)$-local model structure.
\end{cor}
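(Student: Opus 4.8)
The plan is to reduce the claim to the localization criterion of Proposition \ref{qad}, at which point it becomes essentially definitional. First I would recall from Proposition \ref{vleftq} that $V[1]\colon \sps(C)\to \sps(\Theta C)$ is already left Quillen for the injective model structures; in particular it is a left adjoint and preserves cofibrations. Since both the $S$-local and the $\se(S)$-local model structures retain the monomorphisms as their cofibrations, the only remaining point needed to promote $V[1]$ to a left Quillen functor between the localized structures is that it respect the local equivalences, and by the standard characterization of left Quillen functors into a left Bousfield localization this reduces to a check on the generators.

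Concretely, I would invoke condition (1) of Proposition \ref{qad}: it suffices to show that $V[1](s)$ is an $\se(S)$-local equivalence for every $s\in S$. Although Proposition \ref{qad} is phrased for Quillen pairs between $\sps(\Theta C)$ and $\sps(\Theta C')$, its proof uses only the formal properties of left Bousfield localization and applies without change to any Quillen pair of injective model structures on simplicial presheaf categories, hence in particular to $V[1]$ together with its right adjoint. This mild extension of the statement to the source category $\sps(C)$ is the one place where a little care is warranted, but it requires no new idea.

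The verification itself is then immediate: by Notation \ref{cptnotn} the localizing set $\se(S)=\se_C\cup V[1](S)$ contains $V[1](S)$, so for each $s\in S$ the map $V[1](s)$ belongs to $\se(S)$ and is therefore an $\se(S)$-local equivalence by definition. This confirms condition (1), and Proposition \ref{qad} then yields that $V[1]$ is left Quillen from the $S$-local to the $\se(S)$-local model structure. The genuine content of the corollary thus lies entirely in the definitional containment $V[1](S)\subseteq \se(S)$, and there is no substantive obstacle to overcome beyond the bookkeeping described above.
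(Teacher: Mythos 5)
Your proof is correct and matches the paper's intended derivation: the corollary follows directly from Proposition \ref{qad} together with Proposition \ref{vleftq} and the containment $V[1](S)\subseteq \se(S)$ from Notation \ref{cptnotn}, which is exactly what you write. Your remark that Proposition \ref{qad} must be read slightly more generally (with source $\sps(C)$ rather than $\sps(\Theta C)$) is a fair point of care, and indeed the cited result \cite[2.16]{rezkth} applies at that level of generality.
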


\begin{rmk}
The adjunction $(\pi_k^n)^* \dashv (\tau_k^n)^*$ forms a Quillen pair with respect to the injective, $\se^k$-local, and $\cpt^k$-local model structures.
\end{rmk}

Localizing with respect to the Segal maps also forces other similar maps to be weak equivalences.

\begin{prop}\cite[5.3]{rezkth}\label{moresegal}
Let $X_1,\ldots,X_m$ be objects of $\sps(C)$ then the maps
\[
V[1](X_1)\amg_{V[0]}\cdots \amg_{V[0]} V[1](X_m)\mapsto V[m](X_1,\ldots,X_m)
\]
are $\se$-local equivalences for all $m$.
\end{prop}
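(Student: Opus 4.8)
The plan is to reduce the statement to the basic Segal maps $\operatorname{se}^{[m](c_1,\ldots,c_m)}\in\se_C$ and then propagate the $\se$-local equivalence from representables to arbitrary inputs by a cell-induction. Write $A(X_1,\ldots,X_m)=V[1](X_1)\amg_{V[0]}\cdots\amg_{V[0]}V[1](X_m)$ and $B(X_1,\ldots,X_m)=V[m](X_1,\ldots,X_m)$ for the source and target, and let $\eta$ denote the natural comparison map. Two structural observations about $A$ and $B$ drive the argument, both read off from the levelwise formula \eqref{veq}: as functors of each variable separately (the others held fixed at arbitrary objects), $A$ and $B$ (i) preserve monomorphisms, and (ii) preserve connected colimits, since in each single variable the formula is, up to the constant terms, a coproduct of products of a fixed simplicial set with a \emph{single} evaluation of that variable. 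In particular $A$ and $B$ also preserve levelwise weak equivalences in each variable, because $-\times K$ and coproducts do so in $\sset$. One checks directly from \eqref{veq} that $\eta$ is a levelwise monomorphism.

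First I would treat the base case where every $X_i=F(c_i)$ is representable in $C$. Here the defining property of the intertwining functor gives $V[0]=F[0]$, $V[1](F(c_i))=F([1](c_i))$, and $V[m](F(c_1),\ldots,F(c_m))=F([m](c_1,\ldots,c_m))$, so $\eta$ is exactly the Segal map $\operatorname{se}^{[m](c_1,\ldots,c_m)}$, which lies in $\se_C$ and is therefore an $\se$-local equivalence. To upgrade this to the representables $X_i=F(c_i)\times\Delta[p_i]$ of $\sps(C)$, I would use that the projection $F(c_i)\times\Delta[p_i]\to F(c_i)$ is a levelwise weak equivalence between cofibrant objects; by the preservation noted above, both $A$ and $B$ send it to a levelwise, hence $\se$-local, equivalence in the $i$-th variable, so two-out-of-three reduces this case to the previous one.

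With the representable cases settled, I would extend to arbitrary $X_i$ one variable at a time. Fixing all but one variable at objects for which the statement is already known, I would filter the remaining input by its skeleta, so that each stage is a pushout attaching cells $F(c)\times\Delta[p]$ along their boundaries $F(c)\times\partial\Delta[p]$. Applying $A$ and $B$, observation (ii) turns these into pushout presentations of the two sides, and observation (i) makes all the attaching maps cofibrations; since the $\se$-local model structure is left proper with monomorphisms as cofibrations, these presentations compute homotopy colimits. As $\eta$ is an $\se$-local equivalence on each cell by the representable cases, the gluing lemma and invariance of homotopy colimits under objectwise weak equivalences propagate it to $A(\ldots)\to B(\ldots)$. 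Iterating over the variables (each induction starting from the case handled in the previous one) yields the claim for all $X_1,\ldots,X_m$.

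The step I expect to be the main obstacle is the colimit bookkeeping in the inductive stage: verifying carefully from \eqref{veq} that $A$ and $B$ genuinely preserve the relevant connected colimits in each separate variable, so that the skeletal pushouts are honestly preserved, and that the resulting filtrations are sufficiently cofibrant to be homotopy colimits in the $\se$-local structure. Once this is established, the propagation of the equivalence from cells to arbitrary objects is formal via left properness.
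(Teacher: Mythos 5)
The paper itself offers no proof of this proposition --- it is imported wholesale from Rezk \cite[5.3]{rezkth} --- so your argument has to stand entirely on its own. The parts of it that work are: the identification of the comparison map with the Segal map $\operatorname{se}^{[m](c_1,\ldots,c_m)}$ when every $X_i=F(c_i)$; the passage to $F(c_i)\times\Delta[p_i]$ via the contractible projections; and the structural observations read off from (\ref{veq}) (preservation of monomorphisms, of connected colimits, and of levelwise weak equivalences in each separate variable). Those are all correct.

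The gap is exactly where you flagged it, but it is not mere bookkeeping: the cellular presentation you propose does not exist. For a general small $C$, an object of $\sps(C)=\ps(C\times\Delta)$ cannot be built by attaching cells $F(c)\times\partial\Delta[p]\hookrightarrow F(c)\times\Delta[p]$. The honest simplicial skeletal filtration attaches at stage $p$ a cell of the form $X_p\otimes\Delta[p]$ along $X_p\otimes\partial\Delta[p]\cup L_pX\otimes\Delta[p]$, where $X_p$ is an \emph{arbitrary} discrete presheaf on $C$, and arbitrary discrete presheaves need not lie in the cellular closure of the representables: if some $F(c)$ carries a nontrivial automorphism group $G$ (which the standing hypotheses on $C$ allow), every complex built from representable cells is levelwise $G$-free, so a one-point non-free orbit is neither cellular nor a retract of anything cellular. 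The natural fallback --- writing $X$ as the canonical colimit over its category of elements, which is connected and is preserved strictly by both sides --- does not rescue the induction either, because that colimit is not sent to a \emph{homotopy} colimit: the summands of (\ref{veq}) indexed by those $\delta$ that miss the chosen variable are constant in it, and the homotopy colimit of a constant diagram over $\operatorname{el}(X)$ is $N(\operatorname{el}(X))\times(-)$ rather than the constant value. So the propagation from representables to general inputs needs a different idea. One route that stays inside the paper's own toolkit and avoids decomposing the $X_i$ altogether: for $\se$-fibrant $W$, both $\Map(V[1](X_1)\amg_{V[0]}\cdots\amg_{V[0]}V[1](X_m),W)$ and $\Map(V[m](X_1,\ldots,X_m),W)$ fiber over $W_{[0]}^{\times(m+1)}$ via fibrations, with fibers over $(x_0,\ldots,x_m)$ given by $\prod_i\Map(X_i,\map_W(x_{i-1},x_i))$ and by the mapping space out of $X_1\boxtimes\cdots\boxtimes X_m$ into the multiobject mapping object $\map_W(x_0,\ldots,x_m)$, respectively (Lemma \ref{mapgen} and its multiobject analogue); by Propositions \ref{fibpb} and \ref{fiber} the claim then reduces to the fiberwise Segal equivalence $\map_W(x_0,\ldots,x_m)\to\prod_i\map_W(x_{i-1},x_i)$, which is exactly $\se$-fibrancy of $W$.
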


There is also an alternate description of the Segal condition: using the Yoneda lemma we obtain isomorphisms 
$\Map(F([1](c_i),X)\cong X_{[1](c_i)}$ and $\Map(F([m](c_1,\ldots , c_m),X)\cong X_{[m](c_1,\ldots , c_m)}$, and thus $X\in \sps(\Theta)$ is $\se$-local if and only if the maps
\[
X_{[m](c_1,\ldots , c_m)} \to X_{[1](c_1)}\tim_{X_{[0]}}\cdots \tim_{X_{[0]}} X_{[1](c_m)}
\]
induced by the spine inclusions are weak equivalences.

Using the following classical result regarding homotopy pullbacks, we may also note that the pullbacks on the above are, in fact, homotopy pullbacks if $X$ is injective fibrant, since the pullbacks are with respect to maps induced by monomorphisms $F[0]\hookrightarrow F[1](c_i)$.

\begin{prop}\label{fibpb}\cite[A.2.4.4.]{luriehtt}
Let the following be a pullback square in a model category $\mathcal{M}$:
\[
\begin{tikzcd}
X \arrow[r] \arrow[d] \arrow[rd, phantom, "\lrcorner" very near start]
& Z \arrow[d, "f"]\\
Y \arrow[r] & W. \\
\end{tikzcd}
\]
If $f$ is a fibration, and $Y$ and $W$ are fibrant, then the diagram is also a homotopy pullback square.
\end{prop}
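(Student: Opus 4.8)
The plan is to exhibit the ordinary pullback $U=V\times_Y X$ as weakly equivalent, via the canonical comparison map, to a standard path-object model of the homotopy pullback. Since $f$ is a fibration into the fibrant object $Y$, the object $X$ is fibrant as well, so all four objects are fibrant and we may use a \emph{very good path object} for $Y$: a factorization $Y\xrightarrow{s}Y^{I}\xrightarrow{(d_0,d_1)}Y\times Y$ of the diagonal with $s$ a weak equivalence and $(d_0,d_1)$ a fibration. Because $Y$ is fibrant the projections $Y\times Y\to Y$ are fibrations, so $d_0$ and $d_1$ are fibrations, and by two-out-of-three with $s$ they are trivial fibrations. The homotopy pullback of the cospan $V\xrightarrow{g}Y\xleftarrow{f}X$ is then modelled by $P:=V\times_Y Y^{I}\times_Y X$, the limit of $V\xrightarrow{g}Y\xleftarrow{d_0}Y^{I}\xrightarrow{d_1}Y\xleftarrow{f}X$, and there is a canonical map $\alpha\colon U\to P$ sending $(v,x)$ to $(v,s f(x),x)$. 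It suffices to prove that $\alpha$ is a weak equivalence.

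First I would isolate the right-hand half by setting $W:=Y^{I}\times_Y X$, the pullback of $d_1$ along $f$, so that $P=V\times_Y W$. Here the projection $W\to X$ is the pullback of the trivial fibration $d_1$ along $f$, hence a trivial fibration; while $W\to Y^{I}\xrightarrow{d_0}Y$ is a composite of fibrations (the first being the pullback of $f$ along $d_1$), so $W\to Y$ is a fibration and $W$ is fibrant. The map $\sigma\colon X\to W$, $x\mapsto(sf(x),x)$, is a section of $W\to X$ and a morphism over $Y$; by two-out-of-three it is a weak equivalence. Crucially, $\sigma$ is a weak equivalence between objects $(X\to Y)$ and $(W\to Y)$ that are both \emph{fibrant in the slice} $\mathcal M/Y$, that is, both structure maps to $Y$ are fibrations.

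The comparison $\alpha$ is then obtained by applying the pullback functor $g^{*}\colon \mathcal M/Y\to\mathcal M/V$ to $\sigma$ (and forgetting the map to $V$), since $g^{*}(X\to Y)=V\times_Y X=U$ and $g^{*}(W\to Y)=V\times_Y W=P$. For any $g$ the adjunction between post-composition with $g$ and $g^{*}$ is a Quillen pair for the slice model structures, so $g^{*}$ is right Quillen and, by Ken Brown's lemma, preserves weak equivalences between fibrant objects of $\mathcal M/Y$. Applying this to $\sigma$ shows that $\alpha\colon U\to P$ is a weak equivalence, which is exactly the assertion that the square is a homotopy pullback.

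The main obstacle is that a general model category need not be right proper, so one cannot simply invoke ``the pullback of a weak equivalence along a fibration is a weak equivalence.'' The whole point of routing the argument through $W$ is to replace the given, possibly non-fibration map $g\colon V\to Y$ by a situation in which the weak equivalence being pulled back, namely $\sigma$, lies between objects fibrant in $\mathcal M/Y$; this is precisely what makes Ken Brown's lemma applicable and removes any need for properness. An alternative packaging of the same idea is to factor $g$ as $V\xrightarrow{\sim}\hat V\twoheadrightarrow Y$, note that $\hat V\times_Y X$ computes the homotopy pullback because the resulting cospan is Reedy fibrant for the inverse Reedy structure on cospans, and then compare $U$ with $\hat V\times_Y X$; but this comparison meets the same properness subtlety and is resolved by the same slice-fibrancy argument.
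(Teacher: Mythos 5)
The paper does not prove this statement at all --- it is quoted as a black box from Lurie's \emph{Higher Topos Theory} (A.2.4.4) --- so there is no in-paper argument to compare against; your proposal supplies an actual proof, and it is correct. The core of your argument is sound: $d_0,d_1\colon Y^I\to Y$ are trivial fibrations because $Y$ is fibrant, hence $W=Y^I\times_Y X\to X$ is a trivial fibration with section $\sigma$, and since both $f\colon X\to Y$ and $W\to Y$ (via $d_0$) are fibrations, $\sigma$ is a weak equivalence between fibrant objects of $\mathcal M/Y$; applying the right Quillen functor $g^*$ and Ken Brown's lemma then gives the comparison weak equivalence $U\to P$ without any properness hypothesis. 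This is exactly the right way to dodge the ``pullback of a weak equivalence along a fibration'' trap. Two small points. First, the step you assert rather than prove --- that $P=V\times_Y Y^I\times_Y X$ models the homotopy pullback --- is where the fibrancy of $V$ actually enters: one needs $N_g=V\times_Y Y^I\to Y$ (via $d_1$) to be a fibration, which follows by factoring it as $N_g\to V\times Y\to Y$ and using that the projection $V\times Y\to Y$ is a fibration only because $V$ is fibrant; it would be worth making that explicit, since otherwise the hypothesis on $V$ appears unused. Second, your closing claim that the alternative packaging (comparing $U$ with $\hat V\times_Y X$) ``is resolved by the same slice-fibrancy argument'' is too quick as literally stated: there the weak equivalence $V\to\hat V$ lives over $Y$ with non-fibrant source, so Ken Brown does not apply directly to it, and one is forced back to the path-object route you actually took. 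Since that remark is only an aside, it does not affect the validity of the main argument.
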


The Segal condition allows us to reduce properties regarding all levels of an object to just the levels that are zero or one 1-cells in length.

\begin{prop}\label{segallw}
Let $f\colon X\to Y$ be a map of $\se(S)$-fibrant objects of $\sps(\Theta C)$. Then $f$ is a levelwise weak equivalence if and only if it is a weak equivalence at level $0$ and levels $[1](c)$ for all $c\in C$.
\end{prop}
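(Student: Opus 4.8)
The forward implication is immediate, since a levelwise weak equivalence is in particular a weak equivalence at level $[0]$ and at the levels $[1](c)$. For the converse, the plan is to use the Segal condition to express every level as a homotopy limit assembled from the levels $[0]$ and $[1](c)$, and then to invoke the invariance of homotopy limits under objectwise weak equivalences. So suppose $f$ is a weak equivalence at level $[0]$ and at all levels $[1](c)$; I would fix an object $[m](c_1,\ldots,c_m)$ with $m\geq 2$, the cases $m=0$ and $m=1$ being exactly the hypotheses.

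Since $U$ and $V$ are $\se(S)$-fibrant, they are in particular $\se_C$-local and injective fibrant. First I would apply the alternate description of the Segal condition together with Proposition \ref{fibpb} (exactly as in the discussion preceding it, using that each $F[0]\hookrightarrow F[1](c_i)$ is a monomorphism so that the legs $U_{[1](c_i)}\to U_{[0]}$ are fibrations of Kan complexes): this yields, for both $U$ and $V$, weak equivalences from the top level onto the iterated fiber product, which is moreover a genuine homotopy limit,
\[
U_{[m](c_1,\ldots,c_m)} \xrightarrow{\ \sim\ } U_{[1](c_1)}\tim_{U_{[0]}}\cdots \tim_{U_{[0]}} U_{[1](c_m)},
\]
and likewise for $V$.

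Next I would observe that $f$ induces a map between the diagrams
\[
U_{[1](c_1)}\to U_{[0]}\leftarrow U_{[1](c_2)}\to \cdots \leftarrow U_{[1](c_m)}
\]
and its $V$-analogue, which is objectwise a weak equivalence by hypothesis, since every object appearing is one of the levels $[0]$ or $[1](c_i)$. As homotopy limits carry objectwise weak equivalences to weak equivalences, the induced map on the homotopy limits is a weak equivalence. Placing this together with the two Segal weak equivalences above in the evident commutative square and applying two-out-of-three, I would conclude that $f$ is a weak equivalence at $[m](c_1,\ldots,c_m)$, completing the induction over all levels.

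The only point needing care—rather than a genuine obstacle—is confirming that the iterated fiber product really computes the homotopy limit at each stage, i.e. that one leg of each pullback is a fibration so that Proposition \ref{fibpb} applies; this is handled by the injective fibrancy of $U$ and $V$ together with the monomorphisms $F[0]\hookrightarrow F[1](c_i)$, and is already recorded in the text just before that proposition. Once this is in place, the standard invariance of homotopy pullbacks under objectwise equivalences does all of the remaining work.
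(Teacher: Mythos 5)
Your proposal is correct and follows essentially the same route as the paper's proof: both use the Segal condition to identify $U_{[m](c_1,\ldots,c_m)}$ and $V_{[m](c_1,\ldots,c_m)}$ with iterated (homotopy) pullbacks over the levels $[0]$ and $[1](c_i)$, invoke invariance of homotopy pullbacks under objectwise weak equivalences together with injective fibrancy to identify the strict and homotopy pullbacks, and conclude by two-out-of-three. The only cosmetic difference is that you phrase the final step as an ``induction over all levels,'' whereas it is really a direct argument for each fixed $[m](c_1,\ldots,c_m)$.
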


\begin{proof}
It suffices to show the backwards direction. Let $f\colon X\to Y$ be a map of $\se(S)$-fibrant objects and suppose that $f_{[0]}$ and $f_{[1](c)}$ are weak equivalences.
Then the map on the iterated homotopy pullbacks 
\[
\begin{tikzcd}
X_{[1](c_1)}\times_{X_{[0]}}^h\cdots \times_{X_{[0]}}^h X_{[1](c_m)} \arrow[r] & Y_{[1](c_1)}\times_{Y_{[0]}}^h\cdots \times_{Y_{[0]}}^h Y_{[1](c_m)}
\end{tikzcd}
\]
is also a weak equivalence for all $c_1,\ldots,c_m$. By injective fibrancy of $X$ and $Y$ these homotopy pullbacks are weakly equivalent to strict pullbacks.
Now we have the diagram
\[
\begin{tikzcd}
X_{[m](c_1,\ldots,c_m)} \arrow[r,"f_{[m](c_1,\ldots,c_m)}" above] \arrow[d, "\sim" left] & Y_{[m](c_1,\ldots,c_m)} \arrow[d, "\sim" ]\\
\displaystyle X_{[1](c_1)}\tim_{X_{[0]}}\cdots \tim_{X_{[0]}} X_{[1](c_m)} \arrow[r,"\sim"] & \displaystyle Y_{[1](c_1)}\tim_{Y_{[0]}}\cdots \tim_{Y_{[0]}} Y_{[1](c_m)}
\end{tikzcd}
\]
with the Segal maps vertically, so $f_{[m](c_1,\ldots,c_m)}$ is also a weak equivalence by the two-out-of-three property. Thus $f$ is a levelwise equivalence.
\end{proof}

Higher-dimensional Segal conditions allow for similar reductions so that, for example, a Segal $\Theta_n$-space $X$ is determined by the levels $[1]^k [0]$, which may be understood us the spaces of $k$-cells in $X$, for $0\leq k \leq n$.

\section{Mapping objects}\label{smap}

We have seen that a Segal object $X$ in $\sps(\Theta C)$ is essentially determined by its levels $[0]$ and $[1](c)$ for $c\in C$, where levels of the latter form may be viewed as encoding information about morphisms in $X$. As with usual categories, morphisms in $X$ can be studied more tractably by fixing a source and a target, but also have additional structure making $X$ enriched in $\sps(C)$. In this section we recall some key properties of these \emph{mapping objects}.

\begin{df}
Let $X$ be an object of $\sps(\Theta C)$, and $x,y\in X_{[0],0}$. We define the \emph{mapping object} of $X$ from $x$ to $y$ as the object of $\sps(C)$ given at level $c$ as the fiber
\begin{equation}\label{mapd}
\begin{tikzcd}
\map_X(x,y)_c \arrow[r] \arrow[d] \arrow[dr, phantom, "\lrcorner" very near start]
& X_{[1](c)} \arrow[d,  "{(d_1,d_0)}"]\\
\{*\} \arrow[r, "{(x,y)}"] & X_{[0]}\times X_{[0]}.
\end{tikzcd}
\end{equation}
We similarly define the \emph{homotopy mapping object} of $X$ from $x$ to $y$ levelwise as the homotopy fiber
\[
\begin{tikzcd}
\hmap_X(x,y)_c \arrow[r] \arrow[d] \arrow[dr, phantom, "\lrcorner_h" very near start]
& X_{[1](c)} \arrow[d,  "{(d_1,d_0)}"]\\
\{*\} \arrow[r, "{(x,y)}"] & X_{[0]}\times X_{[0]}.
\end{tikzcd}
\]
\end{df}
While the homotopy mapping objects are invariant under levelwise weak equivalences, homotopy limits are generally less tangible than strict limits. However, when $X$ is injective fibrant, then the right vertical map in the diagram (\ref{mapd}) is a fibration, which implies that the canonical map $\map_X(x,y) \to \hmap_X(x,y)$ is a levelwise weak equivalence by Proposition \ref{fibpb}.

In the cases where $X$ is not injective fibrant we use the following standard construction for homotopy pullback, which can be found, for example, at \cite[\href{https://kerodon.net/tag/010B}{010B}]{kerodon}.
\begin{lemma}\label{hmapd}
Let $X$ be an object of $\sps(\Theta C)$ such that $X_0\times X_0$ is a Kan complex. Then $hmap_X(x,y)_{c}$ is weakly equivalent to the limit of the diagram
\[
\begin{tikzcd}
& &  X_{[1](c)} \arrow[d]\\
& (X_{[0]}\times X_{[0]})^{\Delta[1]} \arrow[r, "j_0^*" above] \arrow[d, "j_1^*" left] & X_{[0]}\times X_{[0]}\\
\{*\} \arrow[r, "{(x,y)}" ] & X_{[0]}\times X_{[0]}, &
\end{tikzcd}
\]
where $j_0^*,j_1^*$ are the precompositions by the two inclusions $j_0, j_1\colon \Delta[0]\to \Delta[1]$.
\end{lemma}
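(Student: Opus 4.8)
The plan is to exhibit the displayed limit as a strict pullback along a fibration that models the homotopy fiber defining $\hmap_W(x,y)_c$. The point of the lemma is that $W$, and hence the simplicial set $W_{[1](c)}$, need not be injective fibrant, so the strict fiber over $(x,y)$ need not compute the homotopy fiber and Proposition \ref{fibpb} does not apply to the defining square directly. The path space $(W_0\times W_0)^{\Delta[1]}$ is inserted precisely in order to replace the leg $\{*\}\xrightarrow{(x,y)} W_0\times W_0$ by a fibration while using only fibrancy of $W_0\times W_0$.

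First I would read off the limit as an iterated pullback. Pulling $(W_0\times W_0)^{\Delta[1]}$ back along $(x,y)$ via $j_1^*$ produces the based path space
\[
P = \{*\}\times_{W_0\times W_0}(W_0\times W_0)^{\Delta[1]},
\]
equipped with the map $j_0^*\colon P\to W_0\times W_0$ evaluating at the other endpoint, and the displayed limit is then $P\times_{W_0\times W_0} W_{[1](c)}$. This is where the Kan hypothesis enters. Since $j_1\colon\Delta[0]\hookrightarrow\Delta[1]$ is a trivial cofibration and $W_0\times W_0$ is a Kan complex, the cotensor map $j_1^*$ is a trivial fibration, so its pullback $P\to\{*\}$ is again a trivial fibration; thus $P$ is contractible and the constant-path section $\{*\}\to P$ is a weak equivalence. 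Writing $P$ instead as the pullback of the path fibration $(W_0\times W_0)^{\Delta[1]}\to (W_0\times W_0)\times(W_0\times W_0)$ along $\mathrm{id}\times(x,y)$, the same computation shows that $j_0^*\colon P\to W_0\times W_0$ is a fibration.

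It then remains to identify $P\times_{W_0\times W_0}W_{[1](c)}$ with the homotopy fiber. The constant-path section assembles into a levelwise weak equivalence of cospans from $\bigl(\{*\}\xrightarrow{(x,y)}W_0\times W_0\xleftarrow{(d_1,d_0)}W_{[1](c)}\bigr)$ to $\bigl(P\xrightarrow{j_0^*}W_0\times W_0\xleftarrow{(d_1,d_0)}W_{[1](c)}\bigr)$, being the identity on the remaining two entries and commuting with the leg maps because $j_0^*$ of the constant path at $(x,y)$ is $(x,y)$. Since homotopy pullbacks are invariant under such weak equivalences of cospans, the two cospans have weakly equivalent homotopy pullbacks, and the first one is by definition $\hmap_W(x,y)_c$. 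For the second, I would invoke right properness of the model category $\sset$: the strict pullback $P\times_{W_0\times W_0}W_{[1](c)}$ is formed along the fibration $j_0^*$, and in a right proper model category a strict pullback along a fibration is automatically a homotopy pullback, with no fibrancy hypothesis on $W_{[1](c)}$. Chaining these identifications yields the asserted weak equivalence.

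The one genuine subtlety, and the step I would be most careful to state correctly, is this final appeal to right properness rather than to Proposition \ref{fibpb}: the latter would require $W_{[1](c)}$ to be fibrant, which is exactly the hypothesis we wish to avoid, whereas right properness of $\sset$ lets us conclude that the strict pullback along $j_0^*$ is a homotopy pullback from the single fact that $j_0^*$ is a fibration and $W_0\times W_0$ is a Kan complex. Everything else is the standard double mapping path space model for a homotopy pullback, matching the cited construction.
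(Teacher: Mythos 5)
Your proof is correct, and it takes the intended route: the paper gives no argument for this lemma beyond a citation to the standard double mapping path space construction, and your write-up is precisely the standard verification of that construction — fibrancy of $W_0\times W_0$ makes $j_1^*$ a trivial fibration and $j_0^*$ a fibration out of the contractible based path space, the constant-path section gives a weak equivalence of cospans, and right properness of $\sset$ identifies the strict pullback along $j_0^*$ with the homotopy pullback without any fibrancy hypothesis on $W_{[1](c)}$. Your closing remark correctly isolates why Proposition \ref{fibpb} cannot be used here; nothing to add.
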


We may also define a multiobject variant of the mapping objects $\map_X(x_1,\ldots, x_m)\in \sps(C^{\times m})$ levelwise as the fibers
\[
\begin{tikzcd}
\map_X(x_1,\ldots, x_m)_{c_1,\ldots,c_m} \arrow[r] \arrow[d] \arrow[dr, phantom, "\lrcorner" very near start]
& X_{[m](c_1,\ldots,c_m)} \arrow[d]\\
\{*\} \arrow[r, "{(x_1,\ldots, x_m)}"] & X_{[0]}^{m+1}.
\end{tikzcd}
\]
The Segal map 
$
\begin{tikzcd}
X_{[2](c,c)}\to X_{[1](c)}\tim_{X_{[0]}} X_{[1](c)}
\end{tikzcd}
$
can be restricted to the fibers to obtain maps $\map_X(x,y,z)_{c,c}\to \map_X(y,z)_{c}\times \map_X(x,y)_{c}$, which have homotopy inverses natural in $c$ if $X$ is $\se$-fibrant. The Segal object $X$ then obtains a weakly unital and associative composition operation as the composite
\[
\begin{tikzcd}
 \map_X(y,z)\times \map_X(x,y) \arrow[r] & \map_X(x,y,z) \arrow[r] & \map_X(x,z).
\end{tikzcd}
\]
See for example \cite[7.4]{rezkth} for further discussion on the composition.

For an object $X$ in $\sps(\Theta^n C)$ we may iterate the mapping object construction up to $n$ times.
Given $\alpha,\alpha'\in \map_X^{k-1}(\beta, \beta' )_{[0],0}$ and $k=1,\ldots,n$, we denote 
\[
\map_X^k(\alpha,\alpha') := \map_{\map_X^{k-1}(\beta,\beta')}(\alpha,\alpha'),
\]
where $\map_X^{0}=X$. We refer to the objects $\map_X^k(\alpha,\alpha')$ as the $k^{\text{th}}$ \emph{iterated mapping objects} of $X$.

The following lemma establishes an adjunction between the mapping objects and the suspension.

\begin{lemma}\cite[4.12]{rezkth}\label{mapgen}
Let $X$ be an object of $\sps(\Theta C)$ and $A$ an object of $\sps(C)$. Then there is a pullback square
\[
\begin{tikzcd}
\Map(A,\map_X(x,y)) \arrow[r] \arrow[d]
& \Map(V[1](A),X) \arrow[d]\\
\{*\} \arrow[r, "{(x,y)}"] & \Map(V[1](\emptyset),X).
\end{tikzcd}
\]
\end{lemma}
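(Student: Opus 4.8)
The plan is to establish the pullback square by identifying both the top and bottom maps as restrictions of a single adjunction, namely the one coming from the intertwining functor $V[1]\colon \sps(C)\to (F[0]\amg F[0])/\sps(\Theta C)$ together with the defining fiber square for the mapping object. The key observation is that the two objects $V[1](\emptyset)$ and $V[1](X)$ should be understood as the ``boundary'' and ``filled'' versions of a suspended cell: $V[1](\emptyset)\cong F[0]\amg F[0]$ picks out an ordered pair of objects, while $V[1](X)$ is the suspension encoding a morphism whose $1$-cell data is governed by $X$. First I would compute $V[1](\emptyset)$ explicitly from the levelwise formula \eqref{veq}, where setting the single variable to $\emptyset$ kills every term in the product except those coming from the degenerate maps $\delta$ that skip the middle factor, leaving exactly $F[0]\amg F[0]$; this makes the map $(x,y)\colon \{*\}\to \Map(V[1](\emptyset),W)$ literally a choice of two points $x,y\in W_{[0],0}$, matching the bottom map of the mapping-object fiber square \eqref{mapd}.

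The central step is to produce a natural isomorphism $\Map(V[1](X),W)\cong \Map(X,\map_W(x_{\bullet},y_{\bullet}))$ \emph{over} $\Map(V[1](\emptyset),W)\cong W_{[0]}\times W_{[0]}$, where on the right I mean the total space of mapping objects fibered over pairs of objects. The natural way to do this is to invoke the adjunction $V[1]\dashv$ (its right adjoint, the first iterated mapping object functor) at the level of mapping spaces: an $A$-point of $\Map(V[1](X),W)$ for a test simplicial set $A$ is a map $V[1](X)\times \Delta[A]\to W$, and using that $V[1]$ is a left adjoint compatible with the simplicial (Cartesian) structure, such maps correspond to maps $X\times\Delta[A]$ into the appropriate mapping object of $W$. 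Concretely, I would either cite or re-derive the fact that the right adjoint to $V[1]$ sends $W$ to its mapping object, so that $\Map(X,\map_W(x,y))$ is exactly the fiber of $\Map(V[1](X),W)\to\Map(V[1](\emptyset),W)$ over the point $(x,y)$. Once this fiber description is in hand, the square is a pullback essentially by definition of the fiber over a point.

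The main obstacle I anticipate is making the ``fiber over $(x,y)$'' precise as a pullback of mapping \emph{spaces} rather than sets, i.e. checking that the correspondence is natural in the simplicial degree $p$ and compatible with the face maps so that it assembles into an isomorphism of the full mapping spaces $\Map(-,-)_p=\Hom(-\times\Delta[p],-)$, not merely a bijection on vertices. This is where the Cartesian closedness hypothesis and the fact (noted after \eqref{veq}) that $V[1]$ preserves monomorphisms in each variable do the real work: I would verify that $V[1](X\times\Delta[p])\cong V[1](X)\times \Delta[p]$ relative to the base $V[1](\emptyset)$, or more precisely track how $\Delta[p]$-tensoring interacts with the suspension, reducing the claim to the levelwise formula \eqref{veq}. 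The remaining steps—identifying the bottom object via $V[1](\emptyset)\cong F[0]\amg F[0]$ and recognizing the resulting square as the image under $\Map(X,-)$ of the defining pullback of $\map_W$—are then routine naturality checks.
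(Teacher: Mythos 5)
This lemma is quoted from Rezk \cite[4.12]{rezkth} and the paper gives no proof of its own, but your proposal is correct and follows essentially the same route as Rezk's: compute $V[1](\emptyset)\cong F[0]\amalg F[0]$ from the levelwise formula, identify the fiber of $\Map(V[1](X),W)\to\Map(V[1](\emptyset),W)$ over $(x,y)$ with $\Map(X,\map_W(x,y))$ via the defining fiber square for the mapping object, and check that $V[1](X\times\Delta[p])$ agrees with $V[1](X)\times\Delta[p]$ \emph{relative to} $V[1](\emptyset)$ (which is indeed the correct subtlety, since the absolute comparison fails on the two endpoint summands). The one caution is logical order within this paper: the adjunction $V[1]\dashv\map$ appears here as a \emph{corollary} of the present lemma, so you must re-derive the fiber identification directly from the formula \eqref{veq}, as you indicate, rather than cite that corollary.
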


\begin{cor}
The functor 
\[
\map\colon  (F[0]\amg F[0])/\sps(\Theta C) \to \sps(C), 
\]
\[
\left({\begin{tikzcd}
F[0]\coprod F[0] \arrow[r, "{(x,y)}"] & X
\end{tikzcd}}\right) \mapsto  \map_X(x,y)
\]
is right adjoint to $V[1]$ and thus right Quillen with respect to the injective model structures and $\se(S)$-local and $S$-local model structures.
\end{cor}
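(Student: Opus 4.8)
The plan is to read the adjunction $V[1]\dashv \map$ directly off the pullback square of Lemma \ref{mapgen} and then obtain the Quillen properties formally from the already-established left-Quillen-ness of $V[1]$. First I would identify the relevant coslice data. Setting $X=\emptyset$ in the levelwise formula (\ref{veq}), every nonconstant $\delta\colon[m]\to[1]$ contributes an empty product while the two constant maps each contribute the empty product $\{*\}$, so $V[1](\emptyset)\cong F[0]\amg F[0]$. The canonical map $V[1](\emptyset)\to V[1](X)$ is exactly the structure map exhibiting $V[1](X)$ as an object of $(F[0]\amg F[0])/\sps(\Theta C)$, and a map $(x,y)\colon F[0]\amg F[0]\to W$ is the same datum as a pair of $0$-cells of $W$, i.e.\ an object of the coslice. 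Functoriality of $\map$ on the coslice is then immediate: a coslice morphism $g\colon(W,(x,y))\to(W',(x',y'))$ induces a map on fibers $\map_W(x,y)\to\map_{W'}(x',y')$.

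Next I would pass to $0$-simplices in the square of Lemma \ref{mapgen}. Since $\Map(A,B)_0=\Hom(A\times\Delta[0],B)=\Hom(A,B)$, that square exhibits $\Hom_{\sps(C)}(X,\map_W(x,y))$ as the fiber over $(x,y)$ of the restriction map $\Hom(V[1](X),W)\to\Hom(V[1](\emptyset),W)=W_{[0],0}\times W_{[0],0}$. By construction this fiber is the set of maps $V[1](X)\to W$ restricting to $(x,y)$ along $V[1](\emptyset)\to V[1](X)$, which is precisely the coslice hom-set $\Hom_{(F[0]\amg F[0])/\sps(\Theta C)}(V[1](X),(W,(x,y)))$. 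Because the pullback square of Lemma \ref{mapgen} is natural in $X$ and in the coslice object $(W,(x,y))$, this bijection is natural in both variables, which establishes the adjunction $V[1]\dashv\map$. Retaining the full simplicial pullback rather than only its $0$-simplices would upgrade this to a simplicial adjunction, but that is not needed here.

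Finally I would deduce the Quillen statements formally. A Quillen adjunction is exactly an adjunction whose left adjoint preserves cofibrations and trivial cofibrations, so $\map$ is right Quillen for a given pair of model structures precisely when $V[1]$ is left Quillen for that pair. The coslice $(F[0]\amg F[0])/\sps(\Theta C)$ carries the model structure created by the forgetful functor to $\sps(\Theta C)$, and the injective and $\se(S)$-local structures pass to it unchanged, so left-Quillen-ness of $V[1]$ transfers verbatim. Proposition \ref{vleftq} then gives that $V[1]$ is left Quillen for the injective structures, and the corollary established above gives that $V[1]$ is left Quillen from the $S$-local to the $\se(S)$-local structure; hence $\map$ is right Quillen in both cases.

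The only step carrying genuine content is the translation in the second paragraph, namely matching the fiber of the restriction map with the coslice hom-set and verifying naturality; the rest is bookkeeping resting on Lemma \ref{mapgen}, which already encodes the analytic heart of the adjunction. I expect the main point requiring care to be simply confirming that the coslice inherits the injective and localized model structures in the evident way, so that the two left-Quillen facts about $V[1]$ apply without modification.
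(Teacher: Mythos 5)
Your proof is correct and follows the same route the paper intends: the adjunction $V[1]\dashv\map$ is read off the pullback square of Lemma \ref{mapgen} (with $V[1](\emptyset)\cong F[0]\amalg F[0]$ identifying the coslice), and the Quillen statements then follow formally from Proposition \ref{vleftq} and the corollary to Proposition \ref{qad}. The only quibble is terminological, where you call the product indexed by a nonconstant $\delta$ an ``empty product'' when you mean it is the empty set because the factor $\emptyset_{c_i}$ occurs; this does not affect the argument.
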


\begin{cor}
Let $X$ be an injective fibrant object of $\sps(\Theta C)$. Then $\map_X(x,y)$ is injective fibrant in $\sps(\Theta C)$ for all $x,y\in X_{[0],0}$.
\end{cor}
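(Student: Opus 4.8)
The plan is to deduce the statement at once from the adjunction $V[1]\dashv \map$ recorded above, using the general principle that a right Quillen functor preserves fibrant objects. Since $V[1]$ is left Quillen with respect to the injective model structures by Proposition \ref{vleftq}, its right adjoint $\map$ is right Quillen, and hence carries injective-fibrant objects of the undercategory $(F[0]\amg F[0])/\sps(\Theta C)$ to injective-fibrant objects of $\sps(C)$. It therefore remains only to identify the injective-fibrant objects of this undercategory.

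The undercategory $(F[0]\amg F[0])/\sps(\Theta C)$ carries the model structure created by the forgetful functor to $\sps(\Theta C)$, so that a morphism is a cofibration, fibration, or weak equivalence precisely when its underlying morphism is. In particular the terminal object is the map $F[0]\amg F[0]\to \{*\}$, and an object $(x,y)\colon F[0]\amg F[0]\to W$ is fibrant exactly when the structure map $W\to \{*\}$ is an injective fibration, that is, exactly when $W$ is injective fibrant in $\sps(\Theta C)$. Thus for injective-fibrant $W$ the object $(x,y)$ is fibrant in the undercategory, and applying the right Quillen functor $\map$ shows that $\map_W(x,y)$ is injective fibrant in $\sps(C)$, as claimed.

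I would expect the only point requiring genuine care to be the identification of the fibrant objects of the undercategory, which is the standard fact that the forgetful functor out of an undercategory creates its model structure, so this step is routine. One could instead argue directly from the defining fiber square, observing that $\map_W(x,y)$ is the pullback of the map $c\mapsto (d_1,d_0)\colon W_{[1](c)}\to W_{[0]}\times W_{[0]}$ along the point $(x,y)$, so that $\map_W(x,y)$ is fibrant as soon as this map is an injective fibration in $\sps(C)$. That this map is a fibration is, however, exactly the fibrancy information supplied by the right-Quillen functor $\map$, so the two routes coincide; I would present the adjunction argument, since it requires no separate verification of lifting properties.
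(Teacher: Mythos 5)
Your argument is exactly the paper's: the statement is deduced directly from the preceding corollary that $\map$ is right Quillen for the injective model structures (via the adjunction $V[1]\dashv\map$ and Proposition \ref{vleftq}), together with the standard identification of fibrant objects in the coslice. Your conclusion that $\map_W(x,y)$ is injective fibrant in $\sps(C)$ is also the correct reading of the statement, whose ``$\sps(\Theta C)$'' is evidently a typo.
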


Similarly, $\se(S)$-fibrant objects have $S$-fibrant mapping objects. However, having $S$-fibrant mapping objects is also sufficient for $V[1](S)$-locality for Segal objects.  

\begin{prop}\label{mapfibrant}
Let $X$ be a $\se$-fibrant object of $\sps(\Theta C)$. Then $X$ is $\se(S)$-fibrant if and only if the mapping objects $\map_X(x,y)$ are $S$-fibrant in $\sps(C)$ for all $x,y\in X_{[0],0}$.
\end{prop}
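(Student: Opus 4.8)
The plan is to reduce the biconditional to a single locality condition and then verify it fiberwise through the adjunction of Lemma \ref{mapgen}. Since $\se(S)=\se_C\cup V[1](S)$, an object is $\se(S)$-fibrant exactly when it is $\se$-fibrant and, in addition, $V[1](S)$-local. Hence for a $\se$-fibrant $W$ the whole content of the statement is the equivalence between $V[1](S)$-locality of $W$ and $S$-fibrancy of all the mapping objects $\map_W(x,y)$. The forward implication is the assertion already recorded before the proposition: since $\map$ is right Quillen from the $\se(S)$-local model structure on $(F[0]\amg F[0])/\sps(\Theta C)$ to the $S$-local model structure on $\sps(C)$, an $\se(S)$-fibrant $W$ makes each $(x,y)\colon F[0]\amg F[0]\to W$ a fibrant object of the comma category, so its image $\map_W(x,y)$ is $S$-fibrant. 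I would therefore concentrate on the converse.

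Assume $W$ is $\se$-fibrant and each $\map_W(x,y)$ is $S$-fibrant; I must show that for every $s\colon A\to B$ in $S$ the map
\[
V[1](s)^*\colon \Map(V[1](B),W)\to \Map(V[1](A),W)
\]
is a weak equivalence. First I would record that $V[1](\emptyset)\cong F[0]\amg F[0]$, so $\Map(V[1](\emptyset),W)\cong W_{[0]}\times W_{[0]}$, whose vertices are exactly the pairs $(x,y)$ with $x,y\in W_{[0],0}$. Because $V[1]$ preserves monomorphisms, the maps $V[1](\emptyset)\hookrightarrow V[1](A)$ and $V[1](\emptyset)\hookrightarrow V[1](B)$ are cofibrations, so applying $\Map(-,W)$ to the injective fibrant $W$ shows that the two structure maps $\Map(V[1](A),W)\to W_{[0]}\times W_{[0]}$ and $\Map(V[1](B),W)\to W_{[0]}\times W_{[0]}$ are fibrations of Kan complexes. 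By Lemma \ref{mapgen} their fibers over a vertex $(x,y)$ are canonically $\Map(A,\map_W(x,y))$ and $\Map(B,\map_W(x,y))$.

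By naturality of Lemma \ref{mapgen} in its presheaf variable, $V[1](s)^*$ lies over the identity of $W_{[0]}\times W_{[0]}$ and restricts on the fiber over each $(x,y)$ to $s^*\colon \Map(B,\map_W(x,y))\to \Map(A,\map_W(x,y))$, which is a weak equivalence because $\map_W(x,y)$ is $S$-local. As every component of $W_{[0]}\times W_{[0]}$ contains a vertex, $V[1](s)^*$ is thus a map of fibrations over a common base that is a weak equivalence on every fiber, and I would conclude that it is a weak equivalence by the standard comparison of the long exact sequences of homotopy groups together with the five lemma. This establishes $V[1](S)$-locality of $W$ and finishes the converse.

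I expect the main obstacle to be the fiberwise bookkeeping: one must check that the pullback description of Lemma \ref{mapgen} is sufficiently natural in $A$ and $B$ that $V[1](s)^*$ genuinely covers the identity of the base and restricts to $s^*$ on fibers, and that all the hypotheses of the fiberwise-equivalence principle (fibrancy of the total spaces and the base, the map covering the identity, and a vertex in each component) are simultaneously in force. Everything else is a direct application of the injective fibrancy and Segal assumptions on $W$ and of the adjunction $V[1]\dashv\map$.
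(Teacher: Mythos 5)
Your proposal is correct and follows essentially the same route as the paper: both arguments use Lemma \ref{mapgen} to identify the fibers of $\Map(V[1](-),W)\to\Map(V[1](\emptyset),W)$ with $\Map(-,\map_W(x,y))$, use injective fibrancy of $W$ to replace homotopy fibers by strict fibers, and conclude via the fiberwise criterion for weak equivalences of fibrations (the paper packages your long-exact-sequence argument as Proposition \ref{fiber}). The only cosmetic difference is that the paper runs a single chain of equivalences covering both directions at once, whereas you handle the forward direction separately via the right Quillen property of $\map$.
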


For the proof of this proposition we recall the fiberwise characterization of homotopy pullbacks, which follows from the naturality of the long exact sequence of homotopy groups. See for example \cite[3.3.18]{cubical} for further discussion.
\begin{prop}\label{fiber}
Let the following be a commutative square of simplicial sets:
\[
\begin{tikzcd}
X \arrow[r] \arrow[d, "f" left] & Z \arrow[d, "g"]\\
Y \arrow[r, "h"] & W. \\
\end{tikzcd}
\]
Then the square is a homotopy pullback if and only if the induced map $\hofiber(f)_y\to \hofiber(g)_{h(y)}$ is a weak equivalence for all $y\in Y_{[0]}$.
\end{prop}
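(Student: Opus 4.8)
The plan is to reformulate the homotopy-pullback condition as a statement about the comparison map into the homotopy pullback, and then to recognize the map of the statement as the induced map on homotopy fibers, so that the asserted equivalence becomes an instance of the fiberwise recognition of weak equivalences. Write $P = Y\times_W^h Z$ for the homotopy pullback, with projection $\pr\colon P\to Y$, and let $\kappa\colon X\to P$ be the comparison map determined by $f$, the top horizontal map, and the commutativity of the square. By definition the square is a homotopy pullback precisely when $\kappa$ is a weak equivalence, so it suffices to show that $\kappa$ is a weak equivalence if and only if the maps in the statement are.

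First I would identify the homotopy fibers of $\pr$. By the pasting law for homotopy pullbacks, pulling $P$ back along a vertex $y\in Y_{[0]}$ gives $\{*\}\times_Y^h P \simeq \{*\}\times_W^h Z = \hofiber(g)_{h(y)}$, so the homotopy fiber of $\pr$ over $y$ is canonically $\hofiber(g)_{h(y)}$. Since $\pr\circ\kappa\simeq f$, the map $\kappa$ is a map over $Y$, and on homotopy fibers over $y$ it restricts exactly to the map $\hofiber(f)_y\to \hofiber(g)_{h(y)}$ of the statement. Thus the proposition reduces to the claim that a map $\kappa\colon X\to P$ over $Y$ is a weak equivalence if and only if it induces a weak equivalence on homotopy fibers over every vertex of $Y$.

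For this last claim the forward direction is immediate, since homotopy fibers are invariant under weak equivalences. For the converse I would use, over each vertex $y$, the two homotopy fiber sequences $\hofiber(f)_y\to X\to Y$ and $\hofiber(g)_{h(y)}\to P\to Y$; replacing the data by a weakly equivalent one in which all objects are Kan complexes and the vertical maps are fibrations (so that, by Proposition \ref{fibpb}, homotopy fibers agree with strict fibers), these yield long exact sequences of homotopy groups, and $\kappa$ induces a morphism between them that is the identity on the terms $\pi_*(Y,y)$ and the assumed equivalence on the fiber terms. Fixing a vertex $x\in X$ and applying the five lemma then shows $\kappa_*\colon \pi_n(X,x)\to \pi_n(P,\kappa(x))$ is an isomorphism for all $n\geq 1$; running this over a vertex in each component of $X$, and noting that every nonempty component of a simplicial set contains a vertex, gives isomorphisms on all higher homotopy groups.

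The main obstacle is the bottom of the long exact sequence, where the five lemma does not directly apply and I must check separately that $\kappa$ induces a bijection on $\pi_0$. This I would handle through the exactness of the fiber sequences in low degrees: for each class $[y]\in \pi_0(Y)$, represented by a vertex, the preimages of $[y]$ in $\pi_0(X)$ and in $\pi_0(P)$ are the orbit sets of the $\pi_1(Y,y)$-actions on $\pi_0(\hofiber(f)_y)$ and on $\pi_0(\hofiber(g)_{h(y)})$, respectively. Since $\kappa$ is a map over $Y$, it carries one action to the other and restricts to the assumed $\pi_0$-bijection on fibers, hence induces a bijection of orbit sets over each $[y]$; assembling these over $\pi_0(Y)$ shows $\kappa$ is a $\pi_0$-bijection. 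Together with the isomorphisms on higher homotopy groups, this proves $\kappa$ is a weak equivalence and completes the argument.
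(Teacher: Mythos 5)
Your proof is correct and takes essentially the approach the paper itself points to: the paper does not prove Proposition \ref{fiber} but defers to \cite[3.3.18]{cubical} with the remark that the statement follows from the naturality of the long exact sequence of homotopy groups, and your argument (comparison map over $Y$, identification of the homotopy fibers of the projection via pasting, five lemma on the natural map of long exact sequences, and the $\pi_1(Y,y)$-orbit analysis for $\pi_0$) is precisely a worked-out version of that. The two points you compress — arranging the fibrant replacement so that $\kappa$ remains a map over $Y$, and the pointed-set care at the bottom of the exact sequence — are standard and handled correctly by your separate $\pi_0$ treatment.
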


\begin{proof}[Proof of Proposition \ref{mapfibrant}]
Let $X$ be $\se$-fibrant and $f\colon Y\to Z$ be in $S$, and consider the diagram
\begin{equation}\label{suspfibrancy}
\begin{tikzcd}
\Map(V[1](Z),X) \arrow[r, "{(V[1](f))^*}"] \arrow[d, two heads]
& \Map(V[1](Y),X) \arrow[d, two heads]\\
\Map(V[1](\emptyset),X) \arrow[r, equal]
& \Map(V[1](\emptyset),X),
\end{tikzcd}
\end{equation}
which is a homotopy pullback if and only if $(V[1](f))^*$ is a weak equivalence, which happens if and only if the maps induced on all homotopy fibers of the vertical arrows are weak equivalences by Proposition \ref{fiber}.
However, due to injective fibrancy of $X$ the homotopy fibers coincide with the fibers, which by Lemma \ref{mapgen} have the form
\[
\begin{tikzcd}
\Map(Z,\map_X(x,y)) \arrow[r, "f^*"]
& \Map(Y,\map_X(x,y)).
\end{tikzcd}
\]
Thus $X$ is $V[1](S)$-local if and only if $\map_X(x,y)$ is $S$-local for all $x$ and $y$.
\end{proof}

The following lemma tells us that the underlying $\Theta_k$-space functor is compatible with the structure of mapping objects.

\begin{prop}\label{umap}
Let $X$ be a an object of $\sps(\Theta^n C)$ and $1\leq m\leq k\leq n$. Then 
\[
\map_{(\tau_k^n)^* X}^m(\alpha, \alpha') \cong (\tau_{k-m}^{n-m})^*\map_X(\alpha, \alpha')
\]
for all $\alpha$ and $\alpha'$.
\end{prop}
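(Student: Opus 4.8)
The plan is to prove the statement by induction on $m$, reading the right-hand side as the $m$-th iterated mapping object $(\tau_{k-m}^{n-m})^*\map_W^m(\alpha,\alpha')$ — this is the only interpretation for which both sides live in $\sps(\Theta_{k-m})$. Everything reduces to the base case $m=1$, which expresses the compatibility of a single mapping object with the underlying $\Theta_{k-1}$-space functor; the iteration is then formal. The essential structural input is the recursive identity $\tau_k^n=\Theta(\tau_{k-1}^{n-1})$, which yields $\tau_k^n([1](\theta))=[1](\tau_{k-1}^{n-1}(\theta))$ and $\tau_k^n([0])=[0]$.

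For the base case I would first rewrite the mapping object as an honest pullback in $\sps(\Theta^{n-1}C)$. Writing $\sigma_n\colon \Theta^{n-1}C\to\Theta^n C$ and $\sigma_k\colon\Theta_{k-1}\to\Theta_k$ for the functors $c\mapsto [1](c)$, the object $\map_W(\alpha,\alpha')$ is the pullback of the cospan
\[
\sigma_n^* W \xrightarrow{(d_1,d_0)} \underline{W_{[0]}\times W_{[0]}} \xleftarrow{(\alpha,\alpha')} \{*\},
\]
where $\underline{(-)}$ denotes the constant presheaf and $(d_1,d_0)$ is natural in $c$ because the two inclusions $[0]\to[1](c)$ do not involve $c$. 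The identity above says precisely that the square of categories with horizontal arrows $\sigma_n,\sigma_k$ and vertical arrows $\tau_k^n,\tau_{k-1}^{n-1}$ commutes, so on presheaves $\sigma_k^*(\tau_k^n)^*=(\tau_{k-1}^{n-1})^*\sigma_n^*$. Since $(\tau_{k-1}^{n-1})^*$ is a precomposition functor it is a right adjoint, hence preserves pullbacks, and it carries constant presheaves to constant presheaves; applying it to the cospan above therefore produces exactly the cospan defining $\map_{(\tau_k^n)^*W}(\alpha,\alpha')$, establishing the case $m=1$.

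For the inductive step, suppose the claim holds for $m-1$. Setting $W'=\map_W^{m-1}(\beta,\beta')\in\sps(\Theta^{n-m+1}C)$, the induction hypothesis identifies $\map_{(\tau_k^n)^*W}^{m-1}(\beta,\beta')$ with $(\tau_{k-m+1}^{n-m+1})^*W'$; the $0$-cells agree because $\tau$ fixes $[0]$, so $\alpha,\alpha'$ are legitimate basepoints on both sides. Applying the base case to $W'$ with parameters $(n',k')=(n-m+1,k-m+1)$ (valid since $1\leq k'\leq n'$) gives $\map_{(\tau_{k'}^{n'})^*W'}(\alpha,\alpha')\cong(\tau_{k-m}^{n-m})^*\map_{W'}(\alpha,\alpha')$. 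Unwinding the definitions $\map_{W'}(\alpha,\alpha')=\map_W^m(\alpha,\alpha')$ and $\map_{(\tau_{k'}^{n'})^*W'}(\alpha,\alpha')=\map_{(\tau_k^n)^*W}^m(\alpha,\alpha')$ then closes the induction.

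I expect the main obstacle to be bookkeeping rather than substance: checking the commuting square $\tau_k^n\circ\sigma_k=\sigma_n\circ\tau_{k-1}^{n-1}$ together with its compatibility with the source–target maps $(d_1,d_0)$, and tracking the shifting indices $(n-j,k-j)$ and the tower of auxiliary basepoints $\beta,\beta'$ through the iteration so that the selected $0$-cells stay consistent. No genuine homotopy-theoretic difficulty arises, since every identification is a strict isomorphism obtained from the limit-preservation of precomposition functors.
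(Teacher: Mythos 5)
Your proof is correct, including your reading of the right-hand side as the iterated mapping object $(\tau_{k-m}^{n-m})^*\map_W^m(\alpha,\alpha')$ (the superscript $m$ is indeed missing from the statement, as the later application of the proposition in the paper confirms). Your route is genuinely different from the paper's: the paper passes to left adjoints, using that $\map$ is right adjoint to the suspension $V[1]$ and that $(\tau_k^n)^*$ is right adjoint to $(\pi_k^n)^*$, so that by uniqueness of adjoints the claim reduces to the commutation $V[1]^m((\pi_{k-m}^{n-m})^*X)\cong(\pi_k^n)^*V[1]^m(X)$, which it verifies for $m=1$ by a direct computation with the coproduct formula for the intertwining functor. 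You instead stay entirely on the right-adjoint side: you exhibit $\map_W(\alpha,\alpha')$ as a strict pullback of a cospan built from $\sigma_n^*W$ and constant presheaves, and invoke the commuting square $\tau_k^n\circ\sigma_k=\sigma_n\circ\tau_{k-1}^{n-1}$ (i.e.\ $\tau_k^n=\Theta(\tau_{k-1}^{n-1})$) together with the facts that precomposition preserves limits and constant presheaves. The two arguments are essentially mates of one another --- both hinge on the same compatibility of $\tau$ with the one-step suspension of index categories --- but yours has the small advantage of sidestepping the basepoint bookkeeping in the slice category $(F[0]\amalg F[0])/\sps(\Theta C)$ where $V[1]$ actually lands (a point the paper's ``it suffices to show the left adjoints commute'' reduction quietly elides), at the cost of having to check by hand that the transformed cospan, including the source--target maps $(d_1,d_0)$, is literally the defining cospan for $\map_{(\tau_k^n)^*W}(\alpha,\alpha')$; you do carry out that check. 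Both arguments are complete; I see no gap.
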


\begin{proof}
It suffices to show that the left adjoints commute, that is, 
\[
 V[1]^m((\pi_{k-m}^{n-m})^*Y)\cong (\pi_k^n)^* V[1]^m(Y),
\]
which can be further reduced to the case $m=1$ by iteration.
Using the formula (\ref{veq}) for the suspension we obtain the following natural isomorphisms:
\[
\begin{split}
V[1]((\pi_{k-1}^{n-1})^*Y)_{[m](c_1,\ldots,c_m)} 
& \cong \{*\} \amalg \left( \coprod_{i=1}^m ((\pi_{k-1}^{n-1})^*Y)_{c_i}
\right)\amalg\{*\}\\
& = \{*\} \amalg \left( \coprod_{i=1}^m Y _{\pi_{k-1}^{n-1} c_i}
\right)\amalg\{*\}\\
& \cong V[1](Y)_{[m](\pi_{k-1}^{n-1}c_1,\ldots,\pi_{k-1}^{n-1}c_m)} \\
& = V[1](Y)_{\pi_k^n([m](c_1,\ldots,c_m))} \\
& = (\pi_k^n)^*V[1](Y)_{[m](c_1,\ldots,c_m)}
\end{split}
\]
proving the claim.
\end{proof}

\section{Homotopy relations}\label{sho}

In this section we examine notions of homotopy equivalence on simplicial presheaf categories and how they are related  to equivalences of categories via the discrete nerve functor. The notions of equivalence we are interested in arise as homotopy equivalences in Cartesian model structures, so interval objects provide a good common framework. Most of the results in this section are adapted from \cite[\S 4]{ds} to the Cartesian setting. Interval objects in the presheaf setting have also been extensively studied in \cite[\S 1.3]{cis}, for example.

\begin{notn}
In this section let $\mathcal{M}$ be a Cartesian model category where all monomorphisms are cofibrations. In particular, all objects of $\mathcal{M}$ are cofibrant.
\end{notn}

The three choices for $\mathcal{M}$ that we are interested in are the injective and $\cpt(S)$-local model structures on $\sps(\Theta C)$ and the canonical model structure on $\cat$, which was recorded by Rezk in \cite{rezkcat}. Note that we do not assume that every cofibration in $\mathcal{M}$ is a monomorphism as this is not the case in $\cat$.

\begin{df}\label{intdef}
We call an object $J$ in $\mathcal{M}$ an \emph{interval object} for $\mathcal{M}$ if there is a factorization
\[
\begin{tikzcd}
* \arrow[d, hook] \arrow[drr, equal, bend left=20] \\
*\coprod * \arrow[r, hook] & J \arrow[r, "\sim"] & *.\\
* \arrow[u, hook] \arrow[urr, equal, bend right=20]
\end{tikzcd}
\]
\end{df}
For the remainder of this section let $J$ denote an arbitrary interval object for $\mathcal{M}$.

The following two examples are the standard interval objects for the respective model structures.

\begin{ex}
The constant presheaf $\Delta[1]$ is an interval object for the injective model structure on $\sps(C)$.
\end{ex}

\begin{ex}
The walking isomorphism $I$, which consists of two objects and an isomorphism between them, is an interval object for the canonical model structure on $\cat$.
\end{ex}

Using the Cartesian structure on $\mathcal{M}$, an interval object determines functorial cylinder and path objects in the sense of \cite[\S 4]{ds}.

\begin{lemma}
Let $U$ and $V$ be objects of $\mathcal{M}$ with $V$ fibrant. Then $U\times J$ is a \emph{good cylinder object} for $U$ and $V^J$ is a \emph{very good path object} for $V$, that is, there are factorizations
\[
\begin{tabular}{c c c}
\begin{tikzcd}
U \arrow[d] \arrow[drr, equal, bend left=20] \\
U\coprod U \arrow[r, hook] & U \times J \arrow[r, "\sim"] & U\\
U \arrow[u] \arrow[urr, equal, bend right=20]
\end{tikzcd} & \text{and} & 
\begin{tikzcd}
& & V  \\
V \arrow[r, hook,  "\sim"] \arrow[urr, equal, bend left=20] \arrow[drr, equal, bend right=20]  & 
V^J \arrow[r, two heads] 
& V \times V \arrow[u, "\pr_1" right] \arrow[d, "\pr_2" right]\\
& & V.
\end{tikzcd}
\end{tabular}
\]
\end{lemma}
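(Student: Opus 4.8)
The plan is to produce both factorizations by applying the two adjoint halves of the Cartesian closed structure to the interval factorization of Definition \ref{intdef}, reading off the cofibration and fibration parts directly from the two defining conditions of a Cartesian model category. First I would record the formal identifications: since $U\times(-)$ is a left adjoint it preserves the initial object, coproducts, and the terminal object, so $U\times(*\coprod *)\cong U\coprod U$ and $U\times *\cong U$; dually $V^{(-)}$, the contravariant exponentiation into $V$, sends coproducts to products and $*$ to $V$, so $V^{*\coprod *}\cong V\times V$ and $V^*\cong V$. Applying $U\times(-)$ to the interval factorization then yields a factorization $U\coprod U\to U\times J\to U$ of the fold map, and applying $V^{(-)}$ yields a factorization $V\to V^J\to V\times V$ of the diagonal; the composites are the fold map and the diagonal because each endpoint inclusion $i_k\colon *\to J$ satisfies $p\,i_k=\id_*$ for the structural map $p\colon J\to *$.

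For the cylinder I would identify $U\coprod U\to U\times J$ with the pushout--product of the cofibration $\emptyset\to U$ (a cofibration since $U$ is cofibrant) and the cofibration $*\coprod *\hookrightarrow J$: the terms involving $\emptyset$ collapse, leaving exactly this map, which is therefore a cofibration by condition (1). For the weak equivalence $U\times J\to U$ I would first note that $i_0\colon *\to J$ is an acyclic cofibration---it factors as $*\hookrightarrow *\coprod *\hookrightarrow J$ and is a weak equivalence by two--out--of--three since $J\to *$ is---and then use that the pushout--product of $\emptyset\to U$ with $i_0$ is an acyclic cofibration $U\cong U\times *\to U\times J$ whose composite with $U\times J\to U$ is $\id_U$; two--out--of--three finishes it.

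For the path object I would argue dually. The map $V^J\to V\times V$ is the pullback--power of the fibration $V\twoheadrightarrow *$ (using that $V$ is fibrant) with the cofibration $*\coprod *\hookrightarrow J$; because every exponential $*^{(-)}$ is terminal this reduces to $V^J\to V\times V$, a fibration by condition (2). That $V\to V^J$ is a weak equivalence follows dually to the cylinder case: the pullback--power of $V\twoheadrightarrow *$ with the acyclic cofibration $i_0$ is an acyclic fibration $V^{i_0}\colon V^J\to V$ whose composite with $V\to V^J$ is $\id_V$. The remaining point is that $V\to V^J$ is a cofibration, and here I would exploit that it is a split monomorphism: contravariance gives $V^{i_0}\circ V^{p}=V^{p\,i_0}=V^{\id_*}=\id_V$, so $V\to V^J$ admits a retraction, hence is a monomorphism, hence a cofibration by the standing hypothesis that all monomorphisms are cofibrations.

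The arguments are largely formal, and I expect the only real subtleties to be two. First, the weak-equivalence halves require that $U\times(-)$ and $V^{(-)}$ (for fibrant $V$) preserve the relevant weak equivalences; this is the acyclic form of the pushout--product and pullback--power axioms, which I would take as part of the Cartesian structure (equivalently, $U\times(-)$ is left Quillen and every object is cofibrant, so it preserves all weak equivalences by Ken Brown's lemma). Second, and more to the point, upgrading $V\to V^J$ from a weak equivalence to an acyclic \emph{cofibration} is the one step that is not purely formal; it is precisely where the hypothesis that monomorphisms are cofibrations---imposed in this section and genuinely stronger than what holds for a generic model category---is used.
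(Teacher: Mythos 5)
Your proposal is correct and follows essentially the same route as the paper: apply $U\times(-)$ and $V^{(-)}$ to the interval factorization, read off the cofibration/fibration parts from the pushout--product and pullback--power conditions, deduce the weak equivalences by two-out-of-three from the split composites with the endpoint inclusions, and get cofibrancy of $V\to V^J$ from its being a split monomorphism under the standing hypothesis that monomorphisms are cofibrations. The paper performs these steps more tersely (it simply decorates the arrows of the transported diagrams), but the content, including the implicit reliance on the acyclic form of the Cartesian axioms that you flag, is the same.
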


\begin{proof}
Consider the factorization associated to the interval object $J$
\begin{equation}\label{eqinterval}
\begin{tikzcd}
* \arrow[d, hook] \arrow[dr, hook, "j_1"] \arrow[drr, equal, bend left=20] \\
*\coprod * \arrow[r, hook] & J \arrow[r, "\sim"] & *,\\
* \arrow[u, hook] \arrow[ur, hook, "j_2" swap] \arrow[urr, equal, bend right=20]
\end{tikzcd}
\end{equation}
and note that $j_1$ and $j_2$ are weak equivalences by the two-out-of-three property.

Using the Cartesian property of the model structure on $\mathcal{M}$, we can apply the functors $U\times -$ and $V^{-}$ to the above diagram (\ref{eqinterval}) to obtain the diagrams
\[
\begin{tabular}{c c c}
\begin{tikzcd}
U \arrow[d, hook] \arrow[dr, hook, "\sim"] \arrow[drr, equal, bend left=20] \\
U\coprod U \arrow[r, hook] & U \times J \arrow[r] & U\\
U \arrow[u, hook] \arrow[ur, hook, "\sim" swap]\arrow[urr, equal, bend right=20]
\end{tikzcd} & \text{and} & 
\begin{tikzcd}
& & V  \\
V \arrow[r] \arrow[urr, equal, bend left=20] \arrow[drr, equal, bend right=20]  & 
V^J \arrow[r, two heads] \arrow[ur, two heads, "\sim"] \arrow[dr, two heads, "\sim" swap] 
& V \times V \arrow[u, two heads, "\pr_1" right] \arrow[d, two heads, "\pr_2" right]\\
& & V,
\end{tikzcd}
\end{tabular}
\]
respectively. By the two-out-of-three property the maps $U\times J\to U$ and $V\to V^J$ are weak equivalences with the latter also a monomorphism, since it is the first of a chain of maps composing to an isomorphism.
\end{proof}

\begin{df}\label{inthtpy}
Let $f,g\colon U\to V$ be maps in $\mathcal{M}$. We say that $f$ and $g$ are \emph{$J$-homotopic} if there is a map $H\colon U \times J\to V$ that we refer to as \emph{$J$-homotopy} such that the following diagram commutes: 
\[
\begin{tikzcd}
U \arrow[d,"\id\times j_0" left] \arrow[rd,"f" above right]\\
U\times J \arrow[r,"H"]  & V,\\
U \arrow[u,"\id\times j_1" left] \arrow[ru,"g" below right] \\
\end{tikzcd}
\]
where $j_0, j_1\colon *\hookrightarrow * \amalg *  \hookrightarrow J$ are the two composites.
By adjointness, a homotopy may equivalently be defined as a map $H'\colon U \to V^{J}$ or $H''\colon J \to V^{U}$ as in the diagrams
\[\begin{tabular}{l c r}
\begin{tikzcd}
& V  \\
U \arrow[r,"H'"] \arrow[ru,"f" above left] \arrow[rd,"g" below left] & V^{J} \arrow[u,"V^{j_0}" right] \arrow[d,"V^{j_1}" right]\\
& V  \\
\end{tikzcd} & \text{and} \quad &
\begin{tikzcd}
* \arrow[d,"j_0" left] \arrow[rd,"{f}" above right]\\
J \arrow[r,"H''"]  & V^{U}.\\
* \arrow[u,"j_1" left] \arrow[ru,"{g}" below right] \\
\end{tikzcd}
\end{tabular}
\]
For additional specificity we may refer to $H$ as a \emph{left homotopy} and $H'$ as a \emph{right homotopy}.
\end{df}

The primary goal of this section is to study the following notions of equivalence for different interval objects.
\begin{df}
We say that a map $f\colon U\to V$ is a \emph{$J$-homotopy equivalence} if there are maps $h,h'\colon V\to U$ and $J$-homotopies  $h f \simeq \id_U$ and $f h' \simeq \id_V$, and we say that $U$ and $V$ are \emph{$J$-homotopy equivalent} if there is a $J$-homotopy equivalence between them.
\end{df}

The following is one of three notions of homotopy equivalence we are interested in.
\begin{ex}
In the category $\sps(C)$ a $\Delta[1]$-homotopy equivalence is precisely a simplicial homotopy equivalence.
\end{ex}

Next we recount some key properties of homotopy equivalences from \cite{ds} adapted for interval objects.

\begin{lemma}\label{htpycomp}
Let $f,g\colon U \to V$ be $J$-homotopic and $h\colon V\to W$ and $h'\colon W'\to U$ any maps. Then $hf,hg\colon U\to W$ are $J$-homotopic, and $fh',gh'\colon W\to V$ are $J$-homotopic.
\end{lemma}

\begin{proof}
Given a $J$-homotopy $H\colon U\times J\to V$ between $f$ and $g$ with an adjoint $H'\colon U\to V^J$, then $hH\colon U\times J\to W$ is a $J$-homotopy between $hf$ and $hg$, and $H'h'\colon W\to V^J$ is a $J$-homotopy between $fh'$ and $gh'$.
\end{proof}

\begin{lemma}\label{twohtpies}
If $J$ and $J'$ are interval objects for $\mathcal{M}$, and $f,g\colon U\to V$ maps with $V$ fibrant, then $f$ and $g$ are $J$-homotopic if and only if they are $J'$-homotopic.
\end{lemma}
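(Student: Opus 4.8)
The plan is to reduce the statement to the standard independence of the right-homotopy relation from the chosen path object, exploiting that $V$ is fibrant. By Definition \ref{inthtpy} a $J$-homotopy between $f$ and $g$ may be presented interchangeably as a left homotopy $H\colon U\times J\to V$ or as a right homotopy $U\to V^{J}$, and likewise for $J'$; moreover the preceding lemma shows that $V^{J}$ and $V^{J'}$ are very good path objects for $V$ and that the endpoint inclusions $\id_U\times j_0,\,\id_U\times j_1\colon U\to U\times J$ are trivial cofibrations (here we use that $U$ is cofibrant, which holds by the standing assumption on $\mathcal{M}$, together with the Cartesian pushout-product property). It therefore suffices to show that $f$ and $g$ are right homotopic via $V^{J}$ if and only if they are right homotopic via $V^{J'}$; by symmetry only one implication is needed.

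The main step is a single lifting argument. First I would take a $J$-homotopy in its left form $H\colon U\times J\to V$, so that $H\circ(\id_U\times j_0)=f$ and $H\circ(\id_U\times j_1)=g$, and write $r\colon U\times J\to U$ for the projection. Using the section $s\colon V\xrightarrow{\sim} V^{J'}$ and the fibration $(V^{j_0},V^{j_1})\colon V^{J'}\twoheadrightarrow V\times V$ coming from the path-object factorization, I would set up the square with top edge $s\circ f\colon U\to V^{J'}$, left edge the trivial cofibration $\id_U\times j_0$, bottom edge $(f\circ r,\,H)\colon U\times J\to V\times V$, and right edge $(V^{j_0},V^{j_1})$. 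This square commutes since both composites equal $(f,f)$, using $V^{j_0}s=V^{j_1}s=\id_V$ on one side and $r\circ(\id_U\times j_0)=\id_U$, $H\circ(\id_U\times j_0)=f$ on the other. As $\id_U\times j_0$ is a trivial cofibration and $(V^{j_0},V^{j_1})$ is a fibration, a lift $K\colon U\times J\to V^{J'}$ exists, and the composite $K\circ(\id_U\times j_1)\colon U\to V^{J'}$ is the desired right homotopy: reading off projections gives $V^{j_0}\circ K\circ(\id_U\times j_1)=f\circ r\circ(\id_U\times j_1)=f$ and $V^{j_1}\circ K\circ(\id_U\times j_1)=H\circ(\id_U\times j_1)=g$, so by Definition \ref{inthtpy} this is a $J'$-homotopy from $f$ to $g$. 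Swapping the roles of $J$ and $J'$ yields the reverse implication and hence the equivalence.

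I expect the only real subtlety to be the bookkeeping in the lifting square---verifying that it commutes on the nose and that the correct endpoints recover $f$ and $g$---together with confirming that the endpoint inclusions $U\to U\times J$ are genuinely trivial cofibrations, which is where the Cartesian (pushout-product) structure and the cofibrancy of $U$ enter. This is precisely the Cartesian adaptation of the comparison of cylinder and path objects in \cite[\S 4]{ds}, so no new difficulty beyond careful diagram chasing is anticipated.
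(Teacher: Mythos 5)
Your proof is correct, but it takes a different route from the paper's. The paper solves a single lifting problem that compares the two path objects directly: the square with the acyclic cofibration $V\to V^{J'}$ on the left and the fibration $V^{J}\to V\times V$ on the right produces a comparison map $\varphi\colon V^{J'}\to V^{J}$ over $V\times V$, constructed once and for all independently of $U$, $f$, $g$; postcomposition with $\varphi$ then converts any right $J'$-homotopy into a right $J$-homotopy. You instead run the classical cylinder-to-path conversion: starting from a \emph{left} $J$-homotopy $H\colon U\times J\to V$, you lift against the acyclic cofibration $\id_U\times j_0\colon U\to U\times J$ into the fibration $V^{J'}\twoheadrightarrow V\times V$ and evaluate the lift at the other endpoint to obtain a right $J'$-homotopy. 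Your square does commute (both composites are $(f,f)$), the endpoint reads off $f$ and $g$ correctly, and your appeal to the preceding lemma for the endpoint inclusions being acyclic cofibrations is exactly what that lemma provides (modulo the small notational slip that the right-hand fibration should be written $(V^{j'_0},V^{j'_1})$, the structure maps of $J'$). The two arguments use the same inputs -- fibrancy of $V$ for the path object, cofibrancy of $U$ and the Cartesian pushout-product for the cylinder -- and both are symmetric in $J$ and $J'$. The paper's version is slightly more economical in that the lift does not depend on the particular homotopy, while yours makes the left/right homotopy correspondence explicit; either is a faithful adaptation of \cite[\S 4]{ds}.
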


\begin{proof}
The interval objects $J$ and $J'$ give rise to the cylinder objects $V^{J}$ and $V^{J'}$ whose factorizations fit into the diagram
\[
\begin{tikzcd}
V \arrow[r, hook, "\sim"] \arrow[d, hook, "\sim" swap] &  V^{J} \arrow[d, two heads]\\
V^{J'} \arrow[r, two heads] \arrow[ur, dashed, "\exists \varphi"]& V\times V.
\end{tikzcd}
\]
Now since the left morphism is an acyclic cofibration and the right morphism a fibration, there is a lift $\varphi\colon  V^{J'}\to V^J$ that is compatible with the factorizations. Now for any right $J'$-homotopy $H\colon U \to V^{J'}$ between $f$ and $g$, the map $\varphi H\colon U \to V^{J}$ is a right $J$-homotopy between the between $f$ and $g$. The argument is symmetric with respect to $J$ and $J'$ proving the claim.
\end{proof}

\begin{lemma}
Let $U,V$ and $W$ be objects in $\mathcal{M}$, and let $f,g\colon U\to V$ be $J$-homotopic. Then $W^f,W^g\colon W^V\to W^U$ are $J$-homotopic.
\end{lemma}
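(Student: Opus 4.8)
The plan is to apply the contravariant exponential functor $W^{(-)}$ directly to a $J$-homotopy witnessing $f\simeq g$, and then to reinterpret the result as a $J$-homotopy between $W^f$ and $W^g$ by means of the canonical Cartesian closed isomorphism $W^{U\times J}\cong (W^U)^J$. No fibrancy hypotheses are needed, so the argument is purely formal.

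First I would fix a left $J$-homotopy $H\colon U\times J\to V$ between $f$ and $g$, so that $H\circ(\id_U\times j_0)=f$ and $H\circ(\id_U\times j_1)=g$, where $j_0,j_1\colon *\to J$ are the two endpoint inclusions. Applying $W^{(-)}$, which is contravariant in the exponent, produces a map $W^H\colon W^V\to W^{U\times J}$. Using the natural isomorphism $W^{U\times J}\cong (W^U)^J$ supplied by the Cartesian closed structure, I would regard $W^H$ as a map $\widetilde{H}\colon W^V\to (W^U)^J$. By the adjointness recalled in Definition \ref{inthtpy}, such a map is exactly a right $J$-homotopy between two maps $W^V\to W^U$, provided its two endpoints agree with $W^f$ and $W^g$.

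It then remains to verify the endpoint conditions $(W^U)^{j_0}\circ\widetilde{H}=W^f$ and $(W^U)^{j_1}\circ\widetilde{H}=W^g$. Here the decisive point is the naturality of the isomorphism $W^{U\times(-)}\cong (W^U)^{(-)}$ in the second variable: under it the map $(W^U)^{j_i}$ corresponds to $W^{\id_U\times j_i}\colon W^{U\times J}\to W^U$. Combining this with the (order-reversing) functoriality of $W^{(-)}$ gives
\[
(W^U)^{j_i}\circ\widetilde{H}=W^{\id_U\times j_i}\circ W^H=W^{\,H\circ(\id_U\times j_i)},
\]
which equals $W^f$ for $i=0$ and $W^g$ for $i=1$. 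Thus $\widetilde{H}$ is a right $J$-homotopy between $W^f$ and $W^g$, and hence, again by the adjointness in Definition \ref{inthtpy}, the maps $W^f$ and $W^g$ are $J$-homotopic.

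The only step demanding genuine care is this compatibility of the Cartesian closed isomorphism with the endpoint inclusions $j_0,j_1$, i.e. the naturality statement invoked above; this is where I would be most attentive to the direction of arrows and the variance of $W^{(-)}$, since a slip there would misidentify the endpoints. Everything else is bookkeeping with the two-variable adjunction. (Alternatively, one could build the left homotopy directly as the adjoint of the composite $W^V\times(U\times J)\xrightarrow{\id\times H}W^V\times V\to W$ with the evaluation map, and check endpoints by the same naturality; I would present whichever is cleaner, but the exponential approach matches the right-homotopy formulation of Definition \ref{inthtpy} most transparently.)
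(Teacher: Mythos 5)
Your proof is correct and is essentially the paper's own argument: the paper's proof is exactly the one-line observation that $W^H\colon W^V\to W^{U\times J}\cong (W^U)^J$ is a right $J$-homotopy between $W^f$ and $W^g$, and you simply spell out the endpoint verification via naturality of the Cartesian closed isomorphism, which the paper leaves implicit.
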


\begin{proof}
If $H\colon U\times J \to V$ is a left $J$-homotopy between $f$ and $g$, then $W^H\colon W^V \to W^{U\times J}\cong (W^U)^ {J}$ is a right $J$-homotopy between $W^f$ and $W^g$.
\end{proof}

The following corollary tells us that homotopy equivalences determined by interval objects are compatible with the Cartesian structure on $\mathcal{M}$.

\begin{cor}\label{cateqcart}
Let $U,V,W\in M$, and let $f\colon  U\to V$ be a $J$-homotopy equivalence. Then $W^f\colon  W^V\to W^U$ is a $J$-homotopy equivalence.
\end{cor}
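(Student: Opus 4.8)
The plan is to unwind the definition of $f$ being a $J$-homotopy equivalence and then push all of the data through the contravariant functor $W^{(-)}$, relying on the preceding lemma to control the behaviour of $J$-homotopies under this functor. By definition there are maps $h,h'\colon V\to U$ together with $J$-homotopies $hf\simeq \id_U$ and $fh'\simeq \id_V$. The first thing I would record is that $W^{(-)}$ is contravariant, so that $W^{g\circ k}=W^k\circ W^g$ and $W^{\id}=\id$; in particular $W^f\colon W^V\to W^U$, while $W^h,W^{h'}\colon W^U\to W^V$ run in the opposite direction and are the candidate homotopy inverses.

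Next I would apply the preceding lemma to each of the two defining homotopies. Since $hf$ and $\id_U$ are $J$-homotopic, the lemma yields that $W^{hf}$ and $W^{\id_U}$ are $J$-homotopic; rewriting these composites via contravariance gives $W^f\circ W^h\simeq \id_{W^U}$. Symmetrically, from $fh'\simeq \id_V$ we obtain $W^{h'}\circ W^f\simeq \id_{W^V}$. These are precisely the two one-sided homotopies required by the definition of a $J$-homotopy equivalence, with $W^h\colon W^U\to W^V$ serving as a right homotopy inverse of $W^f$ and $W^{h'}\colon W^U\to W^V$ as a left one. Hence $W^f$ is a $J$-homotopy equivalence.

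The only point requiring care—rather than a genuine obstacle—is keeping the contravariance of $W^{(-)}$ straight, so that after applying the functor the composites land on the correct sides and the roles of the two one-sided inverses are assigned properly. All of the homotopy-theoretic content is supplied directly by the preceding lemma, so once the bookkeeping of directions is fixed the argument is immediate.
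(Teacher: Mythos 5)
Your proof is correct and is exactly the argument the paper intends: the corollary follows from the preceding lemma by contravariant functoriality of $W^{(-)}$, applied to the two defining homotopies $hf\simeq\id_U$ and $fh'\simeq\id_V$, with the roles of left and right inverses swapping as you note. The paper leaves this as an immediate corollary without a written proof, and your bookkeeping of directions is accurate.
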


Note that if $f$ in the above corollary were a weak equivalence, we would generally need $W$ to be fibrant in order to show conclude that $W^f$ is a weak equivalence. This broader compatibility of homotopy equivalence with the Cartesian structure is their main utility to us. Furthermore, the following lemma ties homotopy equivalences to weak equivalences when considered between fibrant-cofibrant objects.

\begin{lemma}\cite[4.24.]{ds}\label{htpywe}
Let $f\colon  U\to V$ be a map in $\mathcal{M}$ with both $U$ and $V$ fibrant-cofibrant. Then $f$ is a $J$-homotopy equivalence if and only if it is a weak equivalence in $\mathcal{M}$.
\end{lemma}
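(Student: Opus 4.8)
The plan is to prove both implications using the cylinder and path objects that $J$ supplies, reducing everything to lifting problems and the two-out-of-three property in $\mathcal{M}$. First I would record that, for maps into a fibrant object, $J$-homotopy is a congruence on fibrant--cofibrant objects: it is reflexive via the constant homotopy $U\times J\to U\xrightarrow{f} V$, symmetric by applying Lemma~\ref{twohtpies} to the interval object obtained from $J$ by reversing its two endpoint inclusions, transitive via the very good path object $V^J$, and compatible with composition by Lemma~\ref{htpycomp}. Consequently $J$-homotopy equivalences are precisely the isomorphisms in the resulting quotient category of fibrant--cofibrant objects, so they are closed under composition and satisfy two-out-of-three.

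For the direction weak equivalence $\Rightarrow$ $J$-homotopy equivalence, I would factor $f=p\circ i$ with $i\colon U\to W$ an acyclic cofibration and $p\colon W\to V$ a fibration; two-out-of-three makes $p$ an acyclic fibration and $W$ is again fibrant--cofibrant, so by the congruence remarks it suffices to treat the two factors and compose. When $f$ is an acyclic fibration, lifting the cofibration $\emptyset\hookrightarrow V$ against $f$ over $\id_V$ gives a section $s$ with $fs=\id_V$; lifting the cofibration $U\amalg U\hookrightarrow U\times J$ against $f$, with $(\id_U,sf)$ on top and $U\times J\to U\xrightarrow{f}V$ on the bottom, produces a $J$-homotopy $\id_U\simeq sf$, so $s$ is a homotopy inverse of $f$. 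When $f$ is an acyclic cofibration, the dual argument using the very good path object $V^J$ and a retraction $r$ with $rf=\id_U$ yields $fr\simeq\id_V$. Lemma~\ref{htpycomp} then assembles the two homotopy equivalences into one for the general $f$.

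For the converse I would factor $f=p\circ i$ as before; $i$ is an acyclic cofibration, hence a weak equivalence and so a $J$-homotopy equivalence by the direction just proved, and since $f$ is a $J$-homotopy equivalence, two-out-of-three for homotopy equivalences forces $p$ to be one too. This reduces the claim to showing that a fibration $p\colon W\to V$ which is a $J$-homotopy equivalence is a weak equivalence. Choosing a homotopy inverse $s$ and lifting a homotopy $ps\simeq\id_V$ along $p$ (the acyclic cofibration $V\hookrightarrow V\times J$ against the fibration $p$) replaces $s$ by a strict section $s'$ with $ps'=\id_V$ and $s'p\simeq\id_W$. The remaining step---upgrading this to the assertion that $p$, equivalently $s'$, is a weak equivalence---is the one I expect to be the main obstacle. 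The naive hope of displaying $s'$ as a retract of the acyclic cofibration $V\hookrightarrow V\times J$ does not close up, since the square forcing the retraction and the square recording the homotopy constrain the same composite incompatibly; instead one must pass to the mapping cylinder of $s'$ to realize $p$ as a retract of a weak equivalence and then invoke closure of weak equivalences under retracts. With that step in hand, $f=p\circ i$ is a composite of weak equivalences and the proof concludes. Every other step is a formal lifting argument together with two-out-of-three.
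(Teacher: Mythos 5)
The paper does not prove this lemma; it imports it from Dwyer--Spalinski \cite[4.24]{ds}, so the relevant comparison is with the classical argument, which your proposal largely reconstructs. Your forward direction is correct and complete: the factorization $f=p\circ i$, the section of an acyclic fibration obtained by lifting $\emptyset\hookrightarrow V$, the homotopy $\id_U\simeq sf$ obtained by lifting $U\amalg U\hookrightarrow U\times J$ against $f$, and the dual argument via $V^J$ are exactly the standard steps. The reduction in the converse direction is also correct: two-out-of-three for $J$-homotopy equivalences (valid because they are the isomorphisms of the quotient category) reduces to the case of a fibration $p\colon W\to V$ that is a $J$-homotopy equivalence, and your rigidification of the section to $s'$ with $ps'=\id_V$ and $s'p\simeq\id_W$ is right.

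The genuine gap is the step you yourself flag. Your proposed resolution --- passing to the mapping cylinder of $s'$ to exhibit $p$ as a retract of a weak equivalence --- does not close up any better than the ``naive hope'' you reject: the mapping cylinder factors $s'$ as a cofibration followed by a weak equivalence, but deciding whether that cofibration is acyclic is exactly the original problem, and the candidate retract squares built from the cylinder fail to commute on the nose for the same reason your first attempt did. The classical ending is both simpler and different: no cylinder is needed at all. The diagram with $\id_W$ along the top row and $V\xrightarrow{s'}W\xrightarrow{p}V$ along the bottom row exhibits $p$ as a retract of $s'p$ (both squares commute strictly because $ps'=\id_V$, which is precisely why the section had to be rigidified). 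Since $s'p$ is $J$-homotopic to $\id_W$ and the end inclusions of the good cylinder $W\times J$ are weak equivalences, two-out-of-three shows $s'p$ is a weak equivalence; the retract axiom then makes $p$, and hence $f=p\circ i$, a weak equivalence. So the shape of your conclusion (``retract of a weak equivalence, then closure under retracts'') is the right one, but the specific weak equivalence is $s'p$ itself, not anything extracted from a mapping cylinder, and as written your argument does not supply the commuting retract diagram that the proof turns on.
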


In the canonical model structure on $\cat$, weak equivalences are precisely equivalences of categories and all objects are fibrant-cofibrant, giving us the following result.
\begin{cor}
Two categories are equivalent if and only if they are $I$-homotopy equivalent.
\end{cor}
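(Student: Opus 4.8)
The plan is to recognize this corollary as a direct specialization of Lemma \ref{htpywe} to the case $\mathcal{M}=\cat$ with its canonical model structure and $J=I$ the walking isomorphism. First I would assemble the three facts about this model structure that make the specialization go through: that $I$ is an interval object for it (as recorded in the example above), that every object of $\cat$ is fibrant--cofibrant in the canonical model structure, and that its weak equivalences are exactly the equivalences of categories. These are all either cited from Rezk's description in \cite{rezkcat} or already noted in the text preceding the statement, so no new input about $\cat$ is required.

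Granting these, Lemma \ref{htpywe} applies to every functor $f\colon U\to V$, since its source and target are automatically fibrant--cofibrant, and yields that $f$ is an $I$-homotopy equivalence if and only if it is a weak equivalence, that is, an equivalence of categories. It then remains only to promote this equivalence of properties of a single map to the stated equivalence of relations between objects: two categories $U$ and $V$ are $I$-homotopy equivalent precisely when some functor $U\to V$ is an $I$-homotopy equivalence, and they are equivalent precisely when some functor $U\to V$ is an equivalence of categories, so the two relations coincide by the pointwise statement just established.

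I do not expect a genuine obstacle here; the only point deserving a remark is the matching of the two notions of invertibility. The definition of $I$-homotopy equivalence provides possibly distinct left and right homotopy inverses $h,h'\colon V\to U$ with $hf\simeq\id_U$ and $fh'\simeq\id_V$, whereas an equivalence of categories is usually phrased with a single inverse; as usual these coincide, since $h\simeq h(fh')\simeq(hf)h'\simeq h'$ exhibits a common inverse. One could also bypass Lemma \ref{htpywe} entirely and argue directly, using Cartesian closedness of $\cat$ to identify an $I$-homotopy $U\times I\to V$ with a functor $I\to V^U=\fun(U,V)$, namely a natural isomorphism between the two restrictions; this makes an $I$-homotopy equivalence literally an adjoint-style equivalence of categories. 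I would prefer the route through Lemma \ref{htpywe}, however, since it keeps the corollary uniform with the other interval-object instances treated in this section.
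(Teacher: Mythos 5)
Your proposal is correct and matches the paper's argument: the corollary is stated there as an immediate specialization of Lemma \ref{htpywe} to the canonical model structure on $\cat$ with interval object $I$, using exactly the three facts you list (all objects fibrant--cofibrant, weak equivalences are equivalences of categories, $I$ an interval object). The extra remarks on reconciling the two-sided homotopy inverses and the direct argument via $V^U=\fun(U,V)$ are fine but not needed.
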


The following, rather technical result is key for showing that various functors, such as our completion, preserve appropriate notions of equivalence. 

\begin{prop}\label{htpyfun}
Let $\mathcal{M}'$ be another Cartesian model category with monomorphisms included among cofibrations, and let $J$ and $J'$ be interval objects for $\mathcal{M}$ and $\mathcal{M}'$, respectively. Furthermore, let $G\colon \mathcal{M}\to \mathcal{M}'$ be a functor with the following two properties.
\begin{enumerate}
\item The functor $G$ takes the defining diagram for $J$ to the corresponding diagram for $J'$ up to isomorphism:
\[
G\left(
\begin{tikzcd}
*_{M} \arrow[d, hook, "j_0" left] \arrow[dr, equal] \\
 J \arrow[r] & *_{M}\\
*_{M} \arrow[u, hook, , "j_1" left] \arrow[ur, equal]
\end{tikzcd}
\right)
\cong
\begin{tikzcd}
*_{M'} \arrow[d, hook, "j'_0" left] \arrow[dr, equal] \\
 J' \arrow[r] & *_{M'}.\\
*_{M'} \arrow[u, hook, , "j'_1" left] \arrow[ur, equal]
\end{tikzcd}
\]
\item The canonical map $G(U\times J)\to G(U)\times G(J)$ is a $J'$-homotopy equivalence for all $U$.
\end{enumerate}
Then $G$ takes $J$-homotopic maps to $J'$-homotopic maps and $J$-homotopy equivalences to $J'$-homotopy equivalences.
\end{prop}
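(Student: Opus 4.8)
The plan is to first reduce the assertion about homotopy equivalences to the assertion about homotopic maps, and then to prove the latter by transporting a given homotopy across the comparison map supplied by hypothesis (2). For the reduction, suppose $f\colon U\to V$ is a $J$-homotopy equivalence, witnessed by $h,h'\colon V\to U$ together with $J$-homotopies $hf\simeq \id_U$ and $fh'\simeq \id_V$. I would take $G(h)$ and $G(h')$ as the candidate inverses for $G(f)$. Since $G(h)G(f)=G(hf)$ and $\id_{G(U)}=G(\id_U)$, once we know that $G$ carries the $J$-homotopic pair $(hf,\id_U)$ to a $J'$-homotopic pair we obtain $G(h)G(f)\simeq \id_{G(U)}$, and symmetrically $G(f)G(h')\simeq \id_{G(V)}$. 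Thus the entire proposition reduces to the claim that $G$ sends $J$-homotopic maps to $J'$-homotopic maps.

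For that claim, let $H\colon U\times J\to V$ be a $J$-homotopy with $H\circ(\id_U\times j_0)=f$ and $H\circ(\id_U\times j_1)=g$. Applying $G$ produces $G(H)\colon G(U\times J)\to G(V)$, which I would combine with the canonical comparison $c\colon G(U\times J)\to G(U)\times G(J)$; by hypothesis (1) the target is identified with $G(U)\times J'$ in a way sending $G(j_i)$ to $j_i'$. The first key point is that $c$ respects endpoints strictly: writing $c=(G(\pr_U),G(\pr_J))$ and using $\pr_U\circ(\id_U\times j_i)=\id_U$ together with $\pr_J\circ(\id_U\times j_i)=j_i\circ{!}_U$, functoriality and (1) give $c\circ G(\id_U\times j_i)=(\id_{G(U)},\,j_i'\circ{!}_{G(U)})=\id_{G(U)}\times j_i'$, the endpoint inclusion of the good cylinder $G(U)\times J'$.

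Now hypothesis (2) guarantees that $c$ is a $J'$-homotopy equivalence; I would pick a homotopy inverse $\bar c$ and set $H':=G(H)\circ\bar c\colon G(U)\times J'\to G(V)$. Combining the identity $c\circ G(\id_U\times j_i)=\id_{G(U)}\times j_i'$ with the $J'$-homotopy $\bar c c\simeq \id$ and propagating it first through $G(\id_U\times j_i)$ and then through $G(H)$ by Lemma \ref{htpycomp}, one finds $H'\circ(\id_{G(U)}\times j_i')\simeq G(f)$ for $i=0$ and $\simeq G(g)$ for $i=1$. In other words, $H'$ is an honest $J'$-homotopy whose endpoints are only $J'$-homotopic to $G(f)$ and $G(g)$ rather than literally equal to them.

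The main obstacle is precisely this discrepancy: the comparison map matches endpoints only up to homotopy, so concluding $G(f)\simeq G(g)$ requires chaining the three homotopies $G(f)\simeq H'\circ(\id\times j_0')$, $\;H'\circ(\id\times j_0')\simeq H'\circ(\id\times j_1')$, and $H'\circ(\id\times j_1')\simeq G(g)$ into a single one. This needs transitivity and symmetry of $J'$-homotopy into the possibly non-fibrant target $G(V)$, which fails for an arbitrary directed cylinder but holds here because the standing hypothesis forces every object---in particular the source $G(U)$---to be cofibrant. I would therefore isolate, as a preliminary lemma adapted from \cite[\S 4]{ds}, the statement that left homotopy out of a cofibrant object is an equivalence relation realized through the good cylinder $G(U)\times J'$, and invoke it to collapse the chain. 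Checking that this equivalence-relation property can genuinely be arranged through the fixed cylinder $G(U)\times J'$, rather than through auxiliary cylinder objects with reassigned endpoints, is the delicate step on which the whole argument rests.
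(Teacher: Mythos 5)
Your argument is essentially the paper's proof. The paper likewise takes a $J'$-homotopy left inverse $\psi$ of the canonical map $\varphi\colon G(U\times J)\to G(U)\times G(J)$, uses the strict identity $\varphi\circ G(\id_U\times j_i)=\id_{G(U)}\times j'_i$ supplied by hypothesis (1), and then concatenates exactly your three homotopies $Gf\simeq GH\psi(\id_{GU}\times j'_0)\simeq GH\psi(\id_{GU}\times j'_1)\simeq Gg$ using Lemma \ref{htpycomp}; your $H'=G(H)\circ\bar c$ is the paper's $GH\circ\psi$. The one point where you diverge is in flagging that this concatenation requires the fixed-cylinder homotopy relation to be transitive (and symmetric) into a possibly non-fibrant target: the paper simply writes down the chain and asserts the conclusion, so you are being more careful than the source rather than missing an ingredient it supplies. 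Your proposed repair is indeed the delicate spot you identify -- with a cofibrant source one can glue cylinders to see that left homotopy via \emph{arbitrary} good cylinders is an equivalence relation, but transporting the glued homotopy back onto the fixed cylinder $G(U)\times J'$ by a lifting argument needs the target to be fibrant, so the preliminary lemma you want is not available in full generality. In the paper's actual applications this is harmless: either $G$ preserves products strictly (as with $\pi^*N$ and $\ho$), so $\psi=\varphi^{-1}$ and the chain collapses to the single homotopy $GH\psi$, or the relevant objects are fibrant (as for the completion functor), where the fixed-cylinder relation is an equivalence relation. As a freestanding statement, your version and the paper's alike really only produce $Gf$ and $Gg$ joined by a finite zigzag of elementary $J'$-homotopies, which is all that the downstream results require.
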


\begin{proof}
Let $G$ be as in the statement, $\varphi\colon  G(U\times J)\to G(U)\times G(J)$ the canonical map with a weak left inverse $\psi\colon G(U)\times G(J) \to G(U\times J)$ with respect to $J'$-homotopy, and $H\colon U\times J\to V$ a $J$-homotopy between $f,g\colon U\to V$. These maps fit into the following commutative diagram:
\[
\begin{tikzcd}
GU \arrow[drr, "Gf", bend left] \arrow[dr, "G(\id_U\times j_0)"] \arrow[d, "\id_{GU}\times j'_0" swap]  & &\\
GU\times GJ & G(U\times J) \arrow[r, "GH"] \arrow[l, "\varphi" above] &  GV.\\
GU \arrow[urr, "Gg", bend right, swap] \arrow[ur, "G(\id_U\times j_1)" swap] \arrow[u, "\id_{GU}\times j'_1"] & &
\end{tikzcd}
\]
Now since homotopy is compatible with composition by Lemma \ref{htpycomp}, we get the following chain of $J'$-homotopies:
\[
\begin{split}
Gf
& = GH \circ G(\id_U\times j_0)\\
& \simeq GH \circ \psi \circ \varphi \circ G(\id_U\times j_0)\\
& = GH \circ \psi \circ (\id_{GU}\times j'_0)\\
& \simeq GH \circ \psi \circ (\id_{GU}\times j'_1)\\
& = GH \circ \psi \circ \varphi  \circ G(\id_U\times j_1)\\
& \simeq GH \circ G(\id_U\times j_1)\\
& = Gg.
\end{split}
\]
Thus $G$ takes $J$-homotopies to $J'$-homotopies. Then by functoriality of $G$, it also takes $J$-homotopy equivalences to $J'$-homotopy equivalences proving the claim.
\end{proof}

Next we move to our third and most important notion of homotopy. 
\begin{notn}
Let $E=NI$ be the discrete nerve of the walking isomorphism in $\sps(\Delta)$ and recall that it may also be viewed in $\sps(\Theta C)$ via the functor $\pi^*$.
\end{notn}

\begin{df}\label{cateqdef}
We say that two maps in $ \sps(\Theta C)$ are \emph{categorically homotopic} if they are $\pi^* E$-homotopic.
We say a map in $ \sps(\Theta C)$ is a \emph{categorical equivalence} if it is a $\pi^* E$-homotopy equivalence.
\end{df}

\begin{lemma}\label{eint}
The object $\pi^* E$ is an interval object for the $\cpt(S)$-local model structure on $ \sps(\Theta C)$.
\end{lemma}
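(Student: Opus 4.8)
The plan is to verify the three defining conditions of an interval object from Definition \ref{intdef} directly, taking the terminal object $*$ of $\sps(\Theta C)$ to be $F[0]$. The first step is to justify this identification: the object $[0]()$ is terminal in $\Theta C$, since a morphism $[m](c_1,\ldots,c_m) \to [0]()$ consists of the unique map $[m]\to[0]$ in $\Delta$ together with an empty family of component morphisms, and is therefore unique. Hence the representable $F[0]$ sends every object of $\Theta C$ to a point and is the terminal presheaf. With this identification in hand, the required factorization reads $F[0]\amalg F[0] \hookrightarrow \pi^* E \xrightarrow{\sim} F[0]$.

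For the weak equivalence, I would simply observe that the map $\pi^* E \to F[0]$ is the completeness map itself: it is the image under $\pi^* N$ of the functor $I \to *$ in $\cat$, so it is the single element of $\operatorname{cplt}_C \subseteq \cpt(S)$. By definition of the $\cpt(S)$-local model structure, every map in $\cpt(S)$ is a $\cpt(S)$-local equivalence, so $\pi^* E \to F[0]$ is a weak equivalence in $\mathcal{M}$, as needed. For the cofibration, the two objects $0,1$ of $I$ furnish two maps $F[0] \to \pi^* E$ via $\pi^* N$, and hence a map $F[0]\amalg F[0] \to \pi^* E$. Since cofibrations in the $\cpt(S)$-local structure are exactly the monomorphisms of the underlying injective structure, it suffices to check that this map is levelwise injective. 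At level $[m](c_1,\ldots,c_m)$ the functor $\pi^*$ forgets the $C$-data, giving $(\pi^* E)_{[m](c_1,\ldots,c_m)} \cong \Hom_{\cat}([m], I)$, while $(F[0]\amalg F[0])_{[m](c_1,\ldots,c_m)}$ is two points; these map to the constant functors $[m]\to I$ at $0$ and at $1$ respectively, which are distinct, so the map is injective.

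Finally, the two triangles in Definition \ref{intdef} commute because the composites $* \xrightarrow{0} I \to *$ and $* \xrightarrow{1} I \to *$ are both the identity of $*$ in $\cat$, a fact preserved by the functor $\pi^* N$. I do not anticipate a genuine obstacle here: the weak equivalence is built into the definition of the localization, and the only real care needed is the bookkeeping identifying $F[0]$ with the terminal object and confirming that $\pi^* N$ carries the relevant diagram in $\cat$ to precisely the diagram required of an interval object.
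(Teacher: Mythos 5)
Your proof is correct and follows essentially the same route as the paper: the weak equivalence is immediate because $\pi^*E\to F[0]$ is the completeness map and hence an element of $\cpt(S)$, and the cofibration is checked by verifying a monomorphism (the paper does this abstractly, noting that $\pi^*N$ preserves monomorphisms as a composite of right adjoints, while you compute levels explicitly). If anything, your levelwise computation is marginally more complete, since it verifies injectivity of the combined map $F[0]\amalg F[0]\to\pi^*E$ rather than of each leg separately.
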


\begin{proof}
As an element of the set $\cpt$, the map $\pi^* E\to F[0]$ is, in particular, a $\cpt(S)$-local equivalence. The maps $j_0,j_1\colon F[0]\to \pi^* E$ are induced by the inclusions ${{0},{1}\colon *\to I}$ and are monomorphisms since both the discrete nerve and $\pi^*$ preserve monomorphisms as right adjoints.
\end{proof}

\begin{cor}\label{eqcat}
The functor $\pi^* N\colon  \cat \to \sps(\Theta C)$ takes equivalences of categories to categorical equivalences.
\end{cor}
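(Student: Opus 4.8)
The plan is to obtain this corollary from the functoriality criterion of Proposition \ref{htpyfun}, applied to the functor $G=\pi^* N$ with $\mathcal{M}=\cat$ in the canonical model structure and interval object $J=I$, and $\mathcal{M}'=\sps(\Theta C)$ in the $\cpt(S)$-local model structure and interval object $J'=\pi^* E$ (an interval object by the preceding lemma). Once the two hypotheses of Proposition \ref{htpyfun} are checked, that proposition yields that $\pi^* N$ carries $I$-homotopy equivalences to $\pi^* E$-homotopy equivalences, which are exactly the categorical equivalences of Definition \ref{cateqdef}. To finish, I would note that every object of $\cat$ is fibrant-cofibrant in the canonical model structure and its weak equivalences are precisely the equivalences of categories, so Lemma \ref{htpywe} tells us that any equivalence of categories is an $I$-homotopy equivalence; chaining these two facts gives the claim.

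For the first hypothesis, I would argue that $\pi^* N$ sends the interval diagram for $I$ to that for $\pi^* E$ essentially by definition. Since $E=NI$, we have $\pi^* N(I)=\pi^* E$, and the two inclusions $0,1\colon *\to I$ together with the projection $I\to *$ are carried to the structure maps $j_0,j_1\colon F[0]\to \pi^* E$ and $\pi^* E\to F[0]$ of the $\pi^* E$-interval, matching the description in the preceding lemma. The one point needing justification is that the terminal category is sent to the terminal object $F[0]$: the discrete nerve of the terminal category is the terminal presheaf of $\sps(\Delta)$ (as $[0]$ is terminal in $\Delta$), which is $F[0]$, and $\pi^*$ preserves it since precomposition functors preserve limits.

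For the second hypothesis, I would show that the canonical comparison map $\pi^* N(U\times I)\to \pi^* N(U)\times \pi^* N(I)$ is in fact an isomorphism, hence a fortiori a $\pi^* E$-homotopy equivalence. This reduces to preservation of finite products by $\pi^* N$: the discrete nerve $N$ preserves products because $\Hom_{\cat}([m],A\times B)\cong \Hom_{\cat}([m],A)\times \Hom_{\cat}([m],B)$ levelwise, and $\pi^*=(\pi_1^1)^*$ preserves products as a precomposition functor. I expect this to be the only step with any real content, and the main care needed is bookkeeping across the three operations involved—the discrete nerve, the precomposition $\pi^*$, and the formation of products—since the remainder is a direct unwinding of definitions together with the machinery of Section \ref{sho}.
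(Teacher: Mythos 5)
Your proposal is correct and follows essentially the same route as the paper: the paper's proof also applies Proposition \ref{htpyfun} to $\pi^*N$, noting that it sends the interval $I$ to $\pi^*E$ by definition and preserves products (the paper cites its being a composite of two right adjoints, where you verify product preservation by hand), and then invokes the identification of equivalences of categories with $I$-homotopy equivalences coming from Lemma \ref{htpywe}.
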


\begin{proof}
The discrete nerve $\pi^*N$ takes the interval object $I$ to the interval object $\pi^*E$ by definition and preserves products as it is a composite of two right adjoints. Thus $\pi^*N$ satisfies the hypotheses of Proposition \ref{htpyfun} and takes $I$-homotopy equivalences to $\pi^*E$-homotopy equivalences.
\end{proof}

Using the facts that $\cpt(S)$-fibrant objects are also injective fibrant and that $\cpt(S)$-local equivalences coincide with levelwise weak equivalences between fibrant objects, we obtain the following corollary using Lemma \ref{htpywe}.

\begin{cor}\label{cateqreedy}
Let $U$ and $V$ be $\cpt(S)$-fibrant objects of $\sps(\Theta C)$ and $f\colon U\to V$. Then the following properties are equivalent for $f$:
\begin{enumerate}
\item categorical equivalence,
\item simplicial homotopy equivalence, and
\item levelwise weak equivalence.
\end{enumerate}
\end{cor}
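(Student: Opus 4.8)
The plan is to apply Lemma \ref{htpywe} twice: once in the injective model structure with interval object $\Delta[1]$, and once in the $\cpt(S)$-local model structure with interval object $\pi^* E$. In each case Lemma \ref{htpywe} identifies the relevant homotopy equivalences with the weak equivalences of that model structure, and the two stated facts then tie both to levelwise weak equivalences. Both the injective and the $\cpt(S)$-local model structures are among the choices of $\mathcal{M}$ fixed at the start of this section, so they satisfy the standing hypotheses (Cartesian, monomorphisms among cofibrations), and the preceding lemma guarantees that $\pi^*E$ is indeed an interval object for the $\cpt(S)$-local structure.

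First I would check that $U$ and $V$ are fibrant-cofibrant in both model structures. Cofibrancy is automatic in either case, since all monomorphisms are cofibrations and the map out of the initial object is a monomorphism. For fibrancy, $U$ and $V$ are $\cpt(S)$-fibrant by hypothesis, hence fibrant in the $\cpt(S)$-local model structure, and also injective fibrant by the stated fact that $\cpt(S)$-fibrant objects are injective fibrant.

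With the hypotheses in place, applying Lemma \ref{htpywe} in the injective model structure with $J=\Delta[1]$ shows that $f$ is a $\Delta[1]$-homotopy equivalence---that is, a simplicial homotopy equivalence---if and only if it is a weak equivalence in the injective model structure, namely a levelwise weak equivalence; this yields the equivalence of (2) and (3). Applying Lemma \ref{htpywe} in the $\cpt(S)$-local model structure with $J=\pi^*E$ shows that $f$ is a $\pi^* E$-homotopy equivalence---that is, a categorical equivalence---if and only if it is a weak equivalence in the $\cpt(S)$-local model structure, namely a $\cpt(S)$-local equivalence. Since $\cpt(S)$-local equivalences between fibrant objects coincide with levelwise weak equivalences and $U,V$ are fibrant, this yields the equivalence of (1) and (3). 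Combining the two chains gives the equivalence of all three conditions.

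The argument is essentially a bookkeeping exercise once the setup is correct, so the only delicate point---and the step I would be most careful about---is verifying that the hypotheses of Lemma \ref{htpywe} hold simultaneously in both model structures, in particular that $\cpt(S)$-fibrancy supplies both flavors of fibrancy at once. I would also note that an alternative route through Lemma \ref{twohtpies}, comparing $\Delta[1]$- and $\pi^*E$-homotopies directly within the $\cpt(S)$-local model structure, would establish (1)$\,\Leftrightarrow\,$(2) on its own; but the symmetric double application of Lemma \ref{htpywe} is cleaner, since it anchors both homotopy notions to levelwise weak equivalences without comparing interval objects.
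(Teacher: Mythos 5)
Your proposal is correct and follows essentially the same route as the paper: Lemma \ref{htpywe} applied in the $\cpt(S)$-local model structure (with interval $\pi^*E$) gives $(1)\Leftrightarrow(3)$, and applied in the injective model structure (with interval $\Delta[1]$, using that $\cpt(S)$-fibrant objects are injective fibrant) gives $(2)\Leftrightarrow(3)$. Your extra care in verifying the fibrant-cofibrant hypotheses in both model structures is sound and merely makes explicit what the paper leaves implicit.
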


\begin{proof}
Lemma \ref{htpywe} tells us that between $\cpt(S)$-fibrant objects, categorical equivalences coincide with $\cpt(S)$-local equivalences which further coincide with levelwise weak equivalences giving us the equivalence $(1)\Leftrightarrow (3)$. Since $\cpt(S)$-fibrant objects are, in particular, injective fibrant, Lemma \ref{htpywe} tells us also that levelwise weak equivalences coincide with simplicial homotopy equivalences, giving us the equivalence $(2)\Leftrightarrow (3)$.
\end{proof}

Using the fact that the interval object $\pi^* E$ that defines categorical equivalence is also used to define completeness, we can extend the relationship between homotopy equivalences and weak equivalences from Lemma \ref{htpywe} to more general Segal objects. The following result was shown for $C=*$ by Rezk in \cite[13.6]{rezksp}.
 
\begin{prop}\label{cateqcpt}
If a map between $\se(S)$-fibrant objects of $\sps(\Theta C)$ is a categorical equivalence, then it is a $\cpt(S)$-local equivalence.
\end{prop}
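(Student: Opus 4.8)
The plan is to verify the defining property of a $\cpt(S)$-local equivalence directly: writing $f\colon U\to V$ for the given categorical equivalence, I must show that
\[
f^*\colon \Map(V,W)\to \Map(U,W)
\]
is a weak equivalence of simplicial sets for \emph{every} $\cpt(S)$-fibrant $W$. The heart of the matter is that mapping out of a $\cpt(S)$-fibrant object converts $\pi^* E$-homotopies into simplicial homotopies; once this is established, the conclusion is formal, using the two-sided homotopy inverses that are built into the notion of a categorical equivalence (Definition \ref{cateqdef}).

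First I would fix a $\cpt(S)$-fibrant $W$ and record the path object supplied by the interval object $\pi^* E$. Since $\pi^* E$ is an interval object for the $\cpt(S)$-local model structure and $W$ is fibrant there, the internal hom $W^{\pi^* E}$ is a very good path object, giving a factorization $W\xrightarrow{\sim} W^{\pi^* E}\twoheadrightarrow W\times W$ of the diagonal into a $\cpt(S)$-local equivalence followed by a $\cpt(S)$-fibration. Applying $\Map(U,-)$ and using that the (Bousfield-localized, hence still simplicial) $\cpt(S)$-local structure is a simplicial model category with $U$ cofibrant, this is sent to a factorization
\[
\Map(U,W)\xrightarrow{\sim} \Map(U,W^{\pi^* E})\twoheadrightarrow \Map(U,W)\times \Map(U,W)
\]
of the diagonal, exhibiting $\Map(U,W^{\pi^* E})$ as a path object for the Kan complex $\Map(U,W)$.

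Next I would produce the right homotopy. Given a $\pi^* E$-homotopy $H\colon U\times \pi^* E\to V$ between maps $f,g\colon U\to V$, the Cartesian-closed adjunction yields a natural isomorphism $\Map(U\times \pi^* E,W)\cong \Map(U,W^{\pi^* E})$ under which the two endpoint evaluations correspond to precomposition with $j_0,j_1$. Hence the composite $\Map(V,W)\xrightarrow{H^*}\Map(U\times \pi^* E,W)\cong \Map(U,W^{\pi^* E})$ is a right homotopy from $f^*$ to $g^*$, so these are simplicially homotopic maps of Kan complexes. That is, $\Map(-,W)$ sends $\pi^* E$-homotopic maps to homotopic maps. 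Applying this to the data of the categorical equivalence, namely $h,h'\colon V\to U$ with $hf\simeq \id_U$ and $fh'\simeq \id_V$, gives $f^*h^*=(hf)^*\simeq \id$ and $(h')^*f^*=(fh')^*\simeq \id$, so $f^*$ has both a left and a right homotopy inverse and is therefore a homotopy equivalence, hence a weak equivalence. As $W$ was arbitrary, $f$ is a $\cpt(S)$-local equivalence.

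The step I expect to be the main obstacle is the passage through $\Map(U,-)$: one must know that applying it to the path-object factorization preserves both the acyclic cofibration and the fibration, which is precisely the SM7 axiom for the $\cpt(S)$-local model structure together with cofibrancy of $U$; one must also check carefully the naturality of $\Map(U\times \pi^* E,W)\cong \Map(U,W^{\pi^* E})$ so that the endpoint evaluations really recover $f^*$ and $g^*$. I note that this argument uses only fibrancy of the test object $W$ and cofibrancy of $U$ and $V$, so the $\se(S)$-fibrancy hypothesis is not strictly needed for this implication. A more conceptual variant makes the same point: the projection $U\times \pi^* E\to U$ and both inclusions $\id_U\times j_0,\id_U\times j_1$ are $\cpt(S)$-local equivalences by the Cartesian structure, so $\pi^* E$-homotopic maps agree in the homotopy category of the $\cpt(S)$-local structure; a categorical equivalence then becomes an isomorphism there and is consequently a $\cpt(S)$-local equivalence.
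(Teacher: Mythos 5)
Your argument is correct. It is, however, packaged differently from the paper's: the paper deduces the statement in two lines from machinery already in place, namely Corollary \ref{cateqcart} (exponentiation $W^{(-)}$ preserves $\pi^*E$-homotopy equivalences) followed by Corollary \ref{cateqreedy} (a categorical equivalence between the $\cpt(S)$-fibrant objects $W^V$ and $W^U$ is a levelwise weak equivalence, via Lemma \ref{htpywe}), and then identifies $\Map(f,W)$ with $(W^f)_0$. You instead work directly with the external mapping spaces, showing that $\Map(-,W)$ carries $\pi^*E$-homotopies to simplicial right homotopies via the path object $\Map(U,W^{\pi^*E})$, so that $f^*$ acquires left and right homotopy inverses and is a homotopy equivalence of Kan complexes. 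In effect you re-prove, at the level of $\Map$, the special case of Lemma \ref{htpywe} that the paper invokes; what you gain is a self-contained argument that makes the role of SM7 and the path object explicit (modulo the small point, which you flag, that sending the acyclic cofibration $W\to W^{\pi^*E}$ to a weak equivalence under $\Map(U,-)$ is a Ken Brown--type consequence of SM7 rather than SM7 verbatim), while the paper gains brevity by reusing its interval-object formalism. Your closing observations are also sound: the $\se(S)$-fibrancy of $U$ and $V$ is indeed not used in either proof (only their cofibrancy, which is automatic, and the $\cpt(S)$-fibrancy of the test object $W$), and the alternative argument via the homotopy category of the $\cpt(S)$-local structure --- using that $U\times\pi^*E\to U$ is a local equivalence by the Cartesian pushout-product axiom applied to $j_0$, so that categorically homotopic maps are identified in $\ho$ and a categorical equivalence becomes an isomorphism there --- is equally valid by the saturation of weak equivalences in a model category.
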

 
\begin{proof}
Let $X,Y,Z\in \sps(\Theta C)$ with $X$ and $Y$ $\se(S)$-fibrant, and $Z$ $\cpt(S)$-fibrant, and let $f\colon X\to Y$ be a categorical equivalence. Now by Corollary \ref{cateqcart} $Z^f$ is a categorical equivalence between the $\cpt(S)$-fibrant objects $Z^Y$ and $Z^X$. Thus $Z^f$ is a levelwise equivalence by Corollary \ref{cateqreedy}, in particular at level $0$. Thus the map
\[
\Map(f,Z)\cong \Map(F[0],Z^{f}) \cong (W^{f})_0
\]
is a weak equivalence of spaces, which precisely means that $f$ is a $\cpt(S)$-local equivalence.
\end{proof}

\section{DK-equivalences}\label{sdk}

In this section we study DK-equivalences, short for Dwyer-Kan equivalences, which may be viewed as maps that induce equivalences of certain \emph{enriched homotopy categories} and behave like essentially surjective and fully faithful functors. We establish connections between DK-equivalences and levelwise and categorical equivalences. DK-equivalences for Segal spaces were extensively studied by Rezk in \cite{rezksp}, and both our results and some of our techniques generalize those found there. Similar results regarding DK-equivalence have been shown for $\Theta_n$-spaces in slightly more restrictive form also by Bergner in \cite{bdisc} using alternate methods.

We start with a few results regarding homotopy categories which are used to define the essential surjectivity part of DK-equivalences.

\begin{df}
Let $X$ be a $\se(S)$-fibrant object of $\sps(\Theta C)$. Xe define the \emph{homotopy category} of $X$, denoted by $\ho(X)$, as the category with objects $X_{[0],0}$ and $\Hom$-sets given by
\[
\Hom_{\ho(X)}(x,y)= \pi_0 (\map_X(x,y)_t))
\]
for objects $x$ and $y$. Recall that $t$ is a terminal object of $C$.
\end{df}

The homotopy category construction is functorial and provides a one-sided inverse to the discrete nerve, giving us the following converse to Corollary \ref{eqcat}.
\begin{lemma}\label{cateqhocat}
If two $\se(S)$-fibrant objects of $\sps(\Theta C)$ are categorically equivalent, then they have equivalent homotopy categories.
\end{lemma}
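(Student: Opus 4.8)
The plan is to show that the homotopy category functor $\ho$ carries categorical equivalences to equivalences of categories; this immediately yields the claim, since an equivalence $\ho(U)\to\ho(V)$ exhibits the two homotopy categories as equivalent. Because a categorical equivalence is by Definition \ref{cateqdef} a $\pi^*E$-homotopy equivalence, and because a functor is an equivalence of categories as soon as it has both a left and a right inverse up to natural isomorphism, it suffices to prove the following: \emph{if $p,q\colon U\to V$ are $\pi^*E$-homotopic maps of $\se(S)$-fibrant objects, then $\ho(p)$ and $\ho(q)$ are naturally isomorphic}. Indeed, applying this to the homotopies $hf\simeq\id_U$ and $fh'\simeq\id_V$ coming from a categorical equivalence $f$ with inverses $h,h'$ produces natural isomorphisms $\ho(h)\ho(f)\cong\id_{\ho(U)}$ and $\ho(f)\ho(h')\cong\id_{\ho(V)}$, so that $\ho(f)$ is an equivalence. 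This mirrors the strategy of Proposition \ref{htpyfun}, but that proposition does not apply verbatim, since $\ho$ is only defined on $\se(S)$-fibrant objects while $U\times\pi^*E$ need not be one; hence I argue directly.

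To build the natural isomorphism from a $\pi^*E$-homotopy $H\colon U\times\pi^*E\to V$, I use that a point $x\in U_{[0],0}$ is the same datum as a map $x\colon F[0]\to U$ and that $F[0]$ is terminal in $\sps(\Theta C)$, so $F[0]\times\pi^*E\cong\pi^*E$. Precomposing $H$ with $x\times\id$ gives $H_x\colon\pi^*E\to V$ whose restrictions along the two endpoint inclusions are $p(x)$ and $q(x)$. The object $\pi^*E=\pi^*NI$ contains, at level $[1](t)$, the generating isomorphism $\phi\colon 0\to 1$ of the walking isomorphism $I$; its image $H_x(\phi)\in V_{[1](t),0}$ is a $1$-cell from $p(x)$ to $q(x)$, and I take $\eta_x\in\Hom_{\ho(V)}(p(x),q(x))$ to be its class in $\pi_0\map_V(p(x),q(x))_t$.

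Two things remain, and they are the heart of the matter. First, \emph{invertibility}: the inverse isomorphism $\phi^{-1}$ of $I$ gives a $1$-cell $H_x(\phi^{-1})$ from $q(x)$ to $p(x)$, and the $2$-cells of $\pi^*E=\pi^*NI$ witnessing $\phi\circ\phi^{-1}=\id_1$ and $\phi^{-1}\circ\phi=\id_0$ map under $H_x$ to witnesses that $\eta_x$ is invertible in $\ho(V)$, whose composition agrees with composition of $1$-cells up to homotopy by the Segal structure on $V$; this is the concrete incarnation of the identity $\ho\circ\pi^*N\cong\id$ evaluated at $I$. Second, \emph{naturality}: for a morphism $[\gamma]\colon x\to x'$ of $\ho(U)$ represented by $\gamma\in\map_U(x,x')_t$, the homotopy $H$ applied to cells of $U\times\pi^*E$ assembled from $\gamma$ and $\phi$ produces a filler exhibiting $\eta_{x'}\circ\ho(p)([\gamma])=\ho(q)([\gamma])\circ\eta_x$ after passing to $\pi_0$ of mapping objects. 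I expect this naturality square to be the main obstacle: it requires organizing the relevant cells of the product $U\times\pi^*E$ and invoking the Segal structure of $V$ to identify the two composites in $\ho(V)$, which is precisely the step where $\se(S)$-fibrancy is used.
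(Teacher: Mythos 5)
Your argument is correct in outline, but it takes a genuinely different route from the paper. The paper's proof is a two-line application of Proposition \ref{htpyfun}: it observes that $\ho\circ\pi^*N\cong\id_{\cat}$ (so $\ho(\pi^*E)\cong I$) and that $\ho$ preserves products (because $\pi_0$ and evaluation at level $([0],0)$ do), and concludes that $\ho$ carries $\pi^*E$-homotopy equivalences to $I$-homotopy equivalences, i.e.\ to equivalences of categories. You instead unwind what a $\pi^*E$-homotopy $H\colon U\times\pi^*E\to V$ gives on homotopy categories by hand: the components $\eta_x=[H_x(\phi)]$, invertibility via the $2$-cells of $NI$ (which is also immediate from Proposition \ref{eeq}, since maps $\pi^*E\to V$ land in $V_{eq}$), and naturality via degenerate $2$-cells of $U$ paired with $2$-cells of $NI$ and the Segal condition in $V$. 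Both routes end at the same place --- a $\pi^*E$-homotopy yields a natural isomorphism of induced functors --- and your reduction to that statement, followed by the two-out-of-inverses argument for $\ho(f)$, is exactly right.

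The trade-off is worth recording. Your objection to applying Proposition \ref{htpyfun} verbatim --- that $\ho$ is only defined on $\se(S)$-fibrant objects while $U\times\pi^*E$ need not obviously be one --- is a real gap in the paper's exposition as written; the paper silently treats $\ho$ as a functor on all of $\sps(\Theta C)$. (It can be patched: $\pi^*E$ is a discrete nerve, hence Segal, and Cartesianness of the $\se(S)$-local structure makes $U\times\pi^*E$ again $\se(S)$-fibrant, so $\ho$ need only be functorial on Segal objects.) Your direct construction sidesteps this entirely, since the homotopies in play have $\se(S)$-fibrant source and target. The cost is that the naturality square, which you correctly identify as the crux, is only sketched: to finish one must exhibit, for $\gamma\in\map_U(x,x')_t$, explicit $2$-cells in $V_{[2](t,t)}$ (images under $H$ of $(s_i\gamma,\sigma)$ for suitable degeneracies and $2$-simplices $\sigma$ of $NI$) whose faces identify $[H(\gamma,\phi)]$ with both composites $\eta_{x'}\circ\ho(p)([\gamma])$ and $\ho(q)([\gamma])\circ\eta_x$. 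This is standard and does go through, but it is the part of your write-up that still needs to be carried out in full.
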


\begin{proof}
First note that $\ho\pi^*N\cong \id_{\cat}$ and in particular, $\ho(\pi^*E)\cong I$. The functor $\ho$ also preserves products, since $\pi_0$ and evaluation at level $([0],0)$ as maps $\sps(\Theta C)\to \set$ 
do so. Thus the functor $\ho\colon \sps(\Theta C)\to \cat$ satisfies the hypotheses of Proposition \ref{htpyfun}, which tells us that $\ho$ takes categorical equivalences to equivalences of categories. 
\end{proof}

We recall the following characterization of isomorphisms in the homotopy category from \cite[5.5]{rezksp}.
Given a Segal object $X$, a map $\varphi\in \map_X(x,y)_{[0],0}$ represents an isomorphism in $\ho(X)$ if and only there are maps $\psi,\psi'\in \map_X(y,x)_{[0],0}$ such that $\varphi\psi$ and $\psi'\varphi$ are homotopic to the identities on $y$ and $x$, respectively. We call such $\varphi$ a \emph{homotopy equivalence} in $X$, and they are closed under homotopy by \cite[5.8]{rezksp}. As a consequence we may consider the space $X_{eq}\subseteq X_{[1](t)}$ consisting of their components. 

The interval object $\pi^* E$ corepresents a notion of equivalence that is a priori stronger, where the two weak inverses $f$ and $h$ to $g$ are required to coincide. However, the following proposition says that every homotopy equivalence is homotopic to one with equal left and right weak inverse.

\begin{prop}\cite[7.8]{rezkth}\label{eeq}
Let $X$ be a $\se(S)$-fibrant object in $\sps(\Theta C)$. Then the map
\[
\Map(\pi^* E, X)\to \Map( \pi^* F[1],X)\cong (\tau^*X)_1
\]
induced by either inclusion $[1]\hookrightarrow I$ factors through $X_{eq}\subseteq (\tau^*X)_1 $ and induces a weak equivalence ${\Map(\pi^* E, X)\to X_{eq}}$ of spaces.
\end{prop}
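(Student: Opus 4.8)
The plan is to reduce the statement to the underlying Segal space and then run Rezk's analysis of the walking isomorphism. First I would invoke the simplicial adjunction $\pi^*\dashv\tau^*$ from the chain of adjoints in Section \ref{sbg}, whose associated mapping-space isomorphism $\Map(\pi^*X,W)\cong\Map(X,\tau^*W)$ lets me rewrite the map of the statement as
\[
\Map(\pi^* E, W)\cong\Map(E,\tau^* W)\to\Map(F[1],\tau^* W)\cong(\tau^* W)_1=W_{[1](t)},
\]
induced by the edge of $E$ corresponding to the isomorphism $0\to 1$ of $I$. Here $M:=\tau^* W$ is the underlying simplicial space of $W$; since $\tau^*$ is right Quillen for the injective and $\se$-local structures, $M$ is injective fibrant and $\se_*$-fibrant, i.e.\ a Segal space, and by definition $M_{eq}:=W_{eq}\subseteq M_1$ is the union of components of homotopy equivalences of $M$. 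Thus it suffices to show that $\Map(E,M)\to M_1$ factors through $M_{eq}$ and restricts to a weak equivalence onto it. (The two inclusions $[1]\to I$ are swapped by the nontrivial automorphism of $I$, so treating one edge is enough.)

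I would organize the computation along the skeletal filtration $E=\colim_d\operatorname{sk}_d E$ of the nerve $E=NI$. In each dimension $E$ has exactly two nondegenerate simplices, the alternating strings $(0,1,0,\ldots)$ and $(1,0,1,\ldots)$, so $\operatorname{sk}_d E$ is built from $\operatorname{sk}_{d-1}E$ by attaching these two simplices along their boundaries. Applying $\Map(-,M)$ to the attachment pushouts produces pullback squares of Kan complexes in which the leg $\Map(\Delta[d],M)\to\Map(\partial\Delta[d],M)$ is a fibration (a cofibration into the injective fibrant $M$), hence homotopy pullbacks by Proposition \ref{fibpb}; likewise the transition maps $\Map(\operatorname{sk}_d E,M)\to\Map(\operatorname{sk}_{d-1}E,M)$ are fibrations, so that $\Map(E,M)\simeq\lim_d\Map(\operatorname{sk}_d E,M)$ is the homotopy limit of a tower of fibrations. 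The statement then follows from two claims: that restriction to the edge $\Map(\operatorname{sk}_2 E,M)\to M_1$ is a weak equivalence onto $M_{eq}$, and that every transition map is a weak equivalence for $d\geq 2$.

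For the $2$-skeleton I would read off the attached simplices: the $2$-simplex $(0,1,0)$ has faces $e$, its reverse $\bar e$, and a degenerate edge, witnessing $\bar e\circ e\simeq\id$, while $(1,0,1)$ witnesses $e\circ\bar e\simeq\id$; thus a map $\operatorname{sk}_2 E\to M$ records a $1$-cell $g$, a candidate inverse $h$, and $2$-cells witnessing $hg\simeq\id$ and $gh\simeq\id$, which is exactly the data making the image of $g$ a homotopy equivalence and gives the factorization through $M_{eq}$. To see restriction to $g$ is a weak equivalence onto $M_{eq}$, I would use the Segal condition (Proposition \ref{moresegal}) and the fiberwise criterion \ref{fiber} to identify the homotopy fiber over a fixed $g\in M_{eq}$ with the space of coherent inverse data and argue it is contractible: composition with the homotopy-invertible $g$ induces weak equivalences of mapping spaces, since homotopy equivalences are closed under homotopy and act invertibly in $\ho(M)$ \cite[5.8]{rezksp}, so an inverse together with its witnesses is unique up to a contractible choice. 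For $d\geq 3$ the attached alternating simplices $(0,1,0,1),\ldots$ contribute, via the Segal condition, only iterated homotopy fiber products of copies of $M_1$ over $M_0$ in which some face map is composition with an equivalence; the same contractibility input makes each transition map a weak equivalence, so the tower is essentially constant from the $2$-skeleton on and $\Map(E,M)\simeq M_{eq}$.

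The main obstacle will be precisely this contractibility input: showing that attaching the higher alternating simplices does not change the homotopy type, equivalently that the space of coherent inverse data above a fixed homotopy equivalence is contractible. This is where the Segal condition enters in an essential, iterated way — expressing $\Map(\Delta[n],M)$ as a homotopy limit of copies of $M_1$ over $M_0$ and repeatedly using that a face map given by composition with a homotopy-invertible cell is a weak equivalence onto the equivalence components. Setting up the induction on skeletal degree and checking the indexing of faces in the alternating simplices so that an invertible factor always appears is the technical heart; everything else is formal manipulation of the adjunction $\pi^*\dashv\tau^*$ and of homotopy pullbacks in the injective model structure.
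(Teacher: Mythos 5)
Your first step --- passing across the simplicial adjunction $\pi^*\dashv\tau^*$ to the underlying Segal space $M=\tau^*W$ and reducing to Rezk's statement for Segal spaces --- is correct, and it is in fact all the paper itself does here (the proposition is quoted from \cite[7.8]{rezkth}, which reduces to \cite[6.2]{rezksp}; no proof is given in this paper). The gap is in the filtration you chose for the second half. Both of your key claims --- that $\Map(\operatorname{sk}_2E,M)\to M_1$ is already a weak equivalence onto $M_{eq}$, and that the transition maps $\Map(\operatorname{sk}_dE,M)\to\Map(\operatorname{sk}_{d-1}E,M)$ are weak equivalences from there on --- are false. A point of $\Map(\operatorname{sk}_2E,M)$ over a fixed homotopy equivalence $g$ consists of a candidate inverse $h$ and two \emph{independent} $2$-simplices $\sigma,\sigma'$ witnessing $hg\simeq\id$ and $gh\simeq\id$. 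The space of pairs $(h,\sigma)$ is indeed contractible (it is the homotopy fiber over the identity of the weak equivalence ``compose with $g$''), but once $(g,h,\sigma)$ is fixed, the space of second witnesses $\sigma'$ is the homotopy fiber over $s_0x_1$ of a map from a contractible space (the Segal fillers of the composable pair $(h,g)$) into $M_1$, i.e.\ a loop space $\Omega_{s_0x_1}M_1$, which is not contractible whenever $\map_M(x_1,x_1)$ has nontrivial fundamental group at the identity. Dually, for $d\geq 3$ you attach both nondegenerate $d$-simplices along their \emph{full} boundaries, and $\Map(\Delta[d],M)\to\Map(\partial\Delta[d],M)$ is not a weak equivalence onto components for a Segal space: the boundary already contains all $d+1$ faces, so one face is overdetermined and the homotopy fibers are again loop spaces. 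These failures cancel only in the limit over $d$ --- that is exactly why the infinite-dimensional $E$ is needed --- so your tower is not essentially constant from degree $2$ on, and the argument as structured does not close.

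The repair is Rezk's: filter $E$ not by skeleta but by the subobjects $E(1)\subset E(2)\subset\cdots$, where $E(k)$ is the image of the single alternating $k$-simplex $(x,y,x,\ldots)$. Then $E(k+1)$ is obtained from $E(k)$ by attaching one $(k+1)$-simplex along the \emph{outer horn} $\Lambda^0[k+1]$ (its $d_0$-face, the other nondegenerate $k$-simplex, is not yet present), and the relevant input is the special outer horn lemma for Segal spaces: $M_{k+1}\to\Map(\Lambda^0[k+1],M)$ is a trivial fibration over the components where the initial edge is a homotopy equivalence. This is precisely the ``composition with an invertible cell is an equivalence'' ingredient you identify, but it only applies because each attachment leaves exactly one face free; with boundary attachments it does not. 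With that filtration the rest of your outline (fibrations from injective fibrancy, Proposition \ref{fibpb}, the identification of the fiber of $\Map(E(2),M)\to M_1$ over $g\in M_{eq}$) goes through.
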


For a Segal object $X$ the set of isomorphism classes of objects of $\ho(X)$ coincides with the set of homotopy equivalence classes of objects of $X$. However, when $X$ is complete, two of its objects homotopy equivalent precisely when there is a path connecting them by Proposition \ref{eeq}, which implies that the set of homotopy equivalence classes of objects may be computed as $\pi_0(X_{[0]})$. For a general $\se(S)$-fibrant $X$ the set of homotopy equivalence classes of objects is a quotient of $\pi_0(X_{[0]})$.

\begin{df}\label{dkdef}
Let $f\colon X\to Y$ be a map between $\se(S)$-fibrant objects of $\sps(\Theta C)$. We say that $f$ is a \emph{DK-equivalence} if 
\begin{enumerate}
\item the functor $\ho(f)\colon \ho(X)\to \ho(Y)$ is an equivalence of categories, and

\item the maps $\map_X(x,y)\to \map_Y(f(x),f(y))$ are levelwise weak equivalences for all $x,y\in X_{[0],0}$.
\end{enumerate}
\end{df}

The second condition, which we refer to as $f$ being \emph{homotopically fully faithful}, implies that $f$ is fully faithful on homotopy categories, so the first condition is may be reduced to
\begin{enumerate}
\item[(1')] the functor $\ho(f)\colon \ho(X)\to \ho(Y)$ is essentially surjective.
\end{enumerate}
Furthermore, since essential surjectivity is equivalent to surjectivity on isomorphism classes of objects, condition (1') can be reduced to the following when $Y$ is $\cpt(S)$-fibrant:
\begin{enumerate}
\item[(1'')] the map $\pi_0(f)\colon \pi_0(X_{[0]})\to \pi_0(Y_{[0]})$ is surjective.
\end{enumerate}

While this notion of a DK-equivalence captures the correct notion of equivalence for enriched categories, when higher-dimensional completeness conditions are omitted in the $n$-categorical setting, a weaker notion of DK-equivalence is required. We provide the adapted notion for the $n$-dimensional setting in Definition \ref{dkn}.

\begin{df}
Let $\mathcal{W}$ be a class of morphisms in a category $\mathcal{C}$. We say that $\mathcal{W}$ satisfies the \emph{two-out-of-six} property if for any sequence of morphisms
\[
\begin{tikzcd}[sep=small]
X \arrow[r, "f"]
& Y \arrow[r, "g"]
& Z \arrow[r, "h"]
& W
\end{tikzcd}
\]
if $fg$ and $gh$ are in $\mathcal{W}$, then so are $f,g,h$ and $fgh$.
\end{df}
The two-out-of-six along with the two-out-of-three property is a useful tool for obtaining new equivalences from old ones. As a classical example, weak equivalences in any model category satisfy the two-out-of-six property, and since both levelwise weak equivalences and equivalences of categories have the two-out-of-six and two-out-of-three properties, so do DK-equivalences.

Next we examine the relationship of DK-equivalences to levelwise weak equivalences.

\begin{lemma}\label{lwdk}
Let $f$ be a map between $\se(S)$-fibrant objects of $\sps(\Theta C)$. Then $f$ is a levelwise weak equivalence if and only if it is a DK-equivalence and a weak equivalence at level $[0]$.
\end{lemma}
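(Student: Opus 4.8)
The plan is to prove both implications directly, reducing everything to the levels $[0]$ and $[1](c)$ where the mapping objects live, and exploiting injective fibrancy to replace homotopy fibers by strict fibers.

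For the forward direction, suppose $f\colon U\to V$ is a levelwise weak equivalence; then it is trivially a weak equivalence at level $[0]$, so it remains to check that $f$ is a DK-equivalence. For condition (2) of Definition \ref{dkdef}, recall that since $U$ and $V$ are $\se(S)$-fibrant they are injective fibrant, so the maps $W_{[1](c)}\to W_{[0]}\times W_{[0]}$ are fibrations and the mapping objects $\map_W(x,y)_c$ are their fibers, agreeing with the homotopy fibers by Proposition \ref{fibpb}. As $f_{[1](c)}$ and $f_{[0]}$ are weak equivalences, the induced map on homotopy fibers $\map_U(x,y)_c\to \map_V(f(x),f(y))_c$ is a weak equivalence for every $c$, so $f$ is homotopically fully faithful. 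For condition (1), the fully faithfulness of $\ho(f)$ then follows by passing to $\pi_0$ at level $t$, while essential surjectivity follows because the natural surjection $\pi_0(W_{[0]})\twoheadrightarrow \{\text{isomorphism classes of }\ho(W)\}$ discussed after Proposition \ref{eeq} is compatible with $f$, and $\pi_0(f_{[0]})$ is surjective since $f_{[0]}$ is a weak equivalence.

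For the backward direction, suppose $f$ is a DK-equivalence and a weak equivalence at level $[0]$. By Proposition \ref{segallw} it suffices to show that $f_{[1](c)}$ is a weak equivalence for each $c\in C$. I would consider the commutative square
\[
\begin{tikzcd}
U_{[1](c)} \arrow[r, "f_{[1](c)}"] \arrow[d] & V_{[1](c)} \arrow[d]\\
U_{[0]}\times U_{[0]} \arrow[r, "f_{[0]}\times f_{[0]}"] & V_{[0]}\times V_{[0]},
\end{tikzcd}
\]
whose vertical maps are fibrations by injective fibrancy. The induced maps on fibers are exactly the mapping object maps $\map_U(x,y)_c\to \map_V(f(x),f(y))_c$, which are weak equivalences by the homotopical fully faithfulness contained in the DK-equivalence hypothesis; hence by Proposition \ref{fiber} the square is a homotopy pullback. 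Since $f_{[0]}$ is a weak equivalence, so is the bottom map $f_{[0]}\times f_{[0]}$, and the stability of weak equivalences under homotopy base change then forces the top map $f_{[1](c)}$ to be a weak equivalence. Thus $f$ is a weak equivalence at levels $[0]$ and $[1](c)$ for all $c$, and Proposition \ref{segallw} upgrades this to a levelwise weak equivalence.

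I expect the backward direction to be the main obstacle, specifically the step that combines the fiberwise equivalences (from DK fully faithfulness) with the base equivalence (at level $[0]$) to conclude the total-space equivalence at level $[1](c)$. The delicate points are correctly identifying the strict fibers of $W_{[1](c)}\to W_{[0]}\times W_{[0]}$ with the mapping objects $\map_W$, which requires injective fibrancy via Proposition \ref{fibpb}, and invoking the homotopy-pullback characterization of Proposition \ref{fiber} to transport a fiberwise statement into a statement about the total spaces. Once these are in place the remaining two-out-of-three and Segal-reduction steps are routine.
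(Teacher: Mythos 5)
Your proof is correct and follows essentially the same route as the paper: both directions hinge on the square $W_{[1](c)}\to W_{[0]}\times W_{[0]}$ being a homotopy pullback precisely when the mapping-object maps are weak equivalences (via Propositions \ref{fibpb} and \ref{fiber}), combined with the reduction of Proposition \ref{segallw}. The only cosmetic difference is in the forward direction's treatment of $\ho(f)$: you argue essential surjectivity directly from surjectivity of $\pi_0(f_{[0]})$ and the surjection onto isomorphism classes, whereas the paper notes that a levelwise weak equivalence of injective fibrant objects is a simplicial homotopy equivalence and hence induces an equivalence of homotopy categories; both are valid.
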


\begin{proof}
Let $f\colon X\to Y$ be a map of $\se(S)$-fibrant objects such that $f_{[0]}$ is a weak equivalence. 

Then consider the diagram
\begin{equation}\label{lwdkd}
\begin{tikzcd}
X_{[1](c)}\arrow[r,"f_{[1](c)}"] \arrow[d, two heads, "(d_1{,}d_0)" left]
& Y_{[1](c)} \arrow[d, two heads, "(d_1{,}d_0)"]\\
X_{[0]}\times X_{[0]} \arrow[r,"f_{[0]}\times f_{[0]}" above, "\sim" below] 
& Y_{[0]}\times Y_{[0]},
\end{tikzcd}
\end{equation}
where $f_{[0]}\times f_{[0]}$ is a weak equivalence, since the Kan-Quillen model structure is Cartesian. Furthermore, the objects $X$ and $Y$ being injective fibrant implies that the vertical arrows are fibrations, so the fibers are weakly equivalent to the homotopy fibers. Thus the induced map on the homotopy fibers is
\[
\begin{tikzcd}
\map_X(x,y)_c \arrow[r, "f_*"]
& \map_Y(f(x),f(y))_c.
\end{tikzcd}
\]
Now $f$ is a levelwise weak equivalence on the mapping objects if and only if the squares (\ref{lwdkd}) are homotopy pullbacks for all $c\in C$, which happens if and only if $f_{[1](c)}$ is a weak equivalence.
Now if we suppose that $f$ is a DK-equivalence, then $f$ is a weak equivalence on levels $[0]$ and $[1](c)$ for all $c$ which is sufficient for $f$ to be a levelwise equivalence by Proposition \ref{segallw}.

Conversely, if $f$ is a levelwise weak equivalence then it induces levelwise weak equivalences on all mapping objects and is a simplicial homotopy equivalence, the latter of which implies that it induces an equivalence of homotopy categories. Thus $f$ is a DK-equivalence.
\end{proof}

Between Segal objects levelwise weak equivalences are stronger than DK-equivalences, since the essential surjectivity condition is too weak to control the behaviour of the level $[0]$. However, the completeness condition allows us to control the level $[0]$ in part by using information on 1-cells. To establish this connection, we use the following notion as a tool.

\begin{df}
A map of Kan complexes is a \emph{homotopy monomorphism} if it is injective on $\pi_0$ and a weak equivalence restricted to each component.
\end{df}

Note that a component-wise weak equivalence that is bijective on $\pi_0$ is a weak equivalence.
\begin{lemma}\label{sseteq}
A map of simplicial sets is a weak equivalence if and only if it is a homotopy monomorphism and surjective on $\pi_0$.
\end{lemma}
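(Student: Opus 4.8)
The plan is to reduce everything to the standard characterization of weak equivalences of simplicial sets by homotopy groups: a map $f\colon X\to Y$ is a weak equivalence if and only if $\pi_0(f)$ is a bijection and $\pi_n(X,x)\to \pi_n(Y,f(x))$ is an isomorphism for every vertex $x$ and every $n\geq 1$. Since all conditions appearing in the statement are phrased purely in terms of $\pi_0$ and homotopy groups, which are preserved under fibrant replacement, I would first replace $X$ and $Y$ by Kan complexes inside a commuting square so that the definition of homotopy monomorphism applies on the nose; thereafter I assume $X$ and $Y$ are Kan and write $X=\coprod_\alpha X_\alpha$ and $Y=\coprod_\beta Y_\beta$ for their decompositions into connected components. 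The key bookkeeping fact is that for a basepoint $x$ lying in the component $X_\alpha$ one has $\pi_n(X,x)=\pi_n(X_\alpha,x)$, and similarly for $Y$, so the maps on homotopy groups induced by $f$ and by its restrictions to components agree.

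For the forward direction, suppose $f$ is a weak equivalence. Then $\pi_0(f)$ is a bijection, which gives both injectivity and surjectivity on $\pi_0$ at once. Because $\pi_0(f)$ is a bijection, each component $X_\alpha$ is carried into a single component $Y_{\beta(\alpha)}$ and the assignment $\alpha\mapsto\beta(\alpha)$ is a bijection of the index sets. The restriction $X_\alpha\to Y_{\beta(\alpha)}$ is automatically a bijection on $\pi_0$ (both sides are connected), and by the identification of homotopy groups above it induces the same isomorphisms on $\pi_n$ as $f$ for basepoints in $X_\alpha$; hence it is a weak equivalence. Thus $f$ is injective on $\pi_0$ with weakly equivalent components, i.e. a homotopy monomorphism, and it is surjective on $\pi_0$.

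For the backward direction, suppose $f$ is a homotopy monomorphism and surjective on $\pi_0$. Injectivity on $\pi_0$ from the homotopy monomorphism hypothesis together with the assumed surjectivity shows $\pi_0(f)$ is a bijection, and being a weak equivalence restricted to each component then furnishes isomorphisms on all higher homotopy groups at every basepoint. This is precisely the component-wise weak equivalence that is bijective on $\pi_0$ addressed by the observation immediately preceding the statement, so $f$ is a weak equivalence. The only step requiring genuine care is the matching of components and basepoints in the forward direction; once the bijection of component sets induced by $\pi_0(f)$ is established and the homotopy groups of a component are identified with those of the ambient simplicial set, both implications are routine.
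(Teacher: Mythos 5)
Your proof is correct and follows the same (essentially unique) route the paper has in mind: the paper offers no written proof of this lemma, treating it as an immediate consequence of the preceding observation that a component-wise weak equivalence which is bijective on $\pi_0$ is a weak equivalence. Your write-up simply supplies the routine bookkeeping — fibrant replacement so the definition of homotopy monomorphism applies, the bijection of component sets induced by $\pi_0(f)$, and the identification of the homotopy groups of a component with those of the ambient complex — all of which is accurate.
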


The following lemma allows us to characterize homotopy monomorphisms in terms of homotopy limits.

\begin{lemma}\label{htpymono}
Let $f\colon X\to Y$ be a map of Kan complexes. Then $f$ is a homotopy monomorphism if and only if the following diagram is a homotopy pullback:
\begin{equation}\label{hmonodiag}
\begin{tikzcd}
X \arrow[d, "\Delta" left] \arrow[r, "f" above] & Y \arrow[d, "\Delta" ]  \\
X\times X \arrow[r, "f\times f"] & Y \times Y.\\
\end{tikzcd}
\end{equation}
\end{lemma}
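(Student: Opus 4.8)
The plan is to apply the fiberwise criterion of Proposition \ref{fiber} to the square \eqref{hmonodiag}, reading the two vertical diagonals $\Delta$ as the maps to be fibered over. First I would recall that the homotopy fiber of a diagonal $\Delta\colon X\to X\times X$ over a vertex $(x_0,x_1)$ is, up to weak equivalence, the \emph{path space} $P_{x_0,x_1}(X)$ of paths from $x_0$ to $x_1$, obtained by contracting away one of the two path coordinates in the standard model of the homotopy fiber. With this identification, Proposition \ref{fiber} says that \eqref{hmonodiag} is a homotopy pullback if and only if, for every pair of vertices $(x_0,x_1)$ of $X$, the map
\[
P_{x_0,x_1}(X)\to P_{f(x_0),f(x_1)}(Y)
\]
induced by $f$ is a weak equivalence. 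The proof then reduces to showing that this family of conditions on path spaces is equivalent to $f$ being a homotopy monomorphism, and I would organize the verification around the dichotomy that $P_{x_0,x_1}(X)$ is empty when $x_0$ and $x_1$ lie in different components of $X$ and is otherwise a based loop space $\Omega_{x_0}X$ of the common component.

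For the direction that a homotopy monomorphism gives a homotopy pullback, I would split into two cases. When $x_0$ and $x_1$ lie in distinct components of $X$, injectivity of $\pi_0(f)$ forces $f(x_0),f(x_1)$ into distinct components of $Y$, so both path spaces are empty and the induced map is trivially a weak equivalence. When $x_0,x_1$ lie in a common component $C$, the paths on both sides stay within $C$ and its image component $C'$, so the path-space map is induced by $f|_C\colon C\to C'$; since $f$ is a weak equivalence on each component, $f|_C$ is a weak equivalence, and because the path space is a homotopy-invariant construction (a homotopy fiber of the diagonal), the induced map on path spaces is a weak equivalence. Proposition \ref{fiber} then yields the homotopy pullback.

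For the converse, I would start from the hypothesis that all path-space maps are weak equivalences. Comparing emptiness of $P_{x_0,x_1}(X)$ and $P_{f(x_0),f(x_1)}(Y)$ recovers the equivalence ``$x_0,x_1$ in the same component $\Leftrightarrow$ $f(x_0),f(x_1)$ in the same component'', whose nontrivial implication is exactly injectivity of $\pi_0(f)$. Fixing a component $C$ of $X$ with basepoint $x$ and taking $x_0=x_1=x$, the hypothesis gives a weak equivalence $\Omega_x X\to \Omega_{f(x)}Y$ of based loop spaces; via the isomorphisms $\pi_n(C,x)\cong\pi_{n-1}(\Omega_x X)$ this shows $f|_C$ induces isomorphisms on $\pi_n$ for all $n\geq 1$, and since $C$ and the component of $f(x)$ are connected, $f|_C$ is a weak equivalence by Whitehead's theorem. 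Hence $f$ is a homotopy monomorphism. The main obstacle I anticipate is the bookkeeping in this last step: carefully identifying the homotopy fiber of the diagonal with the based loop space and transporting the loop-space equivalence back to a component-level weak equivalence through the long exact sequence, while making sure the basepoint dependence across the various components is handled uniformly.
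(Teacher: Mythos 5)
Your proposal is correct and follows essentially the same route as the paper: both reduce the homotopy pullback condition to the statement that $f$ induces weak equivalences on all path spaces with fixed endpoints (the paper realizes these as fibers of the free-path-space fibration $\Map(\Delta[1],X)\to\Map(\Delta[0]\amalg\Delta[0],X)$, you realize them as homotopy fibers of the diagonal via Proposition \ref{fiber}), and then both analyze these path spaces by the same dichotomy of emptiness across components versus loop spaces within a component. The component-wise bookkeeping you worry about is exactly what the paper's proof also handles, and your treatment of it is sound.
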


\begin{proof}
The diagram (\ref{hmonodiag}) is naturally weakly equivalent to 
\[
\begin{tikzcd}
\Map(\Delta[1],X) \arrow[d, two heads] \arrow[r, "f_*" above] 
& \Map(\Delta[1],Y) \arrow[d, two heads]  \\
\Map(\Delta[0]\amalg \Delta[0],X) \arrow[r, "f_*"] 
& \Map(\Delta[0]\amalg \Delta[0],Y), \\
\end{tikzcd}
\]
where the vertical arrows are Kan fibrations since $X$ and $Y$ are Kan complexes. The objects on the top row are spaces of free paths in $X$ and $Y$, respectively, and thus the fibers of the vertical morphisms are spaces of path with fixed endpoints. Furthermore, two points $x_0,x_1\in X_0$ are in the same same component if and only if the space of paths between them is non-empty, and similarly for $f(x_0),f(x_1)$ in $Y_0$. Additionally, for $x_0$ and $x_1$ in the same path component, the space of paths between them is weakly equivalent to the based loop space on either of the points.

Thus the diagram (\ref{hmonodiag}) is a homotopy pullback if and only if $f$ induces weak equivalences on all the path spaces with fixed endpoints, which in turn is equivalent to $f$ being injective on $\pi_0$ and inducing weak equivalences on the loop spaces of all components of $X$ and $Y$. However, a weak equivalence of loop spaces is equivalently a map that induces isomorphisms on $\pi_i$ for $i\geq 1$. Since each component is also connected, $f$ is a weak equivalences on the loop spaces of all components if and only if it is a weak equivalence on all components. Thus the diagram (\ref{hmonodiag}) is a homotopy pullback if and only if $f$ is a homotopy monomorphism.
\end{proof}

Now we are ready to prove that between complete Segal objects DK-equivalences coincide with levelwise weak equivalences and thus also with $\cpt(S)$-local equivalences, categorical equivalences and simplicial homotopy equivalences.

\begin{prop}\label{dklw}
A map between $\cpt(S)$-fibrant objects in $\sps(\Theta C)$ is a levelwise weak equivalence if and only if it is a DK-equivalence.
\end{prop}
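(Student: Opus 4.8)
The plan is to prove both implications, where the forward direction is already handled by Lemma \ref{lwdk} and the real content lies in the converse. For the forward direction, if $f$ is a levelwise weak equivalence between $\cpt(S)$-fibrant objects, then it is in particular a levelwise weak equivalence between $\se(S)$-fibrant objects, so Lemma \ref{lwdk} immediately tells us $f$ is a DK-equivalence. Thus I would reduce the work to the converse: assuming $f\colon U\to V$ is a DK-equivalence between $\cpt(S)$-fibrant objects, show it is a levelwise weak equivalence.

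By Lemma \ref{lwdk} again, since $U$ and $V$ are $\se(S)$-fibrant, it suffices to upgrade the DK-equivalence to a weak equivalence at level $[0]$; once $f_{[0]}$ is known to be a weak equivalence, the full levelwise statement follows. The essential surjectivity part of the DK-equivalence, in the form (1'') noted after Definition \ref{dkdef}, already gives that $\pi_0(f_{[0]})\colon \pi_0(U_{[0]})\to \pi_0(V_{[0]})$ is surjective. By Lemma \ref{sseteq}, a map of Kan complexes is a weak equivalence if and only if it is a homotopy monomorphism and surjective on $\pi_0$. So it remains to show $f_{[0]}$ is a homotopy monomorphism, which by Lemma \ref{htpymono} is equivalent to the square
\[
\begin{tikzcd}
U_{[0]} \arrow[d, "\Delta" left] \arrow[r, "f_{[0]}" above] & V_{[0]} \arrow[d, "\Delta" ]  \\
U_{[0]}\times U_{[0]} \arrow[r, "f_{[0]}\times f_{[0]}"] & V_{[0]} \times V_{[0]}
\end{tikzcd}
\]
being a homotopy pullback.

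The main obstacle, and the step where completeness must enter, is identifying this diagonal square with something controlled by the mapping-object data, since the homotopically fully faithful hypothesis only directly constrains the levels $[1](c)$. The key idea is that for a complete Segal object the space of objects is weakly equivalent to the space of equivalences: by Proposition \ref{eeq} we have $\Map(\pi^* E, W)\simeq W_{eq}$, and completeness forces $W_{eq}$ to be weakly equivalent to $W_{[0]}$ via the degeneracy (every homotopy equivalence is homotopic to an identity). I would therefore reinterpret the diagonal square above in terms of the subspace $W_{eq}\subseteq W_{[1](t)}$ of invertible $1$-cells, so that the fibers of the diagonals become spaces of equivalences between fixed pairs of objects, which sit inside the mapping objects $\map_W(x,y)_t$. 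The homotopically fully faithful condition makes $f$ a weak equivalence on each $\map_U(x,y)$, hence on the sub-objects of invertible components, and this is exactly what is needed to verify the fiberwise criterion of Proposition \ref{fiber} for the diagonal square; completeness is what converts this statement about $W_{eq}$ back into a statement about $W_{[0]}$. Assembling the surjectivity on $\pi_0$ with this homotopy-monomorphism property then yields that $f_{[0]}$ is a weak equivalence, completing the argument.
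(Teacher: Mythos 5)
Your proposal is correct and follows essentially the same route as the paper: reduce via Lemma \ref{lwdk} to showing $f_{[0]}$ is a weak equivalence, get surjectivity on $\pi_0$ from essential surjectivity plus completeness of the target, and establish the homotopy monomorphism condition of Lemma \ref{htpymono} by identifying the diagonal square with the square over $W_{eq}\subseteq W_{[1](t)}$ (using that $s_0\colon W_{[0]}\to W_{eq}$ is a weak equivalence by completeness) and invoking homotopical fully faithfulness on the equivalence components of the mapping objects. The only detail worth writing out fully is that the restriction to equivalence components remains a weak equivalence, which uses that $\ho(f)$ is fully faithful so homotopy equivalences are detected and preserved.
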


\begin{proof}
Due to Lemma \ref{lwdk} we only need to prove the reverse direction, where it suffices to show that $f_{[0]}$ is a weak equivalence.
Let $f\colon X\to Y$ be a DK-equivalence of $\cpt(S)$-fibrant objects. Since $f$ induces levelwise weak equivalences on all mapping spaces, the following diagram is a homotopy pullback square for any $c\in C$:
\[
\begin{tikzcd}
X_{[1](c)} \arrow[r,"f_{[1](c)}" above] \arrow[d, two heads, "(d_1{,}d_0)" left] \arrow[dr, phantom, "\lrcorner_h" very near start] & Y_{[1](c)} \arrow[d, two heads, "(d_1{,}d_0)" right]\\
X_{[0]}\times X_{[0]} \arrow[r,"f_{[0]}\times f_{[0]}" above] & Y_{[0]}\times Y_{[0]}.
\end{tikzcd}
\]
Restricting the case $c=t$ to the components consisting of homotopy equivalences, we get the following diagram:
\[
\begin{tikzcd}
X_{[0]} \arrow[r,"f_{[0]}" above] \arrow[d, "s_0" left , "\sim" right] & Y_{[0]} \arrow[d, "s_0" left, "\sim" right ]\\
X_{eq} \arrow[r,"f_{eq}" above] \arrow[d, two heads] \arrow[dr, phantom, "\lrcorner_h" very near start] & Y_{eq} \arrow[d, two heads]\\
X_{[0]}\times X_{[0]} \arrow[r,"f_{[0]}\times f_{[0]}" above] & Y_{[0]}\times Y_{[0]}
\end{tikzcd}
\]
where the maps $s_0$ are weak equivalences, since $X$ and $Y$ are $\cpt(S)$-fibrant. It follows that the outer rectangle is also a homotopy pullback with diagonals as the vertical maps:
\[
\begin{tikzcd}
X_{[0]} \arrow[r,"f_{[0]}" above] \arrow[d, "\Delta" left] \arrow[dr, phantom, "\lrcorner_h" very near start]  & Y_{[0]} \arrow[d, "\Delta"]\\
X_{[0]}\times X_{[0]} \arrow[r,"f_{[0]}\times f_{[0]}" above] & Y_{[0]}\times Y_{[0]}.
\end{tikzcd}
\]
Since this diagram is a homotopy pullback, $f_{[0]}$ is a homotopy monomorphism by Lemma \ref{htpymono}. Due to the completeness of $Y$ the essential surjectivity of $f$ on the homotopy categories is equivalent to surjectivity on $\pi_0$. 
Thus $f_{[0]}$ is both a homotopy monomorphism and surjective on $\pi_0$ and hence a weak equivalence by Lemma \ref{sseteq}.
\end{proof}

Our next goal is to establish a connection between DK-equivalences and categorical equivalences also for Segal objects without completeness. To this end, we require a few technical lemmata.

\begin{lemma}\label{pullbacks}
Let $X$ be $\se(S)$-fibrant object in $\sps(\Theta C)$. Then the following diagrams are (homotopy) pullback squares:
\[
\begin{tikzcd}
X_{[1](c_1)} \arrow[r, "s_1"] 
\arrow[d, "d_0" left] \arrow[dr, phantom, "\lrcorner" very near start] & X_{[2](c_1,c_2)} \arrow[d, "d_0" right] & 
X_{[1](c_2)} \arrow[r, "s_0"] 
\arrow[d, "d_1" left] \arrow[dr, phantom, "\lrcorner" very near start] & X_{[2](c_1,c_2)} \arrow[d, "d_2" right]\\
X_{[0]} \arrow[r,"s_0" above] & X_{[1](c_2)} &
X_{[0]} \arrow[r,"s_0" above] & X_{[1](c_1)}.
\end{tikzcd}
\]
\end{lemma}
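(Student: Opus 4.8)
The plan is to prove both squares are homotopy pullbacks by comparing them, via the Segal equivalence, to strict pullback squares that are manifestly homotopy pullbacks. I focus on the first square; the second is entirely symmetric under interchanging the roles of the two edges.

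First I would record the relevant simplicial identities and the Segal map. Since $W$ is $\se(S)$-fibrant, it is in particular $\se_C$-local and injective fibrant, so by the alternate description of the Segal condition the spine map $\rho := (d_2, d_0)\colon W_{[2](c_1,c_2)} \to W_{[1](c_1)} \times_{W_{[0]}} W_{[1](c_2)}$ is a weak equivalence. Moreover the face maps $d_0, d_1\colon W_{[1](c)} \to W_{[0]}$ are fibrations, being induced by the monomorphisms $F[0] \hookrightarrow F[1](c)$ out of an injective fibrant object, so the displayed fiber product is a homotopy pullback by Proposition \ref{fibpb} and both projections from it are fibrations, each being a base change of such a $d_0$ or $d_1$.

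Next I would build the comparison. Consider the \emph{model square} with the same corners except that $W_{[2](c_1,c_2)}$ is replaced by $W_{[1](c_1)} \times_{W_{[0]}} W_{[1](c_2)}$, with top map $f \mapsto (f, s_0 d_0 f)$ and right map $\pr_2$. The Segal map $\rho$, together with the identity on the three other corners, defines a map from the original square to this model square; commutativity is checked using the identities $d_2 s_1 = \id$, $d_0 s_1 = s_0 d_0$, and $\pr_2 \rho = d_0$. A direct computation using $d_1 s_0 = \id = d_0 s_0$ shows the model square is a strict pullback, its apex being isomorphic to $W_{[1](c_1)}$ via $f \mapsto (d_0 f, (f, s_0 d_0 f))$; since $\pr_2$ is a fibration and the other corners are fibrant, Proposition \ref{fibpb} upgrades this to a homotopy pullback.

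Finally, since $\rho$ is a weak equivalence and the remaining three legs of the comparison are identities, the comparison is a levelwise weak equivalence of squares; as homotopy pullbacks are invariant under such equivalences, the original first square is a homotopy pullback. The second square is handled identically, using $\rho$, the projection $\pr_1$, and the identities $d_0 s_0 = \id$ and $d_2 s_0 = s_0 d_1$. The main bookkeeping obstacle is simply tracking the face and degeneracy maps in $\Theta C$ correctly and recognizing that the squares are not strict pullbacks for general $W$: the strictness only appears after replacing $W_{[2](c_1,c_2)}$ via the Segal equivalence.
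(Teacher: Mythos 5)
Your proof is correct and follows essentially the same route as the paper's: replace $W_{[2](c_1,c_2)}$ by the Segal fiber product, observe that the resulting square is a strict (hence, by injective fibrancy and Proposition \ref{fibpb}, homotopy) pullback via the cancellation $\bigl(W_{[1](c_1)}\times_{W_{[0]}}W_{[1](c_2)}\bigr)\times_{W_{[1](c_2)}}W_{[0]}\cong W_{[1](c_1)}$, and conclude by invariance of homotopy pullbacks under the levelwise weak equivalence of squares. You simply make explicit the comparison map and the simplicial identities that the paper's terser proof leaves implicit.
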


\begin{proof}
In the left-hand diagram we can use the Segal condition to replace $X_{[2](c_1,c_2)}$ by ${X_{[1](c_1)}\tim_{X_{[0]}}X_{[1](c_2)}}$, and then perform the cancellations 
\[\begin{split}
\left(X_{[1](c_1)}\tim_{X_{[0]}}X_{[1](c_2)}\right)\tim_{X_{[1](c_2)}}X_{[0]} & \cong X_{[1](c_1)}\tim_{X_{[0]}}X_{[0]}\\
& \cong X_{[1](c_1)}.
\end{split}\] 
Furthermore, the squares are also homotopy pullbacks due to injective fibrancy of $X$. The argument for the right diagram is similar.
\end{proof}

The following lemma identifies a product decomposition for the suspension functor.

\begin{lemma}\cite[4.9]{rezkth}\label{itprod}
Let $X$ and $Y$ be objects of $\sps(C)$. Then the following diagram is a pushout square:
\[
\begin{tikzcd}[sep=huge]
V[1](X\times Y) \arrow[d, "V{[}\delta^1{]}(\id_{X\times Y})" left] \arrow[r, "V{[}\delta^1{]}(\id_{X\times Y})" above] \arrow[dr, phantom, "\ulcorner" very near end]& V[2](X,Y) \arrow[d, "(V{[}\sigma^1{]}(\id_X){,}V{[}\sigma^0{]}(\id_Y))" right ]\\
 V[2](Y,X) \arrow[r, "(V{[}\sigma^0{]}(\id_X){,}V{[}\sigma^1{]}(\id_Y))" {below, yshift=-5pt}]&   V[1](X)\times V[1](Y),
\end{tikzcd}
\]
where $\delta^i$ and $\sigma^i$ denote the $i\si{th}$ face and degeneracy maps, respectively.
\end{lemma}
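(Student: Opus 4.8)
The plan is to verify the pushout pointwise. Since colimits in the presheaf category $\sps(\Theta C)$ are computed levelwise, and the values are themselves simplicial sets, it suffices to check that the square becomes a pushout of sets after evaluating at an arbitrary object $[m](c_1,\ldots,c_m)$ and simplicial degree. There everything is described by the levelwise formula (\ref{veq}): writing $a_i$ and $b_i$ for the evaluations of $X$ and $Y$ at $c_i$ in the fixed degree, each corner becomes a coproduct indexed by the monotone maps $\delta\colon[m]\to[n]$ (with $n=1$ or $2$), where the summand attached to $\delta$ is the product over the positions $i$ of the factor contributed by the jump from $\delta(i-1)$ to $\delta(i)$, with the empty product $\{*\}$ whenever $\delta(i-1)=\delta(i)$. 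Conceptually this is the $V$-enriched analogue of the classical decomposition of $\Delta[1]\times\Delta[1]$ into two $2$-simplices glued along their common diagonal edge $\delta^1$, and that picture guides the bookkeeping.

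First I would record how the four maps act on these coproduct decompositions. A morphism in $\Theta(\sps(C))$ lying over a map $\varphi$ in $\Delta$ reindexes the summand of $\delta$ to the summand of $\varphi\circ\delta$ and acts on the factors through its internal component maps. For the two maps out of $V[1](X\times Y)$ the underlying map is $\delta^1$ and the internal maps are the two projections of $X\times Y$; since $\delta^1$ is injective and the projections induce the canonical isomorphisms $(X\times Y)_{c_i}\cong a_i\times b_i$ on factors, both legs are injections. I would then observe that their common image is exactly the sub-coproduct of $V[2](X,Y)$ and of $V[2](Y,X)$ spanned by the two constant maps with values in $\{0,2\}$ together with the single length-$2$ jumps, precisely because $\delta^1$ takes values in $\{0,2\}$ and so never meets the value $1$.

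Next, because both legs out of $V[1](X\times Y)$ are injections onto this common sub-coproduct, the pushout is simply the union $V[2](X,Y)\amalg_{V[1](X\times Y)}V[2](Y,X)$, obtained by gluing the two copies of $V[2]$ along it. A direct comparison of the remaining summands --- the leftover constant, the length-$1$ jumps contributing single copies of $a_i$ and $b_i$, and the two-jump summands contributing $a_i\times b_j$ with $i<j$ on one side and $i>j$ on the other, together with the shared diagonal terms $a_i\times b_i$ --- reassembles exactly the coproduct expansion of $V[1](X)\times V[1](Y)$. Finally I would check that the canonical map out of the pushout coincides with the one induced by $(V[\sigma^1](\id_X),V[\sigma^0](\id_Y))$ and $(V[\sigma^0](\id_X),V[\sigma^1](\id_Y))$, so that the bijection of summands is precisely the comparison map in the statement.

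The main obstacle is purely the combinatorial bookkeeping: tracking the reindexing $\delta\mapsto\varphi\circ\delta$, identifying which factor each jump contributes in each of the four corners, and confirming that the summand matching produced by the pushout agrees on the nose with the degeneracy-induced maps into $V[1](X)\times V[1](Y)$. None of this is conceptually hard once the geometry of $\Delta[1]\times\Delta[1]$ as two triangles glued along the diagonal is kept in mind. If one prefers to minimize casework, one may alternatively note that every corner preserves colimits separately in $X$ and in $Y$, since the formula (\ref{veq}) involves only finite products and coproducts, which commute with colimits in each variable in $\sps(C)$; one then reduces to representable $X$ and $Y$ before running the same count.
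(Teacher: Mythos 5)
Your argument is correct: evaluating everything via the levelwise formula (\ref{veq}), identifying the images of the two copies of $V[2]$ inside $V[1](X)\times V[1](Y)$ with the sub-coproducts indexed by pairs $(\delta_1,\delta_2)$ with $\delta_2\leq\delta_1$ and $\delta_1\leq\delta_2$ respectively, and checking that they intersect exactly in the image of $V[1](X\times Y)$ is precisely the "two triangles glued along the diagonal" verification this statement calls for. The paper itself gives no proof --- it only cites Rezk --- and your levelwise combinatorial count is essentially the standard argument given (or left as straightforward) in that reference.
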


Note that the inclusion of components $X_{eq}\subseteq X_{[1](t)}$ is, in particular, a homotopy monomorphism, which combined with the weak equivalence described in Proposition \ref{eeq} gives us the following result.
\begin{lemma}\label{heqmono}
Let $X$ be a $\se(S)$-fibrant object in $\sps(\Theta C)$. Then the maps
\[
\Map(\pi^* E, X)\to \Map( \pi^* F[1],X)
\]
induced by the inclusions $[1]\hookrightarrow I$ are homotopy monomorphisms.
\end{lemma}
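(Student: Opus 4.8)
The plan is to factor each of the maps through the subspace $W_{eq}$ of homotopy equivalences and to recognize the two resulting pieces as homotopy monomorphisms. Indeed, by Proposition \ref{eeq} the map induced by either inclusion $[1]\hookrightarrow I$,
\[
\Map(\pi^* E, W)\to \Map(\pi^* F[1],W)\cong (\tau^*W)_1,
\]
factors through $W_{eq}\subseteq (\tau^*W)_1$ via a map $\Map(\pi^* E, W)\to W_{eq}$ that is a weak equivalence of spaces. Thus the map under consideration is the composite of this weak equivalence with the inclusion $W_{eq}\hookrightarrow (\tau^*W)_1$.

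First I would note that the inclusion $W_{eq}\hookrightarrow (\tau^*W)_1$ is a homotopy monomorphism: as $W_{eq}$ is by construction a union of connected components of $(\tau^*W)_1$, the inclusion is injective on $\pi_0$ and restricts to the identity, hence to a weak equivalence, on each component. The weak equivalence $\Map(\pi^* E, W)\to W_{eq}$ is likewise a homotopy monomorphism, being bijective on $\pi_0$ and a weak equivalence on each component.

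It then suffices to observe that homotopy monomorphisms are closed under composition, so that the composite is again a homotopy monomorphism. This can be verified directly from the definition — injectivity on $\pi_0$ is preserved under composition, and the restriction of the composite to a component of the domain is a composite of weak equivalences onto the corresponding components — or, alternatively, from the homotopy-pullback characterization of Lemma \ref{htpymono} together with the pasting lemma for homotopy pullbacks. I do not anticipate a genuine obstacle here: the essential content is entirely supplied by Proposition \ref{eeq}, and what remains is only the routine bookkeeping about components and the stability of homotopy monomorphisms under composition.
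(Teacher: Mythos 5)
Your argument is correct and is essentially the paper's own: the paper also obtains the lemma by combining the weak equivalence $\Map(\pi^* E, W)\to W_{eq}$ from Proposition \ref{eeq} with the observation that the inclusion of components $W_{eq}\subseteq W_{[1](t)}$ is a homotopy monomorphism. The only difference is that you spell out the (routine) closure of homotopy monomorphisms under composition, which the paper leaves implicit.
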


The next lemma tells us that the projections associated to the path objects for the interval $\pi^* E$ are DK-equivalences, creating a link to categorical equivalences.

\begin{lemma}\label{edk}
Let $X$ be a $\se(S)$-fibrant object of $\sps(\Theta C)$. Then the maps $r^*\colon  X\to X^{\pi^* E}$ and $(j_0)^*,(j_1)^*\colon  X^{\pi^* E}\to X$ induced by $r\colon \pi^* E\to F[0]$ and the inclusions $j_0,j_1\colon  F[0]\to E$ are DK-equivalences.
\end{lemma}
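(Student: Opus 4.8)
The plan is to reduce the whole statement to showing that $r^*$ alone is a DK-equivalence. Since $r\circ j_0=r\circ j_1=\id_{F[0]}$, we have $(j_0)^*\circ r^*=(j_1)^*\circ r^*=\id_W$, so once $r^*$ is known to be a DK-equivalence the maps $(j_0)^*$ and $(j_1)^*$ are DK-equivalences by the two-out-of-three property for DK-equivalences. For this to even make sense I first record that $W^{\pi^*E}$ is again $\se(S)$-fibrant: the $\se(S)$-local model structure is Cartesian and $\pi^*E$ is cofibrant (all objects are), so the exponential of the fibrant object $W$ by $\pi^*E$ is fibrant. It then remains to verify the two conditions of Definition \ref{dkdef} for $r^*$.

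For the essential-surjectivity condition I would argue that $r^*$ is in fact a categorical equivalence. The functor $I\to *$ is an equivalence of categories, so by Corollary \ref{eqcat} the map $r\colon\pi^*E\to F[0]$ is a categorical equivalence, that is, a $\pi^*E$-homotopy equivalence; applying Corollary \ref{cateqcart} to the exponential $W^{(-)}$ shows that $r^*=W^{r}\colon W\to W^{\pi^*E}$ is itself a categorical equivalence. Consequently $\ho(r^*)$ is an equivalence of categories by Lemma \ref{cateqhocat}, which gives condition (1) of Definition \ref{dkdef}, and in particular essential surjectivity.

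The substance of the proof is the homotopically-fully-faithful condition (2), namely that $\map_W(x,y)\to\map_{W^{\pi^*E}}(r^*x,r^*y)$ is a levelwise weak equivalence. Using the adjunction $V[1]\dashv\map$ together with $-\times\pi^*E\dashv(-)^{\pi^*E}$ and the pullback description of Lemma \ref{mapgen}, I would identify, for each $X\in\sps(C)$, the simplicial set $\Map(X,\map_{W^{\pi^*E}}(r^*x,r^*y))$ with the fiber over $(x,y)$ of $\Map(Q,W)\to\Map(F[0]\amalg F[0],W)$, where
\[
Q=\bigl(V[1](X)\times\pi^*E\bigr)\amalg_{(F[0]\amalg F[0])\times\pi^*E}(F[0]\amalg F[0])
\]
is the pushout collapsing the $\pi^*E$-direction over the two endpoints. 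Under this identification the map induced by $r^*$ is the one induced by the canonical collapse $\rho\colon Q\to V[1](X)$. Because $W$ is injective fibrant the restriction maps over $F[0]\amalg F[0]$ are fibrations and the fibers are the homotopy fibers, so taking $X=Fc$ for $c\in C$ reduces the claim to the single statement that $\rho$ is an $\se(S)$-local equivalence.

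The main obstacle is exactly this last statement, and it is subtler than it looks: the contractibility of $\pi^*E$ only makes $\rho$ a $\cpt(S)$-local equivalence, whereas here we may assume no completeness of $W$ at all. The collapse $\rho$ admits the section $\iota_0\colon V[1](X)\to Q$ induced by $j_0\colon F[0]\to\pi^*E$, with $\rho\iota_0=\id$, so by two-out-of-three it suffices to prove $\iota_0$ is an $\se(S)$-local equivalence. I would establish this by analysing the product $V[1](X)\times\pi^*E$ through the product-of-suspensions decomposition of Lemma \ref{itprod} and the Segal equivalences of Proposition \ref{moresegal}: after collapsing the endpoints, the boundary identifications force the two horizontal copies of $V[1](X)$ to agree up to a Segal equivalence, exactly as in the discrete model a morphism between two identity isomorphisms in a functor category $\fun(I,\mathcal{C})$ is forced by the commuting-square condition to be a single morphism of $\mathcal{C}$. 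This is the one place where a genuine computation with the cell structure of $\pi^*E$ is needed, and I expect it to be the crux of the argument; everything else is formal manipulation of the adjunctions and of the results already recorded.
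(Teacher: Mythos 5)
Your overall architecture is sound and, for the first two thirds, matches the paper: the reduction to $r^*$ via two-out-of-three is exactly what the paper does, and the essential-surjectivity argument via Corollaries \ref{eqcat} and \ref{cateqcart} and Lemma \ref{cateqhocat} is the paper's argument verbatim. Your reformulation of homotopical fully faithfulness is also a correct equivalence: since $W$ is injective fibrant, both $\Map(Q,W)$ and $\Map(V[1](X),W)$ fiber over $W_{[0]}\times W_{[0]}$ via fibrations, so by Proposition \ref{fiber} the map $\rho^*$ is a weak equivalence for every $\se(S)$-fibrant $W$ if and only if $(r^*)_*$ is a weak equivalence on all mapping objects; hence ``$\rho$ (equivalently $\iota_0$) is an $\se(S)$-local equivalence'' is precisely the statement to be proved, not a simplification of it.

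The gap is that you do not prove this statement, and the strategy you sketch for it does not engage with the actual difficulty. Lemma \ref{itprod} decomposes $V[1](X)\times V[1](Y)$, but $\pi^*E$ is not a suspension: $E=NI$ has nondegenerate cells in every simplicial degree, and your heuristic about ``the two horizontal copies of $V[1](X)$ being forced to agree'' only accounts for the $1$-skeleton $\pi^*F[1]\subseteq\pi^*E$. Without completeness of $W$ one cannot collapse the remaining cells of $E$, and this is exactly where the paper has to work: it factors $(r^*)_*$ through $\map_{W^{F([1](t))}}(s_0x,s_0y)$ via $i^*$, uses Lemma \ref{heqmono} (which rests on Rezk's computation $\Map(\pi^*E,W)\simeq W_{eq}$ in Proposition \ref{eeq}) to see that $(i^*)_*$ is a homotopy monomorphism --- this is the device that tames the infinitely many cells of $E$ --- and then carries out the unitality computation (Lemmas \ref{pullbacks} and \ref{itprod}, decomposing $(W^{F([1](c))})_{[1](c)}\cong W_{[2](t,c)}\times_{W_{[1](c)}}W_{[2](c,t)}$ into homotopy pullbacks) to show the composite $(s_0)_*$ is a weak equivalence, so that $(i^*)_*$ is surjective on $\pi_0$ and Lemma \ref{sseteq} applies. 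Your proposal cites neither Proposition \ref{eeq} nor any substitute for it, so as written the crux --- that $\iota_0$ is an $\se(S)$-local equivalence rather than merely a $\cpt(S)$-local one --- remains unestablished.
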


\begin{proof}
By the two-out-of-three property it suffices to show that $r^*$ is a DK-equivalence.
The map $r$ is given by a discrete nerve of an equivalence of categories and is thus a categorical equivalence, so Corollary \ref{cateqcart} tells us that the induced map $r^*\colon  X\to X^{\pi^* E}$ is also a categorical equivalence. Then the induced functor on homotopy categories $\ho(X)\to \ho(X^{\pi^* E})$ is an equivalence by Lemma \ref{cateqhocat}. We want to show that we have levelwise weak equivalences on mapping objects $\map_X(x,y)\to \map_{X^{\pi^* E}}(r^*x,r^*y)$, which we do via the two-out-of-three property.

Consider the inclusion $i\colon F([1](t))\cong \pi^*F[1]\hookrightarrow \pi^* E$ and the map ${i^*\colon X^{\pi^* E}\to X^{F([1](t))}}$ it induces, which appears in the diagram
\begin{equation}\label{eqhtpymono}
\begin{tikzcd}
X_{[1](c)} \arrow[r,"r^*_{[1](c)}" above] \arrow[d, two heads] & (X^{\pi^* E})_{[1](c)} \arrow[r,"i^*_{[1](c)}" above] \arrow[d, two heads] & (X^{F([1](t))})_{[1](c)} \arrow[d, two heads]\\
X_{[0]}\times X_{[0]} \arrow[r,"r^*_{[0]}\times r^*_{[0]}" above] & (X^{\pi^* E})_{[0]}\times (X^{\pi^* E})_{[0]} \arrow[r,"i^*_{[0]}\times i^*_{[0]}" above] & (X^{F([1](t))})_{[0]}\times (X^{F([1](t))})_{[0]}.
\end{tikzcd}
\end{equation}
Here the composites of the horizontal maps are given by the degeneracy $i\circ r = \sigma^0\colon [1](t)\to [0]$, and the maps induced by $i^*$ are homotopy monomorphisms by Lemma \ref{heqmono}, since via natural isomorphisms they correspond to the maps
\[
\begin{tikzcd}[cramped, sep=small]
\Map( \pi^* E,  X^{F([1](c))}) \arrow[r, "i^*"] & \Map( \pi^*F[1], X^{F([1](c))})
\end{tikzcd}
\]
and
\[
\begin{tikzcd}[cramped, sep=small]
\Map( \pi^* E, X^{F[0]\amalg F[0]}) \arrow[r, "i^*"] & \Map(\pi^*F[1],  X^{F[0]\amalg F[0]}).
\end{tikzcd}
\]
By commutativity of homotopy limits, we also have levelwise homotopy monomorphisms on the homotopy fibers of the vertical maps of the right square in diagram (\ref{eqhtpymono}),
\[
\map_{X^{\pi^* E}}(x,y)\to \map_{X^{F([1](t))}}(i^*x,i^*y).
\]
Next we show that the large rectangle in the diagram (\ref{eqhtpymono}) above is a homotopy pullback for each $c$.
To this end, consider the isomorphisms
\[\begin{split}
X_{[2](t,c)}\tim_{X_{[1](c)}} X_{[2](c,t)}
&\cong \Map(F([2](t,c))\amg_{F([1](c))} F([2](c,t)), X) \\
&\cong \Map(F([1](c))\tim F([1](c)), X)\\
&\cong(X^{F([1](c))})_{[1](c)},
\end{split}\]
the second of which follows from Lemma \ref{itprod}. 
Now the outer rectangle of (\ref{eqhtpymono}) is equivalent to
\[
\begin{tikzcd}
X_{[1](c)} \arrow[r, "(s_0{,}s_1)" above] \arrow[d, "(d_1{,}d_0)" left , two heads] & X_{[2](t,c)}\times_{X_{[1](c)}} X_{[2](c,t)} \arrow[d, "(d_2\pr_1{,}d_0 \pr_2)" right, two heads]\\
X_{[0]}\times X_{[0]} \arrow[r,"s_0\times s_0" above] & X_{[1](t)}\times X_{[1](t)}.
\end{tikzcd}
\]
Then, by viewing the products on the bottom row as fibered over the terminal object $\Delta[0]$, we can use commutativity of homotopy pullbacks with one another to decompose the diagram into the three other diagrams below, which are seen to be homotopy pullbacks in Lemma \ref{pullbacks}:
\[
\begin{tikzcd}
X_{[1](c)} \arrow[r, "(s_0{,}s_1)" above] \arrow[d, "(d_1{,}d_0)" left , two heads] & X_{[2](t,c)}\times_{X_{[1](c)}}^h X_{[2](c,t)} \arrow[d, "(d_2\pr_1{,}d_0 \pr_2)" right, two heads]  &
X_{[1](c)} \arrow[r, "s_1"] 
\arrow[d, "d_0" left, two heads] \arrow[dr, phantom, "\lrcorner_h" very near start] & X_{[2](c,t)} \arrow[d, "d_0" right, two heads] \\
X_{[0]}\times X_{[0]} \arrow[r,"s_0\times s_0" above] & X_{[1](t)}\times X_{[1](t)}  &
X_{[0]} \arrow[r,"s_0" above] & X_{[1](t)}\\
X_{[1](c)} \arrow[r, "s_0"] 
\arrow[d, "d_1" left, two heads] \arrow[dr, phantom, "\lrcorner_h" very near start] & X_{[2](t,c)} \arrow[d, "d_2" right, two heads] &
X_{[1](c)} \arrow[r, equal] \arrow[d, two heads] \arrow[dr, phantom, "\lrcorner_h" very near start] & X_{[1](c)} \arrow[d, two heads]\\
X_{[0]} \arrow[r,"s_0" above] & X_{[1](t)} &
\{*\} \arrow[r, equal] & \{*\}.
\end{tikzcd}
\]
Since the homotopy pullback of the three homotopy pullbacks is precisely what appears in the top left corner of the top left square, it is a homotopy pullback. Thus the outer rectangle of diagram (\ref{eqhtpymono}) is also a homotopy pullback  square, and hence the induced map on the homotopy fibers is a weak equivalence.

Now we have the following commutative diagram on the mapping spaces:
\[
\begin{tikzcd}
\map_X(x,y)_c \arrow[r, "(s_0)_*" above, "\sim" below] \arrow[d, "(r^*)_*" left] &\map_{X^{F([1](t))}}(s_0 x,s_0 y)_c\\
\map_{X^{\pi^* E}}(r^* x,r^* y)_c \arrow[ur, "(i^*)_*" below right]
\end{tikzcd}.
\]
In particular, $(s_0)_*=(i^*)_*(r^*)_*$ is surjective on $\pi_0$ and hence so is $(i^*)_*$ which then is a weak equivalence by Lemma \ref{sseteq} as it is also a homotopy monomorphism. By the two-out-of-three property, $r^*$ also induces weak equivalences on mapping spaces levelwise and is thus a DK-equivalence.

\end{proof}

\begin{prop}\label{cateqdk}
If a map between $\se(S)$-fibrant objects of $\sps(\Theta C)$ is a categorical equivalence, then it is a DK-equivalence.
\end{prop}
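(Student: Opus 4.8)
The plan is to deduce the statement formally from Lemma \ref{edk} together with the two-out-of-three and two-out-of-six properties of DK-equivalences recorded earlier. Let $f\colon U\to V$ be a categorical equivalence of $\se(S)$-fibrant objects. By Definition \ref{cateqdef} the map $f$ is a $\pi^* E$-homotopy equivalence, so there are maps $h,h'\colon V\to U$ together with $\pi^* E$-homotopies $hf\simeq \id_U$ and $fh'\simeq \id_V$. Using the adjoint (right) form of a homotopy from Definition \ref{inthtpy}, I would record these as maps $K\colon U\to U^{\pi^* E}$ and $K'\colon V\to V^{\pi^* E}$ satisfying $(j_1)^* K=\id_U$, $(j_0)^* K=hf$ and $(j_1)^* K'=\id_V$, $(j_0)^* K'=fh'$; this translation is valid because $\sps(\Theta C)$ is Cartesian closed and the cotensors $U^{\pi^* E}$, $V^{\pi^* E}$ exist (and are $\se(S)$-fibrant, since the $\se(S)$-local structure is Cartesian).

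The first key step is to observe that $K$ and $K'$ are themselves DK-equivalences. Indeed, Lemma \ref{edk} tells us that $(j_1)^*\colon U^{\pi^* E}\to U$ and $(j_1)^*\colon V^{\pi^* E}\to V$ are DK-equivalences, while $(j_1)^* K=\id_U$ and $(j_1)^* K'=\id_V$ are trivially DK-equivalences; since DK-equivalences satisfy the two-out-of-three property, $K$ and $K'$ are DK-equivalences. The second step applies Lemma \ref{edk} once more: as $(j_0)^*$ is also a DK-equivalence on both path objects, the composites $hf=(j_0)^* K$ and $fh'=(j_0)^* K'$ are DK-equivalences.

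Finally, I would arrange these into the sequence
\[
\begin{tikzcd}
V \arrow[r, "h'"] & U \arrow[r, "f"] & V \arrow[r, "h"] & U,
\end{tikzcd}
\]
whose two consecutive two-fold composites are exactly $fh'\colon V\to V$ and $hf\colon U\to U$, both now known to be DK-equivalences. The two-out-of-six property for DK-equivalences then forces each of $h'$, $f$, and $h$ to be a DK-equivalence; in particular $f$ is a DK-equivalence, as desired. (As an independent check, the essential-surjectivity half of the conclusion also follows directly from Lemma \ref{cateqhocat}, but the unified argument above produces both halves of Definition \ref{dkdef} at once.)

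The argument is essentially formal once Lemma \ref{edk} is available, so I do not expect a genuine obstacle here: the real content — that the interval projections $(j_0)^*,(j_1)^*$ are homotopically fully faithful and essentially surjective — has already been absorbed into that lemma. The only points requiring care are the bookkeeping of the homotopy-equivalence data as right homotopies into the path objects and keeping the composition order straight when invoking the two-out-of-six property, neither of which is substantive.
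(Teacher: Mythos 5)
Your proof is correct and follows essentially the same route as the paper's: both encode the homotopies $hf\simeq\id_U$ and $fh'\simeq\id_V$ as maps into the path objects $U^{\pi^*E}$, $V^{\pi^*E}$, use Lemma \ref{edk} with two-out-of-three to conclude that $hf$ and $fh'$ are DK-equivalences, and then apply two-out-of-six to the chain $V\to U\to V\to U$. The only difference is cosmetic bookkeeping (your explicit remark on $\se(S)$-fibrancy of the cotensors is a reasonable extra precaution), so there is nothing to fix.
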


\begin{proof}
Let $f\colon X\to Y$ be a categorical equivalence between $\se(S)$-fibrant objects with left and right weak inverses $g,h\colon Y\to X$, respectively. Consider the categorical homotopies $H,H'$ as in the diagrams below:

\[
\begin{tikzcd}
& & X & & & Y \\
& Y \arrow[ru, "h" above left] &  &  & X \arrow[ru, "f" above left] & \\
X \arrow[ru, "f" above left] \arrow[rr, "H" ] \arrow[rrdd, equal] & & X^{\pi^*E} \arrow[uu, "(j_0)^*" right] \arrow[dd, "(j_1)^*" right] & Y \arrow[ru, "g" above left] \arrow[rr, "H'" ] \arrow[rrdd, equal] &  & Y^{\pi^*E} \arrow[uu, "(j_0)^*" right] \arrow[dd, "(j_1)^*" right] \\
& & & & & \\
& & X & & & Y.                     
\end{tikzcd}
\]
By Lemma \ref{edk} the maps $(j_0)^*,(j_1)^*$ are DK-equivalences, as are the identities. Then by the two-out-of-three property, $fh$ and $gf$ are also DK-equivalences, and by the two-out-of-six property so is $f$.
\end{proof}

\section{Horizontal completion}\label{shcomp}

In this section we consider the completion in the lowest dimension, which serves as an inductive base case for the more general completion constructions in Section \ref{sscomp}.

We recall the following definition for our construction.

\begin{df}
Let ${\mathbf{X}}$ be a simplicial object in $\sps(C)$. We define the \emph{simplicial diagonal} of ${\mathbf{X}}$ as the object of $\sps(C)$ given levelwise as 
\[
\diag_q({\mathbf{X}}_{q})_{c,p}:=({\mathbf{X}}_{p})_{c,p}.
\]
\end{df}

Given a Segal object $X$ in $\sps(\Theta C)$, a failure of completeness indicates that $X$ has homotopy equivalences that are not recognized by homotopy-theoretic structure, which may be seen as the space $X_{eq}$ having more components than $X_{[0]}$. To address this incongruence, we may encode the homotopy equivalences in a simplicial resolution of $X$ which we may then geometrically realize to obtain an object equivalent to the original and homotopy equivalences encoded also in the homotopical structure.

A starting point for our resolution is extend the interval object $\pi^*E$ into a cosimplicial object which we may then cotensor with to obtain the simplicial resolution. Let $I(p)$ be the category consisting of a chain of $p$ composable isomorphisms or equivalently the connected groupoid on $p+1$ objects or the groupoid completion of $[p]$. Then the discrete nerves $\pi^* E(p):= \pi^* N I(p)$ assemble into a cosimplicial object in $\sps(\Theta C)$ with $\pi^* E(0)=\pi^* N (*)\cong F[0]$.

\begin{con}\label{completion}
Consider the functor on $\sps(\Theta C)$ that sends an object $X$ to the simplicial object $[p]\mapsto \mathbf{T}_{p}^1 X := X^{\pi^* E(p)}$. We define the \emph{dimension $1$ precompletion} functor $\widetilde{T}^1\colon \sps(\Theta C)\to \sps(\Theta C)$ as the simplicial diagonal of $\mathbf{T}^1$, which may be described levelwise as 
\[
(\widetilde{T}^1 X)_{\theta,p}=(X^{\pi^* E(p)})_{\theta,p}\cong\Hom(F(\theta)\times \pi^* E(p)\times \Delta[p],X)
\]
for $\theta\in \Theta C$ and $[p]\in \Delta$.
In general, the simplicial diagonal may break injective fibrancy, so we define the \emph{dimension $1$ completion} functor $T^1$ as the composite $\mathcal{F} \widetilde{T}^1$ for a for a fixed functorial injective fibrant replacement $\mathcal{F}$.

Additionally, the morphisms $\pi^* E(p)\to F[0]$ induce natural maps $X\cong X^{F[0]}\to X^{\pi^* E(p)}$ whose diagonal followed by the fibrant replacement maps define a natural transformation $\eta^1\colon \id\Rightarrow T^1$.
\end{con}

While the functor $T^1$ is well defined for any presheaf in $\sps(\Theta C)$, we are only interested in its behaviour on Segal objects, since only then is the completeness condition meaningful.

In the case $C=*$, the functor $T^1$ coincides with the Rezk completion \cite[\S 14]{rezksp}, and for $C=\Theta_{n-1}$, functor $T^1$ may also be viewed as an extension of the simplicial completion of $n$-quasi-categories as we now explain.

\begin{ex}
Consider $C=\Theta_{n-1}$ and let $\iota\colon\ps(\Theta_n)\hookrightarrow \sps(\Theta_n)$ be the discrete inclusion. Then $\widetilde{T}^1\iota$ is naturally isomorphic to the right adjoint of the Quillen equivalence between $n$-quasi-categories and complete Segal $\Theta_n$-spaces associated to simplicial completion as described in \cite[8.8]{ara}. Considering the same adjunction with respect to model structures without completeness in dimensions above 1 shows that $\widetilde{T}^1 X$ is already injective fibrant for any $X$ in the image of fibrant objects under the right adjoint, which includes all discrete Segal $\Theta_n$-spaces.

The injective fibrant replacement not being required in the above case is significant as $\widetilde{T}^1$ is a right adjoint and completely explicit in construction, whereas this is not the case for a generic injective fibrant replacement functor $\mathcal{F}$.
\end{ex}

These examples already show that the completions of certain objects are indeed complete, and we show this to be the case for any Segal object as a part of the main result of this section, Theorem \ref{hcompletion}.

We start by examining how the completion behaves with respect to categorical equivalences by considering the completion of the interval object $\pi^* E$. We may reduce the computation to the case of Segal spaces, where the precompletion of the discrete nerve is known to coincide with the homotopy coherent nerve functor called the classifying diagram as discussed in \cite[14.2]{rezksp}. In particular, the classifying diagram preserves weak equivalences, giving us the following result.

\begin{lemma}\cite[3.5]{rezksp}
When $C$ is the terminal category $*$, the functor $\widetilde{T}^1 N\colon \cat \to \sps(\Theta C)\cong \sps(\Delta)$ takes equivalences of categories to $\cpt$-local equivalences.
\end{lemma}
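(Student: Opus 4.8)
The plan is to identify the functor $\widetilde{T}^1 N$ with Rezk's classifying diagram and to reduce the claim to the fact that the latter sends equivalences of categories to levelwise weak equivalences. Since $C=*$ we have $\Theta C\cong \Delta$ and $\pi^*=\id$, so $\pi^* E(q)=E(q)=NI(q)$. Unwinding the levelwise formula of Construction \ref{completion} for a category $A$, I would first establish the natural isomorphism
\[
(\widetilde{T}^1 N(A))_{[m],q}\cong \Hom(F[m]\times E(q)\times \Delta[q], N(A))\cong \Hom_{\cat}([m]\times I(q), A),
\]
where the second isomorphism uses that $N(A)$ is a discrete nerve, so that maps out of the product collapse to functors out of $[m]\times I(q)$. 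This exhibits $\widetilde{T}^1 N(A)$ as the bisimplicial set $(m,q)\mapsto \Hom_{\cat}([m]\times I(q),A)$, that is, as the classifying diagram $\nerve(\iso(A^{[\bullet]}))$; the identification is \cite[14.2]{rezksp}.

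Granting this identification, I would fix an equivalence of categories $f\colon A\to B$ and argue levelwise. For a fixed level $[m]$ the simplicial set $(\widetilde{T}^1 N(A))_{[m]}$ is the nerve of the maximal subgroupoid $\iso(A^{[m]})$ of the functor category $A^{[m]}$. Since both the formation of functor categories $(-)^{[m]}$ and the passage to maximal subgroupoids preserve equivalences of categories, $f$ induces an equivalence of groupoids $\iso(A^{[m]})\to \iso(B^{[m]})$, and the nerve of an equivalence of groupoids is a weak equivalence of simplicial sets. As this holds for every $[m]$, the map $\widetilde{T}^1 N(f)$ is a levelwise weak equivalence; this is the content of \cite[3.5]{rezksp}.

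Finally, a levelwise weak equivalence is in particular a weak equivalence in the injective model structure. Because the $\cpt$-local model structure is obtained from the injective one by adjoining further weak equivalences, every injective weak equivalence is a $\cpt$-local equivalence. Hence $\widetilde{T}^1 N(f)$ is a $\cpt$-local equivalence, as required.

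The main obstacle is the identification in the first step: making precise the interaction between the simplicial diagonal, the internal hom $N(A)^{E(q)}$, and the discreteness of the nerve $N(A)$, so that the combinatorial formula reduces to $\Hom_{\cat}([m]\times I(q),A)$. Once this is settled, the remaining two steps are formal and follow from standard properties of nerves of groupoids and of Bousfield localizations.
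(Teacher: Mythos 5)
Your proposal is correct and follows exactly the route the paper takes, namely identifying $\widetilde{T}^1 N$ with Rezk's classifying diagram $[m]\mapsto \nerve(\iso(A^{[m]}))$ and observing that this sends equivalences of categories to levelwise weak equivalences; the paper simply delegates both steps to \cite[14.2]{rezksp} and \cite[3.5]{rezksp}, whereas you supply the details. Your stronger conclusion (levelwise rather than merely $\cpt$-local equivalence) is also what the paper actually uses in the subsequent corollary, so nothing is missing.
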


The (pre)completion also commutes with inclusions induced by enlarging the category $C$, which the next lemma shows in the case when starting with simplicial spaces.

\begin{lemma}
Let $X\in \sps(\Delta)$ be a simplicial space. Then $\widetilde{T}^1\pi^* X\cong \pi^*\widetilde{T}^1 X$.
\end{lemma}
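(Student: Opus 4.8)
The plan is to reduce the statement, for each fixed $p$, to the claim that the precomposition functor $\pi^*$ commutes with the relevant internal hom, and then to recognize this as a projection formula for the left adjoint $\pi_!$. Recall that $\widetilde{T}^1$ is the simplicial diagonal of the simplicial object $[p]\mapsto(-)^{\pi^*E(p)}$. Since $\pi^*$ only relabels the $\Theta C$-variable $\theta\mapsto\pi\theta$ while the diagonal acts on the simplicial variable, $\pi^*$ commutes with $\diag$; moreover $\Delta[p]=\pi^*\Delta[p]$ and $\pi^*E(p)$ is by definition pulled back from $\sps(\Delta)$. Hence it suffices to produce isomorphisms
\[
(\pi^*X)^{\pi^*E(p)}\cong\pi^*\bigl(X^{E(p)}\bigr)
\]
natural in $[p]$, after which the desired isomorphism follows by applying $\diag$.

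I would establish the more general fact that $\pi^*$ preserves internal homs with pulled-back exponent, namely $(\pi^*X)^{\pi^*Z}\cong\pi^*(X^Z)$ for every $Z\in\sps(\Delta)$. Unwinding both sides through the exponential adjunction together with $\pi_!\dashv\pi^*$, one obtains for every $A\in\sps(\Theta C)$ the natural identifications
\[
\Hom\bigl(A,(\pi^*X)^{\pi^*Z}\bigr)\cong\Hom\bigl(\pi_!(A\times\pi^*Z),X\bigr),\qquad
\Hom\bigl(A,\pi^*(X^Z)\bigr)\cong\Hom\bigl(\pi_!A\times Z,X\bigr),
\]
so by the Yoneda lemma the sought isomorphism is equivalent to the projection formula $\pi_!(A\times\pi^*Z)\cong\pi_!A\times Z$, natural in $A$ and $Z$; here I would also use that $\pi_!$ carries the representable $F(\theta)$ to $F(\pi\theta)$, since left Kan extension of a representable along $\pi$ is the representable at the image.

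To prove the projection formula, observe that both sides preserve colimits separately in $A$ and in $Z$: the functors $\pi_!$ and $\pi^*$ are adjoints and hence cocontinuous, and products preserve colimits in each variable because $\sps(\Theta C)$ and $\sps(\Delta)$ are Cartesian closed. As the canonical comparison map is natural, it is enough to check it is an isomorphism for $A=F(\theta)$ and $Z$ representable. In this case I would evaluate $\pi_!(F(\theta)\times\pi^*Z)$ directly from the pointwise (coend) formula for the left Kan extension $\pi_!$, using the explicit description of the morphisms of $\Theta C$ lying over a given morphism of $\Delta$. Equivalently, since $F(\theta)\times\pi^*Z$ lives over the representable $F(\theta)$, the computation is governed by the category of elements and reduces to showing that the functor $\Theta C/\theta\to\Delta/\pi\theta$ induced by $\pi$ is cofinal; this is the step where the relationship between $\Theta C$ and $\Delta$ (and the standing presence of the terminal object $t$, through the splitting $\pi\circ\tau=\mathrm{id}$) is used.

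I expect this cofinality/coend identification to be the main obstacle: the statement is essentially tautological at the levels $[0]$ and once the comparison is known, but verifying that the comparison map is an isomorphism on all representables requires genuinely understanding how a simplex's worth of composable cells in $\Theta C$ projects to $\Delta$. Once this is in place, the colimit defining $\pi_!(F(\theta)\times\pi^*Z)$ collapses to $F(\pi\theta)\times Z$, yielding the projection formula, hence $(\pi^*X)^{\pi^*E(p)}\cong\pi^*(X^{E(p)})$ naturally in $[p]$, and finally $\widetilde{T}^1\pi^*X\cong\pi^*\widetilde{T}^1X$ upon passing to diagonals.
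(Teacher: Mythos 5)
Your reduction is sound up to the last step and follows the same skeleton as the paper's own proof: unwind $\widetilde{T}^1$ levelwise, observe that $\pi^*$ commutes with the simplicial diagonal, and use the adjunction $\pi_!\dashv\pi^*$ to push everything down to a compatibility of $\pi_!$ with products. Your identification of the precise statement needed as the projection formula $\pi_!(A\times\pi^*Z)\cong\pi_!A\times Z$ is correct and is essentially what the paper uses, since both non-representable factors $\pi^*E(p)$ and $\Delta[p]$ are pulled back along $\pi$; the paper phrases this as ``$\pi_!$ commutes with finite products,'' citing a general result on left adjoints and left Kan extensions, and additionally exploits $\pi^*\cong\tau_!$ together with $\pi\tau=\mathrm{id}$ to identify $\pi_!\pi^*E(p)\cong(\pi\tau)_!E(p)\cong E(p)$, a device you do not need once the projection formula is in hand.

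The gap is that the projection formula --- which carries the entire mathematical content of the lemma --- is never established, and the reduction you propose for it targets an insufficient condition. After restricting to $A=F(\theta)$ and $Z$ representable, the colimit comparison you must verify is indexed not by $\Theta C/\theta$ over $\Delta/\pi\theta$ but by the base change $(\Theta C/\theta)\times_\Delta\int Z\to(\Delta/\pi\theta)\times_\Delta\int Z$ of that functor along the discrete fibration $\int Z\to\Delta$, and finality is not stable under such base change (pull back the final inclusion of a terminal object of $J$ into $J$ along a discrete fibration that is empty over that object). What is actually needed is the stronger statement that the counit $(\pi_\theta)_!(\pi_\theta)^*\Rightarrow\mathrm{id}$ is an isomorphism for the induced functor $\pi_\theta\colon\Theta C/\theta\to\Delta/\pi\theta$, i.e.\ that $(\pi_\theta)^*$ is fully faithful; this is the relative version of the full faithfulness of $\pi^*\cong\tau_!$, but unlike the absolute case it cannot be obtained from an adjoint section of $\pi_\theta$ (a terminal lift of $\varphi\colon[k]\to[m]$ would require products in $C$), so it demands a genuine combinatorial argument about morphisms of $\Theta C$ lying over a fixed morphism of $\Delta$. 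Since you explicitly defer exactly this step as ``the main obstacle,'' the proposal stops short of proving the lemma.
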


\begin{proof}
Let $\theta\in \Theta C$ and $[p]\in \Delta$, and recall that $\pi^*\cong \tau_{!}$. Then using the adjunction $\pi_! \dashv \pi^*$ we obtain natural isomorphisms:
\[
\begin{split}
(\widetilde{T}^1\pi^* X)_{\theta,p} 
& \cong \Hom_{\sps(\Theta C)}(F(\theta)\times \pi^* E(p)\times \Delta[p], \pi^* X)\\
& \cong \Hom_{\sps(\Delta)}(\pi_!(F(\theta)\times \tau_! E(p)\times \Delta[p]), X).\\
\end{split}
\]
We may then observe that the functor $\pi_!$ commutes with finite products, since the product as a left adjoint preserves left Kan extensions; see for example \cite[6.3.2.]{riehl} for this fact. Additionally, on the representables we have the formula $\pi_! F\cong F \pi$, and $\pi_!$ acts as the identity on constant presheaves giving us further natural isomorphisms:
\[
\begin{split}
(\widetilde{T}^1\pi^* X)_{\theta,p} 
& \cong \Hom_{\sps(\Delta)}(\pi_!F(\theta)\times \pi_!\tau_! E(p)\times \Delta[p], X)\\
& \cong \Hom_{\sps(\Delta)}(F(\pi\theta)\times (\pi\tau)_! E(p)\times \Delta[p], X)\\
& \cong \Hom_{\sps(\Delta)}(F(\pi\theta)\times E(p)\times \Delta[p], X)\\
& = (\widetilde{T}^1 X)_{\pi\theta,p}\\
& = (\pi^*\widetilde{T}^1 X)_{\theta,p}.
\end{split}
\]
\end{proof}

\begin{cor}
The unique map $ T^1 \pi^* E\to F[0]$ is a levelwise weak equivalence.
\end{cor}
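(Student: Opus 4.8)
The plan is to strip off the fibrant replacement, transport the statement down to simplicial spaces using the two preceding lemmas, and finish with the completeness of the classifying diagram. First I record that $F[0]=F([0]())$ is the terminal object of $\sps(\Theta C)$: since $[0]()$ is terminal in $\Theta C$, Yoneda gives $F[0]_\theta=\Hom_{\Theta C}(\theta,[0]())=\{*\}$ for every $\theta$. Hence the asserted map is the unique map to the terminal object, so the claim is exactly that $T^1\pi^*E$ is levelwise weakly contractible. Because the functorial injective fibrant replacement $\mathcal F$ supplies a natural levelwise weak equivalence $\widetilde T^1\pi^*E\to \mathcal F\widetilde T^1\pi^*E=T^1\pi^*E$, the two-out-of-three property reduces the problem to showing that $\widetilde T^1\pi^*E\to F[0]$ is a levelwise weak equivalence.

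Next I move the problem into $\sps(\Delta)$. By the preceding lemma on compatibility with $\pi^*$ we have $\widetilde T^1\pi^*E\cong \pi^*\widetilde T^1 E$, where $E=NI\in\sps(\Delta)$. As a precomposition functor $\pi^*$ preserves levelwise weak equivalences and sends the terminal object of $\sps(\Delta)$ to the terminal object $F[0]$ of $\sps(\Theta C)$; under this identification the unique map to the terminal object corresponds to $\pi^*$ applied to the unique map $\widetilde T^1 E\to F[0]$ in $\sps(\Delta)$. It therefore suffices to prove that $\widetilde T^1 E\to F[0]$ is a levelwise weak equivalence in $\sps(\Delta)$.

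For this base case I invoke the lemma identifying $\widetilde T^1 N$ with the classifying diagram. The functor $I\to *$ is an equivalence of categories, so that lemma yields that $\widetilde T^1 E=\widetilde T^1 NI\to \widetilde T^1 N(*)=F[0]$ is a $\cpt$-local equivalence, using $N(*)=F[0]$ and that $\widetilde T^1$ fixes the terminal object (an exponential into a terminal object is terminal, so $\widetilde T^1 F[0]=F[0]$). To upgrade this to a levelwise weak equivalence I use that both objects are $\cpt$-fibrant: $F[0]$ is terminal, hence $\cpt$-fibrant, while $\widetilde T^1 E$ is the classifying diagram of $I$, which is injective fibrant — as observed in the Example above, $NI$ being a discrete Segal space — and complete. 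A $\cpt$-local equivalence between $\cpt$-fibrant objects is a levelwise weak equivalence by Corollary \ref{cateqreedy}, which completes the argument.

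The main obstacle is precisely the last step: one must know that $\widetilde T^1 E$ is genuinely $\cpt$-fibrant rather than merely $\cpt$-locally equivalent to a point, since for a non-complete source a $\cpt$-local equivalence to the terminal object is strictly weaker than levelwise contractibility. This is where completeness of the classifying diagram of a category, a foundational fact of \cite{rezksp}, is essential, together with the injective fibrancy of $\widetilde T^1$ on discrete Segal spaces noted in the Example; the remaining reductions are formal consequences of the two preceding lemmas and of $F[0]$ being terminal.
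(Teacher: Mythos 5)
Your proof is correct and follows essentially the same route as the paper's: strip off the fibrant replacement, use the isomorphism $\widetilde{T}^1\pi^*E\cong\pi^*\widetilde{T}^1E$ to reduce to $\sps(\Delta)$, and conclude from the behaviour of the classifying diagram on the equivalence $I\to *$, with two-out-of-three throughout. The only difference is that where the paper directly asserts that $\widetilde{T}^1E\to F[0]$ is a levelwise weak equivalence, you carefully upgrade the $\cpt$-local equivalence supplied by the quoted lemma using the completeness and fibrancy of the classifying diagram of $I$ --- a worthwhile clarification, since the lemma as stated only yields a $\cpt$-local equivalence.
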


\begin{proof}
We utilize the two-out-of-three property using the commutative diagram
\[
\begin{tikzcd}
\pi^*\widetilde{T}^1 E \arrow[d, "\pi^*r"  swap]  \arrow[r, leftrightarrow, "\cong"] 
& \widetilde{T}^1\pi^* E  \arrow[r, "\sim"] 
& T^1 \pi^* E  \arrow[d]\\ 
\pi^*F[0] \arrow[rr, leftrightarrow, "\cong"] & & F[0],
\end{tikzcd}
\]
where $r\colon \widetilde{T}^1 E\to F[0]$ is the unique map to the terminal object, which is a levelwise weak equivalence, since it is the Rezk nerve applied to the equivalence $I\to [0]$. The map $\pi^*r$ is then also a levelwise weak equivalence, since $\pi^*$ preserves levelwise weak equivalences as a precomposition functor. The isomorphisms and the fibrant replacement are levelwise weak equivalences, so then is the remaining vertical map on the right by the two-out-of-three property as claimed.
\end{proof}

Then since the inclusions $F[0]\to T^1\pi^* E$ are cofibrations, we obtain the following key results.
\begin{cor}
The object $T^1\pi^* E$ is an interval object for the injective model structure on $\sps(\Theta C)$.
\end{cor}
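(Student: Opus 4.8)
The plan is to exhibit directly the interval factorization of Definition \ref{intdef}, with the terminal object $*$ taken to be $F[0]$. The weak equivalence $T^1\pi^* E \xrightarrow{\sim} F[0]$ is already supplied by the preceding corollary, and since $F[0]$ is the terminal object of $\sps(\Theta C)$, the two triangles in Definition \ref{intdef} commute for free: the composite of either endpoint inclusion with the collapse $T^1\pi^* E \to F[0]$ is the unique, hence identity, endomorphism of $F[0]$. Thus the whole statement reduces to producing two endpoint inclusions $F[0]\to T^1\pi^* E$ whose coproduct $F[0]\coprod F[0]\to T^1\pi^* E$ is a monomorphism, that is, a cofibration in the injective model structure.

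First I would build the endpoints from the interval structure on $\pi^* E$ established earlier, namely the two monomorphisms $j_0,j_1\colon F[0]\to \pi^* E$ induced by the objects $0,1$ of $I$. Applying $\widetilde{T}^1$ and using that $\widetilde{T}^1 F[0]$ is again terminal (each value $(\widetilde{T}^1 F[0])_{\theta,p}\cong \Hom(F(\theta)\times \pi^* E(p)\times \Delta[p],F[0])$ is a point, so $\widetilde{T}^1 F[0]\cong F[0]$), one obtains $\widetilde{T}^1 j_0,\widetilde{T}^1 j_1\colon F[0]\to \widetilde{T}^1\pi^* E$, which I then compose with the fibrant replacement $\widetilde{T}^1\pi^* E\to T^1\pi^* E$. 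That each resulting map is a monomorphism is routine: the functor $\mathbf{T}_p^1=(-)^{\pi^* E(p)}$ is right adjoint to $-\times \pi^* E(p)$, hence preserves pullbacks and so monomorphisms; the simplicial diagonal preserves monomorphisms since it is computed levelwise; and the fibrant replacement map is an acyclic cofibration, in particular a monomorphism. As each $j_i$ is a monomorphism, so is each composite $F[0]\to T^1\pi^* E$.

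The main point, and the step needing the most care, is that the coproduct of the two endpoint inclusions is itself a monomorphism, which comes down to checking that their images are disjoint at every level $(\theta,p)$. Unwinding the formula $(W^{\pi^* E(p)})_{\theta,p}\cong \Hom(F(\theta)\times \pi^* E(p)\times \Delta[p],W)$, the map $\widetilde{T}^1 j_i$ sends the unique point of $F[0]$ at level $(\theta,p)$ to the constant map at the object $i$ of $\pi^* E$. Since $F(\theta)\times \pi^* E(p)\times \Delta[p]$ is nonempty at level $([0],0)$ and $(\pi^* E)_{[0],0}$ contains the two distinct objects $0$ and $1$, the constant map at $0$ and the constant map at $1$ are distinct elements of this Hom-set; hence the two images never coincide. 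Therefore $F[0]\coprod F[0]\to \widetilde{T}^1\pi^* E$ is a monomorphism.

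Finally, composing with the fibrant replacement monomorphism yields the required cofibration $F[0]\coprod F[0]\hookrightarrow T^1\pi^* E$, and together with the weak equivalence $T^1\pi^* E\xrightarrow{\sim} F[0]$ this is exactly the interval factorization. I expect no genuine obstacle beyond the disjointness verification above; the only subtlety worth flagging is that one should not try to apply $\widetilde{T}^1$ to the coproduct inclusion $F[0]\coprod F[0]\hookrightarrow \pi^* E$ itself, since $\widetilde{T}^1$ involves the right adjoint $(-)^{\pi^* E(p)}$ and does not preserve coproducts — the two endpoints must instead be treated separately, as above.
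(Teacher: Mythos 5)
Your proposal is correct and matches the paper's (largely implicit) argument: the paper simply combines the levelwise weak equivalence $T^1\pi^*E\to F[0]$ from the preceding corollary with the observation that the two endpoint inclusions assemble into a cofibration $F[0]\coprod F[0]\hookrightarrow T^1\pi^*E$, which is exactly what you verify in detail via the disjointness of the images of the constant maps at $0$ and $1$. The extra care you take with the coproduct not being preserved by $\widetilde{T}^1$ and with the monomorphism checks is sound but fills in steps the paper treats as immediate.
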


\begin{cor}
The completion functor takes categorical equivalences to levelwise weak equivalences.
\end{cor}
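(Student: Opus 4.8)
The plan is to apply Proposition \ref{htpyfun} to the \emph{precompletion} functor $\widetilde{T}^1$, with interval objects $J=\pi^* E$ and $J'=\widetilde{T}^1\pi^* E$, in order to deduce that $\widetilde{T}^1$ sends categorical equivalences---that is, $\pi^* E$-homotopy equivalences in the sense of Definition \ref{cateqdef}---to $\widetilde{T}^1\pi^* E$-homotopy equivalences. I would then convert the latter into levelwise weak equivalences using that $\widetilde{T}^1\pi^* E$ is an interval object for the injective model structure, and finally pass from $\widetilde{T}^1$ to $T^1=\mathcal{F}\widetilde{T}^1$ using that the injective fibrant replacement $\mathcal{F}$ preserves levelwise weak equivalences. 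This last reduction is clean because $T^1 f=\mathcal{F}(\widetilde{T}^1 f)$, so a levelwise weak equivalence $\widetilde{T}^1 f$ immediately yields a levelwise weak equivalence $T^1 f$.

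The two hypotheses of Proposition \ref{htpyfun} are checked at the level of $\widetilde{T}^1$. From the levelwise description in Construction \ref{completion}, $\widetilde{T}^1$ is built from the exponential $(-)^{\pi^* E(p)}$ and the simplicial diagonal, both of which preserve finite products; hence the canonical comparison map $\widetilde{T}^1(U\times \pi^* E)\to \widetilde{T}^1 U\times \widetilde{T}^1\pi^* E$ is an isomorphism, which gives hypothesis (2) at once. Since $F[0]$ is terminal in $\sps(\Theta C)$, the same computation gives $\widetilde{T}^1 F[0]\cong F[0]$, so $\widetilde{T}^1$ carries the defining diagram of the interval $\pi^* E$ to the defining diagram of $\widetilde{T}^1\pi^* E$; that the target is genuinely an interval object for the injective structure follows from the computation already used above to show $\widetilde{T}^1\pi^* E\to F[0]$ is a levelwise weak equivalence, together with the fact that the two inclusions $F[0]\to \widetilde{T}^1\pi^* E$ are monomorphisms. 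This supplies hypothesis (1).

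It then remains to see that a $\widetilde{T}^1\pi^* E$-homotopy equivalence is a levelwise weak equivalence. Because $\widetilde{T}^1\pi^* E$ is an interval object for the injective (Cartesian) model structure, the projection $\widetilde{T}^1\pi^* E\to F[0]$ and the two sections $F[0]\to \widetilde{T}^1\pi^* E$ are levelwise weak equivalences, so for every $U$ the maps $U\to U\times \widetilde{T}^1\pi^* E\to U$ are levelwise weak equivalences. As all objects are cofibrant and $U\times \widetilde{T}^1\pi^* E$ is a good cylinder object, homotopic maps agree in the homotopy category, whence a $\widetilde{T}^1\pi^* E$-homotopy equivalence becomes invertible there and is therefore a weak equivalence; if one prefers to invoke Lemma \ref{htpywe} instead, one may first apply $\mathcal{F}$ to land among fibrant-cofibrant objects. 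The main obstacle, and the reason the argument is organized this way, is exactly the fibrant replacement in $T^1$: $\mathcal{F}$ neither fixes the terminal object nor preserves products strictly, so Proposition \ref{htpyfun} cannot be applied to $T^1$ directly. Routing the homotopy-theoretic step through the strictly product-preserving precompletion $\widetilde{T}^1$ and reserving $\mathcal{F}$ for the final weak-equivalence-preserving step circumvents this.
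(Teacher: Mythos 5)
Your proof is correct and rests on the same two pillars as the paper's: Proposition \ref{htpyfun} applied to the (pre)completion, with the product-preservation of $\widetilde{T}^1$ and the contractibility of the completed interval as the inputs. The difference is organizational. The paper applies Proposition \ref{htpyfun} to $T^1$ itself with target interval $T^1\pi^* E$, verifying hypothesis (2) only up to homotopy --- the comparison map $T^1(U\times\pi^* E)\to T^1 U\times T^1\pi^* E$ is a levelwise weak equivalence between injective fibrant objects, hence a $T^1\pi^* E$-homotopy equivalence by Lemma \ref{htpywe}/\ref{twohtpies} --- and then concludes levelwise weak equivalence at the end from Lemma \ref{htpywe}, since $T^1U$ and $T^1V$ are fibrant. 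You instead apply the proposition to $\widetilde{T}^1$ with interval $\widetilde{T}^1\pi^* E$, where the product comparison is a strict isomorphism, and compensate for the possible non-fibrancy of $\widetilde{T}^1 U$ by arguing in the homotopy category (homotopic maps via any cylinder object are identified there, so a $J'$-homotopy equivalence is inverted by the localization and hence is a weak equivalence); both routes work, and deferring $\mathcal{F}$ to a final two-out-of-three step is clean. One small caveat: your claim that Proposition \ref{htpyfun} ``cannot be applied to $T^1$ directly'' is too strong on the product side, since hypothesis (2) only asks for a $J'$-homotopy equivalence, which the fibrant replacement does supply; your concern about hypothesis (1) --- needing $T^1 F[0]\cong F[0]$ on the nose rather than up to equivalence --- is, however, a fair point that routing through $\widetilde{T}^1$ avoids.
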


\begin{proof}
The functor $\widetilde{T}^1$ preserves products, so $T^1$ preserves them up to levelwise weak equivalence or equivalently up to $T^1\pi^* E$-homotopy equivalence by Lemma \ref{twohtpies}. Thus $\widetilde{T}^1$ takes $\pi^* E$-homotopy equivalences to $T^1\pi^* E$-homotopy equivalence by Proposition \ref{htpyfun} proving the claim.
\end{proof}

In order to study the behaviour of the completion we also need some properties of the simplicial diagonal, which may also be understood as a geometric realization. The following theorem, which is originally due to Bousfield and Kan \cite[Ch. XI, 2.6]{bk}, characterizes the diagonal as a homotopy colimit.

\begin{thm}\cite[18.7.4., 15.11.6.]{hirsch} \label{bousfieldkan}
Let ${\mathbf{X}}$ be an injective cofibrant simplicial object in a simplicial model structure on $\sps(C)$. 
Then there is a natural weak equivalence 
\[
\hocolim_{q\in \Delta} {\mathbf{X}}_q \overset{\sim}{\to} \diag_q {\mathbf X}_q
\]
called the Bousfield-Kan map.
\end{thm}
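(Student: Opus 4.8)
The plan is to realize both sides as weighted colimits (coends) of $\mathbf{X}$ over $\Delta$ and to exhibit the Bousfield--Kan map as the map induced by a levelwise equivalence of the two weights. First I would record the coend description of the diagonal: by the co-Yoneda (density) formula, for any simplicial object $\mathbf{X}$ there is a natural isomorphism
\[
\diag_q \mathbf{X}_q \;\cong\; \int^{[q]\in\Delta} \mathbf{X}_q \otimes \Delta[q],
\]
since evaluating the right-hand coend at $(c,p)$ gives $\int^{[q]} (\mathbf{X}_q)_{c,p}\times \Hom_\Delta([p],[q]) \cong (\mathbf{X}_p)_{c,p}$. Next I would use Hirschhorn's description of the homotopy colimit as (the realization of) the simplicial replacement, which for the indexing category $\Delta\op$ is the coend of $\mathbf{X}$ against the cosimplicial space $[q]\mapsto N(\Delta/[q])$ of nerves of the slice categories:
\[
\hocolim_{q\in\Delta}\mathbf{X}_q \;\cong\; \int^{[q]\in\Delta}\mathbf{X}_q\otimes N(\Delta/[q]).
\]

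The comparison between the two weights is the \emph{last-vertex map}. The assignment sending an object $f\colon [p]\to[q]$ of $\Delta/[q]$ to the vertex $f(p)\in[q]$ is functorial $\Delta/[q]\to[q]$, and taking nerves yields a natural map of cosimplicial spaces $N(\Delta/[q])\to N[q]=\Delta[q]$. This map is a levelwise weak equivalence: the slice $\Delta/[q]$ has the terminal object $\id_{[q]}$, so $N(\Delta/[q])$ is contractible, and $\Delta[q]$ is contractible as well. Applying the functor tensor product $W\mapsto \int^{[q]}\mathbf{X}_q\otimes W(q)$ to this map and composing with the two identifications above produces precisely the Bousfield--Kan map $\hocolim_q \mathbf{X}_q\to\diag_q\mathbf{X}_q$, whose naturality is inherited from the naturality of the coend and of the last-vertex map.

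The main work, and the point where the cofibrancy hypothesis is essential, is to show that this induced map is a weak equivalence. I would argue that the functor tensor product $\int^{[q]}\mathbf{X}_q\otimes(-)$ sends a levelwise weak equivalence between Reedy-cofibrant cosimplicial weights to a weak equivalence whenever $\mathbf{X}$ is injective (hence Reedy) cofibrant, i.e.\ all latching maps $L_q\mathbf{X}\to\mathbf{X}_q$ are monomorphisms. Both weights $N(\Delta/[-])$ and $\Delta[-]$ are Reedy cofibrant, their latching inclusions being monomorphisms (for $\Delta[-]$ this is $\partial\Delta[q]\hookrightarrow\Delta[q]$). One then filters each coend by the skeletal filtration of the weight; passage from stage $q-1$ to stage $q$ attaches a cell built as the pushout--product of the latching map $L_q\mathbf{X}\to\mathbf{X}_q$ with the boundary inclusion of the weight, and the injective cofibrancy of $\mathbf{X}$ guarantees that these attaching maps are cofibrations in the injective model structure on $\sps(C)$. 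A gluing-lemma induction, using that the last-vertex map is a levelwise weak equivalence between the cofibrant weights, then shows the comparison is a weak equivalence at each filtration stage, hence on the colimit. I expect this homotopy-invariance step --- verifying that the bar construction depends on the weight only up to homotopy, which is exactly what Reedy cofibrancy of $\mathbf{X}$ buys --- to be the main obstacle; the remaining ingredients are the bookkeeping of coends and the contractibility of the slices.
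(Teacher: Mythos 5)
The paper gives no proof of this statement—it is quoted directly from Hirschhorn—so there is nothing internal to compare against; your argument reconstructs the standard proof from that source (the coend descriptions of the diagonal and of the homotopy colimit, the last-vertex comparison of the two weights, and homotopy invariance of the functor tensor product via the skeletal filtration of the weight), and it is essentially correct. The one point worth tightening is the parenthetical ``injective (hence Reedy) cofibrant'': in general Reedy cofibrancy implies injective (levelwise) cofibrancy and not conversely, so the implication you invoke goes the wrong way as stated. It is nevertheless true in this setting, because every simplicial object in $\sps(C)$ has monic latching maps by the Eilenberg--Zilber lemma (the degenerate part splits off levelwise), which is precisely why the paper can remark that the cofibrancy hypothesis is automatically satisfied in the model structures it considers; you should cite that fact rather than derive Reedy cofibrancy from injective cofibrancy.
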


In particular, the diagonal commutes with other homotopy colimits and is homotopy invariant.

\begin{cor}\label{bousfieldkanwe}
Let $f\colon{\mathbf{X}}\to{\mathbf{Y}}$ be a map between injective cofibrant simplicial objects in a simplicial model structure on $\sps(C)$ such that $f_q\colon {\mathbf{X}}_q \to {\mathbf{Y}}_q$ is a weak equivalence for all $q$. Then the induced map
\[
\diag_q f_q\colon \diag_q {\mathbf X}_q \to \diag_q {\mathbf Y}_q
\]
is a weak equivalence.
\end{cor}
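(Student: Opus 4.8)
The plan is to deduce this directly from Theorem \ref{bousfieldkan} together with the homotopy invariance of homotopy colimits. First I would invoke the naturality of the Bousfield-Kan map to assemble the commutative square
\[
\begin{tikzcd}
\hocolim_q {\mathbf{X}}_q \arrow[r, "\sim"] \arrow[d] & \diag_q {\mathbf{X}}_q \arrow[d] \\
\hocolim_q {\mathbf{Y}}_q \arrow[r, "\sim"] & \diag_q {\mathbf{Y}}_q,
\end{tikzcd}
\]
whose vertical maps are induced by $f$ and whose horizontal maps are the Bousfield-Kan weak equivalences supplied by Theorem \ref{bousfieldkan}, using that both ${\mathbf{X}}$ and ${\mathbf{Y}}$ are injective cofibrant simplicial objects.

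Next I would argue that the left vertical map $\hocolim_q f_q$ is a weak equivalence. This is precisely the homotopy invariance of homotopy colimits: since $f_q$ is a weak equivalence for each $q$ and, in the injective model structure on $\sps(C)$, every object is cofibrant (as the cofibrations are exactly the monomorphisms), the map $f$ is an objectwise weak equivalence between objectwise cofibrant diagrams, and hence induces a weak equivalence on homotopy colimits.

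Finally, the conclusion follows from the two-out-of-three property applied to the square: with both horizontal maps and the left vertical map weak equivalences, the right vertical map $\diag_q f_q$ is a weak equivalence as well. The only point I would take care over — and the closest thing to an obstacle — is the bookkeeping ensuring the hypotheses of Theorem \ref{bousfieldkan} and of the homotopy-invariance statement are met simultaneously, namely that the injective cofibrancy of ${\mathbf{X}}$ and ${\mathbf{Y}}$ as simplicial objects yields both the Bousfield-Kan equivalences and the objectwise cofibrancy needed to conclude that $\hocolim_q f_q$ is a weak equivalence. Once this is in place, the argument is a formal application of two-out-of-three.
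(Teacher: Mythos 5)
Your argument is correct and matches the paper's intended proof: the corollary is stated as an immediate consequence of Theorem \ref{bousfieldkan}, using exactly the naturality of the Bousfield--Kan map, the homotopy invariance of the homotopy colimit for objectwise (injective) cofibrant diagrams, and two-out-of-three. Your care about the cofibrancy hypotheses is also consistent with the paper's remark that in the model structures considered the cofibrancy assumption is always satisfied.
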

Note that in the model structures that we consider on $\sps(C)$ the assumption on cofibrancy is always satisfied.

\begin{lemma}\label{diagmap}
Let ${\mathbf X}$ be a simplicial object and $A$ a discrete object in $\sps(C)$. Then there is a natural isomorphism
\[
\Map(A, \diag {\mathbf X}) \cong \diag \Map(A, {\mathbf X}).
\]
\end{lemma}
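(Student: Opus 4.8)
The plan is to reduce the statement to the case of a discrete representable $A=F(c)$ and then extend to arbitrary discrete $A$ by a colimit argument, throughout exploiting the identification $\sps(C)\cong \ps(C\times \Delta)$.

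First I would record the key consequence of discreteness. Under the identification $\sps(C)\cong \ps(C\times \Delta)$, the object $F(c)\times \Delta[p]$ is the representable presheaf at $(c,[p])$, since $(F(c)\times \Delta[p])_{c',p'}\cong \Hom_C(c',c)\times \Hom_\Delta([p'],[p])$. Hence the Yoneda lemma gives
\[
\Map(F(c),Y)_p=\Hom(F(c)\times \Delta[p],Y)\cong Y_{c,p},
\]
naturally in $[p]$ and $Y$, so that $\Map(F(c),-)$ is naturally isomorphic to evaluation at $c$, sending $Y$ to the simplicial set $Y_c\in \sset$. With this in hand the case $A=F(c)$ is immediate: both evaluation at $c$ and the simplicial diagonal are computed levelwise, and therefore commute. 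Writing $\mathbf{X}_c$ for the bisimplicial set $q\mapsto (\mathbf{X}_q)_c=\Map(F(c),\mathbf{X}_q)$, the definition of $\diag$ gives $(\diag \mathbf{X})_c\cong \diag(\mathbf{X}_c)$, which unwinds to the natural isomorphism $\Map(F(c),\diag \mathbf{X})\cong \diag \Map(F(c),\mathbf{X})$.

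Next I would pass to an arbitrary discrete $A$. Every presheaf is a canonical colimit of representables, and the discrete inclusion $\set\to \sset$ preserves colimits, being a left adjoint; hence $A$ may be written as a colimit $\colim_i F(c_i)$ of discrete representables. Both contravariant functors $A\mapsto \Map(A,\diag \mathbf{X})$ and $A\mapsto \diag \Map(A,\mathbf{X})$ carry this colimit to the corresponding limit of simplicial sets: the functor $\Map(-,Z)$ sends colimits to limits for any $Z$, products preserve colimits in the Cartesian closed category $\sps(C)$, and $\diag$ preserves limits since it is restriction along $\id_C\times \delta$ for the diagonal $\delta\colon \Delta\to \Delta\times \Delta$, and restriction functors between presheaf categories are continuous. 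The natural isomorphisms already established on each $F(c_i)$ then assemble into the desired natural isomorphism on $A$.

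The only genuine subtlety, and the step I would treat most carefully, is naturality in the simplicial direction: a priori the levels $\Map(A,\diag \mathbf{X})_p=\Hom(A\times \Delta[p],\diag \mathbf{X})$ and $(\diag \Map(A,\mathbf{X}))_p=\Hom(A\times \Delta[p],\mathbf{X}_p)$ carry different-looking structure maps, the latter mixing the structure maps of $\mathbf{X}$ as a simplicial object with those coming from $\Delta[p]$. This matching is exactly what the commutation of evaluation at $c$ with $\diag$ encodes in the representable case, so once that case is pinned down the remainder of the argument is formal.
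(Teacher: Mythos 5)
Your proof is correct, but it takes a genuinely different route from the paper's. The paper argues directly for an arbitrary discrete $A$ by a single chain of hom-set isomorphisms: it uses the exponential adjunction to trade $-\times\Delta[p]$ for $(-)^{\Delta[p]}$, then the adjunction between the discrete inclusion $\set\hookrightarrow\sset$ and evaluation at simplicial level $0$ to reduce $\Hom_{\sps(C)}(A,Y)$ to $\Hom_{\ps(C)}(A,Y_{\cdot,0})$, at which point the identification $((\diag_q\mathbf{X}_q)^{\Delta[p]})_{\cdot,0}\cong(\mathbf{X}_p)_{\cdot,p}\cong((\mathbf{X}_p)^{\Delta[p]})_{\cdot,0}$ closes the loop. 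You instead isolate the representable case $A=F(c)$, where $\Map(F(c),-)$ is evaluation at $c$ and the statement degenerates to the commutation of two levelwise operations, and then extend by writing a discrete $A$ as a colimit of discrete representables and checking that both sides turn that colimit into a limit. Both arguments use discreteness in an essential and correct way -- the paper through the $\mathrm{disc}\dashv\mathrm{ev}_0$ adjunction applied to $A$ itself, you through the fact that $A$ is a colimit of \emph{discrete} representables rather than of general representables $F(c)\times\Delta[p]$ of $\ps(C\times\Delta)$, for which the statement would fail. The paper's computation is shorter and entirely self-contained; your density argument is more modular and makes visible exactly where the hypothesis on $A$ enters, and your explicit attention to matching the simplicial structure maps in the representable case addresses a point the paper leaves implicit in its levelwise bijections.
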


\begin{proof}
Using the fact that evaluation at level 0 is left adjoint to the discrete inclusion $\set\hookrightarrow \sset$, we get a chain of natural bijections giving us the desired isomorphism levelwise:
\[
\begin{split}
\Map(A, \diag_q {\mathbf X}_q)_p
& \cong \Hom_{\sps(C)}(A\times \Delta[p], \diag_q {\mathbf X}_q)\\
& \cong \Hom_{\sps(C)}(A , (\diag_q {\mathbf X}_q)^{\Delta[p]} )\\
& \cong \Hom_{\ps(C)}(A , ((\diag_q {\mathbf X}_q)^{\Delta[p]})_{\cdot,0} )\\
& \cong \Hom_{\ps(C)}(A , ((\diag_q {\mathbf X}_q)^{})_{\cdot,p} )\\
& \cong \Hom_{\ps(C)}(A ,  {(\mathbf X}_p)_{\cdot,p} )\\
& \cong \Hom_{\ps(C)}(A ,  (({\mathbf X}_p)^{\Delta[p]})_{\cdot,0} )\\
& \cong \Hom_{\sps(C)}(A ,  {(\mathbf X}_p)^{\Delta[p]} )\\
& \cong \Hom_{\sps(C)}(A \times \Delta[p] ,  {\mathbf X}_p )\\
& \cong \Map(A  ,  {\mathbf X}_p )_p\\
& \cong (\diag_q \Map(A  ,  {\mathbf X}_{q} ))_p.\\
\end{split}
\]
\end{proof}


Despite being a homotopy colimit, the diagonal also has some compatibility with limits and homotopy limits.

\begin{lemma}\label{diaglimits}
The simplicial diagonal commutes with limits.
\end{lemma}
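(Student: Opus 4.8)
The plan is to observe that the simplicial diagonal is nothing but a restriction (precomposition) functor, and then invoke the general fact that such functors preserve all limits, so that finiteness plays no essential role.

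More precisely, I would first identify the category of simplicial objects in $\sps(C)$ with a presheaf category: a simplicial object $\mathbf{X}$ amounts to a functor $C\op \times \Delta\op \times \Delta\op \to \set$, where the first copy of $\Delta\op$ records the internal simplicial direction of $\sps(C)$ and the second records the external (simplicial object) direction. Under this identification, the defining formula $\diag_q(\mathbf{X}_q)_{c,p} = (\mathbf{X}_p)_{c,p}$ exhibits $\diag$ as precomposition with the functor $\id_{C\op}\times d\op$, where $d\colon \Delta\to\Delta\times\Delta$ is the diagonal $[p]\mapsto([p],[p])$.

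The key step is then the standard fact that limits in a functor category are computed objectwise, so that any restriction functor preserves all limits; indeed, precomposition functors even admit both adjoints (left and right Kan extension) and hence preserve all limits and colimits. In particular $\diag$ preserves finite limits, which is the claim. The same argument in fact shows that $\diag$ preserves arbitrary limits, not only finite ones, consistent with the remark preceding the lemma that the diagonal is compatible with both homotopy colimits and limits.

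If one prefers an elementary verification avoiding the abstract identification, I would instead argue directly: given a finite diagram $(\mathbf{X}^i)_{i\in\mathcal{I}}$ of simplicial objects with limit $\mathbf{X}=\lim_i \mathbf{X}^i$, limits of simplicial objects and of simplicial presheaves are computed levelwise, so $(\mathbf{X}_p)_{c,p}=\lim_i (\mathbf{X}^i_p)_{c,p}$ for every $c\in C$ and $[p]\in\Delta$; unwinding the diagonal formula then yields $\diag(\mathbf{X})_{c,p}=\lim_i \diag(\mathbf{X}^i)_{c,p}=(\lim_i \diag \mathbf{X}^i)_{c,p}$, naturally in $c$ and $p$. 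There is essentially no obstacle here — the only point requiring care is the bookkeeping of the two distinct simplicial directions, so that the diagonal is applied to the correct pair of indices; the restriction-functor viewpoint makes this transparent.
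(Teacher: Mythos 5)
Your proposal is correct and, in its second paragraph, coincides with the paper's own proof, which simply observes that both the diagonal and the limits are computed levelwise. The restriction-functor framing in your first paragraph is a clean way to package the same observation and shows the stronger statement that $\diag$ preserves all limits (and colimits), but it is not a genuinely different argument.
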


\begin{proof}
The result follows by from the fact that $\diag$ is given by precomposition by the diagonal $\Delta\to \Delta\times \Delta$.
\end{proof}

\begin{lemma}\cite[8.20]{brcomp2}\label{pbpasting}
Let $\mathbf{X}\to \mathbf{Y}$ be a map of simplicial objects of $\sps(C)$ such that, for every map $[q]\to [p]$ in $\Delta$, the induced diagram 
\[
\begin{tikzcd}
{\mathbf{X}}_q \arrow[r] \arrow[d] & {\mathbf{X}}_p \arrow[d]\\
{\mathbf{Y}}_q \arrow[r] & {\mathbf{Y}}_p \\
\end{tikzcd}
\]
is a levelwise homotopy pullback square. Then the diagram
\[
\begin{tikzcd}
{\mathbf{X}}_0 \arrow[r] \arrow[d] & \displaystyle \hocolim_{q\in \Delta}{\mathbf{X}}_q \arrow[d]\\
{\mathbf{Y}}_0 \arrow[r] & \displaystyle  \hocolim_{q\in \Delta}{\mathbf{Y}}_q \\
\end{tikzcd}
\]
is also a levelwise homotopy pullback square.
\end{lemma}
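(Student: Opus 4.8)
The plan is to replace the homotopy colimits by simplicial diagonals via the Bousfield--Kan map of Theorem \ref{bousfieldkan}, reduce the statement to the assertion that the diagonal sends a certain \emph{degreewise} homotopy pullback square to a homotopy pullback, and then settle that assertion by descent. First I would rewrite the target square. Since $[0]$ is terminal in $\Delta$, the unique maps $[q]\to[0]$ induce canonical structure maps $\mathbf{X}_0\to\mathbf{X}_q$ and $\mathbf{Y}_0\to\mathbf{Y}_q$, which assemble into maps of simplicial objects out of the constant ones, fitting into the commuting square
\[
\begin{tikzcd}
\mathrm{const}\,\mathbf{X}_0 \arrow[r]\arrow[d] & \mathbf{X} \arrow[d,"f"]\\
\mathrm{const}\,\mathbf{Y}_0 \arrow[r] & \mathbf{Y}.
\end{tikzcd}
\]
Using $\diag(\mathrm{const}\,A)\cong A$ together with the Bousfield--Kan equivalence $\hocolim\simeq\diag$, the square in the conclusion is identified up to natural weak equivalence with the image of this square under $\diag$. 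By the hypothesis applied to the maps $[q]\to[0]$, the displayed square is a degreewise homotopy pullback, so everything reduces to showing that $\diag$ carries it to a homotopy pullback in $\sps(C)$.

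Next I would strictify. Applying a functorial factorization degreewise, I may assume $\mathbf{Y}$ is degreewise injective fibrant and replace $f$ by a degreewise fibration $\tilde{\mathbf{X}}\twoheadrightarrow\mathbf{Y}$ through a degreewise weak equivalence $\mathbf{X}\to\tilde{\mathbf{X}}$; by Corollary \ref{bousfieldkanwe} this alters neither diagonal up to weak equivalence. Forming the degreewise strict pullback $\mathbf{P}:=\mathrm{const}\,\mathbf{Y}_0\times_{\mathbf{Y}}\tilde{\mathbf{X}}$, Proposition \ref{fibpb} identifies each $\mathbf{P}_q$ with the degreewise homotopy pullback, so the hypothesis yields a degreewise weak equivalence $\mathrm{const}\,\mathbf{X}_0\xrightarrow{\sim}\mathbf{P}$; in particular $\mathbf{P}$ is homotopy constant. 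Applying $\diag$ and invoking Corollary \ref{bousfieldkanwe} once more gives $\mathbf{X}_0\xrightarrow{\sim}\diag\mathbf{P}$, while Lemma \ref{diaglimits} identifies $\diag\mathbf{P}$ with the \emph{strict} pullback $\mathbf{Y}_0\times_{\diag\mathbf{Y}}\diag\tilde{\mathbf{X}}$. Thus it remains only to prove that this strict pullback computes the homotopy pullback of $\mathbf{Y}_0\to\diag\mathbf{Y}\leftarrow\diag\tilde{\mathbf{X}}$.

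I expect this last step to be the main obstacle. The difficulty is that, although $\tilde{\mathbf{X}}_q\to\mathbf{Y}_q$ is a fibration for each $q$, the diagonal $\diag\tilde{\mathbf{X}}\to\diag\mathbf{Y}$ need not be a fibration, so Proposition \ref{fibpb} does not apply directly and the strict and homotopy pullbacks are not obviously identified; this is exactly the content that $\diag$ (equivalently, realization) commutes with this homotopy pullback. The cleanest way to close the gap is to appeal to descent: the levelwise model structure on $\sps(C)$ presents the $\infty$-topos of space-valued presheaves on $C$, in which homotopy colimits are universal, and the lemma is precisely the descent statement that a \emph{cartesian} natural transformation of $\Delta\op$-diagrams (one all of whose naturality squares are homotopy pullbacks) remains cartesian after passing to the colimit, read off at the object $[0]$. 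Alternatively, one can argue by hand through the skeletal filtration of the diagonal, building $\diag\tilde{\mathbf{X}}$ and $\diag\mathbf{Y}$ up one degree at a time and using that in $\sps(C)$ homotopy pushouts are stable under homotopy pullback (the gluing lemma together with Mather's cube theorem) to propagate the homotopy pullback property from each latching square to the colimit. Either route shows that $\diag$ preserves the degreewise homotopy pullback square, which completes the proof.
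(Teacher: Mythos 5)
The paper does not prove this lemma at all --- it is quoted verbatim from Bergner--Rezk \cite[8.20]{brcomp2} --- so there is no internal argument to compare against; your proposal is a genuine proof sketch where the paper has only a citation. Your reductions are sound: identifying the conclusion square with the image under $\diag$ of the square of simplicial objects $\mathrm{const}\,\mathbf{X}_0 \to \mathbf{X}$ over $\mathrm{const}\,\mathbf{Y}_0 \to \mathbf{Y}$ via the Bousfield--Kan map is correct, and the strictification through a degreewise fibration together with Lemma \ref{diaglimits} correctly localizes the difficulty to the statement that the strict pullback $\mathbf{Y}_0 \times_{\diag \mathbf{Y}} \diag\tilde{\mathbf{X}}$ computes the homotopy pullback. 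You are also right that this is where all the content lives: for an arbitrary degreewise homotopy pullback square of simplicial objects the conclusion is \emph{false} (this is why Bousfield--Friedlander need a $\pi_*$-Kan condition), and the full hypothesis that \emph{every} naturality square of $\mathbf{X}\to\mathbf{Y}$ is a homotopy pullback --- not just the squares against the constant diagrams --- is what saves the day. Your closing appeal to descent (equivalently, Puppe's theorem on realizations of cartesian transformations of simplicial spaces, applied levelwise over $C$) is a correct and standard way to finish, as is the skeletal-filtration argument via the gluing lemma and Mather's cube theorem; but be aware that you have asserted rather than proved this step, and it is strictly stronger than ``colimits are universal,'' so if this were to be written out in full the descent/van Kampen property would need its own reference or proof. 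A cosmetic remark: once you decide to invoke descent for the cartesian transformation $\mathbf{X}\to\mathbf{Y}$ directly, the entire strictification paragraph becomes dispensable --- it clarifies where the obstacle sits but does no work in closing it.
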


We are now ready prove that the dimension 1 completion satisfies properties analogous to the Rezk completion \cite[\S 14]{rezksp}.

\begin{thm}\label{hcompletion}
Let $X$ be a $\se(S)$-fibrant object in $\sps(\Theta C)$. Then
\begin{enumerate}
\item $T^1 X$ is $\cpt(S)$-fibrant,
\item $\eta_X^1$ is a $\cpt(S)$-local acyclic cofibration, and
\item $\eta_X^1$ is a DK-equivalence.
\end{enumerate}
\end{thm}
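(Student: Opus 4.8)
The plan is to leverage that the simplicial object $\mathbf{T}^1 W = ([p]\mapsto W^{\pi^*E(p)})$ is built from $\se(S)$-fibrant objects whose structure maps are categorical equivalences, and to compute all homotopy-theoretic data of $T^1 W = \mathcal{F}\widetilde{T}^1 W$ through the diagonal. Since the $\se(S)$-local model structure is Cartesian, each cotensor $W^{\pi^*E(p)}$ is $\se(S)$-fibrant; and every simplicial operator of $\mathbf{T}^1 W$ is $W^{\pi^*N(\varphi)}$ for a functor $\varphi$ between the contractible groupoids $I(p)$, hence the image under $W^{\pi^*N(-)}$ of an equivalence of categories and so a categorical equivalence by Corollaries \ref{eqcat} and \ref{cateqcart}. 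By Propositions \ref{cateqdk} and \ref{cateqcpt}, these operators are therefore simultaneously DK-equivalences and $\cpt(S)$-local equivalences. The key computational device is that, for discrete $A$, Lemma \ref{diagmap} and Theorem \ref{bousfieldkan} give $\Map(A,T^1 W)\simeq \diag_p\Map(A,W^{\pi^*E(p)})\simeq \hocolim_p \Map(A,W^{\pi^*E(p)})$, which trades properties of $T^1 W$ for homotopy (co)limits of the same properties of the fibrant objects $W^{\pi^*E(p)}$.

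Applying this to the spine inclusions, the Segal map of $T^1 W$ becomes the homotopy colimit over $p$ of the Segal maps of the $W^{\pi^*E(p)}$, each a weak equivalence, so $T^1 W$ is $\se_C$-fibrant by Corollary \ref{bousfieldkanwe}. For the homotopically-fully-faithful half of (3), I note that since all operators of $\mathbf{T}^1 W$ are DK-equivalences, Proposition \ref{fiber} makes every naturality square of $[p]\mapsto\big((W^{\pi^*E(p)})_{[1](c)}\to (W^{\pi^*E(p)})_{[0]}^2\big)$ a homotopy pullback; Lemma \ref{pbpasting}, together with Lemma \ref{diaglimits} identifying the diagonal of the base with $(T^1 W)_{[0]}^2$, then yields a homotopy pullback
\[
\begin{tikzcd}
W_{[1](c)} \arrow[r] \arrow[d] & (T^1 W)_{[1](c)} \arrow[d]\\
W_{[0]}^2 \arrow[r] & (T^1 W)_{[0]}^2,
\end{tikzcd}
\]
whose homotopy fibers give levelwise equivalences $\map_W(x,y)\to \map_{T^1 W}(\eta^1_W x,\eta^1_W y)$. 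Essential surjectivity is immediate because $(\diag \mathbf{T}^1 W)_{[0],0}=(\mathbf{T}^1_0 W)_{[0],0}=W_{[0],0}$, so $\eta^1_W$ is surjective on $\pi_0$ of the object space; this proves (3). Since the preserved mapping objects $\map_W(x,y)$ are $S$-fibrant and every object of $T^1 W$ is equivalent to one in the image of $\eta^1_W$, Proposition \ref{mapfibrant} upgrades $\se_C$-fibrancy to $\se(S)$-fibrancy.

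Completeness, the remaining part of (1), I would reduce to the one-dimensional case. The class $\operatorname{cplt}_C$ is detected on underlying simplicial spaces: the adjunction $\pi^*\dashv \tau^*$ gives $\Map(\pi^*E,T^1 W)\cong \Map(E,\tau^* T^1 W)$, so $T^1 W$ is $\operatorname{cplt}_C$-local if and only if $\tau^* T^1 W$ is a complete Segal space. A similar adjunction computation to that establishing $\widetilde{T}^1\pi^*X\cong \pi^*\widetilde{T}^1 X$ yields $\tau^*\widetilde{T}^1 W\cong \widetilde{T}^1\tau^* W$, and since $\tau^*$ is right Quillen it sends $W$ to a Segal space and preserves the equivalence $\widetilde{T}^1 W\to T^1 W$; thus $\tau^* T^1 W$ is the Rezk completion of the Segal space $\tau^* W$, which is complete by \cite[\S 14]{rezksp}. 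This gives (1). For (2), $\eta^1_W$ is a monomorphism because each $W=W^{F[0]}\to W^{\pi^*E(p)}$ is split by an object inclusion, the diagonal preserves levelwise monomorphisms, and $\widetilde{T}^1 W\to T^1 W$ is a cofibration; for the $\cpt(S)$-local equivalence, testing against a $\cpt(S)$-fibrant $Z$ gives $\Map(T^1 W,Z)\simeq \holim_p\Map(W^{\pi^*E(p)},Z)$, and as every operator of $\mathbf{T}^1 W$ is a $\cpt(S)$-local equivalence the cosimplicial space $[p]\mapsto \Map(W^{\pi^*E(p)},Z)$ is homotopy-constant, so its totalization is computed by the augmentation $\Map(W,Z)$ via the map $\Map(\eta^1_W,Z)$; hence $\eta^1_W$ is a $\cpt(S)$-local equivalence.

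The step I expect to cause the most trouble is the homotopy-coherence bookkeeping forced by $\widetilde{T}^1 W$ not being injective fibrant: every use of the diagonal must be routed through Theorem \ref{bousfieldkan} so that it genuinely computes a homotopy (co)limit of the fibrant representatives $W^{\pi^*E(p)}$, and the pasting of homotopy pullbacks in Lemma \ref{pbpasting} must be set up with these representatives rather than with $\widetilde{T}^1 W$ directly, where strict and homotopy pullbacks need not agree.
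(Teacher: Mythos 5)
Your proposal is correct in substance and shares the paper's overall architecture: the simplicial operators of $\mathbf{T}^1W$ are categorical equivalences between $\se(S)$-fibrant objects, hence simultaneously $\cpt(S)$-local equivalences and DK-equivalences; fully faithfulness and the Segal condition are extracted by pasting the resulting homotopy pullback squares through the diagonal via Lemma \ref{pbpasting}; and essential surjectivity comes from $(\widetilde{T}^1W)_{[0],0}=W_{[0],0}$. You diverge from the paper in two places. First, for $\operatorname{cplt}_C$-locality the paper argues internally, using Lemma \ref{diagmap} to identify $\Map(\pi^*E,\widetilde{T}^1W)\cong(\widetilde{T}^1(W^{\pi^*E}))_{[0]}$ and then exploiting that $W\to W^{\pi^*E}$ is a categorical equivalence; you instead reduce to the underlying simplicial space via $\pi^*\dashv\tau^*$ and the isomorphism $\tau^*\widetilde{T}^1W\cong\widetilde{T}^1\tau^*W$, and quote Rezk's $n=1$ theorem. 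Your route is shorter but outsources the key computation to \cite{rezksp}, whereas the paper's version is self-contained and is reused verbatim in the suspended setting of Section \ref{sscomp}. Second, for part (2) the paper decomposes $\eta^1_W$ as the $\cpt(S)$-local hocolim of the augmentation followed by the Bousfield--Kan map and the fibrant replacement; you dualize, mapping into a $\cpt(S)$-fibrant $Z$ and invoking homotopy-constancy of the cosimplicial space $\Map(W^{\pi^*E(\bullet)},Z)$ over the contractible category $\Delta$ -- equivalent content, just phrased on the other side of the adjunction. One small wrinkle you should make explicit: your claim that $T^1W$ is $\se_C$-fibrant because ``the Segal map of $T^1W$ is the hocolim of the Segal maps'' quietly uses that $\Map(\mathrm{spine},-)$ is a strict iterated pullback, which is not homotopy invariant on the non-fibrant $\widetilde{T}^1W$; as you anticipate in your closing remark, this is exactly where the paper routes the argument through the multi-object mapping spaces of the fibrant representatives $W^{\pi^*E(p)}$ and Lemma \ref{pbpasting}, so the fix is already contained in your fully-faithfulness step applied for all $m$ rather than only $m=1$.
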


\begin{proof} 
We start with the second property by decomposing $\eta_X^1$ into three maps, each of which is a $\cpt(S)$-local equivalence. All functors $I(q)\to I(p)$ induced by maps $[q]\to [p]$ in $\Delta$ are equivalences of categories and are thus taken to categorical equivalences $\pi^*E(q)\to \pi^*E(p)$ by the discrete nerve by Corollary \ref{eqcat}. Then by Corollary \ref{cateqcart} the maps $X^{\pi^*E(p)}\to X^{\pi^*E(q)}$ are also categorical equivalences between $\se(S)$-fibrant objects and hence $\cpt(S)$-local equivalences by Proposition \ref{cateqcpt}. In particular, when $p=0$ we get maps $X\to X^{\pi^*E(q)}$ whose homotopy colimit in the $\cpt(S)$-local model structure then remains a $\cpt(S)$-local equivalence 
\[
X=\hocolim_{[q]\in\Delta}X \overset{\sim}{\to} \hocolim_{[q]\in\Delta} X^{\pi^*E(q)},
\]
which is our first map in the decomposition.
The second is the Bousfield-Kan map, which applied in the $\cpt(S)$-local model category, is a $\cpt(S)$-local equivalence by Theorem \ref{bousfieldkan}:
\[
\hocolim_{[q]\in\Delta} X^{\pi^*E(q)}\overset{\sim}{\to} \diag_q X^{\pi^*E(q)}.
\]
Finally we have the injective fibrant replacement, which is a levelwise equivalence and hence also a $\cpt(S)$-local equivalence:
\[
\diag_q X^{\pi^*E(q)}=\widetilde{T}^1 X \overset{\sim}{\to} T^1 X.
\]
The map $\eta_X^1$ is the composite of the three maps above and thus a $\cpt(S)$-local equivalence.

Also note that for each $q$ the map $X\to X^{\pi^*E(q)}$ is a monomorphism, since it has a left inverse induced by any map $F[0]\to \pi^*E(q)$. Thus the map $X\to \diag_q X^{\pi^*E(q)}$ is also a monomorphism as it shares its levels with the preceding monomorphisms. The injective fibrant replacement is also a monomorphism, which means that so is $\eta_X^1$. Thus $\eta_X^1$ is a $\cpt(S)$-local acyclic cofibration.

Next we consider the behaviour of $\eta_X^1$ on mapping objects.
The maps $f\colon X^{\pi^*E(p)}\to X^{\pi^*E(q)}$ are categorical equivalences between $\se(S)$-fibrant objects, so they are also DK-equivalences by Proposition \ref{cateqdk}. Hence we have levelwise weak equivalences on the mapping objects 
\[
\begin{tikzcd}
f_* \colon \map_{X^{\pi^*E(p)}}(x,y) \arrow[r,"\sim"] & \map_{X^{\pi^*E(q)}}(f(x),f(y))
\end{tikzcd}
\]
and thus on their products as well.
By Cartesianness of the $\se(S)$-local model structure the objects $X^{\pi^*E(q)}$ are $\se(S)$-fibrant, so
$f$ also induces levelwise weak equivalences 
\[
\begin{tikzcd}
\map_{X^{\pi^*E(p)}}(x_0,\ldots,x_m) \arrow[r,"\sim"] & \map_{X^{\pi^*E(q)}}(f(x_0),\ldots,f(x_m))
\end{tikzcd}
\]
for $x_0,\ldots,x_m\in (X^{\pi^*E(p)} )_{[0]}$.
Then the square below is a homotopy pullback square for any $c_1,\ldots,c_m\in C$, since the levels of the above map are the ones induced on the homotopy fibers of the vertical maps in the diagram
\[
\begin{tikzcd}[sep=large]
(X^{\pi^*E(p)})_{[m](c_1,\ldots,c_m)} \arrow[r, "f_{[m](c_1,\ldots,c_m)}"] \arrow[d, two heads] \arrow[dr, phantom, "\lrcorner_h" very near start]
& (X^{\pi^*E(q)})_{[m](c_1,\ldots,c_m)} \arrow[d, two heads]\\
(X^{\pi^*E(p)})_{[0]}^{m+1} \arrow[r, "f_{[0]}^{m+1}"] & (X^{\pi^*E(q)})_{[0]}^{m+1}.
\end{tikzcd}
\]
Lemma \ref{pbpasting} tells us then that taking the simplicial diagonal results in the left square below also as a homotopy pullback:
\[
\begin{tikzcd}
X_{[m](c_1,\ldots,c_m)} \arrow[r] \arrow[d, two heads] \arrow[dr, phantom, "\lrcorner_h" very near start] \arrow[rr, bend left, "(\eta_X^1)_{[m](c_1,\ldots,c_m)}"]
& (\widetilde{T}^1 X)_{[m](c_1,\ldots,c_m)} \arrow[d] \arrow[r, "\sim"]
& (T^1 X)_{[m](c_1,\ldots,c_m)} \arrow[d, two heads]\\
X_{[0]}^{m+1} \arrow[r] \arrow[rr, bend right, "(\eta_X^1)_{[0]}^{m+1}" below]
& (\widetilde{T}^1 X)_{[0]}^{m+1} \arrow[r, "\sim"]
& (T^1 X)_{[0]}^{m+1}.
\end{tikzcd}
\]
Since the horizontal maps are levelwise weak equivalences in the right square, it is also a homotopy pullback as is the outer rectangle.
Looking at the homotopy fibers of the vertical maps, we get levelwise weak equivalences
\[
\begin{tikzcd}
(\eta_X^1)_* \colon \map_{X}(x_0,\ldots,x_m) \arrow[r,"\sim"] & \map_{T^1 X}(\eta_X^1(x_0),\ldots,\eta_X^1(x_m)),
\end{tikzcd}
\]
which for $m=1$ tells us that $\eta_X^1$ induces weak equivalences on mapping objects, or in other words, that $\eta_X^1$ is homotopically fully faithful.
For higher $m$ the left side is weakly equivalent to the iterated product of the mapping objects and thus so is the right side, which implies that $T^{1}X$ is $\se$-fibrant. Since the mapping objects are injective fibrant and levelwise weakly equivalent to the $S$-fibrant mapping spaces of $X$, the completion $T^{1}X$ is $\se(S)$-fibrant.

Next we show that $T^1 X$ is $\cpt$-local. We can restrict the weak equivalences on the mapping spaces to the components consisting of homotopy equivalences to obtain the following diagram:
\[
\begin{tikzcd}
X_{eq} \arrow[r] \arrow[d, two heads, "(d_1{,}d_0)" left] \arrow[dr, phantom, "\lrcorner_h" very near start] \arrow[rr, bend left, "(\eta_X^1)_{eq}"]
& (\widetilde{T}^1 X)_{eq} \arrow[d, "(d_1{,}d_0)"] \arrow[r, "\sim"]
& (T^1 X)_{eq} \arrow[d, two heads, "(d_1{,}d_0)"]\\
X_{[0]}\times X_{[0]} \arrow[r] \arrow[rr, bend right, "(\eta_X^1)_{[0]}\times (\eta_X^1)_{[0]}" below]
& (\widetilde{T}^1 X)_{[0]}\times (\widetilde{T}^1 X)_{[0]} \arrow[r, "\sim"]
& (T^1 X)_{[0]}\times (T^1 X)_{[0]} .
\end{tikzcd}
\]

Then using the discreteness of $\pi^*E$, we may commute the diagonal with mapping spaces by Lemma \ref{diagmap} to obtain the following chain of natural isomorphisms:
\[
\begin{split}
\Map(\pi^*E, \widetilde{T}^1 X )
& \cong \Map(\pi^*E, \diag_q X^{\pi^*E(q)} )\\
& \cong  \diag_q\Map(\pi^*E, X^{\pi^*E(q)} )\\
& \cong \diag_q (X^{\pi^*E(q)\times \pi^*E})_{[0]}\\
& \cong (\diag_q(X^{\pi^*E})^{\pi^*E(q) })_{[0]}\\
& = (\widetilde{T}^1 (X^{\pi^*E}))_{[0]}\\
& \cong \Map(F[0],\widetilde{T}^1 (X^{\pi^*E}) ).
\end{split}
\]
In particular, there is a weak equivalence $(\widetilde{T}^1 (X^{\pi^*E}))_{[0]} \overset{\sim}{\to}(\widetilde{T}^1 X)_{eq}\subseteq (\widetilde{T}^1 X)_{[1](t)}$, and thus also $(T^1(X^{\pi^*E}))_{[0]} \overset{\sim}{\to}(T^1 X)_{eq}$. Now since the map $X\to X^{\pi^*E(q)}$ induced by $\pi^*E\to F[0]$ is a categorical equivalence, it becomes a levelwise equivalence on the completions  and at level $[0]$ fits the following diagram on the left:
\[
\begin{tikzcd}
(T^1 X)_{[0]} \arrow[r, "\sim"] \arrow[rr, bend right, "s_0" below]
& (T^1(X^{\pi^*E}))_{[0]}  \arrow[r, "\sim"] 
& (T^1 X)_{eq},
\end{tikzcd}
\]
which tell us precisely that $T^1 X$ is local with respect to the map $\pi^*E\to F[0]$, that is, $\cpt$-local.
Since we have already shown $T^1 X$ to be $\se(S)$-fibrant, it is then $\cpt(S)$-fibrant.

It remains to show that $\eta_X^1$ is essentially surjective on homotopy categories. However, since we have shown the codomain to be $\cpt(S)$-fibrant, it suffices to show surjectivity on $\pi_0$.
Note that the precompletion acts as the identity on objects $X_{{[0]},0}=(\widetilde{T}^1 X)_{{[0]},0}$, and on higher simplicies of level ${[0]}$ we have an injections
\[
X_{{[0]},p}=\Hom(\Delta[p],X)\hookrightarrow \Hom(\pi^*E(p)\times \Delta[p],X)=(\widetilde{T}^1 X)_{{[0]},p}.
\]
Thus the induced map on the space of paths in level $[0]$ is an injection, which implies that the precompletion is surjective: $\pi_0(X_{[0]})\twoheadrightarrow \pi_0((\widetilde{T}^1 X)_{[0]})$. Additionally, the injective fibrant replacement is, in particular, a weak equivalence at level $[0]$, which implies that composing with it retains surjectivity on $\pi_0$ for $\eta_X^1$:
\[
\pi_0(X_{[0]})\twoheadrightarrow \pi_0((T^1 X)_{[0]}).
\]
Thus $\eta_X^1$ is a DK-equivalence, concluding the proof.

\end{proof}

We can now prove a restricted variant of Theorem \ref{t3}, where we assume fibrancy of the mapping objects. For $C=\Theta_{n-1}$ and $S=\cpt^{n-1}$, this theorem was proven by Bergner in \cite[6.4]{bdisc} via a comparison functor to a different model of $(\infty,n)$-categories based in $\sps(\Delta\times\Theta_{n-1})$. 
\begin{thm}\label{dkmain}
A map between $\se(S)$-fibrant objects of $\sps(\Theta C)$ is a DK-equivalence if and only if it is a $\cpt(S)$-local equivalence.
\end{thm}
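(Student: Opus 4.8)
The plan is to use the dimension $1$ completion $T^1$ together with the naturality of $\eta^1$ as a bridge between the two notions of equivalence. Given a map $f\colon U\to V$ of $\se(S)$-fibrant objects, I would first form the commuting naturality square
\[
\begin{tikzcd}
U \arrow[r, "f"] \arrow[d, "\eta_U^1" left] & V \arrow[d, "\eta_V^1"] \\
T^1 U \arrow[r, "T^1 f"] & T^1 V,
\end{tikzcd}
\]
and invoke Theorem \ref{hcompletion}, which guarantees that the vertical maps $\eta_U^1$ and $\eta_V^1$ are simultaneously DK-equivalences and $\cpt(S)$-local equivalences, and that the targets $T^1 U$ and $T^1 V$ are $\cpt(S)$-fibrant. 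This is the crux of the argument: completion produces a comparison map that is recognized by \emph{both} notions of equivalence at once.

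Next I would reduce the claim about $f$ to the corresponding claim about $T^1 f$ by applying the two-out-of-three property twice. Since DK-equivalences satisfy two-out-of-three and $\eta_U^1,\eta_V^1$ are DK-equivalences, commutativity of the square shows that $f$ is a DK-equivalence if and only if $T^1 f$ is. Identically, since $\cpt(S)$-local equivalences are the weak equivalences of a model structure and thus satisfy two-out-of-three, and since $\eta_U^1,\eta_V^1$ are $\cpt(S)$-local equivalences, $f$ is a $\cpt(S)$-local equivalence if and only if $T^1 f$ is. It therefore suffices to establish the biconditional for $T^1 f$.

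Finally I would apply Proposition \ref{dklw} to $T^1 f$, which is a map between the $\cpt(S)$-fibrant objects $T^1 U$ and $T^1 V$: for such maps, being a DK-equivalence is equivalent to being a levelwise weak equivalence. Since $\cpt(S)$-local equivalences between $\cpt(S)$-fibrant objects coincide with levelwise weak equivalences, $T^1 f$ is a DK-equivalence if and only if it is a $\cpt(S)$-local equivalence. Chaining this with the two reductions of the previous paragraph yields the desired equivalence for $f$ itself.

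The substantive work has already been done in Theorem \ref{hcompletion}: once $\eta^1$ is known to be at once a DK-equivalence and a $\cpt(S)$-local acyclic cofibration, the present statement is a purely formal two-out-of-three argument. The only point that needs to be handled with a little care is the identification of $\cpt(S)$-local equivalences with levelwise weak equivalences between $\cpt(S)$-fibrant objects, which is the standard behaviour of weak equivalences between fibrant objects in a localized model structure and is exactly what allows the two notions to be matched after completion.
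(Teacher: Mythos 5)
Your proposal is correct and follows essentially the same route as the paper: the naturality square for $\eta^1$, two applications of two-out-of-three to reduce to $T^1 f$ between $\cpt(S)$-fibrant objects, and then Proposition \ref{dklw} combined with the identification of $\cpt(S)$-local equivalences with levelwise weak equivalences between fibrant objects. No gaps.
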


\begin{proof}
Let $f\colon X\to Y$ be a map of $\se(S)$-fibrant objects of $\sps(\Theta C)$. Then we have the following commutative diagram:
\[
\begin{tikzcd}
X \arrow[r,"f" above] \arrow[d, "\eta_X^1" , "\sim" swap]   & Y \arrow[d, "\eta_Y^1", "\sim" swap]\\
T^1 X \arrow[r,"T^1 f" below] & T^1 Y.
\end{tikzcd}
\]
Now we have the following chain of equivalent statements:
\begin{enumerate}
\item $f$ is a $\cpt(S)$-local equivalence,
\item $T^1 f$ is a $\cpt(S)$-local equivalence,
\item $T^1 f$ is a levelwise equivalence,
\item $T^1 f$ is a DK-equivalence, and
\item $f$ is a DK-equivalence.
\end{enumerate}
Here the equivalences $(1)\Leftrightarrow (2)$ and $(4)\Leftrightarrow (5)$ are due to two-out-of-three properties;
$(2)\Leftrightarrow (3)$ follows from $\cpt(S)$-fibrancy of $T^1 X$ and $T^1 Y$ obtained by Theorem \ref{hcompletion}; and $(3)\Leftrightarrow (4)$ follows by Proposition \ref{dklw}.

\end{proof}

\section{Higher equivalences}\label{hdk}

In this section we consider higher-dimensional analogues of the homotopy and DK-equivalences from Section \ref{sdk}. Our definitions are inspired by the descriptions of equivalences of higher categories found in \cite{oreq}.


\begin{df}\label{hheq}
Let $X$ be a $\se^{n+1}(C)$-fibrant object $\sps(\Theta^{n+1} C)$ and $x,y\in X_{[0],0}$.
Let $\varphi\in \map_X(x,y)_{[0],0}$ be a 1-cell in $X$ and let $n\geq 0$. We say that $\varphi$ is a \emph{categorical $n$-equivalence} from $x$ to $y$ if there are $\psi,\psi'\in \map_X(y,x)_{[0],0}$ together with categorical $(n-1)$-equivalences from $\varphi \psi$ to $\id_y$ and $\psi'\varphi$ to $\id_x$ for some composites $\varphi \psi$ and $\psi'\varphi$, as illustrated in the diagram
\[
\begin{tikzcd}
& x 
\arrow[rr, "\psi'\varphi" description, ""{name=xm, above}] 
\arrow[rr, equal, bend left=60, ""{name=xid, below}, "s_0 x" above]
\arrow[Rightarrow,to path=(xm) -- (xid)\tikztonodes, "\simeq_{n-1}" swap]
\arrow[rdd, "\varphi" {description}] 
 & & x \\
& & &    \\
y 
\arrow[ruu, "\psi" description] 
\arrow[rr, "\varphi \psi" description, ""{name=ym, below}] 
\arrow[rr, equal, bend right=60, ""{name=yid, above}, "s_0 y" below]
\arrow[Rightarrow,to path=(ym) -- (yid)\tikztonodes, "\simeq_{n-1}"]
  &  & 
y.
\arrow[ruu, "\psi' " description] & \\ 
\end{tikzcd}
\]
We define categorical $-1$-equivalence as a path, so that a categorical $0$-equivalence is a homotopy equivalence.

We say that $x$ and $y$ are \emph{categorically $n$-equivalent}, denoted $x\simeq_n y$, if there is a zig-zag of categorical $n$-equivalences between them
\end{df}

In \cite[Section 10]{rezkth}, Rezk provides an alternate characterization of completeness using a corepresenting object for homotopy equivalences, which after passing through the inclusion ${\tau_!\colon\sps(\Delta)\hookrightarrow \sps(\Theta C)}$ is given by the pushout
\[
\begin{tikzcd}[sep=large]
F[1](t)\coprod F[1](t) 
\arrow[d, "{(\delta^{13},\delta^{02})}" left] 
\arrow[r, "\sigma^0\coprod \sigma^0" above] 
\arrow[dr, phantom, "\ulcorner" very near end]
& F[0]\coprod F[0] 
\arrow[d]\\
 F[3](t,t,t) 
 \arrow[r]
& \mathcal{E}^0,
\end{tikzcd}
\]
where maps $\delta$ are the face maps whose image does not include the vertices in the index.

If instead of collapsing the two edges corresponding to composite 1-cells to degenerate ones, we glue in categorical $n-1$-equivalences to the degenerate edges, we inductively obtain an object corepresenting categorical $n$-equivalences.

\begin{con}[Walking categorical $n$-equivalence]
Let $\mathcal{E}^n $ be the object of $\sps(\Theta^n C)$ defined as the colimit of the diagram
\begin{equation}\label{neqd}
\begin{tikzcd}[sep=huge]
&  F[1]([0])\coprod F[1]([0]) 
\arrow[r, "\sigma^0\coprod \sigma^0" above]
\arrow[d, "{V[1](j_1)\coprod V[1](j_1)}" left]
& F[0]\coprod F[0]\\
F[1]([0])\coprod F[1]([0]) 
\arrow[d, "{(\delta^{13},\delta^{02})}" left] 
\arrow[r, "{V[1](j_0)\coprod V[1](j_0)}" {above, yshift=5pt}] & 
V[1](\mathcal{E}^{n-1})\coprod V[1](\mathcal{E}^{n-1})  & \\
 F[3]([0],[0],[0]), &    & 
\end{tikzcd}
\end{equation}
where $j_0,j_1\colon F[0]\to \mathcal{E}^{n-1}$ are the start and endpoint of the 1-cell $i\colon F[1]([0])\hookrightarrow \mathcal{E}^{n-1}$ induced by the map $\delta^{03}\colon [1]\to [3]$. In the above diagram we have also used the identification $V[1](F[0])\cong F[1]([0])$. Observe that the vertical maps in (\ref{neqd}) are monomorphisms, so the colimit of the diagram is also a model for the homotopy colimit.

\end{con}
Now a 1-cell $\varphi\colon F[1]([0])\to X$ in a $\se^n(C)$-fibrant object $X$ is a categorical $n$-equivalence precisely if it admits an extension
\[
\begin{tikzcd}[sep=large]
F[1]([0])
\arrow[d, "j" left]
\arrow[r, "\varphi"] 
& X. \\
\mathcal{E}^n 
\arrow[ru, dashed, "\exists \hat{\varphi}" swap]
&
\end{tikzcd}
\]

Next we show homotopy invariance of categorical $n$-equivalences.
\begin{lemma}\label{neqh}
Let $\varphi$ be a categorical $n$-equivalence and $\varphi'$ homotopic to $\varphi$. Then $\varphi'$ is also a categorical $n$-equivalence.
\end{lemma}

\begin{proof}
The case $n=-1$ follows by definition, and the case $n=0$ is shown in \cite[Lemma 5.8]{rezksp} whose technique we apply to the general case. Let $X$ be $\se^{n+1}(C)$-fibrant, $\varphi,g\colon F[1]([0]) \to X$ with $\varphi$ a categorical $n$-equivalence and $H$ a homotopy between $\varphi$ and $\varphi'$. Since the map $i$ is a cofibration and $X$ is injective fibrant, precomposition by $i$ is a Kan fibration on the mapping spaces to $X$. Thus the homotopy $H$ has a lift $\hat{H}$ as indicated in the diagram
\[
\begin{tikzcd}[sep=large]
\Delta[0]
\arrow[d, hook, "j_0" left, "\sim" right] 
\arrow[r, "\hat{\varphi}" above] 
& \Map(\mathcal{E}^n,X) 
\arrow[d, two heads, "i^*"]\\
 \Delta[1] 
 \arrow[r, "H" below]
 \arrow[ru, dashed, "\exists \hat{H}"]
& \Map(F[1]([0]),X).
\end{tikzcd}
\]
Then $\hat{H} j_1$ is an extension of $\varphi'$ to $\mathcal{E}^n$, showing that it is also a categorical $n$-equivalence.
Also note that if $H$ is a homotopy relative to the endpoints, then so is $\hat{H}$.
\end{proof}
The above lemma in particular implies that the definition of a categorical $n$-equivalence is independent of the choice of composites. This fact can be used in a straightforward computation to show that categorical $n$-equivalences are closed under composition and when suspended also under whiskering.

Closure under homotopy also gives us the following result analogous to Lemma \ref{heqmono}.
\begin{cor}\label{hneq}
Every homotopy equivalence is a categorical $k$-equivalence for all $0\leq k\leq n$. Furthermore, if we denote the subspace of $X_1$ consisting of categorical $k$-equivalences in a $\se^{n+1}(C)$-fibrant $X$ by $X_{eq^k}$, then the degeneracy map $s_0\colon X_0\to X_1$ factors as the tower
\[
X_{[0]} \simeq X_{eq^{-1}}\hookrightarrow X_{eq^0}\hookrightarrow X_{eq^1} \hookrightarrow\cdots \hookrightarrow X_{eq^n}\hookrightarrow X_1
\]
of homotopy monomorphisms.
\end{cor}

Next, we provide an alternative, symmetric definition for categorical $n$-equivalences, which implies zig-zags may always be reduced to a single, direct equivalence.

\begin{prop}
A 1-cell $\varphi\in \map_X(x,y)_{[0],0}$ is a categorical $n$-equivalence in $X$ if and only if there is a 1-cell $\psi\in \map_X(y,x)_{[0],0}$ so that $\varphi \psi$ is categorically $(n-1)$-equivalent $\id_y$ and $\psi\varphi$ to $\id_x$.
\end{prop}

\begin{proof}
The case $n=0$ is Proposition \ref{eeq}, which we use as an inductive base case. Suppose then that the claim is true for $n-1$. The backwards implication of the claim for $n$ then follows immediately, setting $\psi'=\psi$ in the definition, since $\psi\varphi$ and $\varphi\psi$ being categorically $(n-1)$-equivalent to the respective identities implies the existence of a direct equivalences by the induction hypothesis. For the forwards implication, we observe that the two one-sided inverses are equivalent: $\psi'\simeq_{n-1} \psi' \varphi \psi \simeq_{n-1} \psi$ in $\map_X(y,x)$. Thus $\psi\varphi \simeq_{n-1} \id_x$, making $\psi$ an inverse to $\varphi$ also on the right.
\end{proof}

Next, we want to show that the tower 
\[
X_{[0]} \simeq X_{eq^{-1}}\hookrightarrow X_{eq^0}\hookrightarrow X_{eq^1} \hookrightarrow\cdots \hookrightarrow X_{eq^n}
\]
collapses for complete objects.

\begin{lemma}
The unique map $\mathcal{E}^n \to F[0]$ is a $\cpt^{n+1}(C)$-local equivalence.
\end{lemma}

\begin{proof}
We proceed by induction on $n$, where the initial case $n=0$ follows by \cite[10.1]{rezkth}. Suppose then that the map $r\colon \mathcal{E}^{n-1} \to F[0]$ is a $\cpt^{n}(C)$-local equivalence. Then its suspension is a $\se\cpt^{n}(C)$-local equivalence, giving us the equivalence of diagrams
\[
\begin{tikzcd}[column sep=6em]
F[0]\coprod F[0] \arrow[r,equal]
& F[0]\coprod F[0] \\
F[1]([0])\coprod F[1]([0]) \arrow[r,equal] \arrow[u] \arrow[d]
& F[1]([0])\coprod F[1]([0]) \arrow[u] \arrow[d, "\cong" right]\\
V[1](\mathcal{E}^{n-1})\coprod V[1](\mathcal{E}^{n-1}) \arrow[r, "{V[1](r)\coprod V[1](r)}", "\sim" swap]
& V[1](F[0])\coprod V[1](F[0]) \\
F[1]([0])\coprod F[1]([0]) \arrow[r,equal] \arrow[u] \arrow[d]
& F[1]([0])\coprod F[1]([0]) \arrow[u, "\cong" right] \arrow[d]\\
F[3]([0],[0],[0]) \arrow[r,equal]
& F[3]([0],[0],[0]), \\
\end{tikzcd}
\]
which, in turn, induces a $\se\cpt^{n}(C)$-local equivalence of the homotopy colimits 
$\begin{tikzcd}[sep=small]
\mathcal{E}^{n} \arrow[r, "\sim"] & \mathcal{E}^0.
\end{tikzcd}$
Now the composite map $\mathcal{E}^{n} \to \mathcal{E}^0 \to F[0]$ is a $\cpt^{n+1}(C)$-local equivalence as wanted.
\end{proof}

\begin{prop}\label{neqcpt}
Let $X$ be $\cpt^{n+1}(C)$-fibrant. Then $x$ and $y$ are categorically $n$-equivalent in $X$ if and only if they are in the same connected component of $X_{[0]}$.
\end{prop}

\begin{proof}
Let $\operatorname{Path}_X(x,y)$ and $\heq^n_X(x,y)$ denote the spaces of paths and categorical $n$-equivalences from $x$ to $y$, respectively. Now in analogy to \cite[Theorem 6.2]{rezksp}, we have the diagram
\[
\begin{tikzcd}
\operatorname{Path}_X(x,y) \arrow[r] \arrow[d]
& \Map(F[0],X)\arrow[d, "r^*"]\\
\heq^n_X(x,y) \arrow[r] \arrow[d]
& \Map(\mathcal{E}^{n},X)\arrow[d, "{(\alpha^*, \omega^*)}"]\\
\{*\} \arrow[r, "{(x,y)}"] & \Map(F[0]\coprod F[0],X),
\end{tikzcd}
\]
where the outer rectangle and the bottom square are homotopy pullbacks. Furthermore, if $X$ is $\cpt^{n+1}(C)$-fibrant, then $r^*$ is a weak equivalence and thus induces a weak equivalence of the homotopy limits. In particular, $\operatorname{Path}_X(x,y)$ is non-empty if and only if $\heq^n_X(x,y)$ is non-empty.

\end{proof}

We are now ready to discuss the higher-dimensional DK-equivalences.

\begin{df}\label{dkn}
We say that a morphism $f\colon X\to Y$ between $\se^n(C)$-fibrant objects of $\sps(\Theta^n C)$ is a \emph{DK-$n$-equivalence} if the map 
\[
f^*\colon \map_X^k(\alpha,\beta)\to \map_Y^k(f(\alpha),f(\beta))
\]
is surjective up to a categorical $(n-k-1)$-equivalence on objects for $0\leq k\leq n-1$ and a levelwise weak equivalence for $k=n$, for all $(k-1)$-cells $\alpha,\beta$ in $X$.
\end{df}

The above definition may be rephrased inductively as follows.
\begin{lemma}
A morphism $f\colon X\to Y$ is a DK-$n$-equivalence if and only if
\begin{enumerate}
\item it is surjective up to categorical $(n-1)$-equivalence on objects, and 
\item the map 
\[
f^*\colon \map_X(x,y)\to \map_Y(f(x),f(y))
\]
is a DK-$(n-1)$-equivalence for all $x,y\in X_{[0],0}$.
\end{enumerate}
\end{lemma}
Observe that as we increase $n$, the notions of essential surjectivity and fully faithfulness have to be both weakened. This definition gives the correct equivalences between $(\infty,n)$-categories, as indicated by the following proposition.


\begin{prop}
Let $X$ and $Y$ be $\cpt^n(C)$-fibrant. Then $f\colon X\to Y$ is a levelwise weak equivalence if and only if it is a DK-$n$-equivalence.
\end{prop}

\begin{proof}
We proceed by induction. The case $n=1$ is shown in Proposition \ref{dklw}. Suppose then that the claim is true for some $k=n-1$. Now $f$ is a DK-$n$-equivalence if and only if 
\begin{enumerate}
\item it is surjective on $\pi_0$ on objects, and 
\item the map 
\[
f^*\colon \map_X(x,y)\to \map_Y(f(x),f(y))
\]
is a levelwise weak equivalence for all $x,y\in X_{[0],0}$,
\end{enumerate}
where the condition $(1)$ is equivalent to surjectivity up to categorical $(n-1)$-equivalence by Proposition \ref{neqcpt}. However, these two conditions precisely define a DK-1-equivalence, so the claim follows by the initial case.
\end{proof}

In order to utilize the proof technique from Theorem \ref{dkmain} for our main theorem \ref{t3}, we need to show the two-out-of-three property for higher DK-equivalences. For this, we require the following lemma. 

\begin{lemma}\label{eqref}
Let $f\colon X\to Y$ be a DK-$n$-equivalence. Then $x$ and $y$ are categorically $(n-1)$-equivalent if and only if $f(x)$ and $f(y)$ are.
\end{lemma}

\begin{proof}
Since the data of categorical $(n-1)$-equivalences is preserved by any morphism, it suffices to show the backwards implication. The case $n=0$ is immediate and serves as an inductive base case. Suppose then that the claim is true for $n-1$ and let $\varphi\in \map_Y(f(x),f(y))_{[0],0}$ be a categorical $(n-1)$-equivalence in $Y$ with a two-sided inverse $\psi$. Now since $f$ is a DK-$n$-equivalence, it is surjective on 1-cells up to categorical $(n-2)$-equivalence. Thus there are 1-cells $\hat{\varphi}\in \map_X(x,y)_{[0],0}$ and $\hat{\psi}\in \map_X(y,x)_{[0],0}$ with categorical $(n-2)$-equivalences $f_*(\hat{\varphi})\simeq_{n-2} \varphi$ and $f_*(\hat{\psi})\simeq_{n-2} \psi$. Consequently, we also have that 
\[
 f_*(\hat{\psi}\hat{\varphi})\simeq_{n-2}\psi \varphi \simeq_{n-2} \id_x \  \text{and}\  f_*(\hat{\varphi}\hat{\psi})\simeq_{n-2}\varphi \psi \simeq_{n-2} \id_y,
\]
which by induction hypothesis implies that $\hat{\psi}\hat{\varphi} \simeq_{n-2} \id_x$ and $\hat{\varphi}\hat{\psi} \simeq_{n-2} \id_y$. Thus $\hat{\varphi}$ is a categorical $(n-1)$-equivalence between $x$ and $y$.
\end{proof}

\begin{prop}\label{dkn23}
DK-$n$-equivalences satisfy the two-out-of-three property.
\end{prop}

\begin{proof}
The case $n=0$ is immediate, so by induction it suffices to consider two-out-of-three for the essential surjectivity part of the definition. Furthermore, of the three cases to consider two are straightforward, so we focus our discussion on the remaining one.
To this end, let $f\colon X\to Y$ and $g\colon Y\to Z$ be morphisms of $\se^n(C)$-fibrant objects, suppose that $g$ and $gf$ are DK-$n$-equivalences, and let $y$ be an object of $Y$.
By assumption $gf$ is essentially surjective, so there is an object $x$ in $X$ with an equivalence $gf(x)\simeq_{n-1} g(y)$. However, Lemma \ref{eqref} above implies then that $f(x)\simeq_{n-1} y$, since we assumed that $g$ is a DK-$n$-equivalence. Thus $f$ is also surjective up to categorical $(n-1)$=equivalence on objects, concluding the proof for this case.
\end{proof}

For the inductive proofs in the following section, we want to be able to compare higher DK-equivalences of varying dimension.

\begin{prop}
If $f$ is a DK-$m$-equivalence for $m < n$, then $f$ is a DK-$n$-equivalence.
\end{prop}

\begin{proof}
We proceed by induction on $n$, where it suffices to consider $m=n-1$ by a straightforward induction on $m$. The initial case $n=1$ follows by Lemma \ref{lwdk}. Suppose then that the claim is true for all $k\leq n-1$, and let $f$ be a DK-$k$-equivalence. Now $f$ is surjective up to categorical $(n-2)$-equivalence on objects, and thus also up to categorical $(n-1)$-equivalence by Corollary \ref{hneq}. Furthermore, on the mapping objects, $f$ induces DK-$(n-2)$-equivalences, which are also DK-$(n-1)$-equivalences by the induction hypothesis. Hence $f$ is a DK-$n$-equivalence as claimed.
\end{proof}

\section{Higher completions}\label{sscomp}

In this section, we inductively extend the completion functor to the higher dimensional completeness conditions, and prove that the higher dimensional completion functors satisfy Theorem \ref{t1}. We then use the completion to obtain a characterization of each of the completeness conditions in terms of DK-$n$-equivalences, analogously to Theorem \ref{dkmain} for $n=1$. The completions for different dimensions are sufficiently compatible with each other so that we may apply the completions successively to complete in all dimension and retain properties outlined in Theorem \ref{t2}.
Lastly, we use the combined completion to characterize completeness in terms of DK-$n$-equivalences as stated in Theorem \ref{t3}. All results in this section generalize those in Section \ref{shcomp}, and thus we can restrict our considerations to $n\geq 2$.

\begin{con}\label{ncompletion}
Consider the diagram $Q^1\colon \Theta C \times \Delta \to \sps(\Theta C)$ given by the formula
\[
Q_{\theta,p}^1= F(\theta) \times \pi^* E(p),
\]
which may be thought of as a chain of $p$ equivalences of diagrams of the shape $\theta$. Recall also that $\widetilde{T}^1 X_{\theta} \cong \diag_q \Map(Q_{\theta,q}^1 ,X)$. 
Suppose then that $X$ is an object of $\sps(\Theta^n C)$. In order to elevate the completion to the mapping objects of $X$, we want to apply the left adjoint of the mapping object functor, that is, the suspension $V[1]$ to the diagram $Q^1$. However, in order to make the completion uniform on all mapping objects in a way that is also compatible with the Segal conditions and to preserve objects, we instead have to use the more general intertwining functor $V$ for the following recursive definition:
\[
Q_{[m](\theta_1,\ldots,\theta_m),p}^n := V[m](Q_{\theta_1,p}^{n-1},\ldots,Q_{\theta_m,p}^{n-1}),
\]
where now $Q^n$ is a diagram $\Theta^n C \times \Delta \to \sps(\Theta^n C)$.

We can then define the functorial simplicial object $\mathbf{T}^n X$ with levelwise formula
\[
\mathbf{T}^n_q X_{\theta}=\Map(Q_{\theta,q}^n,X),
\]
and similarly to the $n=1$ case, we define the \emph{dimension $n$ precompletion}, denoted by $\widetilde{T}^n$, as the simplicial diagonal of $\mathbf{T}^n$. More explicitly, we have the formula
\[
\widetilde{T}^n X_{\theta,p}:=\mathbf{T}_p^n X_{\theta,p}=\Hom_{\sps(\Theta^n C)}(Q_{\theta,p}^n\times \Delta[p],X).
\]
We define the \emph{dimension $n$ completion} $T^n$ as the composite $\mathcal{F} \widetilde{T}^n$ for a fixed injective fibrant replacement functor $\mathcal{F}$.

Furthermore, the projections $F(\theta) \times \pi^* E(p)\to F(\theta)$ induce maps $Q_{\theta,p}^n\to F(\theta)$ and via precompositions natural maps $X\to \mathbf{T}_q^n X$, whose diagonal we denote by $\widetilde{\eta}_X^n\colon \id \to \widetilde{T}^n$. 
Then by postcomposing with the fibrant replacement maps we obtain a natural transformation $\eta^n\colon \id\to T^n$.
\end{con}
It may be verified that $T^n$ is a simplicial functor and $\eta^n$ a simplicial natural transformation.

Note that in addition to taking the simplicial diagonal of the objects $\mathbf{T}^n X$, the diagonal of $\Delta$ into its $m$-fold product also implicitly appears in recursion formula and breaks injective fibrancy in essentially all non-trivial cases.

We now state the precise formulation of Theorem \ref{t1}, which generalizes the properties of the dimension $1$ completion from Theorem \ref{hcompletion} to the general case. 

\begin{thm}\label{suspended}
Let $X$ be a $\se^n(S)$-fibrant object in $\sps(\Theta^n C)$. Then
\begin{enumerate}
\item $T^n X$ is $\se^{n-1}\cpt(S)$-fibrant,
\item $\eta_X^n$ is a $\se^{n-1}\cpt(S)$-local acyclic cofibration,
\item $\eta_X^n$ is a DK-$n$-equivalence, and
\item $(\tau_{n-2}^{n})^*\eta_X^n$ is a levelwise weak equivalence of the underlying $\Theta_{n-2}$-spaces.
\end{enumerate}
\end{thm}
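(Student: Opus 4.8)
The plan is to induct on $n$, taking Theorem~\ref{hcompletion} as the base case $n=1$ and assuming all four conclusions of the theorem in dimension $n-1$, applied to objects of $\sps(\Theta^{n-1}C)$. The engine of the induction is a comparison of the dimension-$n$ (pre)completion of $W$ with the dimension-$(n-1)$ (pre)completion of its mapping objects. Concretely, from the recursion $Q^n_{[1](\theta'),p}=V[1](Q^{n-1}_{\theta',p})$ of Construction~\ref{ncompletion} together with the adjunction between $V[1]$ and $\map$ (Lemma~\ref{mapgen}), evaluated at simplicial degree $p$, I would first establish a natural isomorphism of presheaves
\[
\map_{\widetilde{T}^n W}(x,y)\cong \widetilde{T}^{n-1}\map_W(x,y),
\]
together with its multiobject analogue obtained from $Q^n_{[m](\theta_1,\ldots,\theta_m),p}=V[m](Q^{n-1}_{\theta_1,p},\ldots,Q^{n-1}_{\theta_m,p})$. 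I would also record that the precompletion leaves the space of objects untouched: since $Q^n_{[0],p}\cong F[0]$ one has $(\widetilde{T}^n W)_{[0]}\cong W_{[0]}$ with $\widetilde{\eta}^n$ the identity there, so $\eta^n_W$ is a levelwise weak equivalence at level $[0]$.

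With these identifications in hand, I would prove part~(2) exactly as in the base case, decomposing $\eta^n_W$ as the canonical map into the homotopy colimit, the Bousfield--Kan map to the diagonal (Theorem~\ref{bousfieldkan}), and the fibrant replacement. The only genuinely new point is that the structure maps $W\cong\Map(F(-),W)\to \mathbf{T}^n_q W=\Map(Q^n_{-,q},W)$ are $\se^{n-1}\cpt(S)$-local equivalences; this reduces to the claim that the projection $Q^n_{-,q}\to F(-)$ is such a local equivalence, which I would verify by induction on the cell structure using that $V$ carries local equivalences in each variable to local equivalences (via Proposition~\ref{moresegal}, the left-Quillen property of $V[1]$ from Proposition~\ref{vleftq}, and left properness), the base instance being $\pi^* E(q)\to F[0]$. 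Each of the three maps is then a $\se^{n-1}\cpt(S)$-local equivalence, and each is a monomorphism, so $\eta^n_W$ is a $\se^{n-1}\cpt(S)$-local acyclic cofibration.

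For parts~(1) and~(3) I would transport the inductive hypothesis through the identification. Since $\map_W(x,y)$ is $\se^{n-1}(S)$-fibrant, the induction gives that $T^{n-1}\map_W(x,y)$ is $\se^{n-2}\cpt(S)$-fibrant and that $\eta^{n-1}$ is a DK-equivalence; because $\widetilde{T}^n W\to T^n W$ is a levelwise weak equivalence, passing to homotopy mapping objects identifies $\map_{T^n W}(x,y)$ with $T^{n-1}\map_W(x,y)$ up to levelwise weak equivalence, whence the mapping objects of $T^n W$ are $\se^{n-2}\cpt(S)$-fibrant and $\eta^n_W$ is homotopically fully faithful. Using the multiobject identification together with Proposition~\ref{moresegal} and the homotopy-pullback pasting of Lemma~\ref{pbpasting}, exactly as in Theorem~\ref{hcompletion}, the outer Segal maps for $T^n W$ become homotopy equivalences, so $T^n W$ is $\se$-fibrant; Proposition~\ref{mapfibrant} then upgrades this to $\se^{n-1}\cpt(S)$-fibrancy, giving part~(1). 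For essential surjectivity of $\ho(\eta^n_W)$ I would use that level $[0]$ is preserved, so $\pi_0(W_{[0]})\to\pi_0((T^n W)_{[0]})$ is surjective, which already forces surjectivity on isomorphism classes; combined with homotopic full faithfulness this yields part~(3).

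Finally, for part~(4) I would apply Proposition~\ref{segallw} to the underlying $\Theta_{n-2}$-spaces $(\tau_{n-2}^{n})^* W$ and $(\tau_{n-2}^{n})^* T^n W$, which are Segal-fibrant because $(\tau_{n-2}^{n})^*$ is right Quillen for the Segal structures. This reduces the claim to level $[0]$, where $\eta^n$ is an equivalence, and to the levels $[1](c)$, that is, to the mapping objects; by Proposition~\ref{umap} these mapping objects are the underlying $\Theta_{n-3}$-spaces of $\map_W(x,y)$, on which $(\tau_{n-3}^{n-1})^*\eta^{n-1}$ is a levelwise weak equivalence by the inductive hypothesis for part~(4). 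The main obstacle I anticipate is making the mapping-object identification fully rigorous: one must commute the strict fibers defining $\map$ past both the simplicial diagonal and the intertwining functor $V[m]$, and then reconcile the strict precompletion $\widetilde{T}^n$ with the completion $T^n$ by passing to homotopy mapping objects (via Proposition~\ref{fibpb} and the levelwise equivalence $\widetilde{T}^n W\to T^n W$), since mapping objects are homotopy-invariant only when the ambient object is injective fibrant.
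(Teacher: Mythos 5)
Your overall architecture---induction on $n$ driven by the identification $\map_{\widetilde{T}^nW}(x,y)\simeq\widetilde{T}^{n-1}\map_W(x,y)$, which is the paper's Proposition \ref{induction}---matches the paper for parts (1) and (3), and your route to part (4) via Proposition \ref{segallw} and Proposition \ref{umap} is a workable alternative to the paper's more direct computation $Q^n_{\tau_k^n\theta,q}\cong F(\tau_k^n\theta)$ for $k\leq n-2$ (Lemma \ref{uth}). The genuine gap is in part (2). You claim that the structure map $W\to\mathbf{T}^n_qW=\Map(Q^n_{-,q},W)$ being a $\se^{n-1}\cpt(S)$-local equivalence ``reduces to'' the projection $Q^n_{\theta,q}\to F(\theta)$ being one. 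It does not: a local equivalence $A\to B$ induces a weak equivalence $\Map(B,W)\to\Map(A,W)$ only when $W$ is local, and $W$ is precisely \emph{not} $\se^{n-1}\cpt(S)$-local here---otherwise there would be nothing to complete. Already for $n=1$, $q=1$ the levelwise map in question at level $[0]$ is $W_{[0]}\to\Map(\pi^*E,W)\simeq W_{eq}$, which fails to be an equivalence exactly when $W$ is incomplete; the map $W\to W^{\pi^*E}$ is nevertheless a $\cpt(S)$-local equivalence, but the paper proves this via the categorical-equivalence machinery of Section \ref{sho} (Corollary \ref{cateqcart} and Proposition \ref{cateqcpt}), which crucially uses that $\mathbf{T}^1_qW$ is an exponential $W^{\pi^*E(q)}$ in a Cartesian closed category.

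For $n\geq 2$ that escape route is closed: since $Q^n_{[m](\theta_1,\ldots,\theta_m),q}=V[m](Q^{n-1}_{\theta_1,q},\ldots,Q^{n-1}_{\theta_m,q})$ intertwines the intervals into the cells, $\mathbf{T}^n_qW$ is no longer of the form $W^A$ and no interval-object homotopy is available. This is why the paper's Proposition \ref{compwe} abandons the hocolim/Bousfield--Kan decomposition for general Segal $W$ and instead runs a two-stage induction: first it proves the statement when $W$ is already $\se^{n-1}\cpt(S)$-fibrant, where the comparison of mapping objects is a local equivalence between fibrant objects and hence levelwise, so $\eta^n_W$ is a levelwise weak equivalence; then, for general $\se^n(S)$-fibrant $W$, it applies the completion twice and uses the two-out-of-six property on the naturality diagram relating $\Map(T^nW,Z)$, $\Map(T^nT^nW,T^nZ)$, $\Map(T^nW,T^nZ)$ and $\Map(W,T^nZ)$, exploiting that $T^nW$, $T^nT^nW$ and $Z$ are already fibrant so that the previously established case applies to them. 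Your proposal is missing this idea, and without it part (2) does not go through; the remaining parts of your argument, while essentially sound, do not supply a substitute for it.
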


Note that as an addition to the case $n=1$, the higher dimensional completions also keep the lower dimensional structure essentially invariant. In order to better organize the proof of this theorem, we separate each item into its own result. We start by proving property (4) as Corollary \ref{underlying} as it does not rely on the other properties. The key to the remaining properties is Proposition \ref{induction}, where we show that on the mapping objects, the dimension $n$ completion essentially acts as the dimension $n-1$ completion, which allows us to use induction with the properties from $n=1$ as the base case. The property (1) then follows as Corollary \ref{compfibrant}.

We prove the property (2) as Proposition \ref{compwe} in an indirect manner using two-out-of-three and two-out-of-six properties. A key to this approach is considering the second iteration of the completion and using the fact that the first iterate is already complete to find weak equivalences in the naturality diagrams for the completion map.
The last property (3) then follows as Corollary \ref{ndk} using the connection established between DK-1-equivalences and weak equivalences in Theorem \ref{dkmain} together with the recursive structure of the completions.

We prove property (4) by showing that the dimension $n$ precompletion preserves the underlying $\Theta_{n-2}$-space.

\begin{lemma}\label{uth}
Let $X$ be a $\se^n(S)$-fibrant object in $\sps(\Theta^n C)$. Then 
\[
(\tau_k^{n})^*\mathbf{T}_q^n X\cong  (\tau_k^{n})^* X
\]
for $k\leq n-2$ and any $[q]\in \Delta$.
\end{lemma}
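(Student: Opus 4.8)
The plan is to reduce the statement, via the Yoneda lemma, to a purely combinatorial claim about the diagram $Q^n$: namely that
\[
Q_{\tau_k^n(\sigma),q}^n \cong F(\tau_k^n(\sigma))
\]
for every $\sigma\in\Theta_k$ and every $[q]\in\Delta$, naturally in $\sigma$. This suffices because $(\tau_k^n)^*$ is precomposition with $\tau_k^n$ and $(\mathbf{T}_q^n W)_\theta = \Map(Q_{\theta,q}^n,W)$, so that $((\tau_k^n)^*\mathbf{T}_q^n W)_\sigma = \Map(Q_{\tau_k^n(\sigma),q}^n, W)$, while the Yoneda isomorphism $\Map(F(\theta),W)\cong W_\theta$ gives $((\tau_k^n)^* W)_\sigma = W_{\tau_k^n(\sigma)} \cong \Map(F(\tau_k^n(\sigma)),W)$. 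Applying the contravariant functor $\Map(-,W)$ to the displayed isomorphism of representing objects then yields the lemma.

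To prove the displayed isomorphism I would induct on $k$, using two structural inputs. First, $\tau_k^n$ is the $\Theta$-construction applied to $\tau_{k-1}^{n-1}$, so on objects $\tau_k^n([m](\sigma_1,\ldots,\sigma_m)) = [m](\tau_{k-1}^{n-1}\sigma_1,\ldots,\tau_{k-1}^{n-1}\sigma_m)$. Second, the defining property of the intertwining functor gives $V[m](F(c_1),\ldots,F(c_m)) \cong F([m](c_1,\ldots,c_m))$, applied here with $C$ replaced by $\Theta^{n-1}C$. For the base case $k=0$ we have $\sigma=*$ and $\tau_0^n(*)=[0]$, the terminal object; since $n\geq 2$ the recursion for $Q^n$ applies to the empty sequence and $Q_{[0],q}^n = V[0] \cong F[0]$. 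For the inductive step, writing $\sigma=[m](\sigma_1,\ldots,\sigma_m)$ with $\sigma_i\in\Theta_{k-1}$, the recursion yields
\[
Q_{\tau_k^n(\sigma),q}^n = V[m]\bigl(Q_{\tau_{k-1}^{n-1}\sigma_1,q}^{n-1},\ldots,Q_{\tau_{k-1}^{n-1}\sigma_m,q}^{n-1}\bigr).
\]
The inductive hypothesis, applicable at ambient dimension $n-1$ in dimension $k-1$ precisely because $k\leq n-2$ forces $k-1\leq (n-1)-2$, rewrites each entry as $F(\tau_{k-1}^{n-1}\sigma_i)$, and the intertwining identity collapses the right-hand side to $F([m](\tau_{k-1}^{n-1}\sigma_1,\ldots,\tau_{k-1}^{n-1}\sigma_m)) = F(\tau_k^n(\sigma))$.

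The point requiring genuine care — and where the hypothesis $k\leq n-2$ is essential — is the bottoming-out of the recursion. The inductive chain descends through ambient dimensions $n, n-1, \ldots, n-k$, and since $k\leq n-2$ this never drops below $n-k\geq 2$. This matters because the recursion $Q_{[m](\ldots)}^n = V[m](\ldots)$ holds only for $n\geq 2$: at $n=1$ the diagram is instead $Q_{\theta,p}^1 = F(\theta)\times\pi^* E(p)$, so that $Q_{[0],p}^1 \cong \pi^* E(p) \not\cong F[0]$. Remaining at ambient dimension $\geq 2$ throughout is exactly what permits the use of $Q_{[0],q}^{n'}\cong F[0]$ at every appeal to the base case, and it simultaneously explains why the statement breaks down at $k=n-1$. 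The only other thing to verify is that these isomorphisms are natural in $\sigma$ so as to assemble into an isomorphism of presheaves on $\Theta_k$; this I expect to be routine, following from naturality of the defining isomorphism of the intertwining functor together with naturality of the Yoneda isomorphism.
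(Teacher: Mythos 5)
Your proposal is correct and follows essentially the same route as the paper: both reduce via Yoneda to the claim $Q^n_{\tau_k^n\sigma,q}\cong F(\tau_k^n\sigma)$ and prove it by the same double induction on $(k,n)$ using the recursion for $Q^n$ together with $V[m](F(c_1),\ldots,F(c_m))\cong F([m](c_1,\ldots,c_m))$, with the base case $Q^{n'}_{[0],q}=V[0]\cong F[0]$ valid only for $n'\geq 2$. Your explicit tracking of why $k\leq n-2$ keeps the recursion at ambient dimension at least $2$ matches the paper's opening remark that the base-case isomorphism holds "for any $l\geq 2$."
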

 
\begin{proof}
We proceed inductively; first note that $Q_{[0],q}^l = V[0]\cong F[0]$ for any $l\geq 2$. Suppose then that, for some $k\geq 0$, $Q_{\tau_k^{l} \theta,q}^l\cong F(\tau_k^{l} \theta)$ for all $\theta\in \Theta_k$ . Now we have natural isomorphisms
\[
\begin{split}
Q_{\tau_{k+1}^{l+1} [m](\theta_1,\ldots,\theta_m),q}^{l+1}
& = Q_{ [m](\tau_{k}^{l+1}\theta_1,\ldots,\tau_{k}^{l+1}\theta_m),q}^n\\
& = V[m](Q_{\tau_{k}^{l+1}\theta_1,q}^{l},\ldots,Q_{\tau_{k}^{l+1}\theta_m,q}^{l}) \\
& \cong V[m](F(\tau_k^{l+1} \theta_1),\ldots,F(\tau_k^{l+1} \theta_m)) \\
& \cong F([m](\tau_k^{l+1} \theta_1,\ldots,\tau_k^{l+1} \theta_m)) \\
& \cong F(\tau_{k+1}^{l+1}[m]( \theta_1,\ldots, \theta_m)).
\end{split}
\]
Thus by induction $Q_{\tau_{k}^{n} \theta,q}^{n}\cong F(\tau_{k}^{n} \theta)$ for any $k\leq n-2$ and any $\theta\in \Theta_k$. Then we obtain the following natural isomorphisms
\[
\begin{split}
((\tau_k^{n})^*\mathbf{T}_q^n X)_{\theta}
& = (\mathbf{T}_q^n X)_{\tau_{k}^{n} \theta}\\
& \cong \Map(Q_{\tau_{k}^{n} \theta,q}^n,X)\\
& \cong \Map(F(\tau_{k}^{n} \theta),X)\\
& \cong X_{\tau_{k}^{n} \theta}\\
& = ((\tau_k^{n})^* X)_{\theta},
\end{split}
\]
which prove the claim.
\end{proof} 
 
Taking the simplicial diagonal of the isomorphisms in the above lemma then yields the property (4) of \ref{suspended}.

\begin{cor}\label{underlying}
Let $X$ be a $\se^n(S)$-fibrant object in $\sps(\Theta^n C)$. Then the precompletion induces an isomorphism
\[
(\tau_k^{n})^*\widetilde{T}^n X\cong  (\tau_k^{n})^* X,
\]
and the completion induces a levelwise weak equivalence
\[
(\tau_k^{n})^*T^n X\simeq (\tau_k^{n})^* X
\]
for $k\leq n-2$.
\end{cor}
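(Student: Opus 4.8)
The plan is to deduce both statements directly from Lemma \ref{uth}, which has already isolated the relevant levels of $\mathbf{T}^n_q W$. The essential observation is that the isomorphisms $Q^n_{\tau_k^{n}\theta,q}\cong F(\tau_k^{n}\theta)$ produced there, valid for $k\leq n-2$, carry no dependence on $q$: the factors $\pi^* E(q)$ enter only in dimensions above $k$, where the inflated object $\tau_k^{n}\theta$ is trivial and the recursion bottoms out at $Q^{\ell}_{[0],q}=V[0]\cong F[0]$. Thus $Q^n_{\tau_k^{n}\theta,\bullet}$ is (isomorphic to) a constant cosimplicial object, and correspondingly $q\mapsto(\tau_k^{n})^*\mathbf{T}^n_q W$ is isomorphic to the constant simplicial object on $(\tau_k^{n})^* W$.

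For the precompletion isomorphism I would then note that $(\tau_k^{n})^*$, being a precomposition functor in the $\Theta^n C$-variable, commutes with the simplicial diagonal, which only mixes the external simplicial index with the internal $\Delta$-direction. Since the diagonal of a constant simplicial object returns that object, this yields
\[
(\tau_k^{n})^*\widetilde{T}^n W=\diag_q\,(\tau_k^{n})^*\mathbf{T}^n_q W\cong \diag_q\,(\tau_k^{n})^* W=(\tau_k^{n})^* W.
\]
Unpacking this levelwise as a sanity check, evaluation at $\theta\in\Theta_k$ and internal level $[p]$ reads $((\tau_k^{n})^*\widetilde{T}^n W)_{\theta,p}=\Hom_{\sps(\Theta^n C)}(Q^n_{\tau_k^{n}\theta,p}\times\Delta[p],W)\cong \Hom(F(\tau_k^{n}\theta)\times\Delta[p],W)\cong (W_{\tau_k^{n}\theta})_p$ by the Yoneda lemma, matching $((\tau_k^{n})^* W)_{\theta,p}$; the fact that $Q^n_{\tau_k^{n}\theta,\bullet}$ is constant is exactly what guarantees compatibility with the internal face and degeneracy maps.

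For the weak equivalence, recall that $T^n=\mathcal{F}\widetilde{T}^n$ for a functorial injective fibrant replacement $\mathcal{F}$, so the natural map $\widetilde{T}^n W\to T^n W$ is a levelwise weak equivalence. As a precomposition functor, $(\tau_k^{n})^*$ preserves levelwise weak equivalences, so $(\tau_k^{n})^*\widetilde{T}^n W\to (\tau_k^{n})^* T^n W$ is again one; composing with the isomorphism above gives the desired levelwise weak equivalence $(\tau_k^{n})^* W\simeq (\tau_k^{n})^* T^n W$. The only point requiring care is the compatibility of the pointwise isomorphisms of Lemma \ref{uth} with the simplicial structure before passing to the diagonal, but this is not a genuine obstacle: it is subsumed by the constancy of the cosimplicial object $Q^n_{\tau_k^{n}\theta,\bullet}$, and everything else is formal.
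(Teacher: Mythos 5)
Your proof is correct and follows essentially the same route as the paper: the paper likewise derives the corollary by taking the simplicial diagonal of the natural isomorphisms $(\tau_k^{n})^*\mathbf{T}_q^n W\cong (\tau_k^{n})^* W$ from Lemma \ref{uth}, with the weak equivalence part coming from the injective fibrant replacement being a levelwise weak equivalence preserved by the precomposition functor $(\tau_k^{n})^*$. Your added observations — that $Q^n_{\tau_k^{n}\theta,\bullet}$ is a constant cosimplicial object and that $(\tau_k^{n})^*$ commutes with the diagonal — are exactly the implicit content of the paper's one-line deduction.
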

Note that in particular when $k=0$, we have that 
\[
(\widetilde{T}^n X)_{[0]}\cong(\mathbf{T}_q^n  X)_{[0]}\cong X_{[0]} \simeq (T^n X)_{[0]}.
\]

Note we also have the natural isomorphism $(\widetilde{T}^n X)_{\theta,0}\cong X_{\theta,0}$, which we can use to obtain the following example illustrating the necessity of the injective fibrant replacement for the higher-dimensional completions even for discrete objects.
\begin{ex}
Consider the $3$-category $\Sigma^2 I$, which is gaunt in dimensions 1 and 2; that is, it has no non-identity isomorphisms in those dimensions, but it is not gaunt in dimension 3. Thus $N\Sigma^2 I\cong V[1]^2 E$ is $\cpt^2\se$-fibrant in $\sps(\Theta_3)$. Then supposing Theorem \ref{suspended}, $T^3 N\Sigma^2 I$ is $\cpt(\se\cpt)$-fibrant, which $\widetilde{T}^3 N\Sigma^2 I$ cannot be, since $(\widetilde{T}^3 N\Sigma^2 I)_{\cdot,0}\cong (N\Sigma^2 I)_{\cdot,0}$ is not fibrant in the model structure on $\ps(\Theta_3)$ with completeness in dimension 3, and since the evaluation at simplicial level 0 is right Quillen by \cite[8.4.]{ara}.
\end{ex}

\begin{lemma}\label{compsegal}
Let $X$ be a $\se$-fibrant object in $\sps(\Theta^n C)$. Then $T^n X$ is $\se$-fibrant.
\end{lemma}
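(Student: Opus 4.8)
The plan is to reduce the statement to a fact about each simplicial level $\mathbf{T}^n_q W$ and then transport it through the diagonal. Since $T^n W=\mathcal{F}\widetilde{T}^n W$ with $\mathcal{F}$ a levelwise weak equivalence into an injective fibrant object, and since the Segal condition phrased via homotopy pullbacks is invariant under levelwise weak equivalences, it suffices to show that $\widetilde{T}^n W=\diag_q \mathbf{T}^n_q W$ satisfies the Segal condition up to homotopy. So first I would analyze a single simplicial level $\mathbf{T}^n_q W=\Map(Q^n_{-,q},W)$.

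The key input is that the spine inclusion into $Q^n_{[m](c_1,\dots,c_m),q}$ is an $\se$-local equivalence. This follows from the recursion $Q^n_{[m](\theta_1,\dots,\theta_m),q}=V[m](Q^{n-1}_{\theta_1,q},\dots,Q^{n-1}_{\theta_m,q})$ together with Proposition \ref{moresegal}, which says precisely that $V[1](X_1)\amalg_{V[0]}\cdots\amalg_{V[0]}V[1](X_m)\to V[m](X_1,\dots,X_m)$ is an $\se$-local equivalence; taking $X_i=Q^{n-1}_{\theta_i,q}$ and using $Q^n_{[1](c_i),q}=V[1](Q^{n-1}_{c_i,q})$ and $Q^n_{[0],q}\cong F[0]$ identifies the source with the spine. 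Applying $\Map(-,W)$ with $W$ being $\se$-fibrant then turns the Segal map of $\mathbf{T}^n_q W$ into a weak equivalence, since $\Map(-,W)$ carries $\se$-local equivalences to weak equivalences and carries the pushout defining the spine to the strict Segal pullback. Moreover, as the endpoint inclusions $F[0]\hookrightarrow Q^n_{[1](c_i),q}$ are monomorphisms and $W$ is injective fibrant, the legs $(\mathbf{T}^n_q W)_{[1](c_i)}\to (\mathbf{T}^n_q W)_{[0]}$ are fibrations, so this strict pullback already computes the homotopy pullback.

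To descend to $\widetilde{T}^n W$, I would use that the diagonal commutes with finite limits (Lemma \ref{diaglimits}) and preserves levelwise weak equivalences (Corollary \ref{bousfieldkanwe}): the diagonal of the per-level Segal maps is then a weak equivalence whose target is the strict Segal pullback of $\widetilde{T}^n W$. What remains — and this is the heart of the argument — is to see that this strict pullback actually computes the homotopy Segal pullback of $\widetilde{T}^n W$, so that the conclusion survives the passage to the injective fibrant replacement $T^n W$. Here the hypothesis $n\ge 2$ is essential: by Lemma \ref{uth} (equivalently Corollary \ref{underlying}) the object level is constant, $(\mathbf{T}^n_q W)_{[0]}\cong W_{[0]}$, which is a Kan complex. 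Thus all the Segal legs sit over one and the same fibrant base, and the diagonal, being a homotopy colimit (Theorem \ref{bousfieldkan}), commutes with the homotopy pullbacks over this constant base; concretely one organizes this through the homotopy-fibre (mapping object) description and the pasting Lemma \ref{pbpasting}, exactly as in the proof of Theorem \ref{hcompletion} for $n=1$. This yields that $\widetilde{T}^n W$ satisfies the homotopy Segal condition, and hence so does the injective fibrant $T^n W$, which is therefore $\se$-fibrant.

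The main obstacle is precisely this last commutation of the simplicial diagonal with the homotopy pullbacks of the Segal condition: because $\widetilde{T}^n W$ is not injective fibrant, its strict Segal pullback need not a priori agree with the homotopy pullback, and a homotopy colimit does not in general commute with homotopy pullbacks. The decisive simplification, available only for $n\ge 2$, is that the object level is constant along the simplicial direction, so the relevant homotopy pullbacks are base changes along a point of a constant fibrant object; keeping track of this is where the care lies, and it is what lets the single-level computation built on Proposition \ref{moresegal} propagate through the diagonal.
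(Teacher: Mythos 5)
Your proposal follows essentially the same route as the paper's proof: the per-level identification $\mathbf{T}^n_q W_{[m](\vec\theta)}\cong\Map(V[m](Q^{n-1}_{\theta_1,q},\ldots,Q^{n-1}_{\theta_m,q}),W)$, the use of Proposition \ref{moresegal} together with $\se$-fibrancy of $W$ to get a weak equivalence onto the iterated strict (= homotopy) pullback over the constant base $W_{[0]}$ from Corollary \ref{underlying}, and then descent through the diagonal via Lemma \ref{diaglimits} and Corollary \ref{bousfieldkanwe}. You are in fact more explicit than the paper about the one delicate point, namely why the strict Segal pullback of the non-fibrant $\widetilde{T}^n W$ still computes the homotopy pullback so that the conclusion survives the fibrant replacement; the paper compresses this into the sentence that the Segal condition is preserved by levelwise weak equivalences, and your observation that the constancy (and fibrancy) of the object level for $n\ge 2$ is what makes this work is exactly the right diagnosis.

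One caveat on your proposed mechanism: Lemma \ref{pbpasting} cannot be applied with the \emph{constant} base $W_{[0]}^{m+1}$ as the target simplicial object, since its hypothesis would then force all simplicial structure maps of $\mathbf{T}^n_\bullet W_{[m]}$ to be weak equivalences (the fibers $\Map(Q^{n-1}_{\theta,q},\map_W(x,y))$ genuinely vary with $q$ when $W$ is merely $\se$-fibrant). The correct way to cash in the constancy of the base is either to apply Lemma \ref{pbpasting} with $\mathbf{Y}_q$ taken to be the level-$q$ Segal pullbacks (the hypothesis then holds because the vertical maps are weak equivalences for every $q$), or to argue directly that homotopy fibers over the fixed Kan complex $W_{[0]}^{m+1}$ commute with the diagonal by universality of homotopy colimits, and then conclude fiberwise via Proposition \ref{fiber}. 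With that adjustment your argument is complete.
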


\begin{proof}
First consider the following expression for the levels of $\mathbf{T}_q^{n} X$:
\[
\begin{split}
\mathbf{T}_q^{n} X_{[m](\theta_1,\ldots,\theta_m)}
& = \Map(Q^n_{[m](\theta_1,\ldots,\theta_m),q},X)\\
& = \Map(V[m](Q^n_{\theta_1,q},\ldots,Q^n_{\theta_m,q}),X).\\
\end{split}
\]
Since $X$ is $\se$-fibrant, we may decompose the intertwining functor into suspensions by using the generalized Segal maps from Proposition \ref{moresegal} to obtain the middle weak equivalence below:

\[
\begin{tikzcd}
\mathbf{T}_q^{n} X_{[m](\theta_1,\ldots,\theta_m)} \arrow[d, "\cong"]\\
\Map(V[m](Q^n_{\theta_1,q},\ldots,Q^n_{\theta_m,q}),X) \arrow[d, "\sim"] \\
\displaystyle \Map(V[1](Q^n_{\theta_1,q}),X)\tim_{X_{[0]}} \cdots \tim_{X_{[0]}} \Map(V[1](Q^n_{\theta_m,q}),X) \arrow[d, "\cong"] \\
\displaystyle \mathbf{T}_q^{n} X_{[1](\theta_1)}\tim_{\mathbf{T}_q^{n} X(q)_{[0]}} \cdots \tim_{\mathbf{T}_q^{n} X_{[0]}} \mathbf{T}_q^{n} X_{[1](\theta_m)},
\end{tikzcd}
\]
where we also use the fact that at level $[0]$ we have isomorphisms by Corollary \ref{underlying}.
Now since the diagonal preserves finite limits, the diagonal of the composite of the maps above is the corresponding Segal map for $\widetilde{T}^n X$:
\[
\begin{tikzcd}
\widetilde{T}^n X_{[m](\theta_1,\ldots,\theta_m)} \arrow[r] &
\displaystyle \widetilde{T}^n X_{[1](\theta_1)}\tim_{\widetilde{T}^n X_{[0]}} \cdots \tim_{\widetilde{T}^n X_{[0]}} \widetilde{T}^n X_{[1](\theta_m)},
\end{tikzcd}
\]
which is also a levelwise weak equivalence as a diagonal of levelwise weak equivalences by Corollary \ref{bousfieldkanwe}.

Then the Segal condition being preserved by levelwise weak equivalences implies that $T^n X$ is $\se$-fibrant.
\end{proof}

The next proposition establishes the recursive relation between the completion functors.

\begin{prop}\label{induction}
Let $X$ be a $\se$-fibrant object in $\sps(\Theta^n C)$. Then the map $\eta_X^n\colon X\to T^n X$ induces on the mapping objects is the component of $\eta^{n-1}$ up to a homotopy equivalence:
\[
\begin{tikzcd}[sep=large]
\map_{X}(x,y) \arrow[r, "{\eta_{\map_{X}(x,y)}^{n-1}}"] \arrow[dr, "{(\eta_{X}^{n})_*}" swap]
& T^{n-1}\map_{X}(x,y) \arrow[d, "\simeq"] \\
& \map_{T^n X}((\eta_X^n)_{[0]}(x),(\eta_X^n)_{[0]}(y)).
\end{tikzcd}
\]
\end{prop}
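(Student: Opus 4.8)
The plan is to prove the statement in two stages: first to establish a strict isomorphism identifying the mapping objects of the precompletion $\widetilde{T}^n W$ with the precompletion of the mapping objects, and then to compare the two injective fibrant replacements that appear in the triangle.

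First I would unwind the recursion for $Q^n$ at the levels that see mapping objects. Since $Q^n_{[1](\theta),q}=V[1](Q^{n-1}_{\theta,q})$ and $Q^n_{[0],q}\cong F[0]$, Corollary \ref{underlying} gives $(\mathbf{T}^n_q W)_{[0]}\cong W_{[0]}$, so $\mathbf{T}^n_q W$ has the same objects as $W$. Applying Lemma \ref{mapgen} with $X=Q^{n-1}_{\theta,q}$ identifies the strict fiber of $(\mathbf{T}^n_q W)_{[1](\theta)}=\Map(V[1](Q^{n-1}_{\theta,q}),W)$ over $(x,y)$ with $\Map(Q^{n-1}_{\theta,q},\map_W(x,y))=\mathbf{T}^{n-1}_q\map_W(x,y)_\theta$; that is, $\map_{\mathbf{T}^n_q W}(x,y)\cong\mathbf{T}^{n-1}_q\map_W(x,y)$ naturally in $q$. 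Because the simplicial diagonal commutes with finite limits (Lemma \ref{diaglimits}), taking diagonals yields a natural isomorphism $\map_{\widetilde{T}^n W}(x,y)\cong\widetilde{T}^{n-1}\map_W(x,y)$. I would then check that under this isomorphism $(\widetilde{\eta}^n_W)_*$ corresponds to $\widetilde{\eta}^{n-1}_{\map_W(x,y)}$: both are induced by the projections $\pi^*E(q)\to F[0]$, and $V[1]$ of the projection $Q^{n-1}_{\theta,q}\to F(\theta)$ becomes, after passing to fibers via Lemma \ref{mapgen}, exactly the projection defining $\widetilde{\eta}^{n-1}$.

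Next I would pass from precompletions to completions $T^\bullet=\mathcal{F}\widetilde{T}^\bullet$. Composing the isomorphism above with the fibrant replacement gives an injective weak equivalence $\widetilde{T}^{n-1}\map_W(x,y)\xrightarrow{\mathcal{F}}T^{n-1}\map_W(x,y)$ into an injective fibrant object, while the corollary to Lemma \ref{mapgen} shows $\map_{T^n W}(x,y)$ is injective fibrant. It therefore suffices to show that the map $\map_{\widetilde{T}^n W}(x,y)\to\map_{T^n W}(x,y)$ induced by the fibrant replacement $\widetilde{T}^n W\xrightarrow{\sim}T^n W$ is a levelwise weak equivalence: then both $T^{n-1}\map_W(x,y)$ and $\map_{T^n W}(x,y)$ are injective fibrant replacements of $\widetilde{T}^{n-1}\map_W(x,y)\cong\map_{\widetilde{T}^n W}(x,y)$, so uniqueness of fibrant replacement produces a weak equivalence between them, which is a homotopy equivalence of fibrant objects by Lemma \ref{htpywe} and which renders the triangle commutative. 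To verify that induced map is a weak equivalence I would show that the strict mapping object of $\widetilde{T}^n W$ already computes its homotopy mapping object. Since $V[1]$ preserves monomorphisms and $W$ is injective fibrant, each $(\mathbf{T}^n_q W)_{[1](\theta)}\to W_{[0]}\times W_{[0]}$ is a Kan fibration, so by Proposition \ref{fibpb} its strict fiber $\mathbf{T}^{n-1}_q\map_W(x,y)$ is a homotopy fiber; combining Lemma \ref{diaglimits} with the Bousfield–Kan equivalence $\diag\simeq\hocolim$ (Theorem \ref{bousfieldkan}), the strict fiber of the diagonal is $\diag_q\hofiber\simeq\hocolim_q\hofiber$, whereas $\hmap_{\widetilde{T}^n W}(x,y)$ is $\hofiber(\hocolim_q)$. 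These agree precisely because homotopy fibers commute with the diagonal over the constant base $W_{[0]}\times W_{[0]}$, and once this is known the comparison to $\map_{T^n W}(x,y)=\hmap_{T^n W}(x,y)$ is a weak equivalence by homotopy invariance of homotopy mapping objects.

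The main obstacle is exactly this commutation of homotopy fibers with the simplicial diagonal. The structure maps of the simplicial object $q\mapsto\mathbf{T}^{n-1}_q\map_W(x,y)$ are categorical equivalences rather than levelwise weak equivalences, so the comparison squares over the fixed base are \emph{not} homotopy pullbacks and the hypothesis of Lemma \ref{pbpasting} fails; one cannot simply invoke the descent statement used in the case $n=1$. Instead I would isolate as a separate lemma the universality of these homotopy colimits in spaces, proved by exploiting that the structure maps $(\mathbf{T}^n_q W)_{[1](\theta)}\to(\mathbf{T}^n_p W)_{[1](\theta)}$ are themselves fibrations (being induced by the monomorphisms $Q^{n-1}_{\theta,p}\hookrightarrow Q^{n-1}_{\theta,q}$ between cofibrant objects mapped into the fibrant $W$), so that the levelwise fibration $q\mapsto(\mathbf{T}^n_q W)_{[1](\theta)}$ over the constant base stays a fibration after taking the diagonal; this is the technical heart of the argument and the only place where a point-set descent input beyond the cited lemmas is required.
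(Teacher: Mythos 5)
Your proposal follows the same overall route as the paper's proof: Lemma \ref{mapgen} identifies the strict fibers of $(\mathbf{T}^n_qW)_{[1](\theta)}\to W_{[0]}\times W_{[0]}$ with $\mathbf{T}^{n-1}_q\map_W(x,y)_\theta$, the diagonal commutes with this finite limit so that $\map_{\widetilde{T}^nW}(x,y)\cong\widetilde{T}^{n-1}\map_W(x,y)$, and the triangle then follows from up-to-homotopy uniqueness of injective fibrant replacements once one knows that $\map_{\widetilde{T}^nW}(x,y)\to\map_{T^nW}(x,y)$ is a levelwise weak equivalence. You also correctly identify the crux: $\widetilde{T}^nW$ is not injective fibrant, so one must show its strict mapping objects still compute the homotopy mapping objects, and Lemma \ref{pbpasting} does not apply because the structure maps $\mathbf{T}^n_qW\to\mathbf{T}^n_pW$ are only categorical equivalences, so the comparison squares over the constant base are not levelwise homotopy pullbacks.

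The gap is in your proposed proof of the missing descent statement. You want to argue that $q\mapsto(\mathbf{T}^n_qW)_{[1](\theta)}$ is a levelwise fibration over the constant base whose diagonal remains a fibration, the structure maps being ``induced by the monomorphisms $Q^{n-1}_{\theta,p}\hookrightarrow Q^{n-1}_{\theta,q}$''. There are two problems. First, only the coface maps of the cosimplicial object $p\mapsto Q^{n-1}_{\theta,p}$ are monomorphisms; the codegeneracies, built from the non-injective functors $I(p)\to I(p-1)$, are not, so not all structure maps of the simplicial object are fibrations. Second, and more seriously, the diagonal of a levelwise Kan fibration is not in general a Kan fibration without additional hypotheses of $\pi_*$-Kan type, so even granting the fibration claim the auxiliary lemma you want does not follow from it. In fact no descent input beyond the cited lemmas is needed: since the base $\Map(V[1](\emptyset),\widetilde{T}^nW)\cong W_{[0]}\times W_{[0]}$ is a Kan complex that is constant in $q$, Lemma \ref{hmapd} computes the homotopy mapping object as a finite \emph{strict} limit (the path-space construction), which commutes with the diagonal by Lemma \ref{diaglimits}. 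The comparison $\map_{\widetilde{T}^nW}(x,y)\to\hmap_{\widetilde{T}^nW}(x,y)$ is therefore the diagonal of the levelwise comparisons $\map_{\mathbf{T}^n_qW}(x,y)\to\hmap_{\mathbf{T}^n_qW}(x,y)$, each of which is a weak equivalence because the map (\ref{mapfiber}) is a fibration by injective fibrancy of $W$; the diagonal of levelwise weak equivalences is a weak equivalence by Corollary \ref{bousfieldkanwe}. This closes the step you flagged using only results already available, and the rest of your argument then goes through as written.
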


\begin{proof}
By the definition of the objects $\mathbf{T}_q^{n} X$ we have isomorphisms
\[
\begin{split}
\Map(V[1](F(\theta)),\mathbf{T}_q^{n} X)
& \cong \Map(Q^n_{[1](\theta),q}, X)\\
& \cong \Map(V[1](Q^{n-1}_{\theta,q}), X),
\end{split}
\]
and by Corollary \ref{underlying} pairs of objects agree:
\[
\Map(V[1](\emptyset),\mathbf{T}_q^{n} X) \cong \Map(V[1](\emptyset), X).
\]
The levels mapping objects of $\mathbf{T}_q^{n} X $ can then be computed as the fibers of the morphism
\begin{equation}\label{mapfiber}
 \Map(V[1](Q^{n-1}_{\theta,q}), X)\to  \Map(V[1](\emptyset),X),
\end{equation}
which by the suspension-map adjunction in Lemma \ref{mapgen} have the form
\[
\map_{\mathbf{T}_q^{n} X}(x,y)_{\theta}
 \cong \Map(Q^{n-1}_{\theta,q},\map_{ X}(x,y)).
\]
Taking the simplicial diagonal of the morphisms $\map_{X}(x,y)\to \map_{\mathbf{T}_q^{n} X}(x,y)$ then yields the dimension $n-1$ precompletion:
\[
\map_{X}(x,y)\to\diag_q \map_{\mathbf{T}_q^{n} X}(x,y) \cong \widetilde{T}^{n-1}(\map_{X}(x,y)).
\]

Next, we consider the mapping objects of $\widetilde{T}^n X$. Since $\widetilde{T}^n X$ is not necessarily injective fibrant, the mapping objects may a priori differ from the homotopy mapping objects, whereas only the latter are generally preserved by levelwise equivalences such as the injective fibrant replacement.

For objects, Corollary \ref{underlying} tells us that
\[
\Map(V[1](\emptyset),\widetilde{T}^n X) \cong \Map(V[1](\emptyset),  X),
\]
which is a Kan complex, so by using the formula for homotopy pullbacks in Lemma \ref{hmapd}, we may compute the homotopy mapping objects of $\widetilde{T}^n X$ as the limits of the diagrams
\[
\begin{tikzcd}
& &  \Map(V[1](F(\theta)),\widetilde{T}^n X) \arrow[d]\\
& \Map(V[1](\emptyset),X)^{\Delta[1]} \arrow[r, "j_0^*" above] \arrow[d, "j_1^*" left] 
& \Map(V[1](\emptyset),X)\\
\{*\} \arrow[r, "{(x,y)}" ] & \Map(V[1](\emptyset),X). &
\end{tikzcd}
\]
We may also use the fact that the diagonal is computed levelwise to obtain the isomorphism
\[
\Map(V[1](F(\theta)),\widetilde{T}^n X)\cong \diag_{q} \Map(V[1](F(\theta)),\mathbf{T}_q^{n} X).
\]

Now $\hmap_{\widetilde{T}^n X}(x,y)$ may be computed as the limit of the diagram of the diagonals
\[
\diag_{q}\left(\begin{tikzcd}
& &  \Map(V[1](F(\theta)),\mathbf{T}_q^{n} X) \arrow[d]\\
& \Map(V[1](\emptyset),X)^{\Delta[1]} \arrow[r, "j_0^*" above] \arrow[d, "j_1^*" left] 
& \Map(V[1](\emptyset),X)\\
\{*\} \arrow[r, "{(x,y)}" ] & \Map(V[1](\emptyset),X) &
\end{tikzcd}\right)
\]
and similarly $\map_{\widetilde{T}^n X}(x,y)$ as the limit of 
\[
\diag_{q}\left(\begin{tikzcd}
& \Map(V[1](F(\theta)),\mathbf{T}_q^{n} X) \arrow[d]\\
\{*\} \arrow[r, "{(x,y)}" ] & \Map(V[1](\emptyset),X) 
\end{tikzcd}\right).
\]
Using the commutativity of the diagonal with finite limits we then see that the canonical map $\map_{\widetilde{T}^n X}(x,y) \to \hmap_{\widetilde{T}^n X}(x,y)$ is the diagonal of the corresponding maps 
\begin{equation}\label{simphmap}
\map_{\mathbf{T}_q^{n} X}(x,y) \to \hmap_{\mathbf{T}_q^{n} X}(x,y).
\end{equation}
However, the (homotopy) mapping objects of $\mathbf{T}_q^{n} X$ can be computed in terms of the (homotopy) fibers of the map (\ref{mapfiber}), which is a fibration by injective fibrancy of $X$, telling us that fibers are weakly equivalent to the homotopy fibers. Then the diagonal of the weak equivalences (\ref{simphmap}) is also a levelwise weak equivalence by Corollary \ref{bousfieldkanwe}.

Now the fibrant replacement of $\widetilde{T}^n X$ induces a levelwise weak equivalence on the mapping objects by the two-out-of-three property:
\[
\begin{tikzcd}
\diag_q (\map_{\mathbf{T}_q^{n} X}(x,y)) \arrow[r, "\cong"] \arrow[d, "\sim"]
& \map_{\widetilde{T}^n X}(x,y) \arrow[r] \arrow[d]
&  \map_{T^n X}(x,y) \arrow[d, "\sim"] \\
\diag_q (\hmap_{\mathbf{T}_q^{n} X}(x,y)) \arrow[r, "\cong"]
& \hmap_{c}(x,y) \arrow[r, "\sim"]
&  \hmap_{T^n X}(x,y) .\\
\end{tikzcd}
\]

In Lemma \ref{compsegal} we showed that $T^n X$ is $\se$-fibrant, so its mapping objects are injective fibrant by Proposition \ref{mapfibrant}. Thus the map induced by $\eta_X^n$ on the mapping objects is isomorphic to
\[
\begin{tikzcd}[sep=large]
\map_{X}(x,y) \arrow[r, "{\widetilde{\eta}_{\map_X(x,y)}^n}"]
& \widetilde{T}^{n-1}(\map_{X}(x,y))
\end{tikzcd}
\]
followed by an injective fibrant replacement. The claim follows by the up-to-homotopy uniqueness of fibrant replacements.
\end{proof}

We then obtain property (1) of Theorem \ref{suspended}.

\begin{cor}\label{compfibrant}
Let $X$ be a $\se^n(S)$-fibrant object in $\sps(\Theta^n C)$. Then $T^n X$ is $\se^{n-1}\cpt(S)$-fibrant.
\end{cor}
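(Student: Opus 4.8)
The plan is to induct on $n$, taking the case $n=1$ — which is precisely item (1) of Theorem \ref{hcompletion} — as the base case, so that it suffices to treat $n\geq 2$. For the inductive step I would first establish that $T^n W$ is $\se$-fibrant: this is immediate from Lemma \ref{compsegal}, since a $\se^n(S)$-fibrant object is in particular $\se$-fibrant. Granting this, Proposition \ref{mapfibrant} — applied with base category $\Theta^{n-1}C$ and localizing set $\se^{n-2}\cpt(S)$, whose local model structure is Cartesian by the inductive iteration of the Cartesianness results — reduces the remaining content of the claim to showing that each mapping object $\map_{T^n W}(x,y)$ is $\se^{n-2}\cpt(S)$-fibrant; here I use the identity $\se^{n-1}\cpt(S)=\se(\se^{n-2}\cpt(S))$.

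To analyze the mapping objects, I would first observe that $\map_W(x,y)$ is $\se^{n-1}(S)$-fibrant: this follows from Proposition \ref{mapfibrant} applied to $W$ itself, via $\se^n(S)=\se(\se^{n-1}(S))$. The inductive hypothesis, namely Corollary \ref{compfibrant} in dimension $n-1$, then gives that $T^{n-1}\map_W(x,y)$ is $\se^{(n-1)-1}\cpt(S)=\se^{n-2}\cpt(S)$-fibrant. The recursive structure of the completion, recorded in Proposition \ref{induction}, is what connects this to the object we care about: it identifies $\map_{T^n W}(x,y)$ with $T^{n-1}\map_W(x,y)$ up to the equivalence appearing in that proposition. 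Moreover, since $T^n W$ is $\se$-fibrant and hence injective fibrant, its mapping objects $\map_{T^n W}(x,y)$ are injective fibrant, and $T^{n-1}\map_W(x,y)$ is injective fibrant by construction.

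The step requiring the most care is transferring $\se^{n-2}\cpt(S)$-fibrancy across this identification. The equivalence of Proposition \ref{induction} is a levelwise weak equivalence between the two injective fibrant objects $\map_{T^n W}(x,y)$ and $T^{n-1}\map_W(x,y)$, so the point to nail down is that locality with respect to a fixed set of maps is invariant under a levelwise weak equivalence between injective fibrant objects. This holds because, all objects being cofibrant, a levelwise weak equivalence between injective fibrant objects induces weak equivalences on the mapping spaces $\Map(A,-)$ out of the (co)fibrant sources and targets of the maps in $\se^{n-2}\cpt(S)$, so one locality square is a weak equivalence if and only if the other is. Consequently $\map_{T^n W}(x,y)$ inherits $\se^{n-2}\cpt(S)$-locality from $T^{n-1}\map_W(x,y)$ and, being injective fibrant, is $\se^{n-2}\cpt(S)$-fibrant. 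With both the $\se$-fibrancy of $T^n W$ and the $\se^{n-2}\cpt(S)$-fibrancy of its mapping objects verified, Proposition \ref{mapfibrant} yields that $T^n W$ is $\se^{n-1}\cpt(S)$-fibrant, completing the induction.
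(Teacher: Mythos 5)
Your proof is correct and follows essentially the same route as the paper's: induction on $n$ with Theorem \ref{hcompletion}(1) as the base case, Lemma \ref{compsegal} for $\se$-fibrancy, Proposition \ref{induction} to identify the mapping objects of $T^n W$ with $T^{n-1}$ applied to those of $W$, and Proposition \ref{mapfibrant} to assemble the conclusion. The only difference is that you make explicit the transfer of $\se^{n-2}\cpt(S)$-locality across the levelwise weak equivalence between injective fibrant objects coming from Proposition \ref{induction}, a step the paper leaves implicit.
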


\begin{proof}
We proceed by induction. The case $n=1$ is the item (1) of Theorem \ref{hcompletion}. Suppose then that the claim is true for some $k\geq 1$ and let $X$ be $\se^{k+1}(S)$-fibrant. Now $T^{k+1} X$ is $\se$-fibrant by Lemma \ref{compsegal} with $\se^{k}\cpt(S)$-fibrant mapping objects. Then $T^{k+1} X$ is $\se^{k}\cpt(S)$-fibrant by Proposition \ref{mapfibrant}, completing the induction.
\end{proof}

Next we show property (2) of Theorem \ref{suspended}.

\begin{prop}\label{compwe}
Let $X$ be a $\se^n(S)$-fibrant object in $\sps(\Theta^n C)$. Then $\eta_X^n$ is a $\se^{n-1}\cpt(S)$-local acyclic cofibration.
\end{prop}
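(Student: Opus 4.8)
The plan is to verify the two halves of the statement—that $\eta_W^n$ is a monomorphism and that it is a $\se^{n-1}\cpt(S)$-local equivalence—separately, arguing by induction on $n$ with Theorem \ref{hcompletion} serving as the base case $n=1$. For the cofibration half, I would first note that each projection $Q_{\theta,q}^n \to F(\theta)$ admits a section: this holds for $Q_{\theta,q}^1 = F(\theta)\times \pi^* E(q)$ via any point of $\pi^* E(q)$, and the recursion $Q^n_{[m](\theta_1,\ldots,\theta_m),q} = V[m](Q^{n-1}_{\theta_1,q},\ldots,Q^{n-1}_{\theta_m,q})$ propagates sections upward using functoriality of $V[m]$ together with the identification $F([m](\theta_1,\ldots,\theta_m)) \cong V[m](F\theta_1,\ldots,F\theta_m)$. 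Hence every structure map $\iota_q\colon W \to \mathbf{T}_q^n W$ is a split monomorphism. Since the simplicial diagonal is computed levelwise and the injective fibrant replacement $\mathcal{F}$ is a monomorphism, it follows that $\eta_W^n = \mathcal{F}\,\widetilde{\eta}_W^n$ is a monomorphism, i.e.\ a cofibration.

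For the local-equivalence half, I would decompose $\eta_W^n$ exactly as in the proof of Theorem \ref{hcompletion}:
\[
W = \hocolim_{[q]\in\Delta} W \xrightarrow{\hocolim_q \iota_q} \hocolim_{[q]\in\Delta} \mathbf{T}_q^n W \xrightarrow{\mathrm{BK}} \diag_q \mathbf{T}_q^n W = \widetilde{T}^n W \xrightarrow{\mathcal{F}} T^n W .
\]
Computed in the $\se^{n-1}\cpt(S)$-local model structure, the Bousfield--Kan map is a local equivalence by Theorem \ref{bousfieldkan}, the fibrant replacement is a levelwise weak equivalence, and homotopy colimits preserve local equivalences; so it suffices to show that each $\iota_q$ is a $\se^{n-1}\cpt(S)$-local equivalence. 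Here the recursive structure enters: Corollary \ref{underlying} shows $\iota_q$ is an isomorphism on level $[0]$, while the computation in Proposition \ref{induction} identifies the map $\iota_q$ induces on mapping objects with the dimension-$(n-1)$ structure map $\map_W(x,y) \to \mathbf{T}_q^{n-1}\map_W(x,y)$, which is a $\se^{n-2}\cpt(S)$-local equivalence by the inductive hypothesis.

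The main obstacle is then the reduction principle that, for the enriched Segal localization $\se^{n-1}\cpt(S) = \se(\se^{n-2}\cpt(S))$, a map which is a weak equivalence on level $[0]$ and a $\se^{n-2}\cpt(S)$-local equivalence on all homotopy mapping objects is itself a local equivalence. I would establish this by passing to $\se^{n-1}\cpt(S)$-fibrant replacements and observing that the replacement maps preserve both hypotheses: the Segal maps are isomorphisms on level $[0]$, the suspended maps $V[1](s)$ are identities there, and by the suspension--mapping-object adjunction of Proposition \ref{mapfibrant} the localization acts on mapping objects precisely as the $\se^{n-2}\cpt(S)$-localization. The induced map of fibrant objects is then a weak equivalence on levels $[0]$ and $[1](\theta)$, hence a levelwise weak equivalence by Proposition \ref{segallw}, and two applications of the two-out-of-three property transfer this conclusion back to $\iota_q$.

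The delicate point I expect to dominate the argument is exactly this transfer through fibrant replacement, where the apparent circularity—that the natural candidate for the fibrant replacement is the completion itself—must be avoided by working with an abstract $\se^{n-1}\cpt(S)$-fibrant replacement rather than with $T^n$. A secondary technical nuisance, present throughout, is that $\mathbf{T}_q^n W$ need not be injective fibrant, so one must systematically replace strict mapping objects by homotopy mapping objects, as is already done in the proof of Proposition \ref{induction}; this is what makes the levelwise comparisons above legitimate.
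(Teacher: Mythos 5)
Your cofibration argument is fine and matches the paper's (the paper phrases it as ``precomposition by a retraction,'' which is your section of $Q^n_{\theta,q}\to F(\theta)$). The local-equivalence half, however, has a genuine gap, and it sits exactly where you flagged it: the ``reduction principle'' that a map of Segal objects which is a weak equivalence at level $[0]$ and a $\se^{n-2}\cpt(S)$-local equivalence on homotopy mapping objects is itself a $\se^{n-1}\cpt(S)$-local equivalence. Your proposed justification --- pass to abstract $\se^{n-1}\cpt(S)$-fibrant replacements and claim they preserve level $[0]$ and ``act on mapping objects precisely as the $\se^{n-2}\cpt(S)$-localization'' --- is not established by anything in the paper and is essentially as hard as the statement you are trying to prove. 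Homotopy mapping objects are invariant only under \emph{levelwise} weak equivalences, so the local-equivalence fibrant replacement map $W\to \widehat{W}$ gives you no a priori control of $\hmap_{\widehat{W}}(x,y)$; and the Segal maps $\se_{\Theta^{n-1}C}$, which are part of the localizing set, do not act on a fixed mapping object $\map(x,y)$ in any transparent way (they glue mapping objects together via composition). Proposition \ref{mapfibrant} only tells you that $\se(S')$-fibrancy is detected on mapping objects of Segal objects; it does not identify the effect of the \emph{localization functor} on mapping objects of non-fibrant objects. Indeed, the whole point of constructing $T^n$ explicitly is that abstract fibrant replacements are not known to have this compatibility --- assuming it here is close to circular. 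A secondary, fixable gap: your induction needs each individual structure map $\iota_q\colon W\to \mathbf{T}^{n-1}_q W$ to be a local equivalence, but the inductive hypothesis (the proposition for $n-1$) only concerns the diagonal composite $\eta^{n-1}_W$; you would have to strengthen the statement being inducted on.

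The paper avoids this entirely by a different route. It does \emph{not} run the $\hocolim$/Bousfield--Kan decomposition for $n\geq 2$; instead it argues in two stages. First, when $W$ is already $\se^{k}\cpt(S)$-fibrant (complete one dimension down), the induced maps on mapping objects are local equivalences between \emph{fibrant} objects, hence levelwise weak equivalences, and then the fibration square over $W_{[0]}\times W_{[0]}$ plus the Segal condition force $\eta_W^{k+1}$ to be a levelwise weak equivalence --- no reduction principle for non-fibrant targets is needed. Second, for a general $\se^{k+1}(S)$-fibrant $W$, it tests $(\eta_W^{k+1})^*$ against $\se^{k}\cpt(S)$-fibrant $Z$ and runs a two-out-of-six argument through the double completion $T^{k+1}T^{k+1}W$, using the first stage to know that $\eta^{k+1}_{Z}$, $\eta^{k+1}_{T^{k+1}W}$, and $T^{k+1}\eta^{k+1}_W$ are levelwise weak equivalences. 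If you want to salvage your decomposition, you would need to first develop a suspended analogue of the categorical-equivalence machinery (which is what makes the $q$-wise argument work for $n=1$ via Proposition \ref{cateqcpt}), or else adopt the paper's bootstrap.
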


\begin{proof}
First consider cofibrancy. The maps $X\to \mathbf{T}_q^{n} X$ are monomorphisms since they are levelwise defined as precomposition by a retraction. Then the diagonal $\widetilde{\eta}_X^n\colon X\to \widetilde{T}^n X$ is also a monomorphism, since the diagonal preserves finite limits. Then postcomposing with the injective fibrant replacement, which is also a monomorphism, gives the map $\eta_X^n\colon X\to T^n X$.

To show that $\eta_X^n$ is a $\se^{n-1}\cpt(S)$-local equivalence, we use induction on $n$ with the base case $n=1$ being part (2) of Theorem \ref{hcompletion}. Suppose then that the claim is true for some $k\geq 1$ and consider first $X$ a $\se^{k}\cpt(S)$-fibrant object of $\sps(\Theta^{k+1} C)$.
By Corollary \ref{underlying} $\eta_X^{k+1}$ is a weak equivalence on level $[0]$, and by Proposition \ref{induction} together with the induction hypothesis, $\eta_X^{k+1}$ induces a $\se^{k-1}\cpt(S)$-local acyclic cofibration on the mapping objects which are $\se^{k-1}\cpt(S)$-fibrant by assumption for $X$ and by Lemma \ref{compfibrant} for $T^{k+1}X$. Thus the map on mapping objects is a levelwise weak equivalence implying that we have homotopy pullback squares
\[
\begin{tikzcd}[sep=large]
X_{[1](\theta)} \arrow[r, "{(\eta_X^{k+1})_{[1](\theta)}}"] \arrow[d, two heads,  "{(d_1,d_0)}" swap] \arrow[dr, phantom, "\lrcorner_h" very near start]
& T^{k+1}X_{[1](\theta)} \arrow[d, two heads,  "{(d_1,d_0)}"] \\
X_{[0]}\times X_{[0]} \arrow[r, "{(\eta_X^{k+1})_{[0]}^2}", "\sim" swap] 
& T^{k+1}X_{[0]} \times T^{k+1}X_{[0]}
\end{tikzcd}
\]
for all $\theta\in \Theta^k C$. However, since the bottom map is a weak equivalence, so is the top map $(\eta_X^{k+1})_{[1](\theta)}$. It follows by the Segal condition for $X$ and $T^{k+1}X$ that the remaining levels of $\eta_X^{k+1}$ are also weak equivalences, so $\eta_X^{k+1}$ is $\se^{k-1}\cpt(S)$-local as a levelwise weak equivalence, concluding the case $n=k+1$ for $\se^{k}\cpt(S)$-fibrant $X$.

Consider then a $\se^{k+1}(S)$-fibrant $X$. By definition $\eta_X^{k+1}$ is a $\se^{k-1}\cpt(S)$-local equivalence if the morphism
\[
(\eta_X^{k+1})^* \colon \Map(T^{k+1} X, Z) \to \Map(X, Z)
\]
is a weak equivalence for all $\se^{k}\cpt(S)$-fibrant $Z$, which we employ the two-out-of-six property to show. 
Now we have the commutative diagram
\begin{equation}\label{wediagram}
\begin{tikzcd}[sep=large]
\Map(T^{k+1} X, Z) \arrow[r, "{T^{k+1}}"] \arrow[d, "(\eta_X^{k+1})^*" swap]
&  \Map(T^{k+1} T^{k+1} X, T^{k+1} Z) \arrow[d, "(T^{k+1}\eta_X^{k+1})^*"]\\
\Map(X, Z) \arrow[r, "{T^{k+1}}"] \arrow[rd, "(\eta_Z^{k+1})_*" swap]
&  \Map(T^{k+1} X, T^{k+1} Z) \arrow[d, "(\eta_X^{k+1})^*" ]\\
&  \Map( X, T^{k+1} Z),
\end{tikzcd}
\end{equation}
where the commutativity of the top square follows from the functoriality of $T^{k+1}$ and the bottom triangle by naturality of $\eta^{k+1}\colon \id\Rightarrow T^{k+1}$. Note that $(\eta_Z^{k+1})_*$ is a weak equivalence since $\eta_Z^{k+1}$ is a levelwise weak equivalence by the inductive step for $\se^{k}\cpt(S)$-fibrant objects shown above.

Similarly to the bottom triangle of the diagram (\ref{wediagram}), we also have
\[
\begin{tikzcd}
\Map(T^{k+1} X, Z) \arrow[r, "{T^{k+1}}"] \arrow[rd, "(\eta_Z^{k+1})_*" swap]
&  \Map(T^{k+1} T^{k+1} X, T^{k+1} Z) \arrow[d, "(\eta_{T^{k+1} X}^{k+1})^*" ]\\
&  \Map(T^{k+1} X, T^{k+1} Z),
\end{tikzcd}
\]
where both $(\eta_Z^{k+1})_*$ and $(\eta_{T^{k+1} X}^{k+1})^*$ are weak equivalences, since $Z$ and $T^{k+1} X$ are $\se^{k}\cpt(S)$-fibrant, using Corollary \ref{compfibrant}. Thus $T^{k+1}$ is a weak equivalence on the mapping space $\Map(T^{k+1} X, Z)$ by the two-out-of-three property.

Next consider the map $T^{k+1}(\eta_X^{k+1})$, which fits into the following commutative diagram by naturality of $\eta^{k+1}$:
\[
\begin{tikzcd}[sep=large]
X \arrow[r, "{\eta_X^{k+1}}"] \arrow[d, "{\eta_X^{k+1}}" swap]
& T^{k+1} X \arrow[d, "{\eta_{T^{k+1} X}^{k+1}}"] \\
T^{k+1} X \arrow[r, "{T^{k+1}\eta_X^{k+1}}"]
& T^{k+1} T^{k+1} X,
\end{tikzcd}
\]
where each component of $\eta^{k+1}$ is a weak equivalence on level $0$ and so is then $T^{k+1}\eta_X^{k+1}$ too by the two-out-of-three property. Furthermore, on the mapping objects the induced diagram is equivalent to the square in the following diagram by Proposition \ref{induction}:
\[
\begin{tikzcd}[sep=large]
& T^{k} \map_{X}(x,y)  \arrow[d, "\sim"]\\
\map_{X}(x,y) \arrow[r, "{(\eta_X^{k+1})_*}" swap] 
\arrow[d, "{\eta_{\map_{X}(x,y) }^{k}}" swap] \arrow[ur, "{\eta_{\map_{X}(x,y) }^{k}}" ]
& \map_{T^{k+1} X}(\eta_X^{k+1}(x),\eta_X^{k+1}(y)) \arrow[d, "{\eta_{ \map_{T^{k+1} X}(x,y)}^{k}}"] \\
T^{k} \map_{X}(x,y) \arrow[r, "{T^{k}((\eta_X^{k})_*)}"]
& T^{k} \map_{T^{k+1} X}(\eta_X^{k+1}(x),\eta_X^{k+1}(y)),
\end{tikzcd}
\]
where each component of $\eta^{k}$ is a $\se^{k-1}\cpt(S)$-local equivalence by the induction hypothesis.
Thus the two-out-of-three property tells us that $T^{k+1}\eta_X^{k+1}$ is also a $\se^{k-1}\cpt(S)$-local equivalence on the mapping objects. However, the objects $T^{k+1} X$ and $T^{k+1} T^{k+1} X$ are $\se^{k}\cpt(S)$-fibrant by Corollary \ref{compfibrant}, so $T^{k+1}\eta_X^{k+1}$ is a levelwise weak equivalence by the same argument as the $\se^{k}\cpt(S)$-fibrant induction step. 

Now we have weak equivalences in diagram \ref{wediagram} as indicated:
\[
\begin{tikzcd}[sep=large]
\Map(T^{k+1} X, Z) \arrow[r, "{T^{k+1}}", "\sim" swap] \arrow[d, "(\eta_X^{k+1})^*" swap]
&  \Map(T^{k+1} T^{k+1} X, T^{k+1} Z) \arrow[d, "(T^{k+1}\eta_X^{k+1})^*", "\sim" swap]\\
\Map(X, Z) \arrow[r, "{T^{k+1}}"] \arrow[rd, "(\eta_Z^{k+1})_*" swap, "\sim"]
&  \Map(T^{k+1} X, T^{k+1} Z) \arrow[d, "(\eta_X^{k+1})^*" ]\\
&  \Map( X, T^{k+1} Z),
\end{tikzcd}
\]
which implies that all of the maps in the diagram are weak equivalences by the two-out-of-six property, in particular, the left vertical map. Thus $\eta_X^{k+1}$ is a $\se^{k}\cpt(S)$-local equivalence completing the inductive step.
\end{proof}

The final part of Theorem \ref{suspended} then inductively follows using the relation between DK-equivalences and weak equivalences from Theorem \ref{dkmain}.

\begin{cor}\label{ndk}
Let $X$ be a $\se^n(S)$-fibrant object in $\sps(\Theta^n C)$. Then $\eta_X^n$ is a DK-$n$-equivalence.
\end{cor}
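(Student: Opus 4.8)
The plan is to induct on $n$, taking the case $n=1$ directly from part (3) of Theorem \ref{hcompletion}; it then remains to treat $n\geq 2$ under the hypothesis that $\eta^{n-1}_{W'}$ is a DK-equivalence for every $\se^{n-1}(S)$-fibrant $W'$ in $\sps(\Theta^{n-1}C)$. Since $T^n W$ is $\se^{n-1}\cpt(S)$-fibrant by Corollary \ref{compfibrant}, and hence in particular $\se^n(S)$-fibrant, the map $\eta_W^n$ is a map of $\se^n(S)$-fibrant objects and Definition \ref{iterateddk} applies, so the strategy is to verify its two clauses.

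For the homotopically-fully-faithful clause, I would feed Proposition \ref{induction} into the induction. That proposition identifies the map $(\eta_W^n)_*$ induced on each mapping object $\map_W(x,y)$ with the lower completion unit $\eta^{n-1}_{\map_W(x,y)}$ up to a comparison map labelled $\simeq$. Because $W$ is $\se^n(S)=\se(\se^{n-1}(S))$-fibrant, the object $\map_W(x,y)$ is $\se^{n-1}(S)$-fibrant by Proposition \ref{mapfibrant}, so the induction hypothesis makes $\eta^{n-1}_{\map_W(x,y)}$ a DK-equivalence. The comparison map runs between the injective fibrant objects $T^{n-1}\map_W(x,y)$ and $\map_{T^n W}(\eta x,\eta y)$, and being a comparison of injective fibrant replacements it is a levelwise weak equivalence, hence a DK-equivalence. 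Two-out-of-three for (iterated) DK-equivalences then yields that $(\eta_W^n)_*$ is a DK-equivalence, which is clause (2).

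For the clause that $\ho(\eta_W^n)$ is an equivalence of categories I would separate essential surjectivity from fully faithfulness, exactly as in the reductions recorded after Definition \ref{dkdef}. Essential surjectivity is immediate from Corollary \ref{underlying} with $k=0$: the map $(\eta_W^n)_{[0]}$ is a weak equivalence, so it is surjective on $\pi_0$, and path-connected objects are isomorphic in the homotopy category. Fully faithfulness is precisely the content of clause (2): each $(\eta_W^n)_*$ is a DK-equivalence of the mapping objects, so by its own essential-surjectivity-and-full-faithfulness it induces an equivalence of their homotopy categories, and this equivalence is the map on the relevant hom-sets $\Hom_{\ho W}(x,y)\to \Hom_{\ho T^n W}(\eta x,\eta y)$.

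The main obstacle I expect is keeping the recursion honest at the level of the homotopy categories: the identification in Proposition \ref{induction} only holds up to the comparison map, so one must confirm that this map genuinely lands between fibrant objects before calling it a levelwise weak equivalence, and that the two-out-of-three manipulations are carried out with the iterated notion of DK-equivalence rather than a naïve one. This is where Theorem \ref{dkmain}, and its consequence Proposition \ref{dklw}, do the essential work: they pin down the dictionary identifying DK-equivalences of $\cpt$-fibrant mapping objects with levelwise weak equivalences, which is what makes the base clause of Definition \ref{iterateddk} stable under the inductive step and legitimizes the two-out-of-three reasoning throughout.
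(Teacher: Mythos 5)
Your proof is correct, but it is organized differently from the paper's. The paper runs a \emph{descending} induction on the depth $k$ of iterated mapping objects inside a fixed $n$: it anchors the argument at $k=n-1$ by citing Proposition \ref{compwe} (that $\eta_W^n$ is a $\se^{n-1}\cpt(S)$-local equivalence) and converting the resulting $\cpt(S)$-local equivalences of $(n-1)$-fold iterated mapping objects into DK-equivalences via Theorem \ref{dkmain}, and it supplies essential surjectivity at every intermediate level $k\leq n-2$ from Corollary \ref{underlying} together with Proposition \ref{umap}. You instead induct on $n$ itself, anchoring at Theorem \ref{hcompletion}(3) and driving the step with Proposition \ref{induction}: the identification of $(\eta_W^n)_*$ on $\map_W(x,y)$ with $\eta^{n-1}_{\map_W(x,y)}$ followed by a levelwise weak equivalence of injective fibrant objects, plus two-out-of-three for the iterated DK-equivalences, which packages conditions (1) and (2) at all lower depths into the induction hypothesis so that essential surjectivity is only needed at the top level (Corollary \ref{underlying} with $k=0$). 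Your route has the virtue of decoupling part (3) of Theorem \ref{suspended} from part (2): you never invoke Proposition \ref{compwe} (the most involved ingredient, proved by the two-out-of-six argument) nor Theorem \ref{dkmain}, whereas the paper's proof reuses both since they are already available at that point. One caveat applies equally to both arguments: the passage from ``the maps on mapping objects are DK-equivalences'' to ``$\ho(\eta_W^n)$ is fully faithful'' rests on the reduction asserted after Definitions \ref{dkdef} and \ref{iterateddk}; your phrasing that the induced equivalence of homotopy categories of the mapping objects ``is the map on the relevant hom-sets'' glosses over the fact that $\Hom_{\ho W}(x,y)$ is $\pi_0$ of the level-$t$ space rather than the set of isomorphism classes in $\ho(\map_W(x,y))$, but this is exactly the reduction the paper itself relies on, so it is not an additional gap on your side.
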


\begin{proof}
By Corollary \ref{underlying}, $\eta_X^n$ is surjective up to homotopy on $k$ cells for $0\leq k\leq n-2$ and thus also up to higher categorical equivalences by Corollary \ref{hneq}, so it suffices to show that it induces DK-1-equivalences on the $(n-1)$-fold mapping objects. Proposition \ref{compwe} tells us that $\eta_X^n$ is a $\se^{n-1}\cpt(S)$-local equivalence, which implies that it induces $\cpt(S)$-local equivalences
\[
\begin{tikzcd}
\map_X^{n-1}(\alpha,\alpha') \arrow[r, "\sim" ] &
\map_{T^n X}^{n-1}(\eta_X^n(\alpha),\eta_X^n(\alpha'))
\end{tikzcd}
\]
between the iterated mapping objects. However, $\cpt(S)$-local equivalences between $\se(S)$-fibrant objects coincide with DK-equivalences by Theorem \ref{dkmain}.
\end{proof}

The following theorem tells us that localizing with respect to the dimension $n$ completeness condition inverts precisely those maps that are DK-$n$-equivalences and essentially identity in dimension $n-2$ and lower.

\begin{thm}\label{dksuspended}
A map between $\se^n(S)$-fibrant objects of $\sps(\Theta^n C)$ is a DK-$n$-equivalence and a levelwise equivalence of the underlying $\Theta_{n-2}$-spaces if and only if it is a $\se^{n-1}\cpt(S)$-local equivalence.
\end{thm}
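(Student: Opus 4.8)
The plan is to reproduce the argument of Theorem \ref{dkmain}, which is the base case $n=1$, with the dimension $n$ completion $T^n$ in place of $T^1$ and with the properties of $T^n$ supplied by Theorem \ref{suspended}; accordingly I take $n\geq 2$. Given a map $f\colon U\to V$ of $\se^n(S)$-fibrant objects, the first step is to record the naturality square for $\eta^n$,
\[
\begin{tikzcd}
U \arrow[r,"f"] \arrow[d, "\eta_U^n" swap] & V \arrow[d, "\eta_V^n"]\\
T^n U \arrow[r,"T^n f"] & T^n V,
\end{tikzcd}
\]
and to extract from Theorem \ref{suspended} the three features of the vertical maps that drive the proof: by parts (2), (3), and (4), each of $\eta_U^n$ and $\eta_V^n$ is simultaneously a $\se^{n-1}\cpt(S)$-local equivalence, a DK-equivalence, and a levelwise weak equivalence of underlying $\Theta_{n-2}$-spaces, while by part (1) the objects $T^n U$ and $T^n V$ are $\se^{n-1}\cpt(S)$-fibrant.

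I would then prove the equivalence of the following five statements: (a) $f$ is a $\se^{n-1}\cpt(S)$-local equivalence; (b) $T^n f$ is a $\se^{n-1}\cpt(S)$-local equivalence; (c) $T^n f$ is a levelwise weak equivalence; (d) $T^n f$ is a DK-equivalence and a levelwise equivalence of underlying $\Theta_{n-2}$-spaces; and (e) $f$ is a DK-equivalence and a levelwise equivalence of underlying $\Theta_{n-2}$-spaces. The link $(a)\Leftrightarrow(b)$ follows from two-out-of-three for $\se^{n-1}\cpt(S)$-local equivalences applied to the square, since the verticals are such equivalences. The link $(b)\Leftrightarrow(c)$ is the standard fact that a local equivalence between objects fibrant in the local model structure is exactly a levelwise weak equivalence, available because $T^n U$ and $T^n V$ are $\se^{n-1}\cpt(S)$-fibrant. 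The link $(c)\Leftrightarrow(d)$ is a direct application of Proposition \ref{ndklw} to $T^n f$, which is a map between $\se^{n-1}\cpt(S)$-fibrant objects.

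The step I expect to need the most care is $(d)\Leftrightarrow(e)$, since two conditions must be transferred across the square at once. For the DK-equivalence clause I would invoke two-out-of-three for DK-equivalences together with the fact that both verticals are DK-equivalences, so that $T^n f$ is a DK-equivalence precisely when $f$ is. For the underlying-$\Theta_{n-2}$ clause I would apply the functor $(\tau_{n-2}^n)^*$ to the entire square; by functoriality this produces a commutative square whose vertical maps are levelwise weak equivalences by part (4) of Theorem \ref{suspended}, so two-out-of-three for levelwise weak equivalences shows that $(\tau_{n-2}^n)^* f$ is a levelwise weak equivalence exactly when $(\tau_{n-2}^n)^* T^n f$ is. Conjoining the two transfers yields $(d)\Leftrightarrow(e)$ and closes the chain $(a)\Leftrightarrow\cdots\Leftrightarrow(e)$, which is the statement of the theorem. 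The genuine mathematical content is carried entirely by Theorem \ref{suspended} and Proposition \ref{ndklw}; the remaining difficulty is organizational, namely making sure that each of the three good properties of $\eta^n$ is deployed in the correct link of the chain.
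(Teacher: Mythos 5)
Your proposal is correct and is essentially the paper's own argument: the paper proves Theorem \ref{dksuspended} by stating that it follows from the same five-step chain as Theorem \ref{dkmain} with $T^1$ replaced by $T^n$, using Theorem \ref{suspended} in place of Theorem \ref{hcompletion} and Proposition \ref{ndklw} in place of Proposition \ref{dklw}. Your extra care in step $(d)\Leftrightarrow(e)$, transferring the DK and underlying-$\Theta_{n-2}$ conditions separately via two-out-of-three, correctly fills in the detail the paper leaves implicit.
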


\begin{proof}
DK-$n$-equivalences satisfy the two-out-of-three property, as do levelwise equivalences; thus an argument identical to Theorem \ref{dkmain} applies, replacing $T^1$ with $T^n$
\end{proof}

Next we consider applying each of the completions successively in the order of decreasing dimension. The fact that the dimension $k$ completion essentially only affects the $(k-1)$-cells substantially simplifies the situation, causing the essential surjectivity in terms of higher categorical equivalences to not appear explicitly. However, when completing in multiple dimensions, this more subtle notion of essential surjectivity becomes essential.

\begin{con}\label{tcompletion}
We define the \emph{total completion} functor $T\colon \sps(\Theta^n C)\to \sps(\Theta^n C)$ as the composite
\[
T :=T^{1}\cdots T^{n-1} T^n .
\]
Note that we also have a natural transformation $\eta\colon \id \Rightarrow T$ with components 
\[
\eta_X:=\eta_{T^{2}\cdots T^n X}^1\cdots \eta_{T^n X}^{n-1} \eta_{X}^n.
\]
\end{con}

The total completion inherits properties similar to the individual completions.

\begin{thm}\label{total}
Let $X$ be a $\se^n(S)$-fibrant object in $\sps(\Theta^n C)$. Then
\begin{enumerate}
\item $T X$ is $\cpt^n(S)$-local,
\item $\eta_X$ is a $\cpt^{n}(S)$-local acyclic cofibration, and
\item $\eta_X$ is a DK-$n$-equivalence.
\end{enumerate}
\end{thm}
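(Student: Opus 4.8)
The plan is to realize the total completion as an $n$-fold application of the single-dimension completion of Theorem \ref{suspended}, where at the stage that completes dimension $j$ we regard the already-completed higher dimensions as part of the coefficient data. The key device is the identification $\Theta^n C\cong \Theta^{j}(\Theta^{n-j}C)$, under which Construction \ref{ncompletion}, and hence Theorem \ref{suspended}, may be invoked with $n$ replaced by $j$, base category $\Theta^{n-j}C$, and coefficient set $\cpt^{n-j}(S)$; this is legitimate because $T^{j}$ is built from the diagram $Q^{j}$ over $\Theta^{n-j}C$, so it is literally the functor appearing in $T=T^{1}\cdots T^{n}$, and because the $\cpt^{n-j}(S)$-local structure on $\sps(\Theta^{n-j}C)$ is Cartesian by iterated application of \cite[8.1, 8.5]{rezkth}. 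Expanding the defining unions one verifies
\[
\se^{j}\cpt^{n-j}(S)=\cpt^n(S)\setminus \bigcup_{k=1}^{j} V[1]^{k-1}(\operatorname{cplt}_{\Theta^{n-k}C}),
\]
so in particular $\se^{j}\cpt^{n-j}(S)\subseteq \cpt^n(S)$ and also $\se^n(S)\subseteq \se^{j}\cpt^{n-j}(S)$ for every $j$. These two containments do essentially all of the work.

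For property (1) I would induct downward on the number of completed dimensions. The object $W$ is $\se^n(S)=\se^{n}\cpt^{0}(S)$-fibrant, and Theorem \ref{suspended}(1) with $j=n$ gives that $T^n W$ is $\se^{n-1}\cpt^{1}(S)$-fibrant. Assuming $T^{j+1}\cdots T^n W$ is $\se^{j}\cpt^{n-j}(S)$-fibrant, applying $T^{j}$ via Theorem \ref{suspended}(1) with base $\Theta^{n-j}C$ and coefficients $\cpt^{n-j}(S)$ produces a $\se^{j-1}\cpt(\cpt^{n-j}(S))=\se^{j-1}\cpt^{n-j+1}(S)$-fibrant object; the completeness conditions in dimensions $j+1,\ldots,n$ survive precisely because they are carried inside the coefficient set and therefore reappear in the conclusion of Theorem \ref{suspended}. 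After $n$ steps $TW$ is $\cpt^{n}(S)$-fibrant, in particular $\cpt^n(S)$-local.

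Properties (2) and (3) then follow formally. By Theorem \ref{suspended}(2) the $j$-th component $\eta^{j}_{T^{j+1}\cdots T^n W}$ is a $\se^{j-1}\cpt^{n-j+1}(S)$-local acyclic cofibration, and by the containment $\se^{j-1}\cpt^{n-j+1}(S)\subseteq \cpt^n(S)$ it is in particular a $\cpt^n(S)$-local equivalence; since cofibrations compose and $\cpt^n(S)$-local equivalences satisfy two-out-of-three, the composite $\eta_W$ is a $\cpt^n(S)$-local acyclic cofibration. Similarly each component is a DK-equivalence by Theorem \ref{suspended}(3) — the notion is well posed because every intermediate object is $\se^{j}\cpt^{n-j}(S)$-fibrant, hence $\se^n(S)$-fibrant by the containment $\se^n(S)\subseteq \se^{j}\cpt^{n-j}(S)$ — and DK-equivalences are closed under composition by their two-out-of-three property, yielding property (3).

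I expect the main obstacle to be the bookkeeping of the base-category and coefficient reindexing, making sure that Theorem \ref{suspended} genuinely applies at each stage and that completeness already achieved in higher dimensions is not destroyed by a later completion in a lower dimension. The conceptual point underlying this is that completing dimension $j$ is a DK-equivalence which is a levelwise weak equivalence on the $j$-th iterated mapping objects, and is thus invisible to the higher completeness conditions; encoding this as ``the higher completeness lives in the coefficient set $\cpt^{n-j}(S)$'' is exactly what lets the single set-theoretic identity above replace a separate, more delicate preservation argument.
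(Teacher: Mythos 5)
Your proposal is correct and follows essentially the same route as the paper: iterate Theorem \ref{suspended} under the reindexing $\Theta^n C\cong\Theta^{j}(\Theta^{n-j}C)$ with coefficient set $\cpt^{n-j}(S)$ to get that $T^{n-k+1}\cdots T^n W$ is $\se^{n-k}\cpt^{k}(S)$-fibrant, then compose the components of $\eta$ using the containment $\se^{k-1}\cpt^{n-k+1}(S)\subseteq\cpt^n(S)$ and closure of DK-equivalences under composition. You merely make explicit the set-theoretic bookkeeping that the paper leaves implicit.
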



\begin{proof}
Let $X$ be $\se^n(S)$-fibrant. By iteratively applying Theorem \ref{suspended}, it follows that $T^{n-k+1}\cdots T^n X$ is $\se^{n-k}\cpt^{k}(S)$-fibrant for all $1\leq k\leq n$. In particular, when $k=n$ we have that $TX$ is $\cpt^{n}(S)$-fibrant. 
Next, note that by Theorem \ref{dksuspended}, for each $1\leq k\leq n$, the map 
\[
\eta_{T^{k+1}\cdots T^n X}^{k}\colon T^{k+1}\cdots T^n X \to T^{k} T^{k+1}\cdots T^n X
\]
is a $\se^{k-1}\cpt^{n-k+1}(S)$-local and thus also a $\cpt^{n}(S)$-local acyclic cofibration.
Thus the composite $\eta_X$ is a $\cpt^{n}(S)$-local acyclic cofibration.

Furthermore, since each $\eta_{T^{k+1}\cdots T^n X}^{k}$ is a DK-$n$-equivalence by Corollary \ref{ndk}, so is  $\eta_X$.
\end{proof}

Finally, we prove Theorem \ref{t3}, which suggests that the conditions in Definition \ref{dkn} correctly describe equivalences of $(\infty,n)$-categories also in the absence of completeness.

\begin{thm}\label{dktotal}
A map between $\se^n(S)$-fibrant objects of $\sps(\Theta^n C)$ is a DK-$n$-equivalence if and only if it is a $\cpt^n(S)$-local equivalence.
\end{thm}

\begin{proof}
The result follows by essentially the same argument as the case $n=1$ in Theorem \ref{dkmain}, replacing $T^1$ with $T$.
\end{proof}

\begin{cor}
A $\se^n(S)$-fibrant object of $\sps(\Theta^n C)$ is $\cpt^n(S)$-fibrant if and only if it is local with respect to the class of DK-$n$-equivalences.
\end{cor}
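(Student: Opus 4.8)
The plan is to deduce both implications from Theorem \ref{dktotal} together with the existence of the total completion from Theorem \ref{total}. Throughout, let $W$ be a $\se^n(S)$-fibrant object, and recall that it is in particular injective fibrant.

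For the forward implication, suppose $W$ is $\cpt^n(S)$-fibrant. Let $f\colon X\to Y$ be any DK-equivalence, so that $X$ and $Y$ are $\se^n(S)$-fibrant by the definition of a DK-equivalence. By Theorem \ref{dktotal} the map $f$ is a $\cpt^n(S)$-local equivalence, which by definition means that $f^*\colon \Map(Y,Z)\to \Map(X,Z)$ is a weak equivalence for every $\cpt^n(S)$-fibrant object $Z$, and in particular for $Z=W$. Hence $W$ is local with respect to the class of DK-equivalences.

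For the backward implication, suppose $W$ is local with respect to DK-equivalences; I would use the completion to promote this to full $\cpt^n(S)$-locality. By Theorem \ref{total} the completion map $\eta_W\colon W\to TW$ is a DK-equivalence onto the $\cpt^n(S)$-fibrant object $TW$. Since $W$ is DK-local and $\eta_W$ is a DK-equivalence, the induced map $\eta_W^*\colon \Map(TW,W)\to \Map(W,W)$ is a weak equivalence of Kan complexes, which compute the correct homotopy type as $W$ is injective fibrant and all objects are cofibrant. In particular $\eta_W^*$ is surjective on $\pi_0$, so there is a map $r\colon TW\to W$ with $r\circ \eta_W\simeq \id_W$. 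Thus $W$ is a retract of $TW$ in the homotopy category $\ho(\sps(\Theta^n C))$ of the injective model structure.

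It remains to transfer $\cpt^n(S)$-locality from $TW$ to its homotopy retract $W$. For each map $g\colon A\to B$ in $\cpt^n(S)$, applying $\Map(A,-)$ and $\Map(B,-)$, which are homotopy invariant on the fibrant objects $W$ and $TW$, realizes the arrow $g^*\colon \Map(B,W)\to \Map(A,W)$ as a retract of $g^*\colon \Map(B,TW)\to \Map(A,TW)$ in the arrow category of $\ho(\sset)$. Because $TW$ is $\cpt^n(S)$-local the latter is an isomorphism in $\ho(\sset)$, and since isomorphisms are closed under retracts so is the former; that is, $g^*\colon \Map(B,W)\to \Map(A,W)$ is a weak equivalence. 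Hence $W$ is $\cpt^n(S)$-local, and being injective fibrant it is $\cpt^n(S)$-fibrant. The main obstacle is precisely this last transfer: DK-locality only controls maps between $\se^n(S)$-fibrant objects, whereas $\cpt^n(S)$-fibrancy requires locality against the completeness maps in $\cpt^n(S)$, whose sources and targets are not themselves fibrant; the completion $\eta_W$ and the retract argument are exactly what bridge this gap.
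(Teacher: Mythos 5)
Your proof is correct and takes the route the paper intends: the forward implication is immediate from Theorem \ref{dktotal}, and the backward implication uses the total completion $\eta_W\colon W\to TW$ from Theorem \ref{total} together with a homotopy-retract transfer of $\cpt^n(S)$-locality from $TW$ to $W$. You also correctly identify the one genuine subtlety — that DK-locality only tests maps between Segal objects while the completeness maps in $\cpt^n(S)$ have non-fibrant (co)domains — which is exactly what the completion-and-retract step is needed for.
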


\newpage
\bibliographystyle{alpha}
\bibliography{ref}

\end{document}